\documentclass[11pt,reqno]{amsart}
\usepackage{amsfonts}
\usepackage{amssymb,amsmath}





\textwidth=16cm \oddsidemargin=-0.5cm \evensidemargin=-0.5cm
\textheight=23cm \topmargin=-0.2cm \setcounter{page}{1}

\numberwithin{equation}{section}

\newtheorem{theorem}{Theorem}[section]
\newtheorem{Lemma}{Lemma}[section]
\newtheorem{Prop}{Proposition}[section]

\newcommand{\beq}{\begin{equation}}
\newcommand{\eeq}{\end{equation}}
\newcommand{\bal}{\begin{aligned}}
\newcommand{\eal}{\end{aligned}}
\newcommand{\bna}{\begin{eqnarray}}
\newcommand{\ena}{\end{eqnarray}}
\newcommand{\bea}{\begin{eqnarray*}}
\newcommand{\eea}{\end{eqnarray*}}

\newcommand{\bb}{{\bf b}}

\newcommand{\D}{{\mathcal{D}}}
\newcommand{\A}{{\mathcal{A}}}

\newcommand{\p}{{\partial}}
\newcommand{\pO}{{\partial\Omega}}

\newcommand{\dd}{\mathrm{d}}

\def \D{{\mathrm{D}}}
\def \IBbb{{\mathbb I}}
\def \PBbb{{\mathbb P}}

\newcommand{\RD}{\mathbb{R}^{\D}}

\newcommand{\SD}{\mathbb{S}^{\D-1}}

\newcommand{\BP}{\iint\limits_{\Sigma_{+}}}

\newcommand{\sps}{\sqrt{\epsilon}}
\newcommand{\gps}{\tilde{g}_\epsilon}

\newcommand{\eps}{\varepsilon}

\newcommand{\ale}{\alpha_\epsilon}
\newcommand{\gpg}{\gamma_+g_\epsilon}
\newcommand{\gpgb}{\langle\gamma_+g_\epsilon\rangle_{\partial\Omega}}
\newcommand{\gamn}{\gamma_{\!-}}
\newcommand{\gamp}{\gamma_{\!+}}
\newcommand{\Sigpo}{\iint\limits_{\Sigma_{+}}}
\newcommand{\Signe}{\iint\limits_{\Sigma_{-}}}

\newcommand{\dnu}{\mathrm{d}\tilde{\nu}_\eps}
\newcommand{\Fe}{{F_\epsilon}}
\newcommand{\Fein}{{F_\epsilon^{\mathrm{in}}}}
\newcommand{\Ge}{{G_\epsilon}}
\newcommand{\gep}{g_\epsilon}
\newcommand{\Gein}{{G_\epsilon^{\mathrm{in}}}}
\newcommand{\gammag}{1+\eps^2\gamp g^2_\eps}
\newcommand{\gammagh}{1+\eps^2\gamp \hat{g}^2_\eps}

\newcommand{\Divv}{v\!\cdot\!\nabla_{\!x}}
\newcommand{\di}{\nabla_{\!x}\!\cdot\!}
\newcommand{\grad}{\nabla_{\!x}}

\def \del{{\partial}}

\def \DIV{\nabla_{\!\!x}\!\cdot\! }
\def \LAP{\Delta_x}

\def \VGRAD{v\!\cdot\!\nabla_{\!\!x}}

\def \LL{{\mathcal L}}

\def \AHat{{\widehat{\mathrm{A}}}}
\def \BHat{{\widehat{\mathrm{B}}}}

\def \Int{\mathrm{int}}
\def \b{\mathrm{b}}
\def \bb{\mathrm{bb}}
\def \u{\mathrm{u}}

\def \cvd{\tfrac{|v|^2}{2}-\tfrac{\D}{2}}
\def \<{\langle}
\def \>{\rangle}

\def \PP{{\mathcal P}}
\def \AA{{\mathcal A}}


\begin{document}

\title[Navier-Stokes-Fourier Limit]{Boundary layers and incompressible Navier-Stokes-Fourier limit of the Boltzmann Equation in Bounded Domain
(I) }
\author[N. Jiang]{Ning Jiang}
\address{School of Mathematics and Statistics, Wuhan university 430072, Wuhan, P.R. China}
\email{cicm10.math@whu.edu.cn}
\author[N. Masmoudi]{Nader Masmoudi}
\address{Courant Institute of Mathematical Sciences, 251 Mercer Street, New York, NY 10012}
\email{masmoudi@cims.nyu.edu}

\begin{abstract}

We establish the  incompressible Navier-Stokes-Fourier limit for solutions to the Boltzmann equation with a general cut-off collision
kernel in  a bounded domain. Appropriately scaled families of DiPerna-Lions-(Mischler) renormalized solutions with Maxwell
reflection boundary conditions are shown to have fluctuations that
converge as the Knudsen number goes to zero. Every limit point is a weak solution to the
Navier-Stokes-Fourier system with different types of  boundary
conditions depending on the ratio between  the accommodation
coefficient and the Knudsen number. The main new result of the paper
is that this convergence is strong in the case of Dirichlet boundary
condition.  Indeed, we prove that the acoustic waves are damped
immediately, namely they are damped in a boundary layer in time.
This damping is due to the presence of viscous and kinetic boundary
layers in space. As a consequence, we also justify the first correction to the infinitesimal Maxwellian that one obtains
from the Chapman-Enskog expansion with Navier-Stokes scaling.

This extends the work of Golse and Saint-Raymond \cite{Go-Sai04,
Go-Sai05} and Levermore and Masmoudi \cite{LM} to the case of a
bounded domain. The case of a bounded domain was considered
by Masmoudi and Saint-Raymond \cite{M-S}  for linear
Stokes-Fourier limit and Saint-Raymond \cite{SRM} for Navier-Stokes limit for hard potential kernels. Both \cite{M-S} and \cite{SRM} didn't  study the damping  of the acoustic waves. This paper extends the result of \cite{M-S} and \cite{SRM} to the nonlinear case and includes soft potential kernels. More importantly, for the Dirichlet boundary condition, this work strengthens the convergence so as to make the boundary layer visible. This answers an open problem proposed by Ukai \cite{Ukai}.

\end{abstract}
\maketitle

\section{Introduction}

The hydrodynamic limits from the Boltzmann equation got a lot of
interest in the previous two decades. Hydrodynamic regimes are those where the Knudsen number
$\eps$ is small.  The Knudsen number is the ratio of the mean free path and
the macroscopic length scales. The incompressible Navier-Stokes-Fourier (NSF) system can
be formally derived from the Boltzmann equation through a scaling in
which the fluctuations of the number density $F$ about an absolute
Maxwellian $M$ are scaled to be on the order $\eps$, see \cite{BGL1}.

The program that justifies the hydrodynamic limits from the
Boltzmann equation in the framework of DiPerna-Lions \cite{D-P} was
initiated by Bardos-Golse-Levermore \cite{BGL1, BGL2} in late 80's. Since then,
there has been lots of contributions to this program \cite{BGL3, GL,
Go-Sai04, Go-Sai05, JLM, LM, LM3, LM4, M-S, S-Ray}. In particular the
work of Golse and Saint-Raymond \cite{Go-Sai04}  is the first complete rigorous justification of
NSF limit from the Boltzmann equation in a class of
bounded collision kernels, without making any nonlinear weak
compactness hypothesis. They have recently extended their result to
the case of hard potentials \cite{Go-Sai05}. With some new nonlinear
estimates, Levermore and Masmoudi \cite{LM}  treated a broader class
of collision kernels which includes all hard potential cases and,
for the first time in this program, soft potential cases.

All of the above mentioned works were carried out in either the
periodic spatial domain or the  whole space, except for  \cite{M-S} and \cite{SRM}.
In \cite{M-S},  the linear Stokes-Fourier system was recovered  with the same
collision kernels assumption as in  \cite{GL}, while in \cite{SRM}, the Navier-Stokes limit was derived with the same kernels assumption as in \cite{Go-Sai05}, i.e. hard potential kernels. In \cite{M-S} and \cite{SRM}, the fluctuations of renormalized solutions to the Boltzmann equation in a bounded domain
(see \cite{Misch3}) was proved to pass to the limit and recovered
fluid boundary conditions, either Dirichlet, or Navier slip boundary condition, depending on the relative sizes of the
accommodation coefficient and the Knudsen number.

The dependance of the boundary conditions of the limiting fluid equations on the relative importance of the accommodation coefficient and the Knudsen number was observed by Sone and his collaborators. Their results, mostly formal, are presented in Chapter 3 and 4 in \cite{Sone2} for several types of kinetic boundary conditions. The work \cite{M-S} and \cite{SRM} rigorously justified the incompressible Stokes and Navier-Stokes equations  from Boltzmann equation imposed with Maxwell reflection boundary condition.

In his survey paper \cite{Ukai}, Ukai proposed the following question: {\em ``As far as the Boltzmann equation in a bounded domain is concerned, some progress has been made recently. In [37] , the convergence of the Boltzmann equation to the (linear) Stokes-Fourier equation was proved together with the convergence of the boundary conditions. It is a big challenging problem to extend the result to the nonlinear case and to strength the convergence so as to make visible the boundary layer."}  (In the above citation of Ukai's survey, the reference {\em [37]}  is the Saint-Raymond and Masmoudi's paper \cite{M-S}.)

In this paper and a forthcoming one, we study the incompressible NSF limit in a bounded domain from the Boltzmann equation with the Maxwell reflection boundary condition in which the accommodation might depend on the Knudsen number. We consider a  bounded domain $\Omega\subset \RD$, $\D\geq 2$, with
boundary $\pO\in C^2$. The NSF system governs  the
fluctuations of mass density, bulk velocity, and temperature $(\rho,
\mathrm{u}, \theta)$ about their spatially homogeneous equilibrium
values in a Boussinesq regime. Specifically, after a suitable choice
of units, these dimensionless fluctuations satisfy the incompressibility and
Boussinesq relations
\begin{equation}\label{incom-boussinesq}
\DIV \mathrm{u}=0\,, \quad \rho+ \theta=0\,,
\end{equation}
while their evolution is determined by the Navier-Stokes and heat equations
\begin{equation}\label{incom-NSF}
\begin{aligned}
 \p_t \mathrm{u}+ \mathrm{u}\!\cdot\!\grad \mathrm{u} +\grad p =\nu  \Delta_{\!x} \mathrm{u}\,,\quad  &\mathrm{u}|_{t=0}=\mathrm{u}_{0}\,,\\
 \p_t \theta+ \mathrm{u}\!\cdot\!\grad \theta =\tfrac{2}{\D+2}\kappa\Delta_x \theta\,,\quad  &\theta|_{t=0}=\theta_{0}\,,
\end{aligned}
\end{equation}
where $\nu >0$ is the kinematic viscosity and $\kappa> 0$ is the heat thermal conductivity.

Traditionally, two types of natural physical boundary conditions could be imposed for the incompressible NSF system
\eqref{incom-NSF}. The first is the homogeneous Dirichlet boundary condition, namely,
\begin{equation}\label{dirichlet-bc}
\mathrm{u}=0\,,\quad \theta=0\!\quad\mbox{on}\!\!\quad
\mathbb{R}^+\times \pO\,.
\end{equation}
The other is the so-called Navier slip boundary condition, which was proposed by Navier \cite{Navier}:
\begin{equation}\label{navier-slip-bc}
\begin{aligned}
&[2\nu  d(\mathrm{u})\!\cdot\!\mathrm{n}+\chi
\mathrm{u}]^{\mathrm{tan}}=0\,,\quad \mathrm{u}\!\cdot\!\mathrm{n}=0
\quad\mbox{on}\!\!\quad \mathbb{R}^+\times \partial\Omega\,,\\
&\kappa \partial_\mathrm{n} \theta +\chi \tfrac{\D+1}{\D+2}\theta=0
\quad\mbox{on}\!\!\quad\mathbb{R}^+\times \partial\Omega\,,
\end{aligned}
\end{equation}
where $d(\u)=\frac{1}{2}(\grad \mathrm{u} + \grad \mathrm{u}^\top)$ denotes the symmetric part of the stress tensor and $\p_\mathrm{n} $
denotes the directional derivative along the outer normal vector
$\mathrm{n}(x), x\in\pO$. In the above Navier boundary condition, $\chi>0$ is
the reciprocal of the slip length which depends on the material of
the container.

In the current work, for general cut-off collision kernels, namely in the framework of \cite{LM}, we justify the NSF system. Regarding  the {\em weak} convergence results, our proof is  basically the same as in \cite{M-S} and \cite{SRM}: the boundary conditions of the limiting NSF system depend on the ratio of the accommodation coefficient and the Kundsen number, namely when $\frac{\alpha_\eps}{\eps}\rightarrow \infty$ as $\eps\rightarrow 0$, Dirichlet condition is derived, while when   $\frac{\alpha_\eps}{\eps}\rightarrow \sqrt{2\pi}\chi$, the Navier-slip boundary condition is derived. The main difference is that \cite{SRM} used the same renormalizations of \cite{Go-Sai05}, applicable for hard potentials, while in the current work, we use the renormalization of \cite{LM}, which works for more  general cut-off kernels, including soft potentials.

The  main novelty of the current work
is the treatment of the Dirichlet boundary condition case.
Indeed, we prove that when $\frac{\alpha_\eps}{\eps}\rightarrow \infty$,
the convergence is {\em strong}. Furthermore, as a consequence of this strong convergence, the first correction to the infinitesimal Maxwellian, which is a quadratic term obtained from the Chapman-Enskog expansion with the Navier-Stokes scaling, is rigorously justified. We point out that  in all the  previous
works mentioned above, the convergence is in $w\mbox{-}L^1$, unless
the initial data is well-prepared, i.e. is hydrodynamic and satisfies the Boussinesq and
incompressibility relations. This weak  convergence is caused by the
persistence of fast acoustic waves. In the Navier-Stokes regime, the
Reynold number $Re$ is order $O(1)$, then the von
K\'{a}m\'{a}n relation $\eps=\frac{Ma}{Re}$
implies that in the fluid limit $\eps\rightarrow 0$, the Mach number
$Ma$ must go to zero. As is well know physically, one
expects that as $Ma\rightarrow 0$, fast acoustic waves are
generated and carry the energy of the potential part of the flow. For
the periodic flows, or for some particular boundary conditions such
as Navier condition \eqref{navier-slip-bc}, these waves subsist
forever and their frequency   grows with  $\eps$. Mathematically,
this means that the convergence is only {\em weak}. This phenomenon
happens in many singular limits of fluid equations among which we only mention \cite{LM1, LM2}.

One of the ingredients of the convergence proof is the treatment of the
acoustic waves which are highly oscillating. A compensated
compactness type argument was used by Lions and Masmoudi \cite{LM3}
to prove that these acoustic waves have no contribution on the equation satisfied by
the weak limit. This argument was previously used in the
compressible incompressible limit \cite{LM2}.

In \cite{DGLM}, a striking phenomenon, namely the damping of acoustic waves
caused by  the Dirichlet  boundary condition was found by
Desjardins, Grenier, Lions, and Masmoudi in considering the
incompressible limit of the isentropic compressible Navier-Stokes equations. In
the case of a viscous flow in a bounded domain with Dirichlet
boundary condition, and  under a generic assumption on the domain
(related to the so-called Schiffer's conjecture and the Pompeiu
problem \cite{Dalmasso}), they showed  that the acoustic waves are
instantaneously (asymptotically) damped, due to the formation of a
thin boundary layer in time.  This layer is caused by a boundary
layer in space and  dissipates the energy carried by the acoustic
waves. From a mathematical point of view, strong convergence
was obtained.

Inspired by the idea of \cite{DGLM}, the current paper considers the much more involved kinetic-fluid coupled case.  We prove that if the accommodation coefficient is bigger than the Knudsen number, there is
no need for the argument in \cite{LM2} since we can prove that the
acoustic waves  are damped instantaneously. Our work is based on the
construction of  viscous and kinetic Knudsen  boundary layers of
size  $\sqrt{\eps }$  and $\eps$. The main idea is to use a family of test functions
which solve approximately a scaled stationary linearized  Boltzmann equation  and can
capture the propagation of the fast acoustic waves. These test
functions are constructed through considering a family of
approximate eigenfunctions of a {\em dual} operator with a {\em
dual} kinetic boundary condition with respect to the original
Boltzmann equation. The approximate eigenvalue is the sum of several terms with different order of $\eps$: the leading term is purely imaginary, which
describes the acoustic mode, and the real part of the next order term is strictly {\em negative} which gives the strict dissipation when applying the test functions to the
renormalized Boltzmann equation.

In contrast to \cite{DGLM}, the approximate eigenfunctions include interior part and two boundary layers:  fluid viscous layer and kinetic Knudsen layer, while in \cite{DGLM}, only a fluid
boundary layer was necessary. Another important difference is that a generic
assumption on the domain had to be made in \cite{DGLM} (in
particular there are modes which are not damped in the disc), while in the current work,
this assumption is not needed. The reason is that we deal with the full acoustic system, namely including the temperature. The NSF system  has also some dissipation in the temperature equation which is ignored
in the isentropic model. (in particular this dissipation property holds in the case of the ball). This was also considered in \cite{J-M-2013} in which we reinforced  the result of \cite{DGLM}.

When the accommodation coefficient $\alpha_\eps$ is asymptotically larger than the Knudsen number $\eps$ in the sense that $\frac{\alpha_\eps}{\eps}\rightarrow \infty$ as $\eps\rightarrow 0$, the fluid limit is the NSF equations with Dirichlet boundary condition. For example, we can assume
$\ale = \chi\eps^\beta$ with $0 \leq \beta <1$. We found that $\beta=\frac{1}{2}$ is a threshold in the sense that the kinetic-fluid coupled boundary layers behave differently for $0 \leq \beta <\frac{1}{2}$ and $\frac{1}{2} \leq \beta <1$, but for both cases the kinetic-fluid layers have damping effect. The current paper focuses on the threshold case $\beta=\frac{1}{2}$ and we leave the other cases for a separate paper due to the more complex construction of the boundary layers.

One of the difficulties  of  the construction happens in the case  the Laplace operator $-\Delta_{\!x}$ with Neumann boundary condition has multiple eigenvalues. As a consequence, the dimension of the null space of the the operator $\AA - i\lambda^k_0$ is greater than one, where $\AA$ denotes the acoustic operator, and $\tfrac{\D}{\D+2}[\lambda^k_0]^2$ are eigenvalues for $k\in\mathbb{N}$ (for details see Section 5.2). Thus, as each stage of the construction of boundary layers, the terms in the null space of $\AA - i\lambda^k_0$ can not be determined uniquely. To completely determine all the terms in the ansaza of boundary layers, we have to add some orthogonality conditions. Surprisingly, all these orthogonality conditions are consistent, at least for the threshold case $\beta=\frac{1}{2}$ treated in the current paper. Similar idea has been used in \cite{J-M-2013} which can be applied to the compressible-incompressible limit of the full Navier-Stokes-Fourier system in a bounded domain.

A key role is played by the linearized kinetic boundary layer equation in the coupling of viscous and kinetic layers. More specifically, its solvability provides the boundary conditions of the fluid variables in the interior and viscous boundary layers which satisfy the acoustic systems with source terms and second order ordinary differential equations respectively. This linearized kinetic boundary layer equation has been studied extensively (see \cite{BCN, CGS, GP, GPS, UYY}). Applying the boundary layer equations to construct the two layer eigenfunctions is the main novelty of the current paper. To the best of our knowledge these  two layer  eigenfunctions are new even in the applied literature.

The paper is organized as follows: the next section contains
preliminary material regarding the Boltzmann equation in a bounded
domain. We state the main theorems in Section 3 which include the
weak convergence for the Navier slip boundary and strong convergence
for the Dirichlet boundary. In Section 4, we list some differential
geometry properties of the boundary $\pO$ as a submanifold of
$\RD$. Section 5 provides an introduction to the acoustic modes while
Section 6 is about the analysis of the kinetic boundary layer
equation whose solvability provides the boundary conditions of the
fluid variables. In Section 7, we present the constructions of the
test functions used in the proof of the Main Theorem. The proof of
the main proposition on the boundary layers is given in Sections 8 and 9.
In Section 10, we establish the weak convergence result of
the main theorem. Section 11 contains the  proof of the strong
convergence in  the Dirichlet boundary case using the test functions constructed in
Section 7.

\section{Boltzmann Equation in Bounded Domain}

Here we introduce the Boltzmann equation in a bounded domain,
only so far as to set our notations, which are  essentially those  of
\cite{BGL2} and \cite{M-S}. More complete introduction to the
Boltzmann equation can be found in \cite{Cer, CIP, Glassey, Sone2}.

\subsection{Maxwell Boundary Condition}

We consider $\Omega$, a smooth bounded domain of $\RD$, and
$\mathcal{O}=\Omega\times\RD$, the space-velocity domain. Let
$\mathrm{n}(x)$ be the outward unit normal vector at $x\in\pO$ and  let $\mathrm{d}\sigma_x $ be
 the Lebesgue measure on the boundary
$\pO$. We define the outgoing and incoming sets $\Sigma_{+}$ and
$\Sigma_{-}$  by
\begin{equation}\nonumber
\Sigma_{\pm}=\{(x,v)\in\Sigma:\pm \mathrm{n}(x)\!\cdot\! v>0\}\quad\mbox{where}\quad\Sigma=\pO\times \RD\,.
\end{equation}

Denoted by $\gamma F$ the trace of $F$ over $\Sigma$, the boundary
condition takes the form of a balance between the values of the
outgoing and incoming parts of $\gamma F$, namely
$\gamma_{\pm}F=\mathbf{1}_{\Sigma_{\pm}}\gamma F$. In order to
describe the interaction between particles and the wall, Maxwell
\cite{Maxwell} proposed in 1879 the following phenomenological law
which splits into a local reflection and a diffuse
reflection
\begin{equation}\label{Max-bc}
\gamma_{-}F=(1-\alpha)L\gamma_{+}F +\alpha K \gamma_{+}F \quad\mbox{on}\quad \Sigma_{-}\,,
\end{equation}
where $\alpha\in[0\,,1]$ is a constant, called the ``accommodation coefficient.'' The local reflection operator $L$ is given by
\begin{equation}\label{reflect-L}
L\phi(x\,,v)=\phi(x\,,R_x v)\,,
\end{equation}
where $R_x v=v-2\left[\mathrm{n}(x)\!\cdot\! v\right]\mathrm{n}(x)$
is the velocity before the collision with the wall. The diffuse
reflection operator $K$ is given by
\begin{equation}\nonumber
K\phi(x\,,v)=\sqrt{2\pi}\tilde{\phi}(x)M(v)\,,
\end{equation}
where $\tilde{\phi}$ is the outgoing mass flux
\begin{equation}\nonumber
\tilde{\phi}(x)=\int\limits_{v\cdot
\mathrm{n}(x)>0}\phi(x\,,v)v\!\cdot\!\mathrm{n}(x) \,\mathrm{d}v\,,
\end{equation}
and $M$ is the absolute Maxwellian $ M(v)=\tfrac{1}{(2\pi)^{\D/2}}\exp\left(-\tfrac{1}{2}|v|^2\right)$,
that corresponds to the spatially homogeneous fluid state with
density and temperature equal to 1 and bulk velocity equals to 0. Furthermore, We notice that
\begin{equation}\nonumber
\int\limits_{v\cdot \mathrm{n}(x)>0}v\!\cdot\!\mathrm{n}(x)\sqrt{2\pi}M(v)\,\mathrm{d}v
=\int\limits_{v\cdot \mathrm{n}(x)<0}|v\!\cdot\!\mathrm{n}(x)|\sqrt{2\pi}M(v)\,\mathrm{d}v=1\,,
\end{equation}
which expresses the conservation of mass at the boundary. Here we take the temperature of the wall to be constant and equal to 1.

\subsection{Nondimensionalized Form of the Boltzmann Equation}

We consider a sequence of renormalized solutions $F_\eps(t,x,v)$ to the rescaled Boltzmann equation
\begin{equation}\label{BE-F}
\begin{aligned}
\eps\p_t\Fe+v\cdot\nabla_x\Fe=\frac{1}{\eps}\mathcal{B}(\Fe\,,\Fe)\quad&\mbox{on}\quad\mathbb{R}^+\times\mathcal{O}\,,\\
\Fe(0\,,x\,,v)=\Fein (x\,,v)\geq 0\quad&\mbox{on}\quad\mathcal{O}\,,\\
\gamma_{-}\Fe=(1-\alpha)L\gamma_{+}\Fe+\alpha
K \gamma_{+}\Fe\quad&\mbox{on}\quad \mathbb{R}^+\times\Sigma_{-}\,.
\end{aligned}
\end{equation}
The Boltzmann collision operator $\mathcal{B}$ acts only on the $v$ argument of
$F$ and is formally given by
\begin{equation}\nonumber
\mathcal{B}(F\,,F)=\iint\limits_{\mathbb{S}^{D-1}\times
\RD}(F'_1F'-F_1F)b(\omega\,,v_1-v)\,\mathrm{d}\omega\,\mathrm{d}v_1\,,
\end{equation}
where $v_1$ ranges over $\RD$ endowed with its Lebesgue measure
$\mathrm{d}v_1$, while $\omega$ ranges over the unit sphere
$\SD=\{\omega\in\RD:|\omega|=1\}$ endowed with its rotationally
invariant unit measure $\mathrm{d}\omega$. The $F'_1\,,F'\,,F_1\,,$
and $F$ appearing in the integrand designate $F(t\,,x\,,\cdot)$
evaluated at the velocities $v'_1\,,v'\,,v_1$ and $v$, respectively,
where the primed velocities are defined by
\begin{equation}\nonumber
v'_1=v_1-\omega[\omega\cdot(v_1-v)]\,,\quad v'=v+\omega[\omega\cdot(v_1-v)]\,,
\end{equation}
for any given $(\omega\,,v_1\,,v)\in\SD\times\RD\times\RD$. This
expresses the conservation of momentum and energy for particle pairs
after a collision, namely,
\begin{equation}\nonumber
v+v_1=v'+v'_1\,,\quad
|v|^2+|v_1|^2=|v'|^2+|v'_1|^2\,.
\end{equation}
The collision kernel $b$ is a positive, locally integrable function and has
the classical form
\begin{equation}\nonumber
b(\omega\,,v)=|v|\Sigma(|\omega\cdot\hat{v}|\,,|v|)\,,
\end{equation}
where $\hat{v}=v/|v|$ and $\Sigma$ is the specific differential
cross section. This symmetry implies that the quantity $\int
b(\omega\,,v)\,\mathrm{d}\omega$ is a function of $|v|$ only. The
DiPerna-Lions theory requires that $b$ satisfies
\begin{equation}\label{b-DL}
\lim\limits_{|v|\rightarrow\infty}\frac{1}{1+|v|^2}\iint\limits_{\SD\times
K}b(\omega\,,v_1-v)\,\mathrm{d}\omega\,\mathrm{d}v_1=0
\end{equation}
for any compact set
$K\subset\RD$. There are some additional assumptions on $b$ needed
in \cite{LM}. For the convenience
of the reader, we list these assumptions here.

A major role will be played by the attenuation coefficient $a(v)$,
which is defined as
\begin{equation}\nonumber
a(v)=\int\limits_{\RD}\bar{b}(v_1-v)M_1\,\mathrm{d}v_1=\iint\limits_{\SD\times\RD}b(\omega,v_1-v)\,\mathrm{d}\omega
M_1\,\mathrm{d}v_1\,.
\end{equation}
A few facts about $a(v)$ are readily evident from what we have already assumed. Because \eqref{b-DL} holds, one can show that
\begin{equation}\label{assum1}
\lim\limits_{|v|\rightarrow \infty}\frac{a(v)}{1+|v|^2}=0\,.
\end{equation}

Our {\em second assumption} regarding the
collision kernel $b$ is that $a(v)$ satisfies a lower bound of the form
\begin{equation}\label{assum2}
C_a(1+|v|)^\alpha\leq a(v)\,,
\end{equation}
for some
constant $C_a>0$ and $\alpha\in\mathbb{R}$. The {\em third
assumption} is that there exists $s\in(1\,,\infty]$ and
$C_b\in(0\,,\infty)$ such that
\begin{equation}\label{assum3}
\left(\,\,\int\limits_{\RD}\left|\frac{\bar{b}(v_1-v)}{a(v_1)a(v)}\right|^s
a(v_1)M_1\,\mathrm{d}v_1\right)^{\frac{1}{s}}\leq C_b\,.
\end{equation}

Another major role in what follows will be played by the linearized
around the global Maxwellian $M$ collision operator $\mathcal{L}$,
which is defined by
\begin{equation}\label{linearized B}
\mathcal{L}\tilde{g}=\iint\limits_{\SD\times\RD}(\tilde{g}+\tilde{g}_1-\tilde{g}'-\tilde{g}'_1)b(\omega\,,v_1-v)\,\mathrm{d}\omega
M_1\,\mathrm{d}v_1\,.
\end{equation}
One has the decomposition
\begin{equation}\nonumber
\frac{1}{a}\mathcal{L}=\mathcal{I}+\mathcal{K}^--2\mathcal{K}^+\,,
\end{equation}
where the loss operator $\mathcal{K}^-$ and the gain operator
$\mathcal{K}^+$ are defined by
\begin{equation}\nonumber
\mathcal{K}^-\tilde{g}=\frac{1}{a}\int\limits_{\RD}\tilde{g}_1\bar{b}(v_1-v)M_1\,\mathrm{d}v_1\,,
\end{equation}
\begin{equation}\nonumber
\mathcal{K}^+\tilde{g}=\frac{1}{a}\int\limits_{\RD}(\tilde{g}'+\tilde{g}'_1)b(\omega,v_1-v)\,\mathrm{d}\omega
M_1\,\mathrm{d}v_1\,.
\end{equation} The {\em fourth assumption} regarding the
collision kernel $b$ is that
\begin{equation}\label{assum4}
\mathcal{K}^+:L^2(aM\mathrm{d}v)\rightarrow L^2(aM\mathrm{d}v)\quad\mbox{is
compact}\,.
\end{equation}
Combining the gain operator assumption
$\eqref{assum4}$ and the loss operator assumption $\eqref{assum3}$, we
conclude that
\begin{equation}\nonumber
\frac{1}{a}\mathcal{L}: L^p(aM\mathrm{d}v)\rightarrow
L^p(aM\mathrm{d}v)\quad\mbox{is Fredholm}
\end{equation}
for every $p\in(1\,,\infty)$. From this Fredholm property we can define the
psuedo-inverse of $\mathcal{L}$, called $\mathcal{L}^{-1}$:
\begin{equation}\nonumber
\mathcal{L}^{-1}: L^p(a^{1-p}M\mathrm{d}v)\cap\mbox{Null}^\perp(\LL)\rightarrow L^p(aM\mathrm{d}v)\,.
\end{equation}
Moreover, $\mathcal{L}^{-1}$ is a bounded operator.

The {\em fifth assumption} regarding $b$ is that for every $\delta > 0$ there exists $C_\delta$ such that $\bar{b}$ satisfies
\begin{equation}\label{assum5}
\frac{\bar{b}(v_1-v)}{1+\delta\frac{\bar{b}(v_1-v)}{1+|v_1-v|^2}} \leq C_\delta(1+a(v_1))(1+a(v))\quad\mbox{for every}\quad\! v_1, v \in \RD\,.
\end{equation}

It is well known that the null space of the linearized Boltzmann operator $\LL$ is given by Null$(\LL)\equiv\mbox{span} \{ 1, v_1\,,\cdots\,,
v_{\D}\,, |v|^2 \}$. Let $\PP$ be the orthogonal projection from $L^2(M\dd v)$ onto Null$(\LL)$, namely,
\begin{equation}\label{projection-p}
\PP \tilde{g}=\< \tilde{g} \> + v\cdot \< \tilde{g} \> +(\tfrac{|v|^2}{2}-\tfrac{\D}{2})\< (\tfrac{1}{\D}|v|^2-1)\tilde{g} \>\,,
\end{equation}
where the notation $\<\cdot\>$ is defined below in \eqref{bracket}.  Furthermore, we define $\PP^\perp=\mathcal{I} -\PP$. The matrix-valued function
$\mathrm{A}(v)$ and the vector-valued function $\mathrm{B}(v)$
are defined by
\begin{equation}\label{define-AB}
\mathrm{A}(v)=v \otimes v-\tfrac{1}{\D}|v|^2 \mathrm{I}\,,\quad \mathrm{B}(v)=\tfrac{1}{2}|v|^2 v-\tfrac{\D+2}{2}v\,.
\end{equation}
We also define a scalar-valued function $\mathrm{C}(v)$ by
\begin{equation}\label{define-C}
  \mathrm{C}(v)= \tfrac{1}{4}|v|^4 - \tfrac{\D+2}{2}|v|^2 + \tfrac{\D(\D+2)}{4}\,.
\end{equation}
It is easy to see that each entry of $\mathrm{A}$, $\mathrm{B}$ and $\mathrm{C}$
are in $L^2(a^{-1}M\,\dd v)\cap\mbox{Null}^\perp(\LL)$. Furthermore, $\mathrm{C}$ is perpendicular to each entry of $\mathrm{A}$ and $\mathrm{B}$. We also introduce
$\widehat{\mathrm{A}}\in L^2(aM\dd v; \mathbb{R}^{\D\times \D})$ and
$\widehat{\mathrm{B}}\in L^2(aM\dd v; \mathbb{R}^{\D})$  by
\begin{equation}\label{A-B-hat}
\widehat{\mathrm{A}}=\LL^{-1}\mathrm{A}\,,\quad \widehat{\mathrm{B}}=\LL^{-1}\mathrm{B}\,.
\end{equation}

Next, for the sake of simplicity, we take the following normalizations:
\begin{equation}\nonumber
\begin{aligned}
&\iiint\limits_{\SD\times\RD\times\RD}b(\omega\,,v_1-v)\,\mathrm{d}\omega M_1\,\mathrm{d}v_1\,\mathrm{d}v=1\,,\\
&\int\limits_{\SD}\,\mathrm{d}\omega=1\,,\quad\int\limits_{\RD}M\mathrm{d}v=1\,,\quad\int\limits_\Omega\,\mathrm{d}x=1\,,
\end{aligned}
\end{equation}
associated with the domains $\SD\times\RD\times\RD$, $\SD$, $\RD$ and $\Omega$ respectively, and
\begin{equation}\label{renorm-ini}
\iint\limits_{\Omega\times\RD}F^{\mathrm{in}}_\eps\,\mathrm{d}x\,\mathrm{d}v=1\,,
\end{equation}
associated with the initial data $F^{\mathrm{in}}_\eps$.

Because $M\mathrm{d}v$ is a positive unit measure on $\RD$, we denote by $\langle\xi\rangle$ the average over
this measure of any integrable function $\xi=\xi(v)$,
\begin{equation}\label{bracket}
\langle\xi\rangle=\int\limits_{\RD}\xi(v)M\mathrm{d}v\,,
\end{equation}
and the inner product on $L^2(M\mathrm{d}v)$
\begin{equation}\nonumber
 \langle\xi\,,\eta\rangle =\int\limits_{\RD}\xi(v)\overline{\eta(v)}M\mathrm{d}v\,,
\end{equation}
where $\overline{\eta}$ denotes the complex conjugate of $\eta$. Moreover, we also use the  following average on the boundary
\begin{equation}\label{B-average}
\langle\xi\rangle_{\pO}=\int\limits_{\RD}\xi(v)\left[\mathrm{n}(x)\!\cdot\! v\right]\sqrt{2\pi}M\mathrm{d}v\,,
\end{equation}
from which we have
$\langle\mathbf{1}_{\Sigma_+}\rangle_{\pO}=-\langle\mathbf{1}_{\Sigma_-}\rangle_{\pO}=1\,.$
Because $\mathrm{d}\mu =b(\omega\,,v_1-v)\,\mathrm{d}\omega
M_1\,\mathrm{d}v_1M\,\mathrm{d}v$ is a positive unit measure on
$\SD\times\RD\times\RD$, we denote by
$\langle\!\langle\Xi\rangle\!\rangle$ the average over this measure
of any integrable function $\Xi=\Xi(\omega\,,v_1\,,v)$\,
\begin{equation}\nonumber
\langle\!\langle\Xi\rangle\!\rangle=\iiint\limits_{\SD\times\RD\times\RD}\Xi(\omega\,,v_1\,,v)\,\mathrm{d}\mu \,.
\end{equation}
The measure $\mathrm{d}\mu$ is invariant under the coordinate transformations
\begin{equation}\nonumber
(\omega\,,v_1\,,v)\mapsto(\omega\,,v\,,v_1)\,,\quad(\omega\,,v_1\,,v)\mapsto(\omega\,,v'_1\,,v')\,.
\end{equation}
These are called $d\mu \mbox{-}symmetries$.

\subsection{Navier-Stokes Scaling}

The incompressible NSF system can be formally derived from the
Boltzmann equation through a scaling in which the fluctuations of
the kinetic densities $F_\eps$ about the absolute Maxwellian M are
scaled to be of order $\eps$. More precisely, we take
\begin{equation}\label{define-g}
F_\eps=MG_\eps=M(1+\eps g_\eps)\,.
\end{equation}
Rewriting equation $\eqref{BE-F}$ for $G_\eps$ yields
\begin{equation}\label{BE-G}
\begin{aligned}
\eps\p_t\Ge+v\cdot\nabla_x\Ge=\frac{1}{\eps}\mathcal{Q}(\Ge\,,\Ge)\quad&\mbox{on}\quad\mathbb{R}^+\times\mathcal{O}\,,\\
\Ge(0\,,x\,,v)=\Gein(x\,,v)\quad&\mbox{on}\quad\mathcal{O}\,,\\
\gamma_{-}\Ge=(1-\alpha)L\gamma_{+}\Ge+\alpha \< \gamp
G_\eps\>_{\!\pO}\quad&\mbox{on}\quad \mathbb{R}^+\times\Sigma_{-}\,,
\end{aligned}
\end{equation}
where the collision kernel $\mathcal{Q}$ is now given by
\begin{equation}\nonumber
\mathcal{Q}(G\,,G)=\iint\limits_{\SD\times\RD}(G'_1G'-G_1G)b(\omega\,,v_1-v)\mathrm{d}\omega
M_1\,\mathrm{d}v_1\,.
\end{equation}
In terms of $g_\eps$ the system $\eqref{BE-F}$ finally reads
\begin{equation}\label{BE-g}
\begin{aligned}
\eps\p_tg_\eps+\Divv  g_\eps+\tfrac{1}{\eps}\mathcal{L}g_\eps=\mathcal{Q}(g_\eps\,,g_\eps)\quad&\mbox{on}\quad\mathbb{R}^+\times
\mathcal{O}\,,\\
g_\eps(0\,,x\,,v)=g_\eps^{\mathrm{in}}(x\,,v)\quad&\mbox{on}\quad\mathcal{O}\,,\\
\gamma_{-}g_\eps=(1-\alpha)L \gamma_{+}g_\eps +\alpha \langle\gamma_+g_\eps\rangle_\pO\quad&\mbox{on}\quad \mathbb{R}^+\times\Sigma_{-}\,.
\end{aligned}
\end{equation}
\subsection{{\em A Priori} Estimates}
Due to the presence of the boundary, the classical {\em a priori}
estimates for the Boltzmann equation, namely the entropy and energy
bounds, are modified. First, because all particles arriving at the
boundary are reflected or diffused, we have conservation of mass,
which can be written as
\begin{equation}\nonumber
\int_{\Omega}\<\Ge\>\,\mathrm{d}x=\int_{\Omega}\<\Gein\>\,\mathrm{d}x=1\,.
\end{equation}
Multiplying the equation \eqref{BE-G} by $\log(\Ge)$ and integrating in $x$ and $v$, we get formally
\begin{equation}\nonumber
\begin{aligned}
&\eps\p_t\int_\Omega\<\Ge\log(\Ge)-\Ge+1\>\,\mathrm{d}x+\iint_\Sigma (\Ge\log(\Ge)-\Ge+1)v\!\cdot\!\mathrm{n}(x)\,\mathrm{d}\sigma_x M\mathrm{d}v\\
=&\frac{1}{\eps}\int_\Omega\<\log(\Ge)\mathcal{Q}(\Ge\,,\Ge)\>\,\mathrm{d}x\,.
\end{aligned}
\end{equation}
By denoting $h(z)=(1+z)\log(1+z)-z$, for $z>-1$ and using that it is a convex function, we can compute the boundary term in the following way:
\begin{equation}\nonumber
\begin{aligned}
\widetilde{\mathcal{E}}_\eps(\gamp\Ge)=&\iint_\Sigma [\Ge\log(\Ge)-\Ge+1]v\!\cdot\!\mathrm{n}(x)\,\mathrm{d}\sigma_x M\mathrm{d}v\\
=&\Sigpo\left[h(\eps
\gamp\gep)-h\left((1-\ale)\eps\gamp\gep+\ale\<\eps\gamp\gep\>_{\pO}\right)\right]v\!\cdot\!\mathrm{n}(x)
\,M\mathrm{d}v\mathrm{d}\sigma_x \\
\geq&\Sigpo\left[h(\eps \gamp\gep)-(1-\ale)h(\eps\gamp\gep)-\ale
h(\<\eps\gamp\gep\>_\pO)\right]v\!\cdot\!
\mathrm{n}(x)\,M\mathrm{d}v\mathrm{d}\sigma_x \\
=&\frac{\ale}{\sqrt{2\pi}}\mathcal{E}(\gamp\Ge)\,,
\end{aligned}
\end{equation}
where $\mathcal{E}(\gamp\Ge)$, the so-called Darroz\`{e}s-Guiraud information, is given by
\begin{equation}\nonumber
\mathcal{E}(\gamp\Ge)=\int_\pO\left[\<h(\eps\gamp\gep)\>_\pO-h(\eps\<\gamp\gep\>_\pO)\right]\,\mathrm{d}\sigma_x
\,.
\end{equation}
Jensen's inequality implies that $\mathcal{E}(\gamp\Ge)\geq 0$.
Noticing that $\widetilde{\mathcal{E}}_\eps(\gamp\Ge)\geq \frac{\ale}{\sqrt{2\pi}}\mathcal{E}(\gamp\Ge)$, we get the entropy inequality
\begin{equation}
H(\Ge(t))+\int^t_0\left(\frac{1}{\eps^2}R(\Ge(s))+\frac{1}{\eps}\widetilde{\mathcal{E}}_\eps(\gamp\Ge(s))\right)\,\mathrm{d}s\leq
H(\Gein)\,,
\end{equation}
where $H(G)$ is the relative entropy functional
\begin{equation}\nonumber
H(G)=\int_\Omega\<G\log(G)-G+1\>\,\mathrm{d}x\,,
\end{equation}
and $R(G)$ is the entropy dissipation rate functional
\begin{equation}\nonumber
R(G)=\int_\Omega
\left\langle\!\!\!\left\langle\frac{1}{4}\log\left(\frac{G'_1G'}{G_1G}\right)(G'_1G'-G_1G)
\right\rangle\!\!\!\right\rangle\,\mathrm{d}x\,.
\end{equation}

\subsection{DiPerna-Lions-(Mischler) Solutions}

We will work in the setting of renormalized solutions which were initially constructed by DiPerna and Lions
\cite{D-P} over the whole space $\RD$ for any initial data
satisfying natural physical bounds. Recently, their result was extended
to the case of a bounded domain by Mischler \cite{Misch1,
Misch2, Misch3} with general Maxwell boundary conditions
$\eqref{Max-bc}$.

The DiPerna-Lions-(Mishler) theory does not yield solutions that are known to
solve the Boltzmann equation in the usual weak sense. Rather, it
gives the existence of a global weak solution to a class of formally
equivalent initial value problems that are obtained by multiplying
$\eqref{BE-G}$ by $\Gamma'(G_\eps)$:
\begin{equation}\label{BE-Gamma}
\begin{aligned}
(\eps\p_t+\Divv )\Gamma(\Ge)=\frac{1}{\eps}\Gamma'(\Ge)\mathcal{Q}(\Ge\,,\Ge)\quad&\mbox{on}\quad \mathbb{R}^+\times\mathcal{O}\,,\\
\Ge(0\,,\cdot\,,\cdot)=\Gein\geq 0\quad&\mbox{on}\quad\mathcal{O}\,.
\end{aligned}
\end{equation}
Here the admissible function $\Gamma:[0\,,\infty)\rightarrow \mathbb{R}$ is continuously differentiable and for some constant $C_\Gamma<\infty$
its derivative satisfies
\begin{equation}
|\Gamma'(z)|\sqrt{1+z}\leq C_\Gamma\,.
\end{equation}

The weak formulation of the renormalized Boltzmann equation \eqref{BE-Gamma} is given by
\begin{equation}\label{weak-BE-G}
\begin{aligned}
\eps&\int\limits_\Omega\langle\Gamma(\Ge(t_2))Y\rangle\,\mathrm{d}x-\eps\int\limits_\Omega\langle\Gamma(\Ge(t_1))Y\rangle\,\mathrm{d}x\\
&-\int^{t_2}_{t_1}\int\limits_\Omega
\langle\Gamma(\Ge)\Divv  Y\rangle\,\mathrm{d}x\,\mathrm{d}t
+\int^{t_2}_{t_1}\int\limits_\pO\langle\Gamma(\gamma\Ge)Y[
\mathrm{n}(x)\cdot v]\rangle\,\mathrm{d}\sigma_x \,\mathrm{d}t\\
&=\frac{1}{\eps}\int^{t_2}_{t_1}\int\limits_\Omega\langle
\Gamma'(\Ge)\mathcal{Q}(\Ge\,,\Ge)Y \rangle\,\mathrm{d}x\,\mathrm{d}t\,,
\end{aligned}
\end{equation}
for every $Y\in C^1\cap
L^\infty(\bar{\Omega}\times\RD)$ and every
$[t_1\,,t_2]\subset[0\,,\infty]$. Moreover, the boundary condition is also understood in the renormalized sense:
\begin{equation}\label{weak-b-BE}
\Gamma(\gamn\Ge)=\Gamma\left((1-\alpha)L \gamp\Ge+\alpha\widetilde{ \Fe}\right)\quad\mbox{on}\quad\mathbb{R}^+\times\Sigma_-\,,
\end{equation} where the equality holds almost everywhere and in the sense of distribution.

\begin{Prop}
{\em (Renormalized solutions in bounded domain \cite{Misch3}) }  Let $b$ satisfy the
condition $\eqref{b-DL}$. Given any initial data $\Gein$ satisfying
\begin{equation}\label{entropy-class}
\iint\limits_{\mathcal{O}}G_\eps^{\mathrm{in}}(1+|v|^2+|\log
 G_\eps^{\mathrm{in}}|)\,M\mathrm{d}v\,\mathrm{d}x<+\infty\,,
\end{equation} there exists at least one
$\Ge\geq 0$ in $C([0\,,\infty);\,L^1(\,M\mathrm{d}v\,\mathrm{d}x))$ such that
$\eqref{weak-BE-G}$ and $\eqref{weak-b-BE}$ hold for all admissible
functions $\Gamma$. Moreover, $\Ge$ satisfies the following global
entropy inequality for all $t>0$:
\begin{equation}\label{entropy-inequality}
H(\Ge(t))+\frac{1}{\eps^2}\int^t_0
R(\Ge(s))\,\mathrm{d}s+\frac{1}{\eps}\int^t_0\widetilde{\mathcal{E}}_\eps(\gamma_+\Ge(s))\leq
H(\Gein)\,.
\end{equation}

\end{Prop}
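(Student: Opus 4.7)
The plan is to follow the DiPerna-Lions strategy adapted to bounded domains as in Mischler's program, the main new issue relative to the whole-space case being the presence of the Maxwell reflection boundary condition, which is genuinely nonlinear in the renormalized setting because of the diffuse term $\alpha\langle\gamma_+\Ge\rangle_{\pO}$.

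First, I would construct an approximating sequence $\{G_\eps^n\}$ by regularizing the problem: truncate the collision kernel $b$ so that $b_n\in L^\infty$ and $b_n\to b$ in $L^1_{\mathrm{loc}}$, mollify the initial data so it belongs to $L^\infty\cap L^1(M\dd v\dd x)$ with finite entropy, and regularize the reflection operator by replacing $\alpha$ with $\alpha^n$ slightly bounded away from $0$ (so that the diffuse contribution dominates) and by cutting off the outgoing mass flux. For these smoothed problems, standard fixed-point / iteration arguments (in the spirit of classical Boltzmann well-posedness theory for bounded kernels) produce nonnegative classical solutions $G_\eps^n\in L^\infty_{\mathrm{loc}}(\dd t; L^\infty(M\dd v\dd x))$ verifying \eqref{BE-G} pointwise, together with well-defined traces $\gamma_\pm G_\eps^n$.

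Second, I would derive estimates that are uniform in $n$. Multiplying by $\log G_\eps^n$ and integrating as in the excerpt yields the full entropy inequality \eqref{entropy-inequality}, the key new feature being the boundary production term $\tfrac{1}{\eps}\widetilde{\mathcal{E}}_\eps(\gamma_+\Ge^n)\ge \tfrac{\alpha^n}{\sqrt{2\pi}\eps}\mathcal{E}(\gamma_+\Ge^n)$ obtained from convexity of $h(z)=(1+z)\log(1+z)-z$ applied to the Maxwell condition. Combined with the assumption \eqref{entropy-class} on the initial data, this gives: (i) an $L^\infty_t L\log L(M\dd v\dd x)$ bound on $G_\eps^n$; (ii) an entropy dissipation bound on $R(G_\eps^n)$; (iii) a Darrozès–Guiraud bound on $\mathcal{E}(\gamma_+G_\eps^n)$. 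The bound (iii), together with a renormalized trace theorem for the free transport operator $\eps\p_t+\Divv$, ensures that $\gamma G_\eps^n$ is well-behaved in a weighted $L^1$ space of the boundary.

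Third, I would extract a compactness and pass to the limit. For any admissible renormalization $\Gamma$, one has $|\Gamma'(z)|\sqrt{1+z}\le C_\Gamma$, so $\Gamma(G_\eps^n)$ is bounded in $L^\infty_t L^2(M\dd v\dd x)$ and the renormalized equation \eqref{BE-Gamma} enjoys an $L^1_{\mathrm{loc}}$-bounded right-hand side thanks to the standard DiPerna–Lions estimate $|\Gamma'(G)\mathcal{Q}(G,G)|\le C\big(\sqrt{GG_1}-\sqrt{G'G'_1}\big)^2 + C$, combined with the entropy dissipation bound. Velocity averaging lemmas applied to $\Gamma(G_\eps^n)$ then produce strong $L^1_{\mathrm{loc}}$ compactness of averages $\int\Gamma(G_\eps^n)\varphi(v)\,M\dd v$, which is enough to pass to the limit in the collision integral by the usual DiPerna–Lions identity (write $\Gamma'(G)\mathcal{Q}(G,G)$ as the difference of two monotone gain/loss pieces and use the product-limit lemma).

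The main obstacle, and the genuinely new point over DiPerna–Lions, is the boundary condition \eqref{weak-b-BE}: the diffuse part $\alpha\langle\gamma_+\Ge^n\rangle_{\pO}$ is nonlinear in $\gamma_+\Ge^n$, and traces of $L^1$ solutions of transport equations are a priori only distributional. I would resolve this by following Mischler: use the Darrozès–Guiraud control of $\mathcal{E}(\gamma_+\Ge^n)$ to obtain biting / weak $L^1$ compactness of $\gamma_+\Ge^n$ on $\Sigma_+$, and simultaneously show that, after renormalization by $\Gamma$, the boundary trace of $\Gamma(G_\eps^n)$ enjoys a renormalized Green identity that is stable in the limit. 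Combined with the convexity inequality used to derive \eqref{entropy-inequality}, one can then pass to the limit in $\Gamma(\gamma_-\Ge^n)=\Gamma((1-\alpha)L\gamma_+\Ge^n+\alpha\langle\gamma_+\Ge^n\rangle_{\pO})$ and recover \eqref{weak-b-BE} in the distributional sense on $\Sigma_-$. The entropy inequality \eqref{entropy-inequality} for the limit then follows by lower semi-continuity of $H$ and $R$ and Fatou's lemma for the boundary term, closing the proof.
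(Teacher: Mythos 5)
The paper does not prove this proposition; it is quoted verbatim as a black box from Mischler's work \cite{Misch3}, so there is no in-paper proof to compare against. Your blind sketch is nonetheless a recognizable outline of the Mischler program: regularize the kernel, data and boundary operator; derive the entropy inequality with the Darroz\`es--Guiraud boundary production term; use that bound together with a renormalized trace theory for the free transport operator $\eps\p_t+\Divv$ to control $\gamma_{\pm}G^n_\eps$; pass to the limit in the interior via velocity averaging and the DiPerna--Lions gain/loss splitting plus the product-limit lemma; and handle the nonlinear diffuse reflection term by weak/biting $L^1$ compactness of the traces, recovering \eqref{weak-b-BE} in the renormalized sense and \eqref{entropy-inequality} by weak lower semicontinuity and Fatou. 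That is exactly the architecture of \cite{Misch1,Misch2,Misch3}, and you correctly identify the two genuinely new difficulties relative to DiPerna--Lions: well-posedness of $L^1$ traces, and stability of the diffusive term $\alpha\langle\gamma_+\Ge\rangle_{\pO}$ under weak convergence.

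One technical caveat: the pointwise estimate you quote, $|\Gamma'(G)\mathcal{Q}(G,G)|\le C(\sqrt{GG_1}-\sqrt{G'G'_1})^2+C$, is not how the renormalized collision term is actually controlled. The DiPerna--Lions bound is obtained after integration against $M_1\,\dd\omega\,\dd v_1$ and splits the gain and loss terms; the entropy dissipation controls $\int(\sqrt{G'G'_1}-\sqrt{GG_1})^2 b\,\dd\mu$, while $\Gamma'(G)\mathcal{Q}^\pm$ are handled separately using $\Gamma'(G)\sqrt{1+G}\le C_\Gamma$ and the normalization \eqref{b-DL}. A pointwise domination of the signed renormalized collision term by the dissipation integrand plus a constant does not hold and is not what is needed. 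Beyond this, the sketch is sound at the level of a program outline, though of course the actual proof in \cite{Misch3} is substantially longer, particularly in building the trace theory on $\Sigma$ and establishing the renormalized Green identity.
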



\section{Statement of the Main Results}

In this section we state our main results on justifying the incompressible NSF limits with different boundary conditions depending on the quotient between the accommodation coefficients $\alpha_\eps$ and the Knudsen number $\eps$.

\subsection{Dirichlet Boundary Condition}
The main theorem of this paper is the following strong convergence
 to the NSF system with Dirichlet boundary
condition when the accommodation coefficient $\ale$ is much larger
than the Knudsen number $\eps$, i.e. $\frac{\ale}{\eps}\rightarrow \infty$
as $\eps \rightarrow 0$.
\begin{theorem}\label{Diri-limit}
{\em (Dirichlet Boundary Condition)\/} Let $b$ be a collision kernel
that satisfies conditions \eqref{assum1}, \eqref{assum2},
\eqref{assum3}, \eqref{assum4} and \eqref{assum5}. Let $G^{\mathrm{in}}_\eps$ be
any family of non-negative measurable functions of $(x,v)$ satisfying \eqref{entropy-class} and the renormalization
\eqref{renorm-ini}. Let $g^{\mathrm{in}}_\eps$ be the associated
family of fluctuations given by $G^{\mathrm{in}}_\eps=1+\eps
g^{\mathrm{in}}_\eps$. Assume that the families $G^{\mathrm{in}}_\eps$ and $g^{\mathrm{in}}_\eps$
satisfy
\begin{equation}\label{boundedness-entropy}
H(G^{\mathrm{in}}_\eps) \leq C^{\mathrm{in}} \eps^2\,,
\end{equation}
and
\begin{equation}\label{distribution}
\lim\limits_{\eps\rightarrow 0} \left( \< g^{\mathrm{in}}_\eps \>\,,
\< v g^{\mathrm{in}}_\eps \> \,, \< (\tfrac{|v|^2}{\D}-1)
g^{\mathrm{in}}_\eps \> \right)=(\rho^{\mathrm{in}},
\mathrm{u}^{\mathrm{in}}, \theta^{\mathrm{in}})\,,
\end{equation}
in the sense of distributions for some $(\rho^{\mathrm{in}},
\mathrm{u}^{\mathrm{in}}, \theta^{\mathrm{in}})\in L^2(\dd x\,;
\mathbb{R} \times \mathbb{R}^\D \times \mathbb{R})$. Let $G_\eps$ be any family of DiPerna-Lions renormalized solutions to the
Boltzmann equation \eqref{BE-G} that have $G^{\mathrm{in}}_\eps$ as initial values, and the accommodation
coefficient $\ale$ satisfies
\begin{equation}
\alpha_\eps= \sqrt{2\pi}\chi \sqrt{\eps}\,.
\end{equation}

Then the family of fluctuations $g_\eps$ given by \eqref{define-g} is relatively compact in  $L^1_{loc}(\dd
t;L^1(\sigma M\dd v\dd x))\,.$ Every limit point $g$ of $g_\eps$ has
the infinitesimal Maxwellian form
\begin{equation}\label{limit-g}
g=v\!\cdot\!
\mathrm{u}+\left(\tfrac{1}{2}|v|^2-\tfrac{\D+2}{2}\right)\theta\,,
\end{equation}
where $(\mathrm{u},\theta)\in C([0,\infty);L^2(\dd
x\,;\mathbb{R}^\D\times \mathbb{R})) \cap L^2(\dd t\,; H^1(\dd
x\,;\mathbb{R}^\D\times \mathbb{R}))$ with mean zero over $\Omega$,
and it satisfies the NSF system with Dirichlet
boundary condition \eqref{incom-boussinesq}, \eqref{incom-NSF}, and
\eqref{dirichlet-bc}, where kinematic viscosity $\nu $ and thermal
conductivity $\kappa$ are given by
\begin{equation}\label{mu-kappa}
\nu =\tfrac{1}{(\D-1)(\D+2)}\<\AHat\!:\!\LL\AHat\>\,,\quad \kappa=\tfrac{1}{\D}\<\BHat\!\cdot\!\LL\BHat\>\,.
\end{equation}
The initial data is given by
\begin{equation}\label{initial-data}
\mathrm{u}^0=\mathbb{P} \mathrm{u}^{\mathrm{in}}\,,\quad
\theta^0=\tfrac{\D}{\D+2}\theta^{\mathrm{in}}-\tfrac{2}{\D+2}\rho^{\mathrm{in}}\,.
\end{equation}
Here the operator $\mathbb{P}$ is the Leray's projection on the space of divergence free vector fields. Moreover, every subsequence $g_{\eps_k}$ of $g_\eps$ that converges to $g$ as $\eps_k\rightarrow 0$ also satisfies
\begin{equation}\label{moments-Diri}
\begin{aligned}
\< vg_{\eps_k}\>\rightarrow \mathrm{u}\quad&\hbox{in}\!\!\quad L^p_{loc}(\dd t;L^1(\dd x;\RD))\,,\\
\<(\tfrac{1}{\D}|v|^2-1)g_{\eps_k}\>\rightarrow
\theta\quad&\hbox{in}\!\!\quad L^p_{loc}(\dd t;L^1(\dd
x;\mathbb{R})) \quad\mbox{for every}\quad 1\leq p <\infty\,.
\end{aligned}
\end{equation}
Furthermore, $\frac{1}{\eps}{\mathcal{P}}^\perp g_\eps$ is
relatively compact in $w\mbox{-}L^1_{loc}(\dd t; w\mbox{-}L^1(\sigma
M \dd v \dd x))$. For every subsequence $\eps_k$ so that
$g_{\eps_k}$ converges to $g$,
\begin{equation}\label{P-Perp}
\begin{aligned}
\frac{1}{\eps}{\mathcal{P}}^\perp g_{\eps_k} \rightarrow
&\tfrac{1}{2}\mathrm{A} : \mathrm{u} \otimes \mathrm{u}+
\mathrm{B}\!\cdot\! \mathrm{u}\theta +
\tfrac{1}{2}\mathrm{C}\theta^2\\
& -\widehat{\mathrm{A}} : \grad \mathrm{u}- \widehat{\mathrm{B}}\!\cdot\!
\grad \theta\,,\quad \mbox{in}\!\!\quad w\mbox{-}L^1_{loc}(\dd t;
w\mbox{-}L^1(\sigma M \dd v \dd x))\,,
\end{aligned}
\end{equation}
as $\eps_k\rightarrow 0$, where $\mathrm{A}, \mathrm{B}, \mathrm{C}$ and $\AHat, \BHat$ are defined in \eqref{define-AB}, \eqref{define-C} and \eqref{A-B-hat}.

\end{theorem}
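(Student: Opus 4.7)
My plan is to organize the proof in four stages. The first three (weak compactness, moment passage, and recovery of the weak Dirichlet boundary condition) adapt the framework of \cite{LM, M-S, SRM}; the fourth, the strong convergence that damps acoustic oscillations, is the principal innovation. I start from the entropy inequality \eqref{entropy-inequality} which, under the hypothesis \eqref{boundedness-entropy}, yields $H(G_\eps(t)) = O(\eps^2)$, uniform control of $\frac{1}{\eps^2}\int_0^t R(G_\eps)\,\dd s$, and uniform control of $\frac{1}{\eps}\int_0^t\widetilde{\mathcal{E}}_\eps(\gamp G_\eps)\,\dd s$. Using the super-quadratic behaviour of $h(z)=(1+z)\log(1+z)-z$, these bounds give weak relative compactness of $g_\eps$ in $L^1_{loc}(\dd t; L^1(\sigma M\,\dd v\,\dd x))$, and passing to the limit in the entropy dissipation forces every weak limit $g$ to have the infinitesimal Maxwellian form \eqref{limit-g}.

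\textbf{Moments and weak Dirichlet condition.} Next I test the renormalized equation \eqref{BE-Gamma} against the collision invariants $(1, v, \tfrac{1}{2}|v|^2)$ to produce approximate local conservation laws. The flux identity $\tfrac{1}{\eps}\LL g_\eps = \mathcal{Q}(g_\eps, g_\eps) - \eps\p_t g_\eps - \Divv g_\eps$, together with the Fredholm property of $\LL$ implied by \eqref{assum4}, lets me identify the antisymmetric fluxes as $\widehat{\mathrm{A}}:\grad\mathrm{u}$ and $\widehat{\mathrm{B}}\cdot\grad\theta$, while the nonlinear weak compactness techniques of \cite{LM} identify the quadratic moments $\tfrac{1}{2}\mathrm{A}:\mathrm{u}\otimes\mathrm{u}+\mathrm{B}\cdot\mathrm{u}\theta+\tfrac{1}{2}\mathrm{C}\theta^2$ in the limit. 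For the boundary condition, the Darroz\`es--Guiraud inequality $\widetilde{\mathcal{E}}_\eps(\gamp G_\eps) \geq \frac{\alpha_\eps}{\sqrt{2\pi}}\mathcal{E}(\gamp G_\eps)$ combined with $\alpha_\eps=\sqrt{2\pi}\chi\sqrt{\eps}$ gives $\mathcal{E}(\gamp G_\eps) = O(\sqrt{\eps})$; together with the solvability analysis of the kinetic half-space problem (Section 6), this forces $\mathrm{u}=0$ and $\theta=0$ on $\pO$ in the limit.

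\textbf{Acoustic damping via boundary-layer test functions.} The crucial and novel step is to upgrade weak to strong convergence in the face of fast acoustic oscillations of $\PP g_\eps$ at frequency $1/\eps$. My plan is to construct, for each acoustic eigenvalue $i\lambda_0^k$ of the operator $\AA$, a test function
\[
 \psi_\eps^k(t,x,v) = e^{-i\lambda_\eps^k t/\eps}\Bigl[\Psi^{k,\mathrm{int}}_\eps(x,v) + \Psi^{k,\mathrm{b}}_\eps\bigl(\tfrac{d(x)}{\sqrt{\eps}},x,v\bigr) + \Psi^{k,\mathrm{bb}}_\eps\bigl(\tfrac{d(x)}{\eps},x,v\bigr)\Bigr]
\]
that approximately solves the \emph{dual} scaled linearized Boltzmann equation with the \emph{dual} Maxwell boundary condition, where $d(x)$ denotes the distance to $\pO$ and the layers $\Psi^{k,\mathrm{b}}_\eps$, $\Psi^{k,\mathrm{bb}}_\eps$ decay in their fast variables. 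A matched-asymptotic expansion in powers of $\sqrt{\eps}$ should give $\lambda_\eps^k = \lambda_0^k + \sqrt{\eps}\,\lambda_1^k + O(\eps)$ with $\mathrm{Re}(\lambda_1^k) < 0$ strictly. Inserting $\psi_\eps^k$ into the weak formulation \eqref{weak-BE-G} then yields a Duhamel-type identity for each acoustic amplitude weighted by $\exp(-c t/\sqrt{\eps})$, which forces this component to zero instantly in the limit. Combined with standard strong compactness of the divergence-free and mean-zero components, this produces \eqref{moments-Diri}, and \eqref{P-Perp} follows by passing to the limit in the Chapman--Enskog identity $\tfrac{1}{\eps}\PP^\perp g_\eps \simeq \LL^{-1}\bigl(\mathcal{Q}(g_\eps,g_\eps) - \Divv g_\eps\bigr)$ term by term.

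\textbf{Main obstacle.} The hard part will be the construction of $\psi_\eps^k$ in the threshold regime $\alpha_\eps \sim \sqrt{\eps}$, where the viscous layer (scale $\sqrt{\eps}$) and the kinetic Knudsen layer (scale $\eps$) interact at leading order. At each order of $\sqrt{\eps}$ in the ansatz, the interior part obeys a forced acoustic system, the viscous layer satisfies a second-order ODE in the stretched normal variable, and the Knudsen layer is governed by the linearized half-space kinetic equation of Section 6; these three pieces are coupled through matching, with the solvability conditions of the Knudsen layer imposing boundary data for the viscous ODE and the viscous ODE in turn supplying boundary data for the interior acoustic fields. When the Neumann Laplacian $-\Delta_x$ has an eigenvalue of multiplicity greater than one, the kernel of $\AA-i\lambda_0^k$ is not one-dimensional, so each stage leaves undetermined modes; closing the ansatz requires additional orthogonality conditions whose mutual consistency must be verified, and the strict dissipation $\mathrm{Re}(\lambda_1^k)<0$ must be shown to survive this closure. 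Establishing this, together with the exponential decay estimates on the Knudsen layer ingredient coming from Section 6, is where the bulk of the work in Sections 7--9 is expected to lie.
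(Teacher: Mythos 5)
Your overall strategy is essentially the paper's: entropy bounds give weak compactness and force the infinitesimal Maxwellian form; local conservation laws plus the Fredholm alternative identify the fluxes; and the dual boundary-layer test function $\psi_\eps^k$ with $\lambda_\eps^k = \lambda_0^k + \sqrt{\eps}\,\lambda_1^k + O(\eps)$, $\mathrm{Re}(i\lambda_1^k)<0$, inserted into the weak formulation to produce a damped Duhamel identity, is exactly the mechanism of Proposition \ref{acoustic-vanish} in Section~11. The construction of $\psi_\eps^k$ you sketch, with interior/viscous/Knudsen layers and orthogonality closure for multiple eigenvalues, matches Sections~7--9.

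One point in your Stage~2 is off, and would not survive being fleshed out: the recovery of the weak Dirichlet condition does \emph{not} go through ``the solvability analysis of the kinetic half-space problem (Section~6).'' Lemma~\ref{BC} is used only to determine boundary data for the fluid variables in the \emph{ansatz} when building the test functions of Sections~7--9; it gives no information about the trace of a DiPerna--Lions weak solution. The paper instead recovers $\gamma\mathrm{u}=\gamma\theta=0$ from the trace/entropy-dissipation machinery of Section~10 (Lemmas~\ref{Inside}, \ref{Bound}, \ref{bc-renorm-g}, following Masmoudi--Saint-Raymond): one shows $\gamma_\eps/(1+\eps^2\langle\gamp g_\eps\rangle^2_\pO)\to 0$ strongly when $\ale/\eps\to\infty$, so that $\gamp g=\mathbf{1}_{\Sigma_+}\rho$ with $\rho$ independent of $v$, which combined with the trace identity \eqref{trace-g} kills the $v$-dependence and forces $\gamma\mathrm{u}=\gamma\theta=0$. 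Incidentally, the estimate ``$\mathcal{E}(\gamp G_\eps)=O(\sqrt{\eps})$'' is also not what the entropy inequality delivers: under \eqref{boundedness-entropy} and $\ale=\sqrt{2\pi}\chi\sqrt{\eps}$ one gets $\int_0^t\mathcal{E}(\gamp G_\eps)\,\mathrm{d}s = O(\eps^{5/2})$, a time-integrated bound. Neither slip affects the viability of the Section~11 damping argument, but they would derail a naive attempt to write out Stage~2 via Lemma~\ref{BC}.
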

\noindent{\bf Remark:} In the formal Chapman-Enskog expansion,
\begin{equation}\nonumber
g_\eps=g+ \eps \mathcal{P}^\perp g_1 + \eps \mathcal{P}g_1 + \eps^2 g_2 + \cdots\,,
\end{equation}
where $g$ is given by \eqref{limit-g} and $\mathcal{P}^\perp g_1$ is the righthand side term in \eqref{P-Perp}. In previous works \cite{Go-Sai04, Go-Sai05,
LM}, under the assumptions \eqref{boundedness-entropy} and
\eqref{distribution}, the convergence to \eqref{limit-g} and
\eqref{moments-Diri} are only in $w\mbox{-}L^1$. So
the convergence to the quadratic term \eqref{P-Perp}, which is the
first correction to the infinitesimal Maxwellian that one obtains
from the Chapman-Enskog expansion with the Navier-Stokes scaling,
could not be obtained. In Theorem \ref{Diri-limit}, by showing the
acoustic waves are instantaneously damped, we justify not only the
strong convergence to the leading order term $g$, but also weak
convergence to the kinetic part of the next order corrector \eqref{P-Perp}.

\subsection{Navier Boundary Condition}
The second result is about Navier boundary condition. For this case, although the coupled viscous boundary layer and the
Knudsen layer still have dissipative effect, however, the damping
happens a longer time scale $O(1)$. Consequently, unlike the Dirichlet
boundary condition case, the fast acoustic waves can be damped, but
{\em not instantaneously}. Nevertheless, we can show the weak
convergence result, thus justify the NSF limit with slip Navier boundary
condition, while the linear Stokes-Fourier limit was justified in \cite{M-S}.


\begin{theorem}\label{Navier-limit}
{\em (Navier Boundary Condition)\/} With the same assumptions with Theorem \ref{Diri-limit}, except that the accommodation coefficients satisfy
\begin{equation}\label{finite-lamda}
\frac{\ale}{\sqrt{2\pi}\eps}\rightarrow \chi\,,\quad\mbox{as}\quad
\eps\rightarrow 0\,.
\end{equation}

Then the family $g_\eps$ is relatively compact in
$w\mbox{-}L^1_{loc}(\dd t;w\mbox{-}L^1(\sigma M\dd v\dd x))\,.$
Every limit point $g$ of $g_\eps$ in $w\mbox{-}L^1_{loc}(\dd
t;w\mbox{-}L^1(\sigma M\dd v\dd x))$ has the infinitesimal
Maxwellian form as \eqref{limit-g} in which $(\mathrm{u},\theta)\in C([0,\infty);L^2(\dd x\,; \RD \times
\mathbb{R})) \cap L^2(\dd t; H^1(\dd x\,;\RD \times \mathbb{R}))$
is a Larey solution of the
NSF system with Navier boundary condition
\eqref{incom-boussinesq}, \eqref{incom-NSF}, and
\eqref{navier-slip-bc}, where kinematic viscosity $\nu $ and thermal
conductivity $\kappa$ are given by \eqref{mu-kappa}, the initial
data is given by \eqref{initial-data}.

Moreover, every subsequence $g_{\eps_k}$ of $g_\eps$ that converges to $g$ as $\eps_k\rightarrow 0$ also satisfies
\begin{equation}\label{moments-Navier}
\begin{aligned}
\mathbb{P}\< vg_{\eps_k}\>\rightarrow \mathrm{u}&\quad\hbox{in}\!\!\quad C([0,\infty)\,;\mathcal{D}'(\Omega\,;\RD))\,,\\
\<(\tfrac{1}{\D+2}|v|^2-1)g_{\eps_k}\>\rightarrow
\theta&\quad\hbox{in}\!\!\quad C([0,\infty)\,;w\mbox{-}L^1(\Omega\,;\mathbb{R}))\,.
\end{aligned}
\end{equation}

\end{theorem}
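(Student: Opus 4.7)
The plan is to combine the interior fluid analysis of Levermore--Masmoudi \cite{LM} with the boundary analysis of Masmoudi--Saint-Raymond \cite{M-S,SRM}, now in the fully nonlinear regime. Under the scaling \eqref{finite-lamda} the ratio $\ale/\eps$ stays bounded, so the boundary entropy dissipation is only $O(1)$: \emph{no instantaneous damping} of the acoustic waves is available here, and consequently only weak convergence can be expected, in contrast with Theorem~\ref{Diri-limit}. First I would extract from the scaled entropy inequality \eqref{entropy-inequality} together with the hypothesis $H(G^{\mathrm{in}}_\eps) \leq C^{\mathrm{in}}\eps^2$ the three uniform bounds
\begin{equation*}
H(\Ge(t)) \leq C\eps^2, \qquad \tfrac{1}{\eps^2}\int_0^T R(\Ge(s))\,\mathrm{d}s \leq C\eps^2, \qquad \tfrac{1}{\eps^2}\int_0^T \mathcal{E}(\gamp\Ge(s))\,\mathrm{d}s \leq C,
\end{equation*}
the last one using the asymptotics \eqref{finite-lamda}. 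By the standard renormalized-fluctuation machinery of \cite{BGL2,LM} these bounds yield weak relative compactness of $\gep$ in $w\mbox{-}L^1_{loc}(\mathrm{d}t;w\mbox{-}L^1(\sigma M\mathrm{d}v\mathrm{d}x))$, an $O(\eps)$ control of $\PP^\perp \gep$ (modulo renormalization), and $L^2_{t,x,v}$ control of size $O(\eps)$ on the boundary fluctuation $\gamp \gep - \langle \gamp \gep\rangle_{\pO}$. The relation $\LL\gep = O(\eps)$ then forces every weak limit $g$ into $\mathrm{Null}(\LL)$, producing the infinitesimal Maxwellian form \eqref{limit-g}.

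Next I would derive the limiting fluid equations by testing the renormalized Boltzmann equation against the collision invariants $1$, $v$ and $\tfrac{1}{2}|v|^2$, as in \cite{LM}. Passing to the limit in the local conservation of mass and energy gives $\DIV\mathrm{u}=0$ and $\rho+\theta=0$; inverting $\LL$ on $\mathrm{Null}^\perp(\LL)$ via \eqref{A-B-hat}, the identification $\tfrac{1}{\eps}\PP^\perp \gep \rightharpoonup -\AHat\!:\!\grad\mathrm{u} - \BHat\!\cdot\!\grad\theta$ produces the viscous-stress and heat-conduction terms with the constants \eqref{mu-kappa}. The principal obstacle is the passage to the limit in the quadratic convection terms $\langle (v\otimes v)\,\PP\gep\,\PP\gep\rangle$ and $\langle v\gep\,\theta_\eps\rangle$: under \eqref{finite-lamda} the acoustic modes persist and oscillate at frequency $1/\eps$, so the product cannot be passed to the limit directly. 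I would handle this by the compensated-compactness strategy of Lions--Masmoudi \cite{LM3}: split the hydrodynamic moments of $\gep$ into an incompressible Leray component, which is strongly compact via an $\eps$-uniform equicontinuity estimate on $\mathbb{P}\langle v\gep\rangle$ derived from the momentum equation, and an acoustic component governed by the acoustic operator $\AA$ with fully reflecting boundary condition; the acoustic--acoustic and acoustic--incompressible bilinear interactions then take the form of exact time derivatives or spatial gradients, which either vanish against divergence-free test fields or are absorbed into the pressure, leaving only the incompressible--incompressible interaction $\mathbb{P}(\mathrm{u}\!\cdot\!\grad\mathrm{u})$ in the limit. The strong convergence of $\mathbb{P}\langle v\gep\rangle$ asserted in \eqref{moments-Navier} is a byproduct of this compactness step.

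Finally, the Navier slip condition \eqref{navier-slip-bc} is extracted by testing the weak formulation \eqref{weak-BE-G} against tangential infinitesimal Maxwellians $Y(x,v)=\mathrm{u}^*(x)\!\cdot\!v$ with $\mathrm{u}^*\!\cdot\!\mathrm{n}=0$ on $\pO$, and, analogously, against $Y=\theta^*(x)(\tfrac{|v|^2}{2}-\tfrac{\D+2}{2})$. Inserting the Maxwell reflection law \eqref{Max-bc} into the boundary flux $\int_\pO \langle \gamma\gep\,Y\,(\mathrm{n}\!\cdot\!v)\rangle\,\mathrm{d}\sigma_x$ decomposes it into a diffuse contribution proportional to $(\ale/\eps)\,\langle\gamp\gep\rangle_{\pO}$ paired with half-space Gaussian moments, plus a residual controlled by $\sqrt{\mathcal{E}(\gamp\Ge)/\eps}$ which vanishes thanks to the third bound above. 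The asymptotics \eqref{finite-lamda} together with the standard half-space Gaussian moment identities for $M$ then produce exactly the slip terms $\chi\mathrm{u}^{\mathrm{tan}}$ and $\chi\tfrac{\D+1}{\D+2}\theta$ appearing in \eqref{navier-slip-bc}. The hardest technical step is to propagate these boundary contributions through the renormalization procedure, where the trace $\gamma\gep$ is \emph{a priori} only a distribution: as in \cite{M-S,SRM} one must control the trace using the Darroz\`es--Guiraud information and exploit the local--diffuse splitting of the Maxwell condition to convert renormalized traces into actual traces in the limit.
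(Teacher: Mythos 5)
Your proposal is correct and follows essentially the same route as the paper: entropy/dissipation/Darroz\`es--Guiraud bounds under \eqref{finite-lamda}, weak compactness of the renormalized fluctuations and identification of the infinitesimal Maxwellian limit, passage to the limit in the projected local conservation laws using the Lions--Masmoudi compensated compactness argument (\cite{LM,LM3}) for the nonlinear convection terms, and extraction of the Navier slip condition from the boundary flux by inserting the renormalized Maxwell reflection law into the weak formulation tested against divergence-free tangential infinitesimal Maxwellians, with the trace control supplied by the $\mathcal{E}(\gamp\Ge)$ bound as in \cite{M-S}. The only cosmetic caveat is that your test fields $\mathrm{u}^*$ should also be divergence-free (not merely tangential) so the pressure drops out, and the convergence $\mathbb{P}\langle v\gps\rangle\to\mathrm{u}$ claimed in \eqref{moments-Navier} is in $C([0,\infty);\mathcal{D}')$, i.e.\ locally uniform in time but weak in space, which is the correct reading of your ``compactness byproduct''.
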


\noindent{\bf Remark:} For the Navier-slip boundary condition case, since the convergence is weak, the convergence \eqref{P-Perp}, i.e. the justification of the first correction to the infinitesimal Maxwellian in the Chapman-Enskog expansion can not be obtained.

\section{Geometry of the boundary $\pO$}

In this section, we collect some differential geometry properties
related to the boundary $\pO$ which can be considered as a $(\D-1)$
dimension Riemannian manifold with a metric induced from the
standard Euclidian metric of $\RD$. From the following classical result in geometry (for the proof, see \cite{Simon}), there is a tubular neighborhood $\Omega_{\delta} = \{ x \in \Omega:
\mbox{dist}(x, \pO) < \delta\}$ of $\pO$ such that the nearest point
projection map is well defined.

\begin{Lemma}\label{Projection}
If $\pO$ is a compact $C^k$ submanifold of dimension $\D-1$ embedded
in $\RD$, then there is $\delta = \delta_{\pO}>0$ and a map $\pi \in
C^{k-1}(\Omega^\delta\,; \RD)$
such that the following properties hold:

$(i)$: for all $x \in \Omega \subset \RD$ with
$\mathrm{dist}(x\,,\pO) < \delta\,;$
\begin{equation*}
\pi(x) \in \pO\,, \quad x - \pi(x) \in T^\perp_{\Pi(x)}(\pO)\,,\quad |
x - \pi(x)| = \mathrm{dist}(x\,, \pO)\,, and
\end{equation*}
\begin{equation*}
|z - x| > \mathrm{dist}(x\,,\pO)\quad \mbox{for any}\quad\! z \in
\pO\setminus\{\pi(x)\} \,;
\end{equation*}

$(ii)$:
\begin{equation*}
\pi(x + z) \equiv x\,, \quad \mbox{for}\quad\! x \in \pO\,, z \in
T_x(\pO)^\perp\,, |z| < \delta\,,
\end{equation*}

$(iii)$: Let $\mathrm{Hess}\Pi^x$ denote the Hessian of $\pi$ at
$x$, then
\begin{equation*}
\mathrm{Hess}\pi^x(V_1\,, V_2) = \mathrm{h}_x(V_1\,, V_2)\,,
\quad\mbox{for}\quad\! x\in\pO\quad\! V_1\,,V_2 \in T_x(\pO)\,,
\end{equation*}
where $\mathrm{h}_x$ is the second fundamental form of $\pO$ at $x$.
\end{Lemma}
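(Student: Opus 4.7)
The plan is to construct $\pi$ as the inverse of the exterior normal bundle parametrization and then verify the three claimed properties by direct differentiation. First I would define $\Phi:\pO\times\mathbb{R}\to\RD$ by $\Phi(y,t)=y+t\,\mathrm{n}(y)$. Since $\pO$ is $C^k$, the outward unit normal $\mathrm{n}$ is a $C^{k-1}$ section of the normal bundle, so $\Phi$ is $C^{k-1}$. At any $(y,0)$ the differential $d\Phi_{(y,0)}$ sends each $V\in T_y(\pO)$ to itself and the $t$-direction to $\mathrm{n}(y)$; because $T_y(\pO)\oplus\mathbb{R}\,\mathrm{n}(y)=\RD$, this differential is an isomorphism. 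The inverse function theorem then gives a local $C^{k-1}$ diffeomorphism near each $(y,0)$, and compactness of $\pO$ upgrades this to a uniform $\delta=\delta_{\pO}>0$ for which $\Phi$ is a diffeomorphism from $\pO\times(-\delta,\delta)$ onto an open neighborhood of $\pO$ containing $\Omega^\delta$ (after possibly shrinking $\delta$). I would then set $\pi:=\mathrm{pr}_1\circ\Phi^{-1}$, which is automatically $C^{k-1}$.

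Property (i) is then a direct consequence of the definition of $\Phi$: if $x\in\Omega^\delta$ and $\Phi^{-1}(x)=(y,t)$, then $y\in\pO$, $x-y=t\,\mathrm{n}(y)\in T_y(\pO)^\perp$, and $|x-y|=|t|$. The identification $|t|=\mathrm{dist}(x,\pO)$ and the strict inequality for competing points $z\in\pO\setminus\{y\}$ follow from a standard compactness argument: any minimizer of $|x-z|$ over $\pO$ must be critical, hence satisfy $x-z\perp T_z(\pO)$, so it lies in the tubular neighborhood and injectivity of $\Phi$ on $\pO\times(-\delta,\delta)$ forces $z=y$. Property (ii) is immediate from the construction: for $x\in\pO$ and $z\in T_x(\pO)^\perp$ with $|z|<\delta$, write $z=s\,\mathrm{n}(x)$ so that $x+z=\Phi(x,s)$, whence $\pi(x+z)=x$.

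The substantive step is property (iii), which I would prove by differentiating along curves in $\pO$. Fix $x_0\in\pO$ and $V\in T_{x_0}(\pO)$, and pick a $C^2$ curve $\gamma:(-\eps,\eps)\to\pO$ with $\gamma(0)=x_0$ and $\gamma'(0)=V$. Property (i) gives $\pi\circ\gamma=\gamma$, and differentiating twice at $t=0$ yields
\begin{equation*}
\mathrm{Hess}\,\pi^{x_0}(V,V)+d\pi_{x_0}\bigl(\gamma''(0)\bigr)=\gamma''(0).
\end{equation*}
Decompose $\gamma''(0)=\gamma''(0)^T+\gamma''(0)^\perp$ at $x_0$. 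Differentiating $\pi\circ\gamma=\gamma$ once shows $d\pi_{x_0}$ restricts to the identity on $T_{x_0}(\pO)$, while differentiating the identity $\pi(x_0+s\,\mathrm{n}(x_0))\equiv x_0$ from (ii) at $s=0$ shows $d\pi_{x_0}$ vanishes on $T_{x_0}(\pO)^\perp$. Therefore $d\pi_{x_0}(\gamma''(0))=\gamma''(0)^T$, and
\begin{equation*}
\mathrm{Hess}\,\pi^{x_0}(V,V)=\gamma''(0)^\perp=\mathrm{h}_{x_0}(V,V),
\end{equation*}
the last equality being the definition of the second fundamental form. Symmetry of the Hessian together with polarization extends this to arbitrary $V_1,V_2\in T_{x_0}(\pO)$.

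The main technical obstacle is the uniform choice of $\delta$: one must ensure simultaneously that $\Phi$ is injective on $\pO\times(-\delta,\delta)$, that $\Omega^\delta$ is contained in its image, and that any nearest point to $x\in\Omega^\delta$ lies within this tubular neighborhood. All three requirements reduce to quantitative estimates on the geometry of $\pO$ (uniform bounds on the principal curvatures and the reach) that follow from compactness and the $C^2$ regularity of $\pO$. Beyond this, the argument is essentially a careful application of the inverse function theorem followed by the two-line Hessian computation above.
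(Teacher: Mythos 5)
The paper does not prove this lemma itself; it simply cites Simon's book \cite{Simon}. Your proof is the standard tubular-neighborhood argument that Simon uses: parametrize a neighborhood of $\pO$ by the normal exponential map $\Phi(y,t)=y+t\,\mathrm{n}(y)$, invert it by the inverse function theorem (with compactness giving a uniform $\delta$), set $\pi=\mathrm{pr}_1\circ\Phi^{-1}$, and read off (i)--(ii) from the construction. Your treatment of (iii) is clean and correct: differentiating $\pi\circ\gamma=\gamma$ once gives $d\pi_{x_0}|_{T_{x_0}\pO}=\mathrm{id}$, differentiating (ii) gives $d\pi_{x_0}|_{T_{x_0}\pO^\perp}=0$, so $d\pi_{x_0}$ is orthogonal projection onto $T_{x_0}\pO$, and the second derivative of $\pi\circ\gamma=\gamma$ then yields $\mathrm{Hess}\,\pi^{x_0}(V,V)=\gamma''(0)-\gamma''(0)^T=\gamma''(0)^\perp=\mathrm{h}_{x_0}(V,V)$, with polarization finishing the off-diagonal case. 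One small caveat worth noting, which is really an issue with the lemma statement rather than with your argument: the construction gives $\pi\in C^{k-1}$, so for the Hessian in (iii) to exist classically one needs $k\geq 3$, whereas the paper assumes $\pO\in C^2$; under $C^2$ the identity in (iii) should be interpreted in a suitably weakened sense.
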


The viscous boundary layer has significantly different behavior over the tangential and normal directions near the boundary. This inspire us to consider the following new coordinate system, which we call the curvilinear coordinate for the tubular neighborhood $\Omega^\delta$ defined in Lemma \ref{Projection}. Because $\pO$ is a $(\D-1)$ dimensional manifold, so locally
$\pi(x)$ can be represented as
\begin{equation}\label{Pai}
\pi(x) = (\pi^1(x)\,, \cdots\,, \pi^{\D-1}(x))\,.
\end{equation}

More precisely, the representation \eqref{Pai} could be understood in the following sense: we can introduce a new coordinate system $(\xi^1\,,\cdots\,, \xi^\D)$ by
a homeomorphism which locally defined as $\xi: \xi(x)=(\xi'(x)\,,\xi^\D(x))$ where $\xi'= (\xi^1\,,\cdots\,, \xi^{\D-1})$, such that $\xi(\pi(x))=(\xi',\, 0)$ and $\mathrm{d}(x)= \xi^\D$, where $\mathrm{d}(x)$ is the distance function to
the boundary $\partial\Omega$, i.e.
\begin{equation}\label{function-d}
\mathrm{d}(x) = \mathrm{dist}(x\,,\pO) = |x - \pi(x)|\,.
\end{equation}
For the simplicity of notation, we denote $``\xi'(x)=\pi(x)"$ which is the meaning of \eqref{Pai}.

It is easy to see that $\grad \mathrm{d}$ is perpendicular to the
level surface of the distance function $\mathrm{d}$, i.e. the set
$S^z = \{ x\in \Omega: \mathrm{d}(x) = z\}$. In particular, on the
boundary, $\grad \mathrm{d}$ is perpendicular to $S_0 = \pO$.
Without loss of generality, we can normalize the distance
function so that $\grad \mathrm{d}(x) = -\mathrm{n}(x)$ when $x \in
\pO$. By the definition of the projection $\Pi$, we have
\begin{equation}\label{Proj-Tangent}
\pi(x + t \grad\mathrm{d}(x)) = \pi(x)\quad \mbox{for}\quad\!\!\! t\quad\!\!\!\mbox{small}\,,
\end{equation}
and consequently, $\grad \pi^\alpha \cdot \grad \mathrm{d} = 0$, for
$\alpha = 1\,,\cdots\,, \D-1$. In particular, for $t$ small enough,
$\grad \pi^\alpha(x) \in T_x(\pO)$ when $x\in \pO$.

Next, we calculate the induced Riemannian metric from $\RD$ on
$\pO$. In a local coordinate system, these Riemannian metric can be
represented as
\begin{equation}\nonumber
g = g_{\alpha \beta} \mathrm{d}\pi^\alpha \otimes \mathrm{d}
\pi^\beta\,,
\end{equation}
where $g_{\alpha \beta}= \< \tfrac{\p}{\p{\pi^\alpha}}\,,
\tfrac{\p}{\p{\pi^\beta}}\>$. Noticing that $\tfrac{\p}{\p{x^i}} =
\tfrac{\p\pi^\alpha}{\p{x^i}}\tfrac{\p}{\p{\pi^\alpha}}$, and
$\<\tfrac{\p}{\p{x^i}}, \tfrac{\p}{\p{x^j}} \> =\delta_{ij}$, the
metric $g_{\alpha \beta}$ can be determined by
\begin{equation}\nonumber
g_{\alpha
\beta}\tfrac{\p\pi^\alpha}{\p{x^i}}\tfrac{\p\pi^\beta}{\p{x^i}}
=1\,.
\end{equation}


\section{Acoustic Modes}\label{Acoustic-Mode}
\subsection{Acoustic Operator $\AA$}
Recall that the Leray's projection $\PBbb$ on the space of
divergence-free vector fields and $\mathbb{Q}$ on the space of
gradients are defined by
\begin{equation}\nonumber
\PBbb=\IBbb-\mathbb{Q}\,,
\end{equation}
where  $\mathbb{Q} \mathrm{u} = \grad q$ and $q$ solves
\begin{equation}\label{Leray-Proj}
\begin{aligned}
\Delta_{\!x} q &= \DIV \mathrm{u}\quad\mbox{in}\quad \Omega\,,\\
\grad q\!\cdot\!\mathrm{n}&= \mathrm{u}\!\cdot\!\mathrm{n}
\quad\mbox{on}\quad\partial\Omega\,,\quad\mbox{and}\quad\int_\Omega
q\,\mathrm{d}x=0\,.
\end{aligned}
\end{equation}

We define Hilbert spaces
\begin{equation}\nonumber
\begin{aligned}
\mathbb{H}&=\left\{ U=(\rho\,, \mathrm{u}\,, \theta)\in L^2(\dd{x};\mathbb{C}\times\mathbb{C}^{\mathrm{D}}\times\mathbb{C})\right \}\,,\\
\mathbb{V}&=\left\{ U\in \mathbb{H}: \int_\Omega |\grad U|^2\,\dd
x<\infty \right\}\,,
\end{aligned}
\end{equation}
endowed with inner product
\begin{equation}\label{innerproduct}
\<U_1\,,U_2\>_\mathbb{H}=\int_\Omega(\rho_1\overline{\rho}_2+\mathrm{u}_1\cdot\overline{\mathrm{u}}_2
+\tfrac{\D}{2}\theta_1\overline{\theta}_2)\,\dd
x\,,
\end{equation}
where $\overline{f}$ denotes the complex conjugate of the complex-valued function $f$. Next, we define the acoustic operator $\AA$:
\begin{equation}\label{acoustic-A}
\AA\begin{pmatrix}\rho\\ \mathrm{u}\\
\theta\end{pmatrix}=\begin{pmatrix}\DIV \mathrm{u}\\ \nabla_x(\rho+\theta)\\
\tfrac{2}{\D}\DIV \mathrm{u}\end{pmatrix}\,,
\end{equation}
over the domain
\begin{equation}\nonumber
\mathrm{Dom}(\AA)=\left\{ U=(\rho\,, \mathrm{u}\,, \theta)\in \mathbb{V}:
\mathrm{u}\!\cdot\!\mathrm{n}=0 \ \hbox{on} \ \partial
\Omega\right\}\,.
\end{equation}

The null space of $\mathcal{A} $ and its orthogonal with respect to
the inner product \eqref{innerproduct} are characterized as
\begin{equation}\label{null-A}
\mbox{Null}(\mathcal{A})=\{(-\varphi\,,\mathrm{w}\,,\varphi)\in
\mathbb{V}: \DIV \mathrm{w}=0\quad\!\mbox{and}\quad\!
\mathrm{w}\!\cdot\! \mathrm{n}=0\quad\!\mbox{on}\quad\!\pO\}\,,
\end{equation}
and
\begin{equation}\label{null-orthA}
\mbox{Null}(\mathcal{A} )^\perp=\{(\rho\,,\mathrm{u}\,,\theta)\in
\mathbb{V}:\theta=\tfrac{2}{\D}\rho\,,\mathrm{u}=\grad\phi\,,\quad\!\!\mbox{for
some}\quad\!\!\phi\in H^1{(\Omega)}\}\,,
\end{equation}
respectively. Because Null$(\AA)$ includes the incompressibility and Boussinesq relations,
we call it {\em incompressible} regime. We will see in the next subsection that Null$(\AA)^\perp$ is spanned by the eigenspaces of the acoustic operator $\AA$, so we call it {\em acoustic} regime.

For any $U=(\rho\,,\mathrm{u}\,,\theta)\in \mathbb{H}$, we can
define $\Pi$ and $\Pi^\perp$ the projections to the incompressible
regime $\mbox{Null}(\mathcal{A})$ and acoustic regime
$\mbox{Null}(\mathcal{A} )^\perp$ respectively as follows:
\begin{equation}\nonumber
\begin{aligned}
\Pi U&=\left(\tfrac{2}{\D+2}\rho-\tfrac{\D}{\D+2}\theta\,,\mathbb{P}\mathrm{u}\,,\tfrac{\D}{\D+2}\theta
-\tfrac{2}{\D+2}\rho\right)\,,\\
\Pi^\perp U&=\left(\tfrac{\D}{\D+2}(\rho+\theta)\,,\mathbb{Q}
\mathrm{u}\,,\tfrac{2}{\D+2}(\rho+\theta)\right)\,.
\end{aligned}
\end{equation}

\subsection{Eigenspaces of $\AA$}

The eigenvalues and eigenvectors of the acoustic operator $\AA$ in a
bounded domain can be constructed from those of the Laplace operator
with Neumann boundary condition in the following way: Let $\tfrac{\D}{\D+2}[\lambda^k]^2\,,\lambda^k
>0\,, k\in\mathbb{N}$ be the nondecreasing sequence of eigenvalues of the Laplace operator $-\Delta_{\mathrm{N}}$ with
homogeneous Neumann boundary condition, and
$\Psi^k$ be the corresponding orthonormal basis of $L^2(\Omega)$ eigenfunctions:
\begin{equation}\label{laplac-neu}
-\Delta_{\!
x}\Psi^k=\tfrac{\D}{\D+2}[\lambda^k]^2\Psi^k
\quad\mbox{in}\quad \Omega,\quad\grad\Psi^k \!\cdot\!
\mathrm{n}=0\,\quad\mbox{on}\quad\partial\Omega\,.
\end{equation}
More specifically,
\begin{equation}\nonumber
0 < \lambda^1 \leq \lambda^2 \leq \cdots \leq \lambda^k \rightarrow +\infty\,,\quad \mbox{as}\quad\! k \rightarrow \infty\,.
\end{equation}

Let $\tau$ denote either $+$ or $-$, and $\lambda^{\tau,k} =
\tau\lambda^k$. It can be verified that $i\lambda^{\tau,k}$ are non-zero eigenvalues of $\AA$ and
\begin{equation}\label{eigen-A}
U^{\tau,k}= \sqrt{\tfrac{\D+2}{2\D}}\left(\tfrac{\D}{\D+2}\Psi^k\,,\tfrac{\nabla_x\Psi^k}{
i\lambda^{\tau,k}}\,, \tfrac{2}{\D+2}\Psi^k \right)^\top
\end{equation}
are the corresponding normalized eigenvectors, i.e.
\begin{equation}\label{acoustic-equ}
\AA U^{\tau,k}= i\lambda^{\tau,k} U^{\tau,k}\,,
\end{equation}
and furthermore, $U^{\tau,k}$ span Null$(\mathcal{A} )^\perp$ under the inner product \eqref{innerproduct}. Consequently we have an orthonormal basis of
the acoustic modes, i.e.
\begin{equation}\nonumber
\mbox{Null}(\mathcal{A} )^\perp
=\overline{\mbox{Span}\left\{U^{\tau,k}|k\in
\mathbb{N}\,, \tau=\pm\right\}}^{L^2}\,.
\end{equation}

Moreover, we can use the components of $U^{\tau,k}$  to construct the infinitesimal Maxwellians $g^{\tau,k}$ which are in the null space of $\LL$:
\begin{equation}\label{acoustic-modes}
g^{\tau,k}= \sqrt{\tfrac{\D+2}{2\D}}\left\{\tfrac{\D}{\D+2}\Psi^k+v\!\cdot
\tfrac{\nabla_x\Psi^k}{ i\lambda^{\tau,k}}+
\tfrac{2}{\D+2}\Psi^k
(\tfrac{|v|^2}{2}-\tfrac{\D}{2})\right\}\,.
\end{equation}
These infinitesimal Maxwellians will be the building blocks of the
approximate eigenfunctions of $\frac{1}{\eps}\LL -
v\!\cdot\! \grad$.

\subsection{Conditions on $\Psi^k$}
Note that $\Psi^k$, $k \geq 1$ are solutions to the Neumann boundary condition equation \eqref{laplac-neu}, so some {\em orthogonality condition} is required for the eigenfunctions associated to the eigenvalues with multiplicity greater
than 1. Assume that $\lambda^2$ is an eigenvalue of \eqref{laplac-neu} and denote by $\mathrm{H}_0=\mathrm{H}_0(\lambda)$ the eigenspace associated to $\lambda^2$, i.e.
\begin{equation}\label{Hspace}
     \mathrm{H}_0(\lambda)= \{\Psi \in \mathrm{Dom}(-\Delta_{\!x}): -\Delta_{\!x} \Psi= \lambda^2 \Psi\quad\!\! \mbox{in}\quad\!\! \Omega, \tfrac{\p \Psi}{\p \mathrm{n}}=0 \quad\!\! \mbox{on}\quad\!\! \pO\}
\end{equation}
where $\mathrm{Dom}(-\Delta_{\!x})= H^2(\Omega)\cap \{\Psi | \tfrac{\p \Psi}{\p \mathrm{n}}=0 \quad\!\! \mbox{on}\quad\!\! \pO\}$ denotes the domain of $-\LAP$ with Neumann boundary condition. On the finite dimensional space $\mathrm{H}_0(\lambda)$, we can define a quadratic form $Q_1$. Its associated bilinear form that we still denote $Q_1$ and a symmetric operator $L_1= L^\lambda_1$ by
\begin{equation}\label{L1def}
   Q_1(\Psi,\Phi) = \int_\Omega L_1(\Psi)\Phi\,\mathrm{d}x= \int_\Omega L_1(\Phi)\Psi\,\mathrm{d}x\,.
\end{equation}
The eigenspace $\mathrm{H}_0(\lambda)$ is endows with an orthogonality condition
\begin{equation}\label{condition0}
     Q_1(\Psi^k,\Psi^l)=0\,,\quad \mbox{if}\quad\! \Psi^k, \Psi^l\in \mathrm{H}_0(\lambda)\quad\!\mbox{and}\quad\! k\neq l\,.
\end{equation}
This condition means that the eigenvectors $\Psi^k$ for $\lambda^k=\lambda$ are orthogonal for the symmetric operator $L^\lambda_1$. Of course, since $L^2(\Omega)$ is the direct sum of the spaces $\mathrm{H}_0(\lambda)$ for different $\lambda's$. From the definition of $L^\lambda_1$ on each eigenspace $\mathrm{H}_0(\lambda)$, we can define an operator $L_1$ on $L^2(\Omega)$ which leaves each eigenspace $\mathrm{H}_0(\lambda)$ invariant.
But this is not necessary, so we will think of $L_1= L^\lambda_1$ as acting on $\mathrm{H}_0(\lambda)$ for a fixed multiple eigenvalue $\lambda$.

The orthogonality condition \eqref{condition0} turns out to be enough for the construction of the boundary layer if the eigenvalues of $L_1$ are
simple, namely, if $\lambda^k_1 \neq \lambda^l_1$ for all $k \neq l$ such that $\lambda^k_0=\lambda^l_0=\lambda$. However, if $\lambda_1$ is an eigenvalue of $L_1$ with multiplicity greater than or equal to $2$, then we need an extra orthogonality condition. Let $\mathrm{H}_1=\mathrm{H}_1(\lambda_1)$ be defined by
\begin{equation}\label{Hspace1}
    \mathrm{H}_1= \{ \Psi\in \mathrm{H}_0 : L_1 \Psi = \lambda_1 \Psi \}\,.
\end{equation}
 On the finite dimensional space $\mathrm{H}_1$, there exists a quadratic form $Q_2$ and a symmetric operator $L_2$ (see the definition below), the extra condition is
 \begin{equation}\label{condition1}
     Q_2(\Psi^k,\Psi^l)=0\,,\quad \mbox{if}\quad\! \Psi^k, \Psi^l\in \mathrm{H}_1(\lambda)\quad\!\mbox{and}\quad\! k\neq l\,.
\end{equation}
 This condition is enough if $L_2$ has only simple eigenvalue on the vector space $H_1$. This process can be continued inductively.

 Let us now explain more precisely the condition we have to impose on the eigenvectors of $-\LAP$. We can construct recursively, on each eigenspace $\mathrm{H}_0(\lambda)$ of $-\LAP$, a sequence of symmetric operators $L_q, q \in \mathbb{N}$ in the following way: Let $L_0= - \LAP$, we define $L_1$ on each one of the eigenspace $\mathrm{H}_0(\lambda)$ of $L_0$ by \eqref{L1def}. Assume that the operators $L_p$ were constructed for $p
 \leq q-1, q \geq 2$ in such a way that each operator $L_p$ leaves invariant the eigenspaces of the operators $L_{p'}$ for $p' < p$. Now, to construct $L_q$, it is enough to construct $L_q$ on each eigenspace $\mathrm{H}_1(\lambda_1)\cap \mathrm{H}_2(\lambda_2)\cap \cdots \cap \mathrm{H}_{q-1}(\lambda_{q-1})$, where $\lambda_1, \lambda_2, \cdots, \lambda_{q-1}$ are eigenvalues of $L_1, L_2, \cdots, L_{q-1}$ respectively. This is done by constructing a quadratic form $Q_q$ on each space $\mathrm{H}_1(\lambda_1)\cap \mathrm{H}_2(\lambda_2)\cap \cdots \cap \mathrm{H}_{q-1}(\lambda_{q-1})$ and defining $L_q$ by
 \begin{equation}\nonumber
    Q_q(\Psi, \Phi)= \int_\Omega L_q(\Psi)\Phi\,\mathrm{d}x\,,\quad\!\mbox{for all}\quad\! \Psi,\Phi\in \mathrm{H}_1(\lambda_1)\cap \mathrm{H}_2(\lambda_2)\cap \cdots \cap \mathrm{H}_{q-1}(\lambda_{q-1})\,.
 \end{equation}
The precise construction of the quadratic form $Q_q$ on the space $\mathrm{H}_1(\lambda_1)\cap \mathrm{H}_2(\lambda_2)\cap \cdots \cap \mathrm{H}_{q-1}(\lambda_{q-1})$ will be done in the proof.

Let $N\in \mathbb{N}$ be an integer. This is the integer that will appear in the order of the approximation in the Proposition \ref{main-prop} . The eigenvectors $\Psi^k$ for $\lambda^k_0=\lambda$ should be chosen in such a way that they are eigenvectors for all the operators $L_n$ at least for $n \leq N+2$. This implies that they are orthogonal to all the operators $L_n$ for $n \leq N+2$, which means that
 \begin{equation}\label{condition-n}
     Q_n(\Psi^k,\Psi^l)=\int_\Omega L_n(\Psi^k)\Psi^l=0\,,
\end{equation}
if $\Psi^k, \Psi^l\in \mathrm{H}_1(\lambda_1)\cap \mathrm{H}_2(\lambda_2)\cap \cdots \cap \mathrm{H}_{n-1}(\lambda_{n-1})$ and $ k\neq l$.

\noindent{\bf Remark: } The precise construction of the quadratic form $Q_q$ will be done in the proof of Proposition \ref{main-prop}.


\subsection{The operator $\AA-
i \lambda^{\tau,k}$}

Later on, in the construction of the boundary layers, for each acoustic mode $k \geq 1$ and
$\tau= +$ or $-$, we will frequently solve the following linear hyperbolic system for $V^{\tau,k}=(\rho^{\tau,k},
\mathrm{v}^{\tau,k}, \theta^{\tau,k})^\top$:
\begin{equation}\label{A-Lambda}
\begin{aligned}
(\AA- i \lambda^{\tau,k})V^{\tau,k} & = i \mu^{\tau,k}
U^{\tau,k} +
F^{\tau,k}\,,\\
\mathrm{v}^{\tau,k}\!\cdot\! \mathrm{n} & = g^{\tau,k}\quad
\mbox{on}\quad\! \pO\,.
\end{aligned}
\end{equation}
where $\mu^{\tau,k}$, $F^{\tau,k}$ and $g^{\tau,k}$ are given, and $U^{\tau,k}$ is defined in
\eqref{eigen-A}.

\noindent{\bf Remark:} Strictly speaking, \eqref{A-Lambda} is not
rigorous because  $\mathrm{v}^{\tau,k}\!\cdot\!\mathrm{n}$ is
non-zero, so $V^{\tau,k}$ is not in the domain of $\AA$. For
notational simplicity, we still use $\AA$ in \eqref{A-Lambda} and
later on, just mean the expression of $\AA$ in \eqref{acoustic-A}
regardless of the domain.

To solve the system \eqref{A-Lambda}, the main difficulty is that the kernel of $\AA- i
\lambda^{\tau,k}$ is nontrivial. It will be more involved when the
eigenvalues have multiplicity greater than $1$. It can be characterized that the kernel and the orthogonal of $\AA- i
\lambda^{\tau,k}$ with respect to the inner product \eqref{innerproduct} are
\begin{equation}\nonumber
\mathrm{Ker}\big(\AA- i \lambda^{\tau,k} \big) =
\mathrm{Span}\big\{U^{\tau,l}: \mbox{for all}\quad\!\! l\in\mathbb{N}\quad\!\! \mbox{such that} \quad\!\!\lambda^l = \lambda^k \big\}\,,
\end{equation}
and
\begin{equation}\nonumber
\begin{aligned}
\mathrm{Ker}\big(\AA- i \lambda^{\tau,k}\big)^\perp = &
\mathrm{Span}\big\{U^{\delta,l}: \mbox{for all}\quad\!\!\delta=\pm\quad\!\!\mbox{and}\quad\!\! l\in\mathbb{N}\quad\!\! \mbox{such that}\quad\!\!\lambda^l \neq \lambda^k \big\}\\
& \oplus \mathrm{Span}\big\{U^{-\tau,l}: \lambda^l =
\lambda^k \big\}\oplus \mbox{Null}(\AA)\,.
\end{aligned}
\end{equation}

Next, we define a bounded pseudo inverse of $\AA- i \lambda^{\tau,k}$
\begin{equation}\nonumber
\big(\AA- i \lambda^{\tau,k}\big)^{-1}:
\mathrm{Ker}\big(\AA- i \lambda^{\tau,k}\big)^\perp
\longrightarrow \mathrm{Ker}\big(\AA- i
\lambda^{\tau,k}\big)^\perp\,,
\end{equation}
by
\begin{equation}\label{pseudo-1}
\big(\AA- i \lambda^{\tau,k}\big)^{-1}
U^{\delta,l}=\tfrac{1}{i\lambda^{\delta,l}- i \lambda^{\tau,
k}}U^{\delta,l}\,,\quad\!\mbox{for any}\quad\!
U^{\delta,l}\quad\!\mbox{with}\quad\! \lambda^l \neq \lambda^k\,,
\end{equation}
\begin{equation}\label{pseudo-2}
\big(\AA- i \lambda^{\tau,k}\big)^{-1}
U^{-\tau,l}=\tfrac{1}{-2i\lambda^{\tau,k}}U^{-\tau,l}\,,\quad\!\mbox{for
any}\quad\! U^{-\tau,l}\quad\!\mbox{with}\quad\! \lambda^l =
\lambda^k\,,
\end{equation}
and
\begin{equation}\label{pseudo-3}
\big(\AA- i \lambda^{\tau,k}\big)^{-1} (\rho, \mathrm{v}, -\rho)^T
= \tfrac{1}{i \lambda^{\tau,k}} (\rho, \mathrm{v}, -\rho)^T\,,
\end{equation}
for any $(\rho, \mathrm{v}, -\rho)^T\in \mbox{Null}(\AA)$ and
$\tau,\delta \in \{ +,-\}$. It is obvious that this pseudo inverse operator
is a bounded operator. Consequently, the solutions to the system \eqref{A-Lambda} are stated in the following lemma.

\begin{Lemma}\label{Solve-A}
For each fixed acoustic modes $k \geq 1$ and $\tau \in \{+,-\}$, the solvability conditions of the system \eqref{A-Lambda} are:

$(i)$ If $\lambda^{k}$ is a simple eigenvalue of \eqref{laplac-neu}, then the only solvability condition is that $i \mu^{\tau,k}$ must satisfy
\begin{equation}\label{imu}
i \mu^{\tau,k} = \int_{\pO}g^{\tau,k}\Psi^k
\dd \sigma_{\! x} - \< F^{\tau,k} |
U^{\tau,k}\>\,.
\end{equation}
Under this condition, the solutions to \eqref{A-Lambda} $V^{\tau,k}$ can be solved uniquely as
\begin{equation}\label{VK-1}
V^{\tau,k} = V^{\tau,k}_1\,,
\end{equation}
where $V^{\tau,k}_1 \in \mathrm{Ker}\big(\AA- i\lambda^{\tau,k}\big)^\perp$.

$(ii)$ If $\lambda^{k}$ is not a simple eigenvalue, then besides \eqref{imu}, further compatibility condition is needed: $F^{\tau,k}$ must satisfy:
\begin{equation}\label{compatability-KL}
\int_{\pO}g^{\tau,k}\Psi^l \dd \sigma_{\!x} = \< F^{\tau,k} |
U^{\tau,l}\>\,,\quad \mbox{for}\quad\! \lambda^l= \lambda^k \quad
\mbox{with}\quad\! k \neq l\,.
\end{equation}
For this case,  under these two conditions \eqref{imu}-\eqref{compatability-KL}, the solutions to \eqref{A-Lambda}
$V^{\tau,k}$ can be determined modulo $\mathrm{Ker}\big(\AA- i
\lambda^{\tau,k}\big)$. In other words, $V^{\tau,k}$ can be uniquely
represented as
\begin{equation}\label{VK}
V^{\tau,k}  = \sum\limits_{\lambda^k
= \lambda^l} \< V^{\tau,k} \,|\, U^{\tau,l}\> U^{\tau,l} + V^{\tau,k}_1\,,
\end{equation}
where $V^{\tau,k}_1 \in \mathrm{Ker}\big(\AA- i
\lambda^{\tau,k}\big)^\perp$.

\end{Lemma}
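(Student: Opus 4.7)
The plan is to reduce \eqref{A-Lambda} to a fixed-source problem with homogeneous boundary condition and then apply the Fredholm alternative built on the bounded pseudo inverse \eqref{pseudo-1}-\eqref{pseudo-3}. Pick any smooth lift $\widetilde V=(\widetilde\rho,\widetilde{\mathrm v},\widetilde\theta)$ with $\widetilde{\mathrm v}\!\cdot\!\mathrm{n}=g^{\tau,k}$ on $\pO$, and write $V^{\tau,k}=\widetilde V+W$ with $W\in\mathrm{Dom}(\AA)$. The problem becomes
\begin{equation*}
(\AA-i\lambda^{\tau,k})W=i\mu^{\tau,k}U^{\tau,k}+F^{\tau,k}-(\AA-i\lambda^{\tau,k})\widetilde V,
\end{equation*}
and is solvable in $\mathrm{Dom}(\AA)$ iff the right-hand side lies in $\mathrm{Ker}(\AA-i\lambda^{\tau,k})^{\perp}$.

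The extraction of the solvability conditions rests on a Green-type identity. Since $\AA$ is skew-adjoint on $\mathrm{Dom}(\AA)$, integration by parts for an arbitrary $V=(\rho,\mathrm v,\theta)$ paired with $U=(\rho_U,\mathrm u_U,\theta_U)\in\mathrm{Dom}(\AA)$ yields
\begin{equation*}
\langle\AA V\,|\,U\rangle_{\mathbb H}=-\langle V\,|\,\AA U\rangle_{\mathbb H}+\int_{\pO}(\mathrm v\!\cdot\!\mathrm n)(\overline{\rho_U}+\overline{\theta_U})\,\dd\sigma_{x}.
\end{equation*}
I apply this with $V=V^{\tau,k}$ and $U=U^{\tau,l}$ for every $l$ such that $\lambda^l=\lambda^k$. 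Since $\AA U^{\tau,l}=i\lambda^{\tau,k}U^{\tau,l}$, the interior terms in $\langle(\AA-i\lambda^{\tau,k})V^{\tau,k}\,|\,U^{\tau,l}\rangle_{\mathbb H}$ cancel and only the boundary contribution survives. Using the explicit form \eqref{eigen-A}, the combination $\rho_l+\theta_l$ is proportional to the (real) eigenfunction $\Psi^l$, so the surviving left-hand side is proportional to $\int_{\pO}g^{\tau,k}\Psi^l\,\dd\sigma_{x}$.

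Matching this with $\langle i\mu^{\tau,k}U^{\tau,k}+F^{\tau,k}\,|\,U^{\tau,l}\rangle_{\mathbb H}$ and using orthonormality of the acoustic modes yields precisely the two announced conditions: the choice $l=k$ gives $\langle U^{\tau,k}|U^{\tau,k}\rangle=1$, fixes $i\mu^{\tau,k}$, and produces \eqref{imu}; any $l\neq k$ with $\lambda^l=\lambda^k$ (which only occurs when $\lambda^k$ is not simple) kills the $\mu^{\tau,k}$ term by orthogonality and produces \eqref{compatability-KL}. In the simple case only the first survives; in the multiple case both are needed, and together they exhaust the projection onto $\mathrm{Ker}(\AA-i\lambda^{\tau,k})=\mathrm{Span}\{U^{\tau,l}:\lambda^l=\lambda^k\}$.

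Once these conditions are in force the source belongs to $\mathrm{Ker}(\AA-i\lambda^{\tau,k})^{\perp}$, and the bounded pseudo inverse \eqref{pseudo-1}-\eqref{pseudo-3} delivers a particular solution $W_1\in\mathrm{Ker}(\AA-i\lambda^{\tau,k})^{\perp}$. The general solution is obtained by adding an arbitrary element of $\mathrm{Ker}(\AA-i\lambda^{\tau,k})$: in the simple case this addition is trivial, yielding \eqref{VK-1}, and in the multiple case it produces the free coefficients $\langle V^{\tau,k}\,|\,U^{\tau,l}\rangle$ displayed in \eqref{VK}. The main technical subtlety is that $V^{\tau,k}$ is not in $\mathrm{Dom}(\AA)$: one must use the Green identity with the boundary correction rather than plain skew-adjointness, and verify that the resulting conditions are independent of the chosen lift $\widetilde V$. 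This last point is automatic, because $\widetilde V$ enters only through $(\AA-i\lambda^{\tau,k})\widetilde V$, whose pairing against any $U^{\tau,l}$ with $\lambda^l=\lambda^k$ collapses, by the same Green identity, to the very same boundary integral $\int_{\pO}g^{\tau,k}\Psi^l\,\dd\sigma_{x}$.
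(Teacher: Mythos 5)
Your proof is correct and follows essentially the same route as the paper: lift the boundary data to reduce to a problem with homogeneous normal-trace condition, read off the solvability conditions by pairing the resulting equation against each $U^{\tau,l}$ with $\lambda^l=\lambda^k$, and then invert on $\mathrm{Ker}(\AA-i\lambda^{\tau,k})^\perp$ via the pseudo-inverse \eqref{pseudo-1}--\eqref{pseudo-3}. The one stylistic difference is that you make the paper's implicit computation explicit through the Green identity $\langle\AA V|U\rangle_{\mathbb H}=-\langle V|\AA U\rangle_{\mathbb H}+\int_{\pO}(\mathrm v\!\cdot\!\mathrm n)(\overline{\rho_U}+\overline{\theta_U})\,\dd\sigma_x$, which has the added merit of showing at once that the solvability conditions are independent of the chosen lift; the paper simply asserts that the inner products against $U^{\tau,k}$ and $U^{\tau,l}$ yield \eqref{imu} and \eqref{compatability-KL} without writing out this cancellation.
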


\begin{proof}

For any $g^{\tau,k} \in H^{\frac{1}{2}}(\pO)$, there exists
$\tilde{\mathrm{v}}^{\tau,k}\in H^1(\Omega\,;\mathbb{R}^\D) $, such that
$\gamma \tilde{\mathrm{v}}^{\tau,k}\!\cdot\!\mathrm{n} =
g^{\tau,k}$, where $\gamma$ is the usual trace operator from
$H^1(\Omega\,;\mathbb{R}^\D)$ to $H^{\frac{1}{2}}(\pO)$. We define
\begin{equation}\label{V-tilde}
\widetilde{V}^{\tau,k}= V^{\tau,k}- (0,
\tilde{\mathrm{v}}^{\tau,k}, 0)^T\,.
\end{equation}
Then $\widetilde{V}^{\tau,k}$ has zero the normal velocity on the
boundary $\pO$, thus is in the domain of $\AA$. From
\eqref{A-Lambda}, $\widetilde{V}^{\tau,k}$ satisfies
\begin{equation}\label{A-V}
(\AA- i \lambda^{\tau,k}) \widetilde{V}^{\tau,k} = - (\AA- i
\lambda^{\tau,k})(0, \tilde{\mathrm{v}}^{\tau,k}, 0)+ i
\mu^{\tau,k} U^{\tau,k} + F^{\tau,k}\,.
\end{equation}

The solvability of \eqref{A-V} is that the righthand side must be in
$\mathrm{Ker}\big(\AA- i \lambda^{\tau,k}\big)^\perp$. Thus, the inner
product of \eqref{A-V} with $U^{\tau,k}$ is zero, which gives \eqref{imu}, while the inner product
with $U^{\tau,l}$ with $\lambda^k = \lambda^l, k\neq l$ gives
\eqref{compatability-KL}. Under these conditions, by applying the
psedu inverse operator $\big(\AA- i \lambda^{\tau,k}\big)^{-1}$ defined in \eqref{pseudo-1}-\eqref{pseudo-3},
we can uniquely solve $\widetilde{V}^{\tau,k}$ in
$\mathrm{Ker}\big(\AA- i \lambda^{\tau,k}\big)^\perp$, denoted by
$\widetilde{V}^{\tau,k}_1$. However, the projection of $\widetilde{V}^{\tau,k}$ on
$\mathrm{Ker}\big(\AA- i \lambda^{\tau,k}\big)$ is {\em not} determined. In other words,
\begin{equation}\nonumber
\widetilde{V}^{\tau,k}= \widetilde{V}^{\tau,k}_1 + \sum\limits_{\lambda^k
= \lambda^l} \< \widetilde{V}^{\tau,k} \,|\, U^{\tau,l}\> U^{\tau,l}\,.
\end{equation}

Using \eqref{V-tilde}, we get \eqref{VK},
where
\begin{equation}\nonumber
V^{\tau,k}_1 = \widetilde{V}^{\tau,k}_1 + (0, \tilde{\mathrm{v}}^{\tau,k},
0)^T -  \sum\limits_{\lambda^k = \lambda^l} \< (0,
\tilde{\mathrm{v}}^{\tau,k}, 0)^T \,|\, U^{\tau,l}\> U^{\tau,l}\,.
\end{equation}

In \eqref{VK}, the projection of $V^{\tau,k}$ on
$\mathrm{Ker}\big(\AA- i \lambda^{\tau,k}\big)$, i.e. the first
term in the righthand side of \eqref{VK}, can {\em not} be
determined. It is easy to see that the projection of $V^{\tau,k}$ on
$\mathrm{Ker}\big(\AA- i \lambda^{\tau,k}\big)^\perp$, i.e. $V^{\tau,k}_1$, is uniquely determined,
although the lifting of the trace $g^{\tau,k}$ is not unique.

\end{proof}

\section{Analysis of the Kinetic Boundary Layer Equation}
In this section, we collect three results in kinetic equations which
will be frequently used in this paper. The first two results are
standard in kinetic theory:
\begin{Lemma}\label{Kinetic-Sol}
The solvability condition for the linear kinetic equation $\LL g=f$
is
\begin{equation}\label{sol-condition}
\< f\,,\zeta(v)\>=0\,,\quad\mbox{for}\quad
\zeta\in\mbox{Span}\{1\,,v\,,|v|^2\}\,.
\end{equation}
\end{Lemma}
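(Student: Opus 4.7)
The plan is to invoke the Fredholm alternative for the linearized collision operator $\LL$, which has already been established under the standing assumptions on $b$. Recall that the authors have noted, as a consequence of the loss operator estimate \eqref{assum3} and the compactness of the gain operator \eqref{assum4}, that $\tfrac{1}{a}\LL : L^p(aM\,\mathrm{d}v) \to L^p(aM\,\mathrm{d}v)$ is Fredholm for every $p \in (1,\infty)$, and moreover $\mathrm{Null}(\LL) = \mathrm{Span}\{1, v_1, \ldots, v_\D, |v|^2\}$. So the problem reduces to identifying the range of $\LL$ with the annihilator of its kernel under the natural duality.

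First I would recall (or verify directly from the definition \eqref{linearized B}) that $\LL$ is formally self-adjoint with respect to the inner product $\langle\cdot,\cdot\rangle$ on $L^2(M\,\mathrm{d}v)$. This is a classical computation: using the $\mathrm{d}\mu$-symmetries $(\omega,v_1,v)\mapsto(\omega,v,v_1)$ and $(\omega,v_1,v)\mapsto(\omega,v'_1,v')$ introduced in Section 2, one writes
\begin{equation}\nonumber
\langle \LL g_1, g_2\rangle = \tfrac{1}{4}\langle\!\langle (g_1 + g_{1,1} - g'_1 - g'_{1,1})(g_2 + g_{2,1} - g'_2 - g'_{2,1})\rangle\!\rangle,
\end{equation}
which is manifestly symmetric in $(g_1, g_2)$. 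In particular, the adjoint $\LL^*$ coincides with $\LL$ on the appropriate dense domain.

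Next I would apply the Fredholm alternative: since $\tfrac{1}{a}\LL$ is Fredholm and self-adjoint, its range is closed and equals the orthogonal complement of $\mathrm{Null}(\tfrac{1}{a}\LL) = \mathrm{Null}(\LL)$. Writing the equation $\LL g = f$ as $\tfrac{1}{a}\LL g = \tfrac{f}{a}$, the solvability condition becomes $\langle \tfrac{f}{a}, \zeta \rangle_{aM\,\mathrm{d}v} = 0$ for every $\zeta \in \mathrm{Null}(\LL)$, which after cancelling the factor $a$ reads exactly $\langle f, \zeta\rangle = 0$ for all $\zeta \in \mathrm{Span}\{1, v_1, \ldots, v_\D, |v|^2\}$. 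The condition \eqref{sol-condition} as stated uses $\mathrm{Span}\{1, v, |v|^2\}$, which is the same span (the $\D$ components of $v$ being collected into one vector-valued condition).

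The main obstacle, if any, is purely bookkeeping regarding function spaces: one must make sure $f$ lies in the correct weighted space (namely $L^p(a^{1-p}M\,\mathrm{d}v)$) so that the pseudo-inverse $\LL^{-1}$ described after \eqref{assum4} produces a solution $g \in L^p(aM\,\mathrm{d}v)$. Once the right functional setting is chosen, the statement is an immediate consequence of self-adjointness together with the already-cited Fredholm property, and no further argument is required.
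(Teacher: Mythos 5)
The paper states Lemma~\ref{Kinetic-Sol} without proof, introducing it together with Lemma~\ref{mu-kappa-identity} under the remark that ``the first two results are standard in kinetic theory.'' Your argument is exactly that standard proof, and it is correct: the self-adjointness of $\LL$ on $L^2(M\dd v)$, which you derive from the quadratic form identity
\begin{equation}\nonumber
\langle \LL g_1, g_2\rangle = \tfrac{1}{4}\langle\!\langle (g_1 + g_{1,1} - g'_1 - g'_{1,1})(g_2 + g_{2,1} - g'_2 - g'_{2,1})\rangle\!\rangle
\end{equation}
via the two $\dd\mu$-symmetries, combines with the already-noted Fredholm property of $\tfrac{1}{a}\LL$ on $L^2(aM\dd v)$ to give closed range equal to $\mathrm{Null}(\LL)^\perp$. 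Since a self-adjoint Fredholm operator automatically has index zero, no separate argument is needed for that, and the factor-of-$a$ cancellation you note when translating $\langle f/a,\zeta\rangle_{aM\dd v}=0$ into $\langle f,\zeta\rangle=0$ is exactly right. There are no gaps; your proposal simply supplies the standard argument the authors chose to omit.
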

The second result we will use is quoted from Lemma 4.4 in \cite{BGL2}.
\begin{Lemma}\label{mu-kappa-identity}
The components of $\< \mathrm{A}\otimes \widehat{\mathrm{A}}\> $ and
$\< \mathrm{B}\cdot\widehat{\mathrm{B}}\>$ satisfy the following
identities:
\begin{equation}\nonumber
\begin{aligned}
\< \mathrm{A}_{ij}\otimes \widehat{\mathrm{A}}_{kl}\>&=\nu  \left(
\delta_{ik}\delta_{jl}+\delta_{il}\delta_{jk}
-\tfrac{2}{\D}\delta_{ij}\delta_{kl}\right)\,,\\
\< \mathrm{B}_i\widehat{\mathrm{B}}_j\> &= \tfrac{\D+2}{2}\kappa
\delta_{ij}\,,
\end{aligned}
\end{equation}
where $\nu $ and $\kappa$ are given by \eqref{mu-kappa}.
\end{Lemma}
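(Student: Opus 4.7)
The plan is to exploit the rotational invariance of the linearized Boltzmann operator $\LL$ and of the Gaussian measure $M\,\dd v$. Because the kernel $b(\omega,v_1-v)$ depends on its arguments only through $|v_1-v|$ and $|\omega\!\cdot\!\widehat{v_1\!-\!v}|$, a direct change of variables shows that for every $R\in O(\D)$ and every admissible $g$ one has $(\LL g)\circ R=\LL(g\circ R)$. Since $\mathrm{Null}(\LL)=\mathrm{span}\{1,v_1,\dots,v_\D,|v|^2\}$ is $O(\D)$-invariant, the pseudo-inverse $\LL^{-1}:\mathrm{Null}(\LL)^\perp\to\mathrm{Null}(\LL)^\perp$ inherits the same equivariance. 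The two identities are then forced by the fact that $\mathrm{A}(v)$ and $\mathrm{B}(v)$ transform in two specific irreducible representations of $O(\D)$, combined with an elementary classification of invariant tensors.

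For the first identity, $\mathrm{A}(Rv)=R\,\mathrm{A}(v)\,R^\top$ and the equivariance of $\LL^{-1}$ yield $\widehat{\mathrm{A}}(Rv)=R\,\widehat{\mathrm{A}}(v)\,R^\top$; moreover, tracelessness of $\mathrm{A}$ transfers to $\widehat{\mathrm{A}}$ because the entry-wise trace commutes with $\LL$ and $\widehat{\mathrm{A}}\in\mathrm{Null}(\LL)^\perp$. Consequently $\widehat{\mathrm{A}}_{kl}(v)=\alpha(|v|)\bigl(v_kv_l-\tfrac{1}{\D}|v|^2\delta_{kl}\bigr)$ for some scalar function $\alpha$. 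The 4-tensor $T_{ijkl}:=\<\mathrm{A}_{ij}\widehat{\mathrm{A}}_{kl}\>$ is then symmetric in $(i,j)$, symmetric in $(k,l)$, traceless in each pair, and $O(\D)$-invariant. Since the $O(\D)$-invariant 4-tensors on $\RD$ are spanned by $\delta_{ij}\delta_{kl}$, $\delta_{ik}\delta_{jl}$, $\delta_{il}\delta_{jk}$, these constraints reduce the admissible space to the one-dimensional line generated by $E_{ijkl}:=\delta_{ik}\delta_{jl}+\delta_{il}\delta_{jk}-\tfrac{2}{\D}\delta_{ij}\delta_{kl}$. The proportionality constant is determined by contracting $i=k$, $j=l$ and summing: the left side equals $\<\mathrm{A}\!:\!\widehat{\mathrm{A}}\>=\<\widehat{\mathrm{A}}\!:\!\LL\widehat{\mathrm{A}}\>$, while the right side equals $(\D-1)(\D+2)$ times that constant, identifying it with $\nu$ as defined in \eqref{mu-kappa}.

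The second identity proceeds in the vector representation one rung lower: $\mathrm{B}(Rv)=R\,\mathrm{B}(v)$ gives $\widehat{\mathrm{B}}_j(v)=\beta(|v|)v_j$, so $\<\mathrm{B}_i\widehat{\mathrm{B}}_j\>$ is a rotationally invariant symmetric 2-tensor on $\RD$ and thus a scalar multiple of $\delta_{ij}$; tracing in $i=j$ and using $\<\mathrm{B}\!\cdot\!\widehat{\mathrm{B}}\>=\<\widehat{\mathrm{B}}\!\cdot\!\LL\widehat{\mathrm{B}}\>$ together with the definition of $\kappa$ in \eqref{mu-kappa} fixes the proportionality constant in the stated form.

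The only substantive ingredient is the classification of $O(\D)$-invariant 4-tensors and the verification that, combined with the two symmetry and two tracelessness constraints, they leave only a one-dimensional family; this is purely linear-algebraic. Beyond that, everything rests on the equivariance of $\LL^{-1}$ under $O(\D)$, which is immediate from the rotational invariance of $b$ and $M$ and the Fredholm property \eqref{assum3}--\eqref{assum4} that defines $\LL^{-1}$ on $\mathrm{Null}(\LL)^\perp$.
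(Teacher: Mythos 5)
Your proof is correct. Note that the paper itself does not prove this lemma; it quotes it directly from Lemma~4.4 of Bardos--Golse--Levermore~\cite{BGL2}, and the argument there is the same rotational-invariance argument you give: $\LL$ commutes with the $O(\D)$ action, its null space is $O(\D)$-invariant, hence $\LL^{-1}$ is equivariant, so $\widehat{\mathrm{A}}$ and $\widehat{\mathrm{B}}$ inherit the tensorial type of $\mathrm{A}$ and $\mathrm{B}$, the moment tensors $\<\mathrm{A}_{ij}\widehat{\mathrm{A}}_{kl}\>$ and $\<\mathrm{B}_i\widehat{\mathrm{B}}_j\>$ are therefore $O(\D)$-invariant with the appropriate symmetry and trace constraints, and the scalar prefactors are pinned down by full contraction together with the self-adjointness identity $\<\mathrm{A}:\widehat{\mathrm{A}}\>=\<\widehat{\mathrm{A}}:\LL\widehat{\mathrm{A}}\>$ (resp. $\<\mathrm{B}\cdot\widehat{\mathrm{B}}\>=\<\widehat{\mathrm{B}}\cdot\LL\widehat{\mathrm{B}}\>$) and the definitions of $\nu$ and $\kappa$ in~\eqref{mu-kappa}. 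The contraction bookkeeping also checks out: contracting $E_{ijkl}=\delta_{ik}\delta_{jl}+\delta_{il}\delta_{jk}-\tfrac{2}{\D}\delta_{ij}\delta_{kl}$ over $i=k$, $j=l$ gives $(\D-1)(\D+2)$, matching the normalization in the definition of $\nu$, and likewise the factor $\tfrac{\D+2}{2}$ for $\kappa$.
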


The next result is about the linear kinetic boundary layer equation which
will be used to determine the boundary conditions of the fluid
variables. We define the kinetic boundary layer operator $\LL^{BL}$, reflection boundary operator $L^{\mathcal{R}}$ and diffusive boundary operator $L^{\mathcal{D}}$ acting on functions $\{g^\bb(x,v, \xi): (x,v,\xi)\in \Omega^\delta \times \RD \times \mathbb{R}_+\}$ as follows:
\begin{equation}\label{L-BL}
 \LL^{BL}g^\bb := -(v\!\cdot\!\grad\mathrm{d}) \p_{\xi}g^\bb + \LL g^\bb\,,
\end{equation}
where $\LL$ is the linearized Boltzmann operator defined in \eqref{linearized B}.
\begin{equation}\nonumber
 L^{\mathcal{R}} g^\bb := \gamma_+ g^\bb - L \gamma_- g^\bb\,,\quad\mbox{and}\quad\!   L^{\mathcal{D}}g^\bb := \sqrt{2\pi}\chi\left[ \<\gamma_- g^\bb \>_{\p\Omega} - L \gamma_- g^\bb \right]\,.
\end{equation}

\begin{Lemma}\label{BC}
Considering the following linear kinetic boundary layer equation of $g^\bb(x,v,\xi)$ in
half space:
\begin{equation}\label{B-L-Equation}
\begin{aligned}
 \LL^{BL}g^\bb & = S^{\bb}\,, \quad \mbox{in}\quad\xi >0\,,\\
g^\bb &\longrightarrow 0\,,\quad\mbox{as}\quad \xi\rightarrow
\infty\,,
\end{aligned}
\end{equation}
with boundary condition
\begin{equation}\label{BC-beta>0}
 L^\mathcal{R}g^\bb = H^\bb\,,\quad \mbox{on}\quad\xi
= 0\,,\quad v\cdot \mathrm{n}>0\,.
\end{equation}
In the above equations, the boundary source term $H^\bb$ is taken of the following form:
\begin{equation}\label{h}
H^\bb=-L^{\mathcal{R}} g + L^{\mathcal{D}} f\,,
\end{equation}
where $ g$ and $f$ are of the forms:
\begin{equation}\label{g}
\begin{aligned}
g=&\rho_g + \mathrm{u}_g\cdot v + \theta_g \left( \tfrac{|v|^2}{2}-\tfrac{\D}{2}\right) \\
-& (\p_\zeta \mathrm{u}^\b\!\otimes\! \mathrm{n} :\widehat{\mathrm{A}}+
\p_\zeta \theta^{\b} \mathrm{n}\!\cdot\! \widehat{\mathrm{B}})+(\p_{\pi^\alpha} \tilde{\mathrm{u}}^\b \!\otimes\! \grad\pi^\alpha :\widehat{\mathrm{A}}+
\p_{\pi^\alpha} \tilde{\theta}^{\b} \grad\pi^\alpha\!\cdot\! \widehat{\mathrm{B}})\\
+&(\grad
\mathrm{u}^\Int :\widehat{\mathrm{A}} + \grad \theta^\Int \cdot
\widehat{\mathrm{B}}) +S_g\,,
\end{aligned}
\end{equation}
and
\begin{equation}\label{f}
f=\rho_f + \mathrm{u}_f\cdot v + \theta_f \left(
\tfrac{|v|^2}{2}-\tfrac{\D}{2}\right)  +S_f\,,
\end{equation}
and where $S_g\,, S_f\in\mbox{Null}(\LL)^\perp$ are source terms.

Then there exists a solution $g^\bb(x,v,\xi)$ of the equation \eqref{B-L-Equation} if and only if the following boundary
conditions are satisfied by the fluid variables:

$(i)$ On the boundary $\pO$, the normal components of velocity is
       \begin{equation}\label{BC-N}
               \mathrm{u}_g\cdot \mathrm{n} = \int^\infty_0\< S^{\bb}\>\, \dd \xi\,.
        \end{equation}

$(ii)$ On the boundary $\pO$, the tangential components of
velocities and temperature satisfy
\begin{equation} \label{BC-D}
\begin{aligned}
\left [\mathrm{u}_f\right ]^{\mathrm{tan}}  = &
\tfrac{\nu}{\chi}  \left [\p_{\zeta}\mathrm{u}^\b \right ]^{\mathrm{tan}} - \tfrac{\nu}{\chi}  \left[2
d(\mathrm{u}^\Int)\cdot \mathrm{n}\right]^{\mathrm{tan}}- \tfrac{\nu}{\chi}\nabla_{\!\pi}[\tilde{\mathrm{u}}^\b\!\cdot\!\mathrm{n}]\\
+&  \left[ \int_{v\cdot \mathrm{n}>0}(L^{\mathcal{D}}S_f) v(v\!\cdot\! \mathrm{n}) M\,\dd v \right]^{\mathrm{tan}}
 - \tfrac{1}{\chi} \< (v\!\cdot\! \mathrm{n}) v S_g \> ^{\mathrm{tan}}+ \tfrac{1}{\chi}\int^\infty_0\< S^{\bb} v\>^{\mathrm{tan}} \, \dd \xi\,,
\end{aligned}
\end{equation}
and
\begin{equation} \label{BC-theta}
\begin{aligned}
\theta_f   = &\tfrac{\D+2}{\D+1}\tfrac{\kappa}{\chi} \p_\zeta \theta^{\b} -\tfrac{\D+2}{\D+1}\tfrac{\kappa}{\chi} \p_\mathrm{n}\theta^\Int  + \tfrac{\sqrt{2\pi}}{2(\D+1)} \mathrm{u}_f\cdot \mathrm{n}
+\tfrac{\sqrt{2\pi}}{\D+1}\int_{v\cdot \mathrm{n} >0}(L^{\mathcal{D}}S_f)
|v|^2 (v\!\cdot\! \mathrm{n})M\,\dd v\\
& - \tfrac{1}{(\D+1)\chi}\<(v\!\cdot\!\mathrm{n})|v|^2 S_g \> + \tfrac{\D+2}{\D+1}\tfrac{1}{\chi}\int^\infty_0\< S^{\bb} (\tfrac{|v|^2}{\D+2}-1)\>\, \dd \xi\,,
\end{aligned}
\end{equation}
where kinematic viscosity $\nu $ and thermal conductivity $\kappa$
are given by \eqref{mu-kappa}, $\mathrm{u}^\mathrm{tan}$ denotes the
tangential components of the vector $\mathrm{u}$, and $\nabla_\pi$ denotes the tangential derivative.
\end{Lemma}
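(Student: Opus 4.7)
The plan is to interpret the lemma as a reformulation, in fluid-dynamical language, of the standard solvability conditions for half-space linearized Boltzmann problems (cf.\ \cite{BCN, CGS, GP, UYY}). Those works show that $\LL^{BL} g^{bb} = S^{bb}$ with incoming data prescribed on $\{v\!\cdot\!\mathrm{n}>0\}$ and with $g^{bb}\to 0$ as $\xi\to\infty$ has a (uniquely defined modulo its kernel) decaying solution if and only if certain linear compatibility conditions involving the five collision invariants hold. The three relations (BC-N), (BC-D), (BC-theta) are exactly these five compatibility conditions rewritten using the prescribed shape of $H^{bb}$.

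The first step is to test the BL equation against $\zeta(v)\in\mathrm{Span}\{1,v,|v|^2\}$. Because $\zeta\in\mathrm{Null}(\LL)$ and $\LL$ is self-adjoint, $\langle \LL g^{bb}, \zeta\rangle=0$, so after integrating in $\xi$ over $(0,\infty)$ and using $g^{bb}\to 0$ together with $\grad\mathrm{d}=-\mathrm{n}$ on $\partial\Omega$, one obtains the master identity
\begin{equation*}
\langle (v\!\cdot\!\mathrm{n})\,\zeta\,\gamma g^{bb}\rangle \;=\; \int_0^\infty \langle S^{bb}\zeta\rangle\,\mathrm{d}\xi,
\end{equation*}
valid for each collision invariant $\zeta$. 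This shifts the analysis to evaluating the left-hand side on the boundary in terms of the fluid data carried by $g$ and $f$.

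The second step is to split the trace integral into $\{v\!\cdot\!\mathrm{n}>0\}$ and $\{v\!\cdot\!\mathrm{n}<0\}$ and substitute the reflection law $L^{\mathcal R}\gamma g^{bb}=H^{bb}=-L^{\mathcal R}g+L^{\mathcal D}f$ on the outgoing set. Using that specular reflection satisfies $\langle(v\!\cdot\!\mathrm{n})\chi(v)L\phi\rangle=-\langle(v\!\cdot\!\mathrm{n})\chi(R_x v)\phi\rangle$, the contributions involving $L\gamma_- g^{bb}$ cancel in moments that are even in $v\!\cdot\!\mathrm{n}$ and double up in the odd ones. Taking $\zeta=1$ kills every reflection contribution and yields (BC-N). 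Taking $\zeta=v^{\mathrm{tan}}$ kills the specular piece but keeps the diffuse part $\sqrt{2\pi}\chi[\langle\gamma_- g^{bb}\rangle_{\partial\Omega}-L\gamma_- g^{bb}]$ and the $L^{\mathcal R}g$ piece: expanding $g$ using \eqref{g}, the term $-\partial_\zeta\mathrm{u}^{\b}\!\otimes\!\mathrm{n}:\widehat{\mathrm{A}}$ contributes $[\partial_\zeta\mathrm{u}^{\b}]^{\mathrm{tan}}$ through the identity $\langle(v\!\cdot\!\mathrm{n})v\otimes\widehat{\mathrm{A}}\rangle$ that is computed from Lemma \ref{mu-kappa-identity} and produces the viscosity $\nu$; the factor $1/\chi$ arises from the $L^{\mathcal D}$ normalization; the term $2d(\mathrm{u}^{\Int})\!\cdot\!\mathrm{n}$ comes from $\grad\mathrm{u}^{\Int}:\widehat{\mathrm{A}}$ after symmetrization; and $\nabla_{\!\pi}[\tilde{\mathrm{u}}^{\b}\!\cdot\!\mathrm{n}]$ comes from the tangential derivative piece $\partial_{\pi^\alpha}\tilde{\mathrm{u}}^{\b}\!\otimes\!\grad\pi^\alpha:\widehat{\mathrm{A}}$ once one exploits the curvilinear decomposition of Section 4. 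This gives (BC-D). Taking $\zeta=|v|^2$ (or equivalently $|v|^2/2-(\D+2)/2$, i.e. the $\mathrm{B}$-direction) and using $\langle \mathrm{B}\!\cdot\!\widehat{\mathrm{B}}\rangle=\frac{\D+2}{2}\kappa\mathrm{I}$ from Lemma \ref{mu-kappa-identity} yields (BC-theta); the coefficient $\frac{\D+2}{\D+1}$ and the coupling term $\frac{\sqrt{2\pi}}{2(\D+1)}\mathrm{u}_f\!\cdot\!\mathrm{n}$ come from the fact that the diffuse mass-flux average $\langle\gamma_- g^{bb}\rangle_{\partial\Omega}$ in $L^{\mathcal D}$ mixes the $\rho_f$, $\theta_f$ and $\mathrm{u}_f\!\cdot\!\mathrm{n}$ moments and must be eliminated via the normal-flux identity already obtained from $\zeta=1$.

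The converse direction (sufficiency of the three relations for solvability) is then the abstract existence theorem for the half-space problem with Maxwell-type reflection data, established in the cited BL literature: once all five moment compatibilities are satisfied, one obtains a decaying solution in an appropriate weighted space. The step I expect to be the main obstacle is the bookkeeping of the tangential computation in (BC-D): one must correctly identify the contribution of $\partial_{\pi^\alpha}\tilde{\mathrm{u}}^{\b}\otimes\grad\pi^\alpha:\widehat{\mathrm{A}}$ and reduce it, through Lemma \ref{mu-kappa-identity} together with the curvilinear geometry of $\partial\Omega$, to the single tangential gradient $\nabla_{\!\pi}[\tilde{\mathrm{u}}^{\b}\!\cdot\!\mathrm{n}]$. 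Sign tracking (from $\grad\mathrm{d}=-\mathrm{n}$) and the precise balance between the specular and diffuse contributions inside $L^{\mathcal R}$ and $L^{\mathcal D}$ is the only delicate algebra; everything else follows the standard moment-method template.
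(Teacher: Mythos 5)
Your proposal follows the same route as the paper: test $\LL^{BL}g^{\bb}=S^{\bb}$ against the collision invariants that are invariant under the specular reflection $R_x$ (namely $1$, $|v|^2$, and $a\cdot v$ with $a\perp\mathrm{n}$), reduce the trace integral to $\int_{v\cdot\mathrm{n}>0}H^{\bb}\,\eta\,(v\cdot\mathrm{n})M\,\dd v$ via the change of variables $v\mapsto R_x v$, then evaluate $H^{\bb}$ using the explicit forms of $g,f$ and the identities of Lemma \ref{mu-kappa-identity}. The paper quotes this as the known solvability criterion \eqref{BL-SOL}; you derive it from scratch, but the content is identical.

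Two small slips to flag. First, your ``master identity'' has the wrong sign: with $\grad\mathrm{d}=-\mathrm{n}$ on $\pO$ and the $\LL^{BL}$ given by \eqref{L-BL}, integrating $(v\cdot\mathrm{n})\,\p_\xi\langle\zeta g^{\bb}\rangle=\langle S^{\bb}\zeta\rangle$ over $(0,\infty)$ and using $g^{\bb}\to 0$ gives $-\langle(v\cdot\mathrm{n})\zeta\,\gamma g^{\bb}\rangle=\int_0^\infty\langle S^{\bb}\zeta\rangle\,\dd\xi$, which, after the change-of-variable argument, reproduces \eqref{BL-SOL} with the correct sign; your stated identity would flip the signs in \eqref{BC-N}--\eqref{BC-theta}. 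Second, the phrase ``$\zeta=1$ kills every reflection contribution'' is imprecise: for every $R_x$-invariant $\zeta$, the specular parts of $\gamma_+g^{\bb}$ and $L\gamma_-g^{\bb}$ do not cancel --- they combine so that only $H^{\bb}$ appears; the restriction to $R_x$-invariant $\zeta$ is what makes the boundary integral expressible purely in terms of the prescribed data (and is why $\zeta=\mathrm{n}\cdot v$ gives no usable solvability condition). With these corrections the argument coincides with the one in the paper, which likewise defers existence (the ``if'' direction) to the cited half-space boundary-layer literature.
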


\begin{proof}

The solvability conditions of the linear boundary layer equation
\eqref{B-L-Equation} with boundary condition \eqref{BC-beta>0} are
given by
\begin{equation}\label{BL-SOL}
\int_{v\cdot \mathrm{n} >0} H^\bb\eta(v)(v\cdot \mathrm{n}) M\, \dd
v=-\int^\infty_0\< S^{\bb} \eta\>\, \dd \xi\,,
\end{equation}
for all $\eta(v)\in$Null$(\LL)$ satisfying the condition:
\begin{equation}\label{condition-v}
\eta(R_x v)=\eta(v)\,.
\end{equation}

It is obvious that $1$ and $|v|^2$ satisfy \eqref{condition-v}. If $\eta(v) = \sum^\D_1 a_i v_i$ satisfies \eqref{condition-v}, then necessarily
\begin{equation}\label{perpendicular}
(v\cdot \mathrm{n})\sum^\D_{i=1} a_i \mathrm{n}_i = 0\,,
\end{equation}
which implies that the vector $\mathrm{a} = (a_1\,,\cdots\,,
a_{\D})^\top$ is perpendicular to the outer normal vector $\mathrm{n}$.

The formula \eqref{BC-N} can be derived by taking $\eta=1$ in
\eqref{BL-SOL}. Simple calculations show that
\begin{equation}\nonumber
\begin{aligned}
\int_{v\cdot \mathrm{n} >0} H^\bb(v) (v\cdot \mathrm{n}) M\,\dd v&=-\< v \gamma g\>\cdot \mathrm{n}\\
&=-\mathrm{u}_g\cdot \mathrm{n}-\< v S_g\> \cdot \mathrm{n}\,.
\end{aligned}
\end{equation}
Note that $S_g \in$Null$(\LL)^\perp$, hence \eqref{BC-N} follows.

To prove \eqref{BC-D}, by taking $\eta= \sum ^\D_1 a_i v_i$ in
\eqref{BL-SOL}, we have
\begin{equation}\nonumber
\int_{v\cdot \mathrm{n} >0} H^\bb (a_i v_i)(\mathrm{n}_j v_j)  M\, \dd
v=-\int^\infty_0\< (a_i v_i) S^{\bb} \>\, \dd \xi\,.
\end{equation}
In other words,
\begin{equation}\nonumber
\begin{aligned}
&-\int_{v \cdot \mathrm{n}>0}(L^\mathcal{R}g) (a_i v_i)(\mathrm{n}_j v_j)M\,\dd v+\int_{v \cdot \mathrm{n}>0}(L^\mathcal{D}f) (a_i v_i)(\mathrm{n}_j v_j)M\,\dd v\\
=& -\int^\infty_0\< S^{\bb} (a_i v_i)\>\, \dd \xi\,.
\end{aligned}
\end{equation}
Simple calculations yield that
\begin{equation}\nonumber
\int_{v \cdot \mathrm{n}>0}(L^\mathcal{R}g) (a_i v_i)(\mathrm{n}_j v_j)M\,\dd v = \<  v_i v_j \gamma g\>
a_i \mathrm{n}_j = \<  \mathrm{A}_{ij} \gamma g\>
a_i \mathrm{n}_j\,.
\end{equation}
Using the definition of the viscosity $\nu $ in \eqref{mu-kappa} and
Lemma \ref{mu-kappa-identity}, we have
\begin{equation}\nonumber
\begin{aligned}
& \int_{v \cdot \mathrm{n}>0}(L^\mathcal{R}g) (a_i v_i)(\mathrm{n}_j v_j)M\,\dd v \\
 = & -\nu [\p_\zeta \mathrm{u}^\b]\cdot \mathrm{a} +\nu
\left[(\grad \mathrm{u}^\Int +(\grad \mathrm{u}^\Int)^\top) \cdot
\mathrm{n}\right] \cdot \mathrm{a}+ \nu\nabla_{\pi}[\tilde{\mathrm{u}}\cdot\mathrm{n}]\cdot\mathrm{a} +\< S_g \mathrm{A} \mathrm{n}\>
\cdot \mathrm{a}\,.
\end{aligned}
\end{equation}
Next, it can be calculated that
\begin{equation}\label{R-f}
L^\mathcal{D} f  =
\big(\tfrac{\D+1}{2}\theta_f-\tfrac{\sqrt{2\pi}}{2}\mathrm{u}_f\cdot
\mathrm{n}\big) - \mathrm{u}_f\!\cdot\! v + 2(\mathrm{u}_f\!\cdot\!\mathrm{n})\mathrm{n}\!\cdot\! v -\tfrac{1}{2}\theta_f|v|^2\\
  + L^\mathcal{D}S_f\,,
\end{equation}
where $R_x \mathrm{u}_f=\mathrm{u}_f-2(\mathrm{u}_f\cdot
\mathrm{n})\mathrm{n}$ is the reflection of $\mathrm{u}_f$ with
respect to the normal $\mathrm{n}(x)$. Thus
\begin{equation}\nonumber
\int_{v\cdot \mathrm{n} >0} (L^\mathcal{D} f)  (a_i v_i)(v\cdot\mathrm{n})  M\,\dd v
=- \tfrac{1}{\sqrt{2\pi}}(R_x \mathrm{u}_f)\cdot \mathrm{a} +
\left[\int_{v \cdot \mathrm{n} >0} (L^\mathcal{D} S_f)  \mathrm{A}\cdot \mathrm{n} M\,\dd v \right] \cdot
\mathrm{a}\,.
\end{equation}

To prove \eqref{BC-theta}, taking $\eta= |v|^2$ in
\eqref{BL-SOL}, we have
\begin{equation}\nonumber
\int_{v\cdot \mathrm{n} >0} H^\bb |v|^2(v\cdot \mathrm{n}) M\, \dd
v=-\int^\infty_0\<|v|^2 S^{\bb} \>\, \dd \xi\,.
\end{equation}
It can be calculated that
\begin{equation}\nonumber
\int_{v\cdot \mathrm{n} >0}(L^\mathcal{R}g) (v\cdot \mathrm{n})|v|^2M\,\dd v=2 \< \gamma g \mathrm{B}\>
\cdot \mathrm{n} +(\D+2) \< (v\cdot \mathrm{n}) \gamma g\>\,.
\end{equation}
Using the definition of the thermal conductivity $\kappa$ in
\eqref{mu-kappa} and Lemma \ref{mu-kappa-identity}, we have
\begin{equation}\nonumber
\begin{aligned}
&\int_{v \cdot \mathrm{n}>0}L^\mathcal{R}g (v\cdot \mathrm{n})|v|^2M\,\dd v\\ =&-(\D+2)\kappa \p_\zeta
\theta^b +(\D+2)\kappa \grad\theta^\Int \cdot \mathrm{n} +(\D+2)
\mathrm{u}_g\cdot \mathrm{n}+\< (v\cdot \mathrm{n}) |v|^2 S_g \>\,.
\end{aligned}
\end{equation}
Furthermore,
\begin{equation}\nonumber
\begin{aligned}
& \int_{v \cdot \mathrm{n}>0} (L^\mathcal{D} f)  (v\!\cdot\!\mathrm{n}) |v|^2 M\, \dd v\\
 = & \tfrac{1}{2}\mathrm{u}_f\cdot \mathrm{n} -
\tfrac{\D+1}{\sqrt{2\pi}}\theta_f + \int_{v\cdot \mathrm{n}>0} (L^\mathcal{D} S_f)  (v \cdot \mathrm{n})
|v|^2 M\,\dd v\,.
\end{aligned}
\end{equation}

As showed in \eqref{perpendicular}, the vector $\mathrm{a}$ is
perpendicular to $\mathrm{n}$. Thus the resulting vector of inner
product with $\mathrm{a}$ is the tangential part. Finally, noticing
$[R_x \mathrm{u}_f]^{\mathrm{tan}}=[\mathrm{u}_f]^{\mathrm{tan}}$,
we then finish the proof of the Lemma \ref{BC}.
\end{proof}

\section{Approximate Eigenfunctions-Eigenvalues}

\subsection{Motivation} We define the operators $\LL_\eps$ and $\LL^*_\eps$ as
\begin{equation}\nonumber
 \LL_\eps := \frac{1}{\eps}\LL -v\!\cdot\!\grad\,, \qquad \LL^*_\eps := \frac{1}{\eps}\LL +v\!\cdot\!\grad\,.
\end{equation}
Formally, $\LL_\eps$ and $\LL^*_\eps$ are ``dual'' in the following sense:
\begin{equation}\label{formal-dual}
\<\LL^*_\eps g^*\,, g \> = \< g^* \,, \LL_\eps g \>\,,
\end{equation}
provided that $g^*$ satisfies the Maxwell reflection boundary condition
\begin{equation}\label{Maxwell-BC}
\gamma_{-}g^*=(1-\alpha)L \gamma_{+}g^*+\alpha \langle\gamma_+g^*\rangle_\pO\quad \mbox{on}\quad \Sigma_{-}\,,
\end{equation}
and $g$ satisfies the dual boundary condition
\begin{equation}\label{dual-BC}
\gamma_{+}g=(1-\alpha)L \gamma_{-}g +\alpha \langle\gamma_-g\rangle_\pO\quad \mbox{on}\quad \Sigma_{+}\,.
\end{equation}

If $g_\eps$ is the fluctuation defined in \eqref{define-g}, then $g_\eps$ obeys the scaled Boltzmann equation \eqref{BE-g} in which $\LL^*_\eps g_\eps$ appears and $g_\eps$ satisfies the boundary condition \eqref{Maxwell-BC}. Then from \eqref{formal-dual}, $\LL_\eps g^{BL}_\eps$ appears in the weak formulation of the Boltzmann equation if we take $g^{BL}_\eps$ as a test function. Thus, it is natural to construct eigenfunctions and eigenvalues of $\LL_\eps$ satisfying the dual boundary condition \eqref{dual-BC}. Specifically, we consider the kinetic eigenvalue problem:
\begin{equation}\label{eigen-Lep}
\mathcal{L}_\eps g^{BL}_\eps=-i\lambda^{BL}_{\eps} g^{BL}_\eps\,,
\end{equation}
with $g^{BL}_\eps$ satisfying the dual Maxwell boundary condition \eqref{dual-BC},
where the accommodation coefficient $\alpha$ takes the value $\ale = \sqrt{2\pi}\chi \sqrt{\eps}$. By doing so, formally the equation \eqref{BE-g} becomes an ordinary differential equation of $b_{\eps}=\int_{\Omega}\< g_{\eps}\,, g^{BL}_{\eps}\>\,\mathrm{d}x$:
\begin{equation}\nonumber
  \tfrac{\mathrm{d}}{\mathrm{d}t}b_{\eps} + \tfrac{i\lambda^{BL}_{\eps}}{\eps}b_{\eps} = c_{\eps}\,.
\end{equation}

To solve the eigenvalue problem \eqref{eigen-Lep} and \eqref{dual-BC}, a key observation is that the solutions must include interior and two boundary layer terms: the {\em fluid viscous boundary layer} with thickness $\sqrt{\eps}$, and the {\em kinetic Knudsen layer} with thickness $\eps$. We make the ansatz of $g^{BL}_\eps$ and $\lambda^{BL}_{\eps}$  as
\begin{equation}\label{1-ansatz-half}
g^{BL}_{\eps}=\sum\limits_{m\geq 0}\left[
g^{\Int}_{m}(x,v) +
g^{\b}_{m}(\pi(x),\tfrac{\dd(x)}{\sqrt{\eps}},v)\right] \eps^\frac{m}{2} + \sum\limits_{m\geq 1}g^{\bb}_{m}(\pi(x),\tfrac{\dd(x)}{\eps},v)\eps^\frac{m}{2} \,,
\end{equation}
and
\begin{equation}\label{1-lambda-half}
\lambda^{BL}_{\eps}=\sum\limits_{m \geq
0}\lambda_{m}\eps^
{\frac{m}{2}}\,.
\end{equation}

Each $g^{\b}_{m}$ and $g^{\bb}_{m}$ are defined in $\Omega^\delta$, the
$\delta\mbox{-}$tubular neighborhood of $\pO$ in $\Omega$, where
$\delta>0$ is the small number defined in Lemma \ref{Projection},
the projection $\pi$ is defined in \eqref{Pai}. After rescaling by
$\sqrt{\eps}$ and $\eps$ respectively,
\begin{equation}\nonumber
g^{\b}_{m}, g^{\bb}_{m}: (\pO \times \mathbb{R}^+) \times \RD
\longrightarrow \mathbb{R}\,.
\end{equation}
Both $g^{\b}_{m}$ and $g^{\bb}_{m}$ will vanish in the
outside of $\Omega^\delta$. Thus $g^{\b}_{m}$ and $g^{\bb}_{m}$
are required to be rapidly decreasing to 0 in the $\zeta$ and $\xi$
respectively, which are defined by $\zeta=\frac{\mathrm{d}(x)}{\sqrt{\eps}}$
and $\xi=\frac{\mathrm{d}(x)}{\eps}$.

In the ansatz \eqref{1-ansatz-half}, $g^{BL}_\eps$ consists three types of terms: the interior terms $g^{\Int}_{m}$,
the fluid viscous boundary layer terms $g^{\b}_{m}$, and the kinetic Knudsen layer terms
$g^{\bb}_{m}$. They are coupled through the boundary condition \eqref{dual-BC}.

\subsection{Statement of the Proposition}
Now we state the proposition which can be considered as a kinetic analogue of the
Proposition 2 in \cite{DGLM}.
\begin{Prop}\label{main-prop}
Let $\Omega$ be a  $C^2$ bounded domain of $\RD$ and the accommodation coefficient $\alpha_\eps= \sqrt{2\pi}\chi\sqrt{\eps}$. Then, for every acoustic mode $k
\geq 1$, non-negative integer $N$, and each $\tau \in \{+\,, -\}$, there exists approximate
eigenfunctions $g^{\tau,k}_{\eps,N}$ and eigenvalues
$-i\lambda^{\tau,k}_{\eps,N}$ of $\LL_\eps$ , and error terms
$R^{\tau,k}_{\eps,N}$ and $r^{\tau,k}_{\eps,N}$ respectively,
such that
\begin{equation}\label{eigen-Lep-N}
\LL_\eps
g^{\tau,k}_{\eps,N}=-i\lambda^{\tau,k}_{\eps,N}g^{\tau,k}_{\eps,N}+R^{\tau,k}_{\eps,N}\,,
\end{equation}
and $g^{\tau,k}_{\eps,N}$ satisfy the approximate dual Maxwell boundary condition:
\begin{equation}\label{bc-ki}
L^\mathcal{R} g^{\tau,k}_{\eps,N}=\sqrt{\eps} L^\mathcal{D} g^{\tau,k}_{\eps,N}+r^{\tau,k}_{\eps,N}\quad
\mbox{on}\quad\! \Sigma_{+}\,.
\end{equation}
Moreover, there exits complex numbers $\lambda^{\tau,k}_1$,  such that $i\lambda^{\tau,k}_{\eps,N}$ has the following expansions:
\begin{equation}\label{lambda-1}
i\lambda^{\tau,k}_{\eps,N}=
i\lambda^{\tau,k}_0+i\lambda^{\tau,k}_1\sqrt{\eps}+O(\eps)\,,\quad
 with\quad \mathrm{Re}(i\lambda^{\tau,k}_1)< 0\,.
\end{equation}
Furthermore, for all $1<r, p\leq\infty$, we have error estimates:
\begin{equation}\label{error1}
\|R^{\tau,k}_{\eps,N}\|_{L^r(\mathrm{d}x,L^p(a^{1-p}M\mathrm{d}v))}=O(\sqrt{\eps}^{N-1})\,,
\end{equation}
 and
\begin{equation}\label{error2}
\|g^{\tau,k}_{\eps,N}-g^{\tau,k,\Int}_{0}\|_{L^r(\mathrm{d}x,L^p(a^{1-p}M\mathrm{d}v))}=O
(\eps^{\frac{1}{2r}})\,.
\end{equation}
where $g^{\tau,k,\Int}_{0}$ is defined in
\eqref{acoustic-modes}. We also have the boundary error estimates:
\begin{equation}\label{r-error}
\|r^{\tau,k}_{\eps,N}\|_{L^r(\mathrm{d}\sigma_x,L^p(a^{1-p}M\mathrm{d}v))}
=O\left(\sqrt{\eps}^{N+1}\right)\,.
\end{equation}
\end{Prop}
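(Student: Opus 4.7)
The approach is to substitute the three-scale ansatz \eqref{1-ansatz-half}--\eqref{1-lambda-half} into \eqref{eigen-Lep-N} and \eqref{bc-ki} and match like powers of $\sqrt{\eps}$. Using $\grad\dd = -\mathrm{n}$ on $\pO$ together with \eqref{Proj-Tangent}, the streaming operator splits near the boundary in the viscous variable as $v\cdot\grad = (v\cdot\grad\dd)\tfrac{1}{\sqrt{\eps}}\p_\zeta + (v\cdot\grad\pi^\alpha)\p_{\pi^\alpha}$, and analogously with $\eps^{-1}\p_\xi$ on the Knudsen profiles. This yields three decoupled hierarchies (interior, viscous, Knudsen), linked only through the boundary relation \eqref{bc-ki} evaluated at $\{\xi=\zeta=0\}$ and the matching conditions $g^{\b}_m, g^{\bb}_m \to 0$ as $\zeta,\xi\to\infty$. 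The interior terms $g^{\Int}_m$ live on $\Omega$ and have no small parameter, so each order produces a separate sub-problem.

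For the interior hierarchy, the leading order enforces $\LL g^{\Int}_0 = 0$, so $g^{\Int}_0$ has the infinitesimal-Maxwellian form parametrised by $U^{\Int}_0 = (\rho^{\Int}_0, \uu^{\Int}_0, \theta^{\Int}_0)$. At order $\sqrt{\eps}^{-1}$, Lemma \ref{Kinetic-Sol} applied to $\LL g^{\Int}_1 = v\cdot\grad g^{\Int}_0 - i\lambda_0 g^{\Int}_0$ produces $\AA U^{\Int}_0 = i\lambda_0 U^{\Int}_0$, forcing $\lambda_0 = \lambda^{\tau,k}_0 := \tau\lambda^k$ and placing $U^{\Int}_0$ in $\mathrm{Ker}(\AA - i\lambda_0)$, so that $g^{\Int}_0$ is of the form \eqref{acoustic-modes}. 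Inverting $\LL$ on its range then writes $\PP^\perp g^{\Int}_1$ in terms of $\AHat$ and $\BHat$ applied to $\grad\uu^{\Int}_0$ and $\grad\theta^{\Int}_0$. Iteratively, the order-$\sqrt{\eps}^{m-1}$ condition becomes $(\AA - i\lambda_0)U^{\Int}_m = i\lambda_1 U^{\Int}_{m-1} + \cdots + F_m$, solved by Lemma \ref{Solve-A}, whose compatibility \eqref{imu} determines $\lambda_m$ once the boundary data for $\uu^{\Int}_m\!\cdot\mathrm{n}$ have been furnished by the layer analysis.

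The Knudsen and viscous layers are then constructed together. The leading Knudsen equation $\LL^{BL} g^{\bb}_1 = 0$ must absorb the trace mismatch of $g^{\Int}_0 + g^{\b}_0$ at $\pO$; its solvability given by Lemma \ref{BC} dictates, via \eqref{BC-N}, the normal trace $\uu^{\Int}_0\!\cdot\mathrm{n}$, and via \eqref{BC-D}--\eqref{BC-theta}, the relation between the tangential viscous traces and their normal derivatives $\p_\zeta \uu^{\b}_0$ and $\p_\zeta\theta^{\b}_0$. Projecting the viscous-layer equation onto $\AHat$ and $\BHat$ reduces it to the decoupled ODEs
\begin{equation*}
\nu\,\p^2_\zeta \uu^{\b,\mathrm{tan}}_0 = i\lambda_0\,\uu^{\b,\mathrm{tan}}_0, \qquad \kappa\,\p^2_\zeta \theta^{\b}_0 = i\lambda_0\,\theta^{\b}_0,
\end{equation*}
whose decaying solutions are $\exp(-\sqrt{i\lambda_0/\nu}\,\zeta)$ and $\exp(-\sqrt{i\lambda_0/\kappa}\,\zeta)$, selecting the square root with strictly positive real part. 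Feeding the resulting boundary data back into the order-$\sqrt{\eps}^0$ interior compatibility \eqref{imu} expresses $i\lambda_1$ as a boundary integral over $\pO$ weighted by $\sqrt{i\lambda_0/\nu}$ and $\sqrt{i\lambda_0/\kappa}$, whose positive real parts yield $\mathrm{Re}(i\lambda^{\tau,k}_1) < 0$; this is the damping mechanism.

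The main obstacle is the multiplicity case, when $[\lambda^k]^2$ is a multiple Neumann eigenvalue. At each stage $m$, the projection of $U^{\Int}_m$ on $\mathrm{Ker}(\AA - i\lambda_0)$ is not fixed by the present equation but only by the compatibility \eqref{compatability-KL} at the next stage, and this requires the recursively defined orthogonality \eqref{condition-n}. I would prove by induction on $m$ that the condition arising at the $m$-th step reads precisely $Q_{m+1}(U^{\Int}_0, U^{\tau,l}) = 0$ for every $U^{\tau,l}$ in the joint eigenspace $\mathrm{H}_1(\lambda_1)\cap\cdots\cap\mathrm{H}_m(\lambda_m)$, where $Q_{m+1}$ is the quadratic form of Section 5.3; the construction of $L_{m+1}$ from the previous stages is tailored exactly so that its eigenvectors $\Psi^k$ make these conditions consistent, which is where one uses that the basis simultaneously diagonalises $L_1,\ldots,L_{N+2}$. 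Once the ansatz is closed up to order $N$, the remainder $R^{\tau,k}_{\eps,N}$ collects only the terms of order $\sqrt{\eps}^{N-1}$ or higher, giving \eqref{error1}; the $\eps^{1/(2r)}$ factor in \eqref{error2} is produced by the $L^r(\mathrm{d}x)$-norm of the two layers of thickness $\sqrt{\eps}$ and $\eps$ after the curvilinear change of variables of Section 4; and \eqref{r-error} follows because the boundary equation is solved exactly at each matched order up to the truncation.
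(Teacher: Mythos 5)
Your outline follows the same strategy as the paper: a three-scale WKB expansion (interior, viscous layer of thickness $\sqrt{\eps}$, Knudsen layer of thickness $\eps$), Fredholm solvability of $\LL$ at each order, the kinetic boundary layer solvability (Lemma \ref{BC}) to generate Robin data for the viscous-layer ODEs, Lemma \ref{Solve-A} to solve the acoustic system and fix $\lambda_m$, and the recursive orthogonality scheme of Section 5.3 to handle multiple Neumann eigenvalues. The truncation estimates you sketch are also consistent with the paper's (the dominant contribution to \eqref{error2} comes from $g^{\b}_0$, giving $\eps^{1/(2r)}$ after the curvilinear change of variable).

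However, there is a genuine gap at the single most important point of the statement, the \emph{strict} inequality $\mathrm{Re}(i\lambda^{\tau,k}_1)<0$ in \eqref{lambda-1}. You claim it follows because the weights $\sqrt{i\lambda_0/\nu}$ and $\sqrt{i\lambda_0/\kappa}$ in the boundary integral formula for $i\lambda^{\tau,k}_1$ have positive real parts. This gives only $\mathrm{Re}(i\lambda^{\tau,k}_1)\le 0$, since the formula (cf.\ \eqref{negative10-half}) is a linear combination of the nonnegative quantities $\int_{\pO}|\nabla_{\!\pi}\Psi^k|^2\dd\sigma_x$ and $\int_{\pO}|\Psi^k|^2\dd\sigma_x$ with coefficients of negative real part, and a priori both could vanish. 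To rule out equality you need an additional argument: if $\mathrm{Re}(i\lambda^{\tau,k}_1)=0$ then $\Psi^k$ and $\nabla_{\!\pi}\Psi^k$ both vanish on $\pO$, and since $\partial_\mathrm{n}\Psi^k=0$ as well, the extension of $\Psi^k$ by zero outside $\Omega$ would be a compactly supported eigenfunction of $-\Delta_{\!x}$ on $\RD$, which is impossible. Without this unique-continuation step the damping mechanism driving Proposition \ref{acoustic-vanish} does not close, so the gap is essential.

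Two further points are less serious but worth noting. First, your order-counting is shifted: the equation producing $\AA U^{\Int}_0=i\lambda_0 U^{\Int}_0$ arises at order $O(\sqrt{\eps}^0)$ from $\LL g^{\Int}_2=v\cdot\grad g^{\Int}_0-i\lambda_0 g^{\Int}_0$ (the order $O(\sqrt{\eps}^{-1})$ equation is simply $\LL g^{\Int}_1=0$), and $i\lambda^{\tau,k}_1$ is determined at order $O(\sqrt{\eps}^1)$ in the interior, not $O(\sqrt{\eps}^0)$. Second, you say the viscous-layer ODEs come from ``projecting onto $\AHat$ and $\BHat$''; the decoupling is in fact obtained by projecting the fluid moment system onto $\mathrm{Null}(\AA^{\mathrm{d}})$ and $\mathrm{Null}(\AA^{\mathrm{d}})^\perp$, where $\AA^{\mathrm{d}}$ is the normal boundary acoustic operator of \eqref{A-d}; the Null$^\perp$ part gives the first-order constraints ($\rho^\b+\theta^\b$ and $\uu^\b\cdot\grad\dd$) while the Null part yields the second-order ODEs for $\uu^\b\cdot\grad\pi$ and $\theta^\b$. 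The operators $\AHat,\BHat$ enter through Lemma \ref{mu-kappa-identity} when computing the diffusive coefficients, not as a projection.
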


\subsection{Main Idea of the Proof}

For each non-negative integer $m$, $g^\Int_m$ is decomposed as hydrodynamic part $\PP g^\Int_m$, i.e. the projection on Null$(\LL)$, and kinetic part $\PP^{\!\perp} g^\Int_m$, the projection on Null$(\LL)^{\!\perp}$. The hydrodynamic part is given by
\begin{equation}\nonumber
\PP g^\Int_m=
\rho^{\Int}_m+v\!\cdot\!\mathrm{u}^{\Int}_m+\big(\cvd\big)\theta^\Int_m = (1,v,\tfrac{|v|^{2}}{2}-\tfrac{\D}{2})U^{\Int}_{m}\,,
\end{equation}
where $U^{\Int}_m=
(\rho^{\Int}_m,\mathrm{u}^{\Int}_m,\theta^\Int_m
)^\top$ is called the fluid variables of $g^\Int_m$. It can be shown that the coefficients of $\PP^{\!\perp} g^\Int_m$ are in terms of  $U^{\Int}_{m'}$ and their derivatives for $m'<m$. Thus, we need only to solve $U^\Int_n$ for all integers $n\leq m$ to determine $g^\Int_m$. Similar notations will be used for $g^{\b}_m$, and for the same reason we also need only to solve $U^\b_m$.

We put the ansatz  into the equation \eqref{eigen-Lep}, then collect the same order terms. The leading order term $g^{\Int}_0$ is hydrodynamic, which means $g^\Int_0$ is completely determined by $U^\Int_0$. we can derive that $U^\Int_0$ satisfies the equation
\begin{equation}\label{eigen-00}
\AA U^\Int_0 = i \lambda_0 U^\Int_0\,.
\end{equation}
For \eqref{eigen-00} there are two cases:
\begin{itemize}
\item  {\bf Case 1}: $\lambda_0\neq 0$, by comparing the
equations \eqref{eigen-00} and \eqref{acoustic-equ},
$ i\lambda_0$ is an eigenvalue of the acoustic operator
$\AA$, i.e. $\lambda_0= \lambda^{\tau,k}$ and
$U^\Int_0= U^{\tau,k}$, where $k \geq 0$ is the acoustic modes, and $\tau$ denotes either $+$ or $-$. Starting from here, we can construct the boundary layer $g^{BL}_\eps$ which we call the boundary layer in the {\bf acoustic modes}.
\item {\bf Case 2}: $\lambda_0=0$,
which implies that $U^\Int_0\in\mbox{Ker}(\AA)$, i.e. $\rho^\Int_0+ \theta^\Int_0=0$ and $\DIV \mathrm{u}^\Int_0=0$. Starting from here, we can construct the boundary layer $g^{BL}_\eps$ which we call the boundary layer in the {\bf incompressible modes}.
\end{itemize}
Because the main goal of the current paper is about how the acoustic waves and the boundary layers interact in the incompressible Navier-Stokes limit of the Boltzmann equation, we only consider the {\bf Case 1},
the kinetic-fluid boundary layers in the acoustic modes for
each $k\geq 0$ and each $\tau$. Consequently, we add superscript $\tau,k$ for each term in the ansatz.
In the forthcoming paper \cite{J-M-2}, we will investigate the higher order acoustic limit of the Boltzmann equation, where we need to analyze the boundary layers in incompressible modes, i.e. {\bf Case 2}.

The basic strategy to solve all terms in the ansatz  is the following: (for the simplicity of notation, we don't write the upper index $\tau$)

\noindent {\bf 1,} $g^{k,\bb}_m$ satisfies the linear kinetic boundary layer equation \eqref{B-L-Equation}. Applying Lemma \ref{BC}, the solvability conditions for $g^{k,\bb}_m$ give the normal boundary condition $[\mathrm{u}^{k,\Int}_m+ \mathrm{u}^{k,\b}_m]\cdot\mathrm{n}$ and the tangential boundary condition $[\mathrm{u}^{k,\b}_{m-1}-\tfrac{\nu}{\chi}\partial_{\!\zeta}\mathrm{u}^{k,\b}_{m-1}]^{\mathrm{tan}} +[\mathrm{u}^{k,\Int}_{m-1}]^{\mathrm{tan}}$ and $\theta^{k,\b}_{m-1}-\tfrac{\D+2}{\D+1}\tfrac{\kappa}{\chi}\partial_{\!\zeta}\theta^{k,\b}_{m-1}+ \theta^{k,\Int}_{m-1}\,;$

\noindent {\bf 2,} $U^{k,\b}_m$ satisfies the ODE system like \eqref{U-b-m-1}, where the normal boundary acoustic operator $\AA^{\mathrm{d}}$ is defined in \eqref{A-d}. Solving $U^{k,\b}_m$ includes two steps: First, projecting the ODE system of $U^{k,\b}_m$ on Null$^\perp(\AA^{\mathrm{d}})$ to get the first order ODE satisfied by $\rho^{k,\b}_m + \theta^{k,\b}_m$ and $\mathrm{u}^{k,\b}_m\cdot\grad\mathrm{d}$ (thus only need one boundary condition at $\zeta=\infty$). The next, projecting on Null$(\AA^{\mathrm{d}})$, to solve $\mathrm{u}^{k,\b}_{m-1}\cdot\grad\pi$ and $\theta^{k,\b}_{m-1}$ which satisfy second ODE. Thus besides the condition at $\zeta=\infty$, another boundary condition at $\zeta=0$ is needed. This is given by the solvability condition of $g^{k,\bb}_{m}$ in {\bf 1}.

\noindent {\bf 3,} $U^{k,\Int}_m$ and $i \lambda^{k}_m$ can be solved by applying Lemma \ref{Solve-A}, where only the boundary condition in the normal
direction $\mathrm{u}^{k,\Int}_m\!\cdot\!\mathrm{n}$ is needed which can be known from $\mathrm{u}^{k,\b}_m\!\cdot\!\mathrm{n}$ since their summation $[\mathrm{u}^{k,\Int}_m+ \mathrm{u}^{k,\b}_m]\cdot\mathrm{n}$ is found in $(1)$, and $\mathrm{u}^{k,\b}_m\!\cdot\!\mathrm{n}$ is already known in {\bf 2};

\noindent {\bf 4,} If the multiplicity of the eigenvalue $\lambda^{k}_0$ is greater than 1, applying Lemma \ref{Solve-A}, $U^{k,\Int}_m$ can be only solved modulo $\mathrm{Ker}(\AA - i\lambda^{k}_0)$. The components of $U^{k,\Int}_m$ in $\mathrm{Ker}(\AA - i\lambda^{k}_0)$ will be solved by applying Lemma \eqref{Solve-A} again in later rounds.

\section{Proof of Proposition \ref{main-prop}: Construction of Boundary Layers}

In this section, we construct the kinetic-fluid boundary layers corresponding to the accommodation coefficient $\ale = \sqrt{2\pi}\chi \sqrt{\eps}$. As mentioned in the previous section, we make the ansatz for $g^{BL}_{\eps}$ and $\lambda^{BL}_{\eps}$ as in \eqref{1-ansatz-half} and \eqref{1-lambda-half}. Then we formally
plug the ansatz \eqref{1-ansatz-half} into  the equation \eqref{eigen-Lep} and then collect the terms with the same order of $\eps$ in the interior, the viscous boundary layer, and the  Knudsen layer respectively. The following calculations will be frequently used:
\begin{equation}\nonumber
\begin{aligned}
v\!\cdot\!\grad g^\b(\pi(x), \tfrac{\mathrm{d}(x)}{\sqrt{\eps}})&= (v\!\cdot\!\grad \pi^\alpha)\p_{\pi^\alpha}g^\b + \tfrac{1}{\sqrt{\eps}} (v\!\cdot\!\grad \mathrm{d})\p_\zeta g^\b\,,\\
v\!\cdot\!\grad g^\bb(\pi(x), \tfrac{\mathrm{d}(x)}{\eps})&=
(v\!\cdot\!\grad \pi^\alpha)\p_{\pi^\alpha}g^\bb +
\tfrac{1}{\eps} (v\!\cdot\!\grad \mathrm{d})\p_\xi g^\bb\,.
\end{aligned}
\end{equation}

\subsection{Normal Boundary Acoustic Operator}
A key role played in the analysis of the viscous boundary layer is the so-called normal boundary acoustic operator $\AA^\mathrm{d}$ of the viscous boundary fluid variables $U^\b = (\rho^\b\,, \mathrm{u}^\b\,, \theta^\b)^\top$:
\begin{equation}\label{A-d}
\mathcal{A}^{\dd} U^\b :=
\begin{pmatrix}
\partial_{\!\zeta}(\mathrm{u}^\b\!\cdot\!\nabla_{\!x}\mathrm{d})\\
\partial_{\!\zeta}(\rho^\b+\theta^\b)\nabla_{\!x}\mathrm{d}\\
\tfrac{2}{\D}\partial_{\!\zeta}(\mathrm{u}^\b\!\cdot\!\nabla_{\!x}\mathrm{d})
\end{pmatrix}\,.
\end{equation}
The null space of $\AA^\b$ and its orthogonal space are
\begin{equation}\nonumber
\begin{aligned}
 \mathrm{Null}(\AA^\mathrm{d}) = \{(\rho^\b\,, \mathrm{u}^\b\,, \theta^\b)^\top \in L^2(\mathrm{d}x\,; \Omega^\delta): \rho^b + \theta^\b= 0\,, \mathrm{u}^\b\cdot \grad\mathrm{d}=0\}\,,\\
 \mathrm{Null}^\perp(\AA^\mathrm{d}) = \{(\rho^\b\,, \mathrm{u}^\b\,, \theta^\b)^\top \in L^2(\mathrm{d}x\,; \Omega^\delta): \theta^\b = \tfrac{2}{\D}\rho^\b\,, \mathrm{u}^\b\cdot \grad\pi=0 \}\,, \end{aligned}
\end{equation}
where the orthogonality is with respect to the inner product endowed on $L^2(\mathrm{d}x\,; \Omega_\delta)$:
\begin{equation}\nonumber
 \langle \widetilde{U}^\b\,, U^\b \rangle_{L^2(\Omega_\delta)} := \int_{U^{\delta}}(\tilde{\rho}^\b \overline{\rho^b} + \tilde{\mathrm{u}}^\b\cdot \overline{\mathrm{u}^\b} + \tfrac{\D}{2} \tilde{\theta}^\b \overline{\theta^b})\,\mathrm{d}x\,.
\end{equation}
The projections from $L^2(\Omega_\delta)$ to Null$(\AA^\mathrm{d})$ and Null$(\AA^\mathrm{d})^\perp$ are defined as
\begin{equation}\label{U-b-decomposition}
\begin{aligned}
 U^\b & = \Pi^\b U^\b + (\mathrm{I}-\Pi^\b) U^\b\\
      & := \begin{pmatrix}
             \tfrac{2}{\D+2}\rho^b-\tfrac{\D}{\D+2}\theta^\b \\
             (\mathrm{u}^\b\cdot \grad \pi^\alpha)\grad\pi^\alpha\\
             \tfrac{\D}{\D+2}\theta^\b - \tfrac{2}{\D+2}\rho^b
          \end{pmatrix}
          + \begin{pmatrix}
             \tfrac{\D}{\D+2}(\rho^\b+ \theta^\b) \\
             (\mathrm{u}^\b\cdot \grad \mathrm{d})\grad \mathrm{d}\\
             \tfrac{2}{\D+2}(\rho^\b+ \theta^\b)
          \end{pmatrix}\,.
\end{aligned}
\end{equation}

We remark that the normal boundary acoustic operator $\AA^\mathrm{d}$ and its Null and Null orthogonal spaces appear in other places to play a key role. For example, the zero viscosity limit of compressible NSF equations with boundary, see \cite{Ding-Jiang}.

\subsection{Preparations}

Before we start the induction, we solve $g^{\Int}_{0}$, $g^{\b}_{0}$
and $i\lambda_{0}$. First, the terms of order $O(\eps^{-1})$ in the interior and viscous boundary layers
in the equation $\eqref{eigen-Lep}$ yield
\begin{equation}\nonumber
\LL g^{\Int}_{0}=0\quad\mbox{and}\quad \LL
g^{\b}_{0}=0\,,
\end{equation}
which imply that $g^{\Int}_{0}$ and
$g^{\b}_{0}$ are hydrodynamic, i.e.
\begin{equation}\nonumber
g^{\Int}_{0}(x,v)=\rho^{\Int}_{0}+v\!\cdot\!\mathrm{u}^{\Int}_{0}+\big(\cvd\big)\theta^{\Int}_{0}= (1,v,\cvd)U^{\Int}_{0}\,,
\end{equation}
and
\begin{equation}\nonumber
\begin{aligned}
g^{\b}_{0}(\pi(x),\zeta,v)&=\rho^{\b}_{0}+v\!\cdot\!
\mathrm{u}^{\b}_{0}+\big(\cvd\big)\theta^{\b}_{0}= (1,v,\cvd)U^{\b}_{0}\,.
\end{aligned}
\end{equation}
We denote above expressions as $g^\Int_0 = I_0(U^\Int_0)$ and $g^\b_0= B_0(U^\b_0)$. Here as operators, $I_0=B_0$, we use different notations to emphasize that one is for interior variable, the other for viscous boundary variable.  The fluid variables $U^{\Int}_{0}$ and $U^{\b}_{0}$ are to be determined. To solve them we need to know the equations satisfied by them and their boundary conditions. It is easy to know from the order $O(\sqrt{\eps}^{-1})$ of the interior part that $g^{\Int}_{1}$ is also hydrodynamic, i.e. $g^{\Int}_{1}= I_0(U^\Int_1)\,.$

\subsection{Induction: Round 0}

Now we start our induction arguments, each round includes considering the kinetic boundary layer, viscous boundary layer and interior terms alternately.

{\bf \underline{Step 1: Order $O(\sqrt{\eps}^{-2})$ in the kinetic boundary layer.}}

The order $O(\sqrt{\eps}^{-2})$ in the boundary condition
$\eqref{dual-BC}$ gives
\begin{equation}\label{BC-g0-half}
L^\mathcal{R}
\tilde{g}_{0}=0\,,
\end{equation}
where we use the notation
$\tilde{g}=g^{\Int}+g^{\b}$.
Because $g^{\Int}_{0}$ and $g^{\b}_{0}$ are
hydrodynamic, then
\begin{equation}\label{normal-00}
\big[\mathrm{u}^{\Int}_{0}+\mathrm{u}^{\b}_{0}\big]\cdot\mathrm{n}=0\quad \mbox{on}\quad\! \pO\,.
\end{equation}

{\bf \underline{Step 2: Order $O(\sqrt{\eps}^{-1})$ in the viscous boundary layer.}}

Next, the order $O(\sqrt{\eps}^{-1})$ in the viscous boundary layer reads
\begin{equation}\label{12-b}
\begin{aligned}
&\LL g^{\b}_{1}=\VGRAD \mathrm{d}\partial_\zeta g^{\b}_{0}\\
= & \partial_{\!\zeta}\mathrm{u}^{\b}_{0}\!\otimes\!\!\grad\mathrm{d}:\!\mathrm{A}
+\partial_{\!\zeta}\theta^{\b}_{0}\grad
\mathrm{d}\!\cdot\!\mathrm{B}+ \A^\dd U^{\b}_{0}\!\cdot\!(1,v,\cvd)\,.
\end{aligned}
\end{equation}

Lemma \ref{Kinetic-Sol} implies that the solvability condition for equation $\eqref{12-b}$ is $\A^\dd U^{\b}_{0}=0$, i.e. $U^\b_0$ lies in the kernel of the normal boundary acoustic operator:
\begin{equation}\nonumber
\partial_{\!\zeta}(\rho^{\b}_{0}+\theta^{\b}_{0})\grad  \mathrm{d}=0\quad \mbox{and}\quad\!
\partial_{\!\zeta}( \mathrm{u}^{\b}_{0}\!\cdot\! \grad  \mathrm{d})=0\,.
\end{equation}
from which we deduce that $\rho^{\b}_{0}+\theta^{\b}_{0}$ is constant in $\zeta$. Since $\rho^{\b}_{0}+\theta^{\b}_{0}\rightarrow 0 $ as
$\zeta \rightarrow \infty$, then
\begin{equation}\label{rho-add-theta-0}
\rho^{\b}_{0}+\theta^{\b}_{0}=0\,.
\end{equation}
Similarly, we have
\begin{equation}\label{u-normal-0}
\mathrm{u}^{\b}_{0}\!\cdot\! \grad  \mathrm{d}=0\,,
\end{equation}
which also gives that on the boundary $\partial\Omega$,
\begin{equation}\label{bc-b-0}
\mathrm{u}^{\b}_{0}(x,\zeta\!=\!0)\!\cdot\!\mathrm{n}=0\,.
\end{equation}
Combining \eqref{normal-00} with the condition \eqref{bc-b-0}, we deduce that
\begin{equation}\label{bc-int-0}
\mathrm{u}^{\Int}_{0}\!\cdot\!\mathrm{n}=0\quad \mbox{on}\quad\! \pO\,.
\end{equation}
Under these conditions, $g^{\b}_{1}$ can be expressed as
\begin{equation}\label{gb-10}
\begin{aligned}
g^{\b}_{1}& =B_0(U^\b_1) + B_1(U^\b_0) \\
& := (1,v,\cvd)U^{\b}_{1}+
\{\partial_{\!\zeta}\mathrm{u}^{\b}_{
0}\!\otimes\!\grad\mathrm{d}\!:\!\AHat+\partial_{\!\zeta}\theta^{\b}_{0}\grad
\mathrm{d}\!\cdot\!\BHat\}\,.
\end{aligned}
\end{equation}
Note that $B_1$ is a linear operator.

{\bf \underline{Step 3: Order $O(\sqrt{\eps}^{0})$ in the interior.}}

To find the equation satisfied by
$U^{\Int}_{0}$, we consider the order $O(\sqrt{\eps}^0)$
in the interior part:
\begin{equation}\label{int-00}
\LL g^{\Int}_{2}=\VGRAD g^{\Int}_{0}-
i\lambda_{0}g^{\Int}_{0}\,.
\end{equation}
Using the boundary condition \eqref{bc-int-0} which means that
$U^{\Int}_{0}$ is in the domain of the acoustic
operator $\AA$, the solvability of \eqref{int-00} gives
\begin{equation}\label{sol-int-00}
\AA
U^{\Int}_{0}=i\lambda_{0}U^{\Int}_{0}\,,
\end{equation}
which is a first order linear hyperbolic system with the boundary condition \eqref{bc-int-0}. If
$i\lambda_{0}=0$, \eqref{sol-int-00} is the so-called acoustic system whose solutions $U^{\Int}_{0}$ satisfies the incompressible and Boussinesq relations.
This case will be treated in a separate paper \cite{J-M-2}.

If $i\lambda_{0}\neq 0$,
then from the discussions in section \ref{Acoustic-Mode}, especially \eqref{acoustic-equ}, we know that
 the system \eqref{sol-int-00} has a family of solutions: for each $k\in \mathbb{N}$,
$U^{\Int}_{0}$ are eigenvectors of $\AA$, and can be
constructed from the eigenvectors of the Laplace operator with
Neumann boundary condition, i.e.
\begin{equation}\label{sol-int-0}
U^{\Int}_{0}=U^{\tau,k}_{0}\,,\quad\mbox{and}\quad
\lambda_{0}=\tau \lambda^k\,,\quad k=1,2,\cdots
\end{equation}
where $\tau$ denotes the signs either $+$ or $-$, see the discussion of the spectrum of $\AA$ in section
\ref{Acoustic-Mode}, in particular \eqref{laplac-neu},
\eqref{eigen-A} and \eqref{acoustic-equ}. Consequently, every term in the ansatz \eqref{1-ansatz-half} and \eqref{1-lambda-half} depends on the choice of $k\in \mathbb{N}$ and $\tau$.  \\

\noindent{\bf Remark:} \eqref{sol-int-0} is the building-block of
the construction of the approximate eigenvector
$g^{k}_{\eps,N}$ and eigenvalues
$\lambda^{k}_{\eps,N}$ for any $k, N \in \mathbb{N}$. It means that the leading order term of $(g^{k}_{\eps,N}, \lambda^{k}_{\eps,N})$ is in acoustic modes. For this reason, we call $g^{k}_{\eps,N}$ the boundary layer in {\em acoustic} regime.

Now we can represent
$g^{\Int}_{2}$ as
\begin{equation}\label{g-int-20}
\begin{aligned}
g^{\Int}_{2}&=I_0(U^{\tau,k,\Int}_2) + I_2(U^{\tau,k,\Int}_0)\\
& := U^{\tau,k,\Int}_{2}\!\cdot\!(1,v,\cvd)+ \{\grad \mathrm{u}^{\tau,k,\Int}_{0}\!:\!\widehat{\mathrm{A}}(v)+\grad \theta^{\tau,k,\Int}_{0}\!\cdot\!\widehat{\mathrm{B}}(v)\}\,,
\end{aligned}
\end{equation}
where $\mathrm{u}^{\tau,k,\Int}_{0}=\sqrt{\tfrac{\D+2}{2\D}}\frac{\grad
\Psi^k}{i\lambda^{\tau,k}_{0}}$ and
$\theta^{\tau,k,\Int}_{0}=\sqrt{\frac{2}{\D(\D+2)}}\Psi^k$, and $\Psi^k$ is defined in \eqref{laplac-neu}.\\

\noindent{\bf Remark:} For the notational simplicity, from now on we drop $\tau$ on the subscript, unless specifically mentioned.

\subsection{Induction: Round 1}

Now we move to round 1 which studies the kinetic boundary layer, viscous boundary layer and interior terms {\em alternately}.

{\bf \underline{Step 1: Order $O(\sqrt{\eps}^{-1})$ in the kinetic boundary layer.}}

The order $O(\sqrt{\eps}^{-1})$ of the kinetic boundary layer in the ansatz gives that  $g^{k,\bb}_{1}(x,v,\xi)$ obeys the following linear kinetic boundary layer equation in $\Omega^{\delta}\times \mathbb{R}^{\D}\times \mathbb{R}_{+}$ : (recalling the definition of $\LL^{BL}$ in \eqref{L-BL})
\begin{equation}\label{BL-Equation-10}
\begin{aligned}
 \LL^{BL} g^{k,\bb}_{1} & = 0\,, \quad \mbox{in}\quad\xi >0\,,\\
g^{k,\bb}_{1}&\longrightarrow 0\,,\quad\mbox{as}\quad \xi\rightarrow
\infty\,,
\end{aligned}
\end{equation}
with boundary condition at $\xi=0$
\begin{equation}\label{BC-bb-10}
L^\mathcal{R} g^{k,\bb}_{1} = H^{k,\bb}_{1}\,,\quad \mbox{on}\quad\xi
= 0\,,\quad v\cdot \mathrm{n}(x)>0\,,
\end{equation}
where $H^{k,\bb}_{1}$ is of the form: for $x\in \pO$, $v\cdot \mathrm{n}(x)>0,$
\begin{equation}\label{h-bb-1}
\begin{aligned}
H^{k,\bb}_{1}(x,v)&=-L^\mathcal{R}\tilde{g}^{k}_{1}+ L^\mathcal{D}\tilde{g}^{k}_{0} \\
& := \tilde{h}^\bb_0(U^{\Int}_{1},U^{\b}_{1}) + \tilde{h}^\bb_1(U^{\Int}_{0},U^{\b}_{0})\,.
\end{aligned}
\end{equation}
Here $\tilde{h}^\bb_0(U^{\Int}_{1},U^{\b}_{1}) $ is a linear function of $U^\Int_1$ and $U^\b_1$, and $\tilde{h}^\bb_1(U^{\Int}_{0},U^{\b}_{0})$ is a linear function of $U^\Int_0$ and $U^\b_0$. More specifically,
\begin{equation}\label{H0}
\begin{aligned}
\tilde{h}^\bb_0(U^{\Int}_{1},U^{\b}_{1}):=& -L^\mathcal{R}(I_0(U^\Int_1)+ B_0(U^\b_1))\\
=& -2(v\!\cdot\!\mathrm{n})(\tilde{\mathrm{u}}^k_1\!\cdot\!\mathrm{n})\,,
\end{aligned}
\end{equation}
and
\begin{equation}\label{H1}
\begin{aligned}
\tilde{h}^\bb_1(U^{\Int}_{0},U^{\b}_{0}) := & -L^\mathcal{R} B_1(U^\b_0)+ L^\mathcal{D} (I_0(U^\Int_0)+ B_0(U^\b_0))\\
=&L^\mathcal{R}(\partial_{\!\zeta}\mathrm{u}^{k,\b}_{0}\!\otimes\!\mathrm{n}\!:\!\AHat
+\partial_{\!\zeta}\theta^{k,\b}_{0}\mathrm{n}\!\cdot\!\BHat)\\
& - v\cdot [\tilde{\mathrm{u}}^k_0\!\cdot\!\grad\pi^\alpha]\grad\pi^\alpha+ \big(\tfrac{\D+1}{2}- \tfrac{|v|^2}{2}\big)\tilde{\theta}^k_0  + \big(\mathrm{n}\cdot v-\tfrac{\sqrt{2\pi}}{2}\big)(\tilde{\mathrm{u}}^k_0\!\cdot\!\mathrm{n})\,.
\end{aligned}
\end{equation}

The boundary conditions for the tangential components of $\mathrm{u}^{k,\b}_0$ and $\theta^{k,\b}_0$ can be derived from the solvability condition of the above kinetic boundary layer equations. The formulas \eqref{BC-D} and \eqref{BC-theta} of Lemma \ref{BC} give that on the boundary
$\pO$,
\begin{equation}\label{BC u-theta-0}
[\mathrm{u}^{k,\b}_{0}-\tfrac{\nu}{\chi}\partial_{\!\zeta}\mathrm{u}^{k,\b}_{0}]^{\mathrm{tan}}=-[\mathrm{u}^{k,\Int}_{0}]^{\mathrm{tan}}
\quad \mbox{and}\quad
\theta^{k,\b}_{0}-\tfrac{\D+2}{\D+1}\tfrac{\kappa}{\chi}\partial_{\!\zeta}\theta^{k,\b}_{0}=-\theta^{k,\Int}_{0}\,,
\end{equation}
from which we can deduce the boundary conditions for
$[\mathrm{u}^{k,\b}_{0}]^{\mathrm{tan}}$ and $\theta^{k,\b}_{0}$ because
$\mathrm{u}^{k,\Int}_{0}$ and
$\theta^{k,\Int}_{0}$ are already determined, thus their boundary values are known. Furthermore, from \eqref{BC-N} of Lemma \ref{BC}, we also have
\begin{equation}\label{normal-10-half}
[\mathrm{u}^{k,\Int}_{1}+ \mathrm{u}^{k,\b}_{1}]\!\cdot\!\mathrm{n}=0\,,\quad \mbox{on}\quad\! \pO\,,
\end{equation}
which implies $\tilde{h}^\bb_0(U^{\Int}_{1},U^{\b}_{1})=0$. Furthermore, from the boundary condition \eqref{BC u-theta-0}
\begin{equation}\label{H-1-u}
\begin{aligned}
\tilde{h}^\bb_1(U^{\Int}_{0},U^{\b}_{0})= & L^\mathcal{R}(\partial_{\!\zeta}\mathrm{u}^{k,\b}_{
0}\!\otimes\!\mathrm{n}\!:\!\AHat+\partial_{\!\zeta}\theta^{k,\b}_{0}\mathrm{n}\!\cdot\!\BHat)\\
 & - \tfrac{\nu}{\chi}v\cdot[\partial_{\!\zeta}\mathrm{u}^{k,\b}_{0}]^{\mathrm{tan}} + \big(\tfrac{\D+1}{2}- \tfrac{|v|^2}{2}\big)\tfrac{\D+2}{\D+1}\tfrac{\kappa}{\chi}\partial_{\!\zeta}\theta^{k,\b}_{0}\,,
\end{aligned}
\end{equation}
which implies that although {\em formally}, $g^{k,\bb}_{1}$ depends on $g^{k,\Int}_{1}$ and $g^{k,\b}_{1}$ which have not been fully determined at this stage, it in fact depends only on the boundary values of  $U^{k,\b}_{0}$, thus once we solve $U^{k,\b}_{0}$, we can solve $g^{k,\bb}_{1}$ {\em completely}. This will be finished at the end of the {\bf \underline{Step 2}}, see \eqref{H-1-sol}.

{\bf \underline{Step 2: Order $O(\sqrt{\eps}^{0})$ in the viscous boundary layer.}}

The equations satisfied by $[\mathrm{u}^{k,\b}_{0}]^{\mathrm{tan}}$ and
$\theta^{k,\b}_{0}$ can be derived by considering the order
$O(\sqrt{\eps}^0)$ of the viscous boundary layer:
\begin{equation}\label{1-b}
\LL g^{k,\b}_{2}=\VGRAD
\mathrm{d}\partial_{\!\zeta}g^{k,\b}_{1}+ v\!\cdot\!\grad
\pi^\alpha\p_{\pi^\alpha} g^{k,\b}_{0}
-i\lambda^{k}_{0}g^{k,\b}_{0}\,.
\end{equation}

Lemma \ref{Kinetic-Sol} implies that the projection of the righthand
side of \eqref{1-b} must be in the null space of $\LL$. We first use
the expression \eqref{gb-10} of $g^{k,\b}_{1}$ to calculate
the projection of
$\VGRAD\mathrm{d}\partial_{\!\zeta}g^{k,\b}_{1}$ onto
Null$(\LL)$. It is easy to see that components of
$\PP(\partial_i\mathrm{d}\partial_m\mathrm{d}
\partial^2_{\zeta\zeta}(\mathrm{u}^{k,\b}_{0})_lv_i\widehat{\mathrm{A}}_{ml})$ on $1$ and $\frac{|v|^2}{2}-\frac{\D}{2}$
are zeros, here $\mathcal{P}$ is defined in \eqref{projection-p}. Applying Lemma \ref{mu-kappa-identity}, we get
\begin{equation}\nonumber
\PP\left(\partial_i\mathrm{d}\partial_m\mathrm{d}\partial^2_{\zeta\zeta}(\mathrm{u}^{k,\b}_{0})_lv_i\widehat{\mathrm{A}}_{ml}\right)
=\left[\nu \partial^2_{\zeta\zeta}
\mathrm{u}^{k,\b}_{0}+\nu (1-\tfrac{2}{\D})\partial^2_{\zeta\zeta}
\big(\mathrm{u}^{k,\b}_{0}\!\cdot\!\grad \mathrm{d}\big)\grad
\mathrm{d}
\right]\cdot v\,.
\end{equation}
Similarly, the components of
$\PP(\partial_i\mathrm{d}\partial_j\mathrm{d}\partial^2_{\zeta\zeta}\theta^{k,b}_{0}v_i\widehat{\mathrm{B}}_j)$
on $1$ and $v$ are zeros. Then applying Lemma \ref{mu-kappa-identity} again, we have
\begin{equation}\nonumber
\PP\left(\partial_i\mathrm{d}\partial_j\mathrm{d}\partial^2_{\zeta\zeta}\theta^{k,b}_{0}v_i\widehat{\mathrm{B}}_j\right)
=\tfrac{\D+2}{\D}\kappa\partial^2_{\zeta\zeta}\theta^{k,b}_{0} \big(\cvd\big)\,,
\end{equation}
where kinematic viscosity $\nu $ and thermal conductivity $\kappa$ are given by \eqref{mu-kappa}. Based on above calculations the solvability conditions for $\eqref{1-b}$ are a system of second order ordinary differential equations in $\zeta$:
\begin{equation}\label{U-b-0}
 -\AA^{\dd}
U^{k,\b}_{1} = (\AA^\pi
+\mathcal{D}^{\dd}-i\lambda^{k}_{0})U^{k,\b}_{0}\,,
\end{equation}
where the tangential acoustic operator $\AA^\pi$ and the normal diffusive operator $\mathcal{D}^\dd$ are
defined as
\begin{equation}\label{D-d}
\mathcal{A}^\pi U^\b :=
\begin{pmatrix}
\mathrm{div}_\pi(\mathrm{u}^\b\!\cdot\!\grad\pi)\\
\p_{\pi^\alpha}(\rho^\b+\theta^\b) \grad
\pi^\alpha\\
\tfrac{2}{\D}\mathrm{div}_\pi(\mathrm{u}^\b\!\cdot\!\grad\pi)
\end{pmatrix}\,,\quad
\mathcal{D}^{\dd}U^\b :=
\begin{pmatrix}
0\\
\nu  \del^2_{\!\zeta\!\zeta}\mathrm{u}^\b
+\nu (1-\tfrac{2}{\D})\del^2_{\!\zeta\!\zeta}(\mathrm{u}^\b\!\cdot\!\grad\mathrm{d})\grad  \mathrm{d}\\
\tfrac{\D+2}{\D}\kappa \del^2_{\!\zeta\!\zeta}\theta^\b
\end{pmatrix}\,,
\end{equation}
for $U^\b=(\rho^\b\,,\mathrm{u}^\b\,,\theta^\b)^\top$. Here we use the notation $\mathrm{div}_\pi(\mathrm{u}^\b\!\cdot\!\grad\pi)=\p_{\pi^\alpha}(\mathrm{u}^\b\!\cdot\!\grad\pi^\alpha)$. Recall the normal acoustic operator $\mathcal{A}^{\dd}$ is defined in \eqref{A-d}.

The ODE system \eqref{U-b-0} can be solved as follows: projecting the system \eqref{U-b-0} on Null$(\AA^\mathrm{d})$ and Null$(\AA^\mathrm{d})^\perp$ respectively, the projection on Null$(\AA^\mathrm{d})$ gives the first order equations of $\rho^\b_1+ \theta^b_1$ and $\mathrm{u}^\b_1\cdot\grad\mathrm{d}$ which can be solved by using the vanishing condition at $\zeta =\infty$, while the projection on Null$(\AA^\mathrm{d})^\perp$ gives the second order equations of $\mathrm{u}^\b_0\cdot\grad\pi$ and $\theta^\b_0$ which can be solved by using the vanishing condition at $\zeta =\infty$ and the Robin boundary condition at $\zeta=0$, i.e. \eqref{BC u-theta-0}.

{\bf 1. \underline{Solve $\rho^{k,\b}_{1} +\theta^{k,\b}_{1}$:} }  We first project the system \eqref{U-b-0} on Null$(\AA^\mathrm{d})^\perp$, the $\mathrm{u}\mbox{-}$component of is
\begin{equation}\label{rho-add-theta-1}
\del_{\!\zeta}(\rho^{k,\b}_{1} +\theta^{k,\b}_{
1})=0\,,\quad\mbox{hence}\quad \rho^{k,\b}_{1}
+\theta^{k,\b}_{1}=0\,.
\end{equation}
The $\rho\mbox{-}$component (or equivalently the $\theta\mbox{-}$component) of the projection on Null$(\AA^\mathrm{d})^\perp$ is
\begin{equation}\label{u-normal-1}
 -\partial_{\!\zeta}(\mathrm{u}^{k,\b}_1\cdot\grad\mathrm{d})= \mathrm{div}_\pi(\mathrm{u}^{k,\b}_0\cdot \grad\pi)+ \kappa \partial^2_{\!\zeta\zeta}\theta^{k,\b}_0\,,
\end{equation}
to solve which we need to first solve $\mathrm{u}^{k,\b}_0\cdot \grad\pi$ and $\theta^{k,\b}_0$. Note that in the derivation of \eqref{rho-add-theta-1} and \eqref{u-normal-1} the relations \eqref{rho-add-theta-0} and \eqref{u-normal-0} are used.

{\bf 2. \underline{Solve $[\mathrm{u}^{k,\b}_{0}]^{\mathrm{tan}}$}:} We next project the system \eqref{U-b-0} on Null$(\AA^\mathrm{d})$, the $\mathrm{u}\mbox{-}$component  gives the equation for $\mathrm{u}^{k,\b}_{0}\!\cdot\! \grad \pi^\alpha$:
\begin{equation}\label{equation-u-normal-0}
\begin{aligned}
(\nu \del^2_{\!\zeta\!\zeta}-
i\lambda^{k}_{0})[\mathrm{u}^{k,\b}_{0}\!\cdot\! \grad \pi^\alpha]&=0\,,\\
[\mathrm{u}^{k,\b}_{0}-\tfrac{\nu}{\chi}\partial_{\!\zeta}\mathrm{u}^{k,\b}_{0}](\zeta=0)\!\cdot\! \grad \pi^\alpha&=-\mathrm{u}^{k,\Int}_{0}(\pi(x))\!\cdot\! \grad \pi^\alpha\,,\\
\lim\limits_{\zeta\rightarrow\infty}\mathrm{u}^{k,\b}_{0}\!\cdot\!
\grad \pi^\alpha&=0\,.
\end{aligned}
\end{equation}
where the boundary condition in the second line of \eqref{equation-u-normal-0}
follows from the fact that $\mathrm{u}^{k,\b}_{0}\!\cdot\!
\grad \pi^\alpha$ is the tangential components of
$\mathrm{u}^{k,\b}_{0}$ because $\grad \pi^\alpha$ is
tangential to $\pO$, see the arguments after \eqref{Proj-Tangent}. The solution to ODE $\eqref{equation-u-normal-0}$ is
\begin{equation}\label{solution-u-0}
\mathrm{u}^{k,\b}_{0}(\pi(x),\zeta)\!\cdot\! \grad
\pi^\alpha=\tfrac{1}{c_\chi\sqrt{\nu}-1}(\mathrm{u}^{k,\Int}_{0}(\pi(x))\!\cdot\! \grad
\pi^\alpha)\exp\big(-(1+ \tau i)\sqrt{\tfrac{\lambda^k_{0}}{2\nu
}}\zeta\big)\,,
\end{equation}
where $\tau = +$ or $-$, $c_\chi=-\tfrac{1+\tau i}{2\chi}\sqrt{2\lambda^k_{0}}$. We denote the solution \eqref{solution-u-0} by
\begin{equation}\label{solution-u-tilde}
\mathrm{u}^{k,\b}_{0}(\pi(x),\zeta)\!\cdot\! \grad\pi = \widetilde{Z}^{\b,\mathrm{u}}_{0}(\zeta,U^{k,\Int}_{0})\,,
\end{equation}
where $\widetilde{Z}^{\b,\mathrm{u}}_{0}(\zeta, \cdot)$ is a linear function. Note that in the righthand side of \eqref{solution-u-0}, $U^{k,\Int}_0$ should be understood as its value on the boundary, i.e. $U^{k,\Int}_0(\pi(x))$.

{\bf 3. \underline{Solve $\theta^{k,\b}_0$}: }  The $\rho\mbox{-}$component (or equivalently the $\theta\mbox{-}$component) of the projection Null$(\AA^\mathrm{d})$ yields
the equation for $\theta^{k,\b}_{0}$:
\begin{equation}\label{equation-theta-0}
\begin{aligned}
(\kappa \del^2_{\!\zeta\!\zeta}-i\lambda^{k}_{0})\theta^{k,\b}_{0}&=0\,,\\
[\theta^{k,\b}_{0}-\tfrac{\D+2}{\D+1}\tfrac{\kappa}{\chi}\partial_{\!\zeta}\theta^{k,\b}_{0}](\zeta=0)&=-\theta^{k,\Int}_{0}(\pi(x))\,,\\
\lim\limits_{\zeta\rightarrow\infty}\theta^{k,\b}_{0}&=0\,.
\end{aligned}
\end{equation}
The solution to $\eqref{equation-theta-0}$ is
\begin{equation}\label{solution-theta-0}
\theta^{k,\b}_{0}(\pi(x),\zeta)=\tfrac{1}{c_\chi \sqrt{\tilde{\kappa}}-1}\theta^{k,\Int}_{0}(\pi(x))\exp\left(-(1 + \tau
i)\sqrt{\tfrac{\lambda^k_{0}}{2\kappa}}\zeta\right)\,,
\end{equation}
where $\tilde{\kappa}=(\tfrac{\D+2}{\D+1})^2\kappa$. We denote the solution \eqref{solution-theta-0} by
\begin{equation}\label{solution-theta-tilde}
\theta^{k,\b}_{0}(\pi(x),\zeta)= \widetilde{Z}^{\b,\theta}_{0}(\zeta,U^{k,\Int}_{0})\,,
\end{equation}
where $\widetilde{Z}^{\b,\theta}_{0}(\zeta, \cdot)$ is a linear function.

{\bf 4. \underline{Solve $\mathrm{u}^{k,\b}_{1}\!\cdot\!\grad\mathrm{d}$}: }
Now the equation \eqref{u-normal-1} becomes
\begin{equation}\label{equation-ud-1}
\p_{\!\zeta}(\mathrm{u}^{k,\b}_{1}\!\cdot\!\grad
\mathrm{d})=-\p_{\pi^\alpha}(\mathrm{u}^{k,\b}_{0}\!\cdot\!\grad
\pi^\alpha)-i\lambda^{k}_{0}\theta^{k,\b}_{0}\,.
\end{equation}
By integrating the equation $\eqref{equation-ud-1}$ from $\zeta$ to $\infty$, it gives
\begin{equation}\nonumber
\mathrm{u}^{k,\b}_{1}\!\cdot\! \grad \mathrm{d}=\widetilde{Z}^\b_{1}(\zeta,U^{k,\Int}_{0})\,,
\end{equation}
where $\widetilde{Z}^\b_{1,0}(\zeta, \cdot)$ is linear. In particular, letting $\zeta=0$ gives the value of $\mathrm{u}^{k,\b}_{1}\!\cdot\!
\mathrm{n}$ on the boundary $\partial\Omega$:
\begin{equation}\label{normal-b-10}
\begin{aligned}
-\mathrm{u}^{k,\b}_{1}\!\cdot\!\mathrm{n}&=\tfrac{1-\tau
i}{\sqrt{2\lambda^k_{0}}}\left( \mathrm{div}_\pi(\mathrm{u}^{k,\Int}_{0}\!\cdot\!\grad \pi)\tfrac{\sqrt{\nu}}{c_\chi\sqrt{\nu}-1}+  \tau i\lambda^{k}_{0} \theta^{k,\Int}_{0}\tfrac{\sqrt{\kappa}}{c_\chi\sqrt{\tilde{\kappa}}-1}\right)\\
&= Z^\b_{1}(U^{k,\Int}_{0})= \widetilde{Z}^\b_{1}(0,U^{k,\Int}_{0})\,,
\end{aligned}
\end{equation}
where $Z^\b_{1}(\cdot)$ is a linear function. Consequently, \eqref{normal-10-half} gives the boundary value $$\mathrm{u}^{k,\Int}_{1}\cdot\mathrm{n}=Z^\b_{1}(U^{k,\Int}_{0})\,.$$

Finally we can represent $g^{k,\b}_{2}$ from \eqref{1-b}:
\begin{equation}\label{solution-1-b}
\begin{aligned}
g^{k,\b} _{2}= &(1,v,\cvd)U^{k,\b}_{2} +\partial_\zeta \mathrm{u}^{k,\b}_{
1} \otimes\!\grad\mathrm{d}:\!\AHat+\partial_\zeta\theta^{k,\b}_{
1}\grad\mathrm{d}\!\cdot\!\BHat\\
&+\p_{\pi^\alpha}
\mathrm{u}^{k,\b}_{0}\otimes \grad\pi^\alpha:\AHat+\p_{\pi^\alpha}\theta^{k,\b}_{
0}\grad \pi^\alpha\cdot\BHat\\
&+\LL^{-1}\PP^\perp\left(\partial_i\mathrm{d}\partial_j\mathrm{d}
\partial^2_{\zeta\zeta}(\mathrm{u}^{k,\b}_{0})_k v_i\widehat{\mathrm{A}}_{jk}+\partial_i\mathrm{d}\partial_j\mathrm{d}
\partial^2_{\zeta\zeta}\theta^{k,b}_{0}v_i\widehat{\mathrm{B}}_j\right) \\
= & B_0(U^\b_2) + B_1(U^\b_1) + B_2(U^\b_0)\,,
\end{aligned}
\end{equation}
where
$$B_2(U^\b_0):= \LL^{-1}\mathcal{P}^\perp\left((v\cdot\grad\mathrm{d})\partial_{\!\zeta}B_1(U^\b_0)+ (v\cdot\grad\pi^\alpha)\partial_{\!\pi^\alpha}B_0(U^\b_0) \right)\,.$$

{\bf \underline{Solve $g^{k,\bb}_{1}$}: }
Now we can represent \eqref{h-bb-1} as,
\begin{equation}\label{H-1-sol}
\begin{aligned}
H^\bb_1(U^{k}_0)&= h^{\bb}_1(U^{k,\Int}_0) \\
& =L^\mathcal{R}(\partial_{\!\zeta}\widetilde{Z}^{\b,\mathrm{u}}_0(0,U^k_0)\grad\pi\!\otimes\!\mathrm{n}\!:\!\AHat+
\partial_{\!\zeta}\widetilde{Z}^{\b,\theta}_0(0,U^k_0)\mathrm{n}\!\cdot\!\BHat)\\
 & - \tfrac{\nu}{\chi}v\cdot\partial_{\!\zeta}\widetilde{Z}^{\b,\mathrm{u}}_0(0,U^k_0)\grad\pi + \left(\tfrac{\D+1}{2}- \tfrac{|v|^2}{2}\right)\tfrac{\D+2}{\D+1}\tfrac{\kappa}{\chi}\partial_{\!\zeta}\widetilde{Z}^{\b,\theta}_0(0,U^k_0)\,,
\end{aligned}
\end{equation}
which is completely determined. Thus we can solve $g^{k,\bb}_{1}$ which we denote by $$g^{k,\bb}_{1}(\pi(x),\xi,v)= K_{1}(\xi,v,U^{k,\Int}_{0}(\pi(x)))\,,$$ where $K_1(\xi,v,\cdot)$ is a linear function.

We summarize that in {\bf \underline{Step 2}} by considering the order $O(\sqrt{\eps}^0)$ in the viscous boundary layer, we determine:
\begin{itemize}
  \item $\rho^{k,\b}_{1}+\theta^{k,\b}_{1}$;
  \item $\mathrm{u}^{k,\b}_0\cdot \grad \pi$ and $\theta^{k,\b}_0$, thus $g^{k,\b}_0$;
  \item $\mathrm{u}^{k,\b}_1\cdot \grad \mathrm{d}$ and hence the boundary value of $\mathrm{u}^{k,\b}_1\cdot \mathrm{n}$ when we take $\zeta=0$, and consequently $\mathrm{u}^{k,\Int}_{1}\cdot\mathrm{n}$ which will be used in {\bf \underline{Step 3}};
  \item expression of $g^{k,\b}_2$;
  \item $g^{k,\bb}_{1}$.
\end{itemize}

{\bf \underline{Step 3: Order $O(\sqrt{\eps}^{1})$ in the interior.}}

The order $O(\sqrt{\eps}^{1})$ in the interior part yields
\begin{equation}\label{int-10}
\LL g^{k,\Int}_{3}=v\!\cdot\!\grad g^{k,\Int}_{1}
-i \lambda^{k}_{0}g^{k,\Int}_{1}-i
\lambda^{k}_{1}g^{k,\Int}_{0}\,,
\end{equation}
and the solvability condition of which is
\begin{equation}\label{sol-int-10}
\begin{aligned}
(\AA - i \lambda^{k}_{0}) U^{k,\Int}_{1} & = i
\lambda^{k}_{1} U^{k,\Int}_{0}\,,\quad\mbox{in}\quad\! \Omega\,,\\
 \mathrm{u}^{k,\Int}_{1}\cdot\mathrm{n}& =Z^\b_{1}(U^{k,\Int}_{0})\,,\quad \mbox{on}\quad\! \pO\,.
\end{aligned}
\end{equation}
To solve \eqref{sol-int-10}, we apply Lemme \ref{Solve-A}. The formula \eqref{imu} gives
\begin{equation}\label{l12}
i\lambda^{k}_{1}=\int_{\partial\Omega}[\mathrm{u}^{k,\Int}_{1}\cdot
\mathrm{n}]\Psi^k \,\mathrm{d}\sigma_x = \int_{\partial\Omega} Z^\b_{1}(U^{k,\Int}_{0})\Psi^k \,\mathrm{d}\sigma_x \,.
\end{equation}
Note that
$\grad \Psi^k  =
g^{\gamma\beta}\tfrac{\p \Psi^k}{\p\pi^\beta}
\tfrac{\p}{\p\pi^\gamma}$, and $\grad \pi^\alpha=
g^{\alpha\delta} \tfrac{\p}{\p\pi^\delta}$, we have
\begin{equation}\nonumber
\begin{aligned}
\int_\pO \p_{\pi^\alpha}(\grad\Psi^k\!\cdot\!\grad\pi^\alpha)
\Psi^k \, \mathrm{d}\sigma_x
=& -\int_\pO g_{\gamma
\delta}g^{\alpha\delta}g^{\beta\gamma}\tfrac{\p\Psi^k}{\p\pi^\alpha}\tfrac{\p\Psi^k}{\p\pi^\beta}\,\mathrm{d}\sigma_x\\
=&
-\int_\pO |\nabla_{\!\pi} \Psi^k |^2\mathrm{d}\sigma_x\,,
\end{aligned}
\end{equation}
where $\nabla_{\!\pi}$ is the tangential gradient on $\pO$. Thus
\begin{equation}\label{negative10-half}
i\lambda^{k}_{1}=\Lambda_1\int_{\partial\Omega} |\nabla_{\!\pi}\Psi^k |^2\,\mathrm{d}\sigma_x  + \Lambda_2
\int_{\partial\Omega}
\tfrac{2}{\D+2}(\lambda^k_{0})^2
|\Psi^k |^2 \,\mathrm{d}\sigma_x\,,
\end{equation}
where
\begin{equation}\nonumber
\Lambda_1= -\tfrac{\sqrt{\nu}}{\sqrt{2(\lambda^k_{0})^3}}\tfrac{(2a+1)+\tau i}{(a+1)^2+a^2}\sqrt{\tfrac{\D+2}{\D}}\,,\quad\! \Lambda_2 = - \tfrac{\sqrt{\kappa}}{\sqrt{2(\lambda^k_{0})^3}}\tfrac{(2b+1)+\tau i}{(b+1)^2+b^2}\sqrt{\tfrac{\D+2}{\D}}\,,
\end{equation}
\begin{equation}\nonumber
a=\tfrac{\sqrt{2\lambda^k_{0}\nu}}{2\chi}\,, \quad b=\tfrac{\sqrt{2\lambda^k_{0}\kappa}}{2\chi}\tfrac{\D+2}{\D+1}\,.
\end{equation}
From the expression \eqref{negative10-half}, $i\lambda^{\tau,k}_{1}$
has an important property ({\bf no matter $\tau= +$ or $-$!}):
\begin{equation}\label{dissip-half}
\mathrm{Re}(i\lambda^{\tau,k}_{1})<0\,.
\end{equation}

\begin{proof} From \eqref{negative10-half}, we can only conclude $\mathrm{Re}(i\lambda^{k}_{1})\leq 0$. The strict negativity comes from the following argument. Indeed, assume that $  \mathrm{Re}(i\lambda^{k}_{1}) = 0
$. This would imply that $ \grad \Psi^k  = 0  $ and $
\Psi^k  = 0  $ on the boundary $\pO$. Hence, extending $
\Psi^k $ by $0$ outside of $\Omega$ and denoting by $  \tilde
\Psi^k $ this extension, we see that $  \tilde
\Psi^k $ is an eigenvector of $ -\Delta_{\!x} $ on the whole
space with compact support which is impossible.  Hence
\eqref{dissip-half} holds.
\end{proof}

\noindent{\bf Remark:} The above formula $\eqref{negative10-half}$ and the strict inequality \eqref{dissip-half} is
crucial in this paper, because it gives dissipativity, which will be
used later in proving the damping of the acoustic waves in the
Navier-Stokes limit.\\

\underline{Case 1}: If $\frac{\D}{\D+2}[\lambda^k_0]^2$ is a {\em simple} eigenvalue of $-\Delta_{\!x}$ with Neumann boundary condition, see \eqref{laplac-neu}.  By Lemma \ref{Solve-A}, \eqref{l12} is the only solvability condition under which the system \eqref{sol-int-10} can be solved uniquely as
\begin{equation}\label{U-int-10-siple}
U^{k,\Int}_{1}  = Z^\Int_{1}(U^{k,\Int}_{0})\,,
\end{equation}
where $Z^\Int_{1}(U^{k,\Int}_{0})\in\mathrm{Null}(\A)^\perp$. Note that the system \eqref{sol-int-10} is linear and the boundary data $Z^\bb_1$ is also linear in $U^{k,\Int}_{0}$. So $Z^\Int_{1}(U^{k,\Int}_{0})$ also linearly depends on $U^{k,\Int}_{0}$, i.e. $Z^\Int_{1}(\cdot)$ is a linear function.

\underline{Case 2}: If the eigenvalues $\lambda^k_0$ is {\em not simple}, an additional compatibility condition is needed, which is given by the formula \eqref{compatability-KL}:
\begin{equation}\label{orth-cond-1}
\int_{\partial\Omega}Z^\b_{1}(U^{k,\Int}_{0})\Psi^l \,\mathrm{d}\sigma_x = 0\,,\quad \mbox{if}
\quad \lambda^k_{0}=\lambda^l_{0}\quad
\mbox{and}\quad k \neq l\,.
\end{equation}
Specifically, this condition reads as
\begin{equation}\nonumber
\Lambda_1 \int_{\partial\Omega} \nabla_{\!\pi}\Psi^k\!\cdot\!\nabla_{\!\pi}\Psi^l\,\mathrm{d}\sigma_x +
\Lambda_2 \int_{\partial\Omega}
\tfrac{2}{\D+2}(\lambda^k_{0})^2
\Psi^k \Psi^l\,\mathrm{d}\sigma_x
=0\,, \quad \mbox{if}
\quad \lambda^k_{0}=\lambda^l_{0}\quad
\mbox{and}\quad k \neq l\,.
\end{equation}
We can define the quadratic form $Q_1$ and the symmetric operator $L_1$ on $\mathrm{H}_0(\lambda)$ as
\begin{equation}\label{Q1}
Q_1(\Psi^k,\Psi^l) = \int_{\pO} Z^\b_{1}(U^{k,\Int}_{0})\Psi^l\,\mathrm{d}\sigma_x\,,
\end{equation}
and
\begin{equation}\label{L-1}
L_1 \Psi^k =  i \lambda^k_{1} \Psi^k\,,
\end{equation}
and the orthogonality condition \eqref{orth-cond-1} is
\begin{equation}\label{orth-cond-1-Q}
Q_1(\Psi^k,\Psi^l)=0\,,\quad\mbox{if}\quad\! \Psi^k,\Psi^l\in \mathrm{H}_0(\lambda)\quad\mbox{and}\quad\! l\neq k\,.
\end{equation}
Under these conditions, applying Lemma
\ref{Solve-A}, we solve $U^{k,\Int}_{1}$ modulo
$\mathrm{Ker}\big(\AA - i \lambda^{k}_{0}\big)$, i.e.
\begin{equation}\label{U-int-10}
U^{k,\Int}_{1}  = Z^\Int_{1}(U^{k,\Int}_{0})+ P_0 U^{k,\Int}_{1}\,,
\end{equation}
 where $P_0 U^{k,\Int}_{1}$ is defined as
\begin{equation}\label{P0U1}
P_0 U^{k,\Int}_{1} = \sum\limits_{l\neq k\,, \lambda^l_{0} = \lambda^k_{0}} a^{kl}_{1} U^{l,\Int}_{0}\,,
\end{equation}
where $a^{kl}_{1}= \< U^{k,\Int}_{1} \,|\, U^{l,\Int}_{0} \>$ will be determined later. Finally we can represent $g^{k,\Int}_{3}$ as
\begin{equation}\label{g-int-30}
\begin{aligned}
g^{k,\Int}_{3}
&= (1,v,\cvd)U^{k,\Int}_{3}+\grad \mathrm{u}^{k,\Int}_{1}\!:\!\widehat{\mathrm{A}}(v)+\grad \theta^{k,\Int}_{1}\!\cdot\!\widehat{\mathrm{B}}(v)\\
& = I_0(U^{k}_3) + I_2(P_0U^{k}_1) + I_3(U^{k,\Int}_{0})\,.
\end{aligned}
\end{equation}
If $\lambda^{k}_{0}$ is a $simple$ eigenvalue, the $P_{0} U^{k}_{1}$ term vanishes. Thus we finish the {\bf Round 1} in the induction.\\

\noindent{\bf Remark:} The orthogonality condition \eqref{orth-cond-1} are given on the eigenfunctions of $-\Delta_{x}$ with Neumann boundary condition with respect to the eigenvalue $[\lambda^k_0]^2$. Usually, the eigenfunctions with Neumann boundary conditions are determined up to some constants, so not unique. \eqref{orth-cond-1} is only used to determine the eigenfunctions corresponding to multiple eigenvalue, and it does not give any new assumption on the geometry of the domain $\Omega$.

\subsection{Induction: Round 2}

Now we move to the second round in the induction, which also includes three steps by considering terms in the kinetic, viscous boundary layers and interior alternatively.

{\bf \underline{Step 1: Order $O(\sqrt{\eps}^{0})$ in the kinetic boundary layer.}}

The order $O(\sqrt{\eps}^{0})$ of the kinetic boundary layer in the ansatz gives that $g^{k,\bb}_{2}$ satisfies the linear
boundary layer equation
\begin{equation}\label{BL-Equation-20}
\begin{aligned}
\LL^{BL} g^{k,\bb}_{2}  = 0\,, \quad &\mbox{in}\quad\xi >0\,,\\
g^{k,\bb}_{2}\longrightarrow 0\,,\quad &\mbox{as}\quad \xi\rightarrow
\infty\,,
\end{aligned}
\end{equation}
with boundary condition at $\xi=0$
\begin{equation}\label{BC-bb-20}
L^{\mathcal{R}} g^{k,\bb}_{2} = H^{k,\bb}_{2}\,,\quad \mbox{on}\quad\xi
= 0\,,\quad v\cdot \mathrm{n}>0\,,
\end{equation}
where $H^{k,\bb}_{2}$ is of the form:
\begin{equation}\label{h-bb-2}
\begin{aligned}
H^{k,\bb}_{2}&=-L^{\mathcal{R}}\tilde{g}^{k}_{2}+ L^{\mathcal{D}} (\tilde{g}^{k}_{1} + g^{k,\bb}_1)\\
& = \tilde{h}^\bb_0(U^{\Int}_{2},U^{\b}_{2}) + \tilde{h}^\bb_1(U^{\Int}_{1},U^{\b}_{1}) + \tilde{h}^\bb_2(U^{\Int}_{0},U^{\b}_{0})\,.
\end{aligned}
\end{equation}
Here
\begin{equation}\label{H2}
\tilde{h}^\bb_2(U^{k,\Int}_{0}, U^{\b}_{0})= -L^{\mathcal{R}}(I_2(U^\Int_0)+ B_2(U^\b_0))
 + L^{\mathcal{D}}(B_1(U^\b_0)+ K_1(U^\b_0))\,.
\end{equation}
Comparing with \eqref{h-bb-1}, we note that the first two terms of \eqref{h-bb-2} are the same as \eqref{h-bb-1} replacing by arguments with subscripts higher than 1. In other words,
\begin{equation}
\tilde{h}^\bb_0(U^{\Int}_{2},U^{\b}_{2}):= -2(v\!\cdot\!\mathrm{n})(\tilde{\mathrm{u}}^k_2\!\cdot\!\mathrm{n})\,,
\end{equation}
and
\begin{equation}\label{H-1-1}
\tilde{h}^\bb_1(U^{\Int}_{1},U^{\b}_{1}) := L^\mathcal{R}(\partial_{\!\zeta}\mathrm{u}^{k,\b}_{1}\!\otimes\!\mathrm{n}\!:\!\AHat
+\partial_{\!\zeta}\theta^{k,\b}_{1}\mathrm{n}\!\cdot\!\BHat)
 - v\cdot [\tilde{\mathrm{u}}^k_1\!\cdot\!\grad\pi^\alpha]\grad\pi^\alpha+ \big(\tfrac{\D+1}{2}- \tfrac{|v|^2}{2}\big)\tilde{\theta}^k_1\,.
\end{equation}
Note that comparing to \eqref{H1}, we have used the boundary condition $\mathrm{u}^{k}_{1}\cdot\mathrm{n}=0\,.$

Next, the formulas \eqref{BC-D} and \eqref{BC-theta} give the boundary conditions
\begin{equation}\label{bc-b-10-u-half}
[\mathrm{u}^{k,\b}_{1}-\tfrac{\nu}{\chi}\partial_{\!\zeta}\mathrm{u}^{k,\b}_{1}]^{\mathrm{tan}}=-[\mathrm{u}^{k,\Int}_{1}]^{\mathrm{tan}} + V^\mathrm{u}_{1}(U^{k,\Int}_{0})\,,
\end{equation}
and
\begin{equation}\label{bc-b-10-theta-half}
\theta^{k,\b}_{1,0}-\tfrac{\D+2}{\D+1}\tfrac{\kappa}{\chi}\partial_{\!\zeta}\theta^{k,\b}_{1}=-\theta^{k,\Int}_{1} + V^\theta_{1}(U^{k,\Int}_{0})\,,
\end{equation}
where
\begin{equation}\label{V-u-theta}
\begin{aligned}
 V^{\mathrm{u}}_{1}(U^{k,\Int}_{0})&= -\tfrac{\nu}{\chi}[2d(\mathrm{u}^{k,\Int}_{0})\!\cdot\!\mathrm{n}]^{\mathrm{tan}}+\left[ \int_{v\cdot \mathrm{n}>0}  L^\mathcal{D} S_1(v\!\cdot\! \mathrm{n})v M\,\dd v \right]^{\mathrm{tan}}\,,\\
 V^{\theta}_{1}(U^{k,\Int}_{0})&= \tfrac{\sqrt{2\pi}}{\D+1}\int_{v\cdot \mathrm{n} >0} L^\mathcal{D} S_1(v\!\cdot\! \mathrm{n})
|v|^2M\,\dd v\,,
\end{aligned}
\end{equation}
and $S_{1}=-(\p_{\zeta}
\mathrm{u}^{k,\b}_{0}\!\otimes\!\mathrm{n}:\widehat{\mathrm{A}}+\p_{\zeta}\theta^{k,\b}_{0}
\mathrm{n} \cdot \widehat{\mathrm{B}}) + K_{1}(U^{k,\Int}_{0})$. Here we have used the facts
\begin{equation}\nonumber
\begin{aligned}
& \left[\mathrm{n}_i\mathrm{n}_j\mathrm{n}_l\partial^2_{\!\zeta\!\zeta}(\mathrm{u}^{k,\b}_{0})_k
\int_{\mathbb{R}^\D}\LL^{-1}\mathcal{P}^\perp(v_i\widehat{A}_{jk})v_lv_m\,M\mathrm{d}v\right]^{\mathrm{tan}}\\
& + \left[\mathrm{n}_i\mathrm{n}_j\mathrm{n}_l\partial^2_{\!\zeta\!\zeta}\theta^{k,\b}_{0}
\int_{\mathbb{R}^\D}\LL^{-1}\mathcal{P}^\perp(v_i\widehat{B}_{j})v_lv_m\,M\mathrm{d}v\right]^{\mathrm{tan}}\\
& = 0\,,
\end{aligned}
\end{equation}
and
\begin{equation}\nonumber
\begin{aligned}
& \mathrm{n}_i\mathrm{n}_j\mathrm{n}_l\partial^2_{\!\zeta\!\zeta}(\mathrm{u}^{k,\b}_{0})_k
\int_{\mathbb{R}^\D}\LL^{-1}\mathcal{P}^\perp(v_i\widehat{A}_{jk})v_l|v|^2\,M\mathrm{d}v\\
& + \mathrm{n}_i\mathrm{n}_j\mathrm{n}_l\partial^2_{\!\zeta\!\zeta}\theta^{k,\b}_{0}
\int_{\mathbb{R}^\D}\LL^{-1}\mathcal{P}^\perp(v_i\widehat{B}_{j})v_l|v|^2\,M\mathrm{d}v\\
& = 0\,,
\end{aligned}
\end{equation}
Thus, \eqref{H-1-1} is reduced to
\begin{equation}
\begin{aligned}
\tilde{h}^\bb_1&(U^{\Int}_{1},U^{\b}_{1})=  L^\mathcal{R}(\partial_{\!\zeta}\mathrm{u}^{k,\b}_{1}
\!\otimes\!\mathrm{n}\!:\!\AHat+\partial_{\!\zeta}\theta^{k,\b}_{1}\mathrm{n}\!\cdot\!\BHat)\\
 & - \tfrac{\nu}{\chi}v\cdot[\partial_{\!\zeta}\mathrm{u}^{k,\b}_{1}]^{\mathrm{tan}} -v\cdot V^\mathrm{u}_{1}(U^{k,\Int}_{0})+
 \big(\tfrac{\D+1}{2}- \tfrac{|v|^2}{2}\big)\tfrac{\D+2}{\D+1}\tfrac{\kappa}{\chi}\partial_{\!\zeta}\theta^{k,\b}_{1} + \big(\tfrac{\D+1}{2}- \tfrac{|v|^2}{2}\big)V^\theta_{1}(U^{k,\Int}_{0})\,.
\end{aligned}
\end{equation}

Furthermore, the formula \eqref{BC-N} gives the boundary condition on the normal direction:
\begin{equation}\label{normal-20-half}
\mathrm{u}^{k,\Int}_{2}\!\cdot\!\mathrm{n}= - \mathrm{u}^{k,\b}_{2}\!\cdot\!\mathrm{n}\,, \quad \mbox{on}\quad\! \pO\,,
\end{equation}
from which we know $\tilde{h}^\bb_0(U^{\Int}_{2},U^{\b}_{2})=0$. Thus from \eqref{h-bb-2}, to solve $g^{k,\bb}_{2}$ we don't need know $g^{k,\Int}_{2}$ and $g^{k,\b}_{2}$, although formally it does. Thus, once we solve $\mathrm{u}^{k,\b}_{1}\cdot\grad\pi$ and $\theta^{k,\b}_{1}$, we can solve $g^{k,\bb}_{2}$.

{\bf \underline{Step 2: Order $O(\sqrt{\eps}^{1})$ in the viscous boundary layer.}}

The equations of $\mathrm{u}^{k,\b}_{1}\cdot\grad\pi$ and
$\theta^{k,\b}_{1}$ can be found by analyzing the order
$O(\sqrt{\eps}^{1})$ of the viscous boundary layer in
\eqref{1-ansatz-half} which gives
\begin{equation}\label{b-10-half}
\LL g^{k,\b}_{3}=(\VGRAD
\mathrm{d})\partial_{\!\zeta}g^{k,\b}_{2}+ (v\!\cdot\!\grad
\pi^\alpha)\p_{\pi^\alpha} g^{k,\b}_{1}
-i\lambda^{k}_{0}g^{k,\b}_{1}-i\lambda^{k}_{1}g^{k,\b}_{0}\,.
\end{equation}
The solvability condition for \eqref{b-10-half} yields that
\begin{equation}\label{U-b-1}
 -\AA^{\dd}
U^{k,\b}_{2} = (\AA^\pi
+\mathcal{D}^{\dd}-i\lambda^{k}_{0})U^{k,\b}_{1}+ (\mathcal{F}_1- i\lambda^k_{1})U^{k,\b}_{0}\,,
\end{equation}
where the linear operator $\mathcal{F}_1(U^{k,\b}_{0})=(\mathcal{F}^\rho_1,\mathcal{F}^\mathrm{u}_1,\mathcal{F}^\theta_1)^\top(U^{k,\b}_{0})$ is defined as
$$
\mathcal{F}_1(U^{k,\b}_{0}) \!\cdot\!(1,v,\cvd) := \mathcal{P}\left\{v\cdot\grad\mathrm{d}\partial_{\!\zeta}B_2(U^{k,\b}_0)+ v\cdot\grad\pi\partial_{\pi}B_1(U^{k,\b}_0)\right\}\,.
$$
The precise form of $\mathcal{F}_1(U^{k,\b}_{0})$ is tedious and not easy to represent explicitly, however, is not of great importance for the later analysis. The $\rho $ component vanishes, while the $\mathrm{u}$ and $\theta$ components are linear functions of third order $\zeta$ derivatives of $\mathrm{u}^{k,\b}_0$ and $\theta^{k,\b}_0$ respectively.

Similar as in solving \eqref{U-b-0}, we derive the ODE satisfied by $\rho^{k,\b}_{2}+ \theta^{k,\b}_{2}$ from the $\mathrm{u}\mbox{-}$component of the projection of \eqref{U-b-1} on Null$(\AA^\mathrm{d})^\perp$:
\begin{equation}\label{rho-theta-2-0-half}
-\p_\zeta(\rho^{k,\b}_{2}+ \theta^{k,\b}_{2})
= \left\{\nu(2-\tfrac{2}{\D})\p^2_{\!\zeta\!\zeta}- i\lambda^k_{0}\right\}[\mathrm{u}^{k,\b}_{1}\!\cdot\!\grad \mathrm{d}]+ \mathcal{F}^{\mathrm{u}}_1(U^{k,\b}_{0})\!\cdot\!\grad\mathrm{d}\,.
\end{equation}
Note that both $\mathrm{u}^{k,\b}_{1}\!\cdot\!\grad \mathrm{d}$ and $\mathrm{u}^{k,\b}_{0}\!\cdot\!\grad \pi^\alpha$ (which is included in $\mathcal{F}^{\mathrm{u}}_1(U^{k,\b}_{0})\!\cdot\!\grad\mathrm{d}$) are known from the last round and linear in $U^{k,\Int}_{0}$, so the righthand side of \eqref{rho-theta-2-0-half} is known and a linear function of $U^{k,\Int}_{0}$. Integrating \eqref{rho-theta-2-0-half} from $\zeta$ to $\infty$ gives
\begin{equation}\label{rho-add-theta-20-half}
   \rho^{k,\b}_{2}+ \theta^{k,\b}_{2}= Y^\b_{2}(\zeta,U^{k,\Int}_{0})\,,
\end{equation}
where $Y^\b_{2}(\zeta,\cdot)$ is a linear function. We can also derive the ODEs satisfied by $\mathrm{u}^{k,\b}_{1}\!\cdot\!\grad\pi$ and $\theta^{k,\b}_{1}$: \begin{equation}\label{equation-u-1-0-half}
\left\{\nu\del^2_{\!\zeta\!\zeta}-i\lambda^{k}_{0}\right\}[\mathrm{u}^{k,\b}_{1}\!\cdot\!\grad\pi]= i \lambda^k_{1} [\mathrm{u}^{k,\b}_{0}\!\cdot\!\grad\pi] - \mathcal{F}^{\mathrm{u}}_1(U^{k,\b}_{0})\cdot \grad \pi\,,
\end{equation}
and
\begin{equation}\label{equation-theta-1-0-half}
\left\{\kappa\del^2_{\!\zeta\!\zeta}-i\lambda^{k}_{0}\right\}\theta^{k,\b}_{1}= i \lambda^k_{1} \theta^{k,\b}_{0}-( \tfrac{2}{\D+2}\mathcal{F}^\rho_1- \tfrac{\D}{\D+2}\mathcal{F}^\theta_1)(U^{k,\b}_{0})\,,
\end{equation}
with boundary conditions \eqref{bc-b-10-u-half} and \eqref{bc-b-10-theta-half} respectively. Because of the linearity of the above equations, we can solve \eqref{equation-u-1-0-half} and \eqref{equation-theta-1-0-half} as
\begin{equation}\label{sol-b-u-theta-1}
\begin{aligned}
   \mathrm{u}^{k,\b}_{1}\!\cdot\!\grad\pi& =  \widetilde{Z}^{\b,\mathrm{u}}_{0}(\zeta,P_0 U^{k,\Int}_{1})+ \widetilde{Z}^{\b,\mathrm{u}}_{1}(\zeta,U^{k,\Int}_{0})\,,\\
\theta^{k,\b}_{1} & =  \widetilde{Z}^{\b,\theta}_{0}(\zeta,P_0U^{k,\Int}_{1})+ \widetilde{Z}^{\b,\theta}_{1}(\zeta,U^{k,\Int}_{0})\,,
\end{aligned}
\end{equation}
 recalling $\widetilde{Z}^{\b,\mathrm{u}}_{0}$ and $\widetilde{Z}^{\b,\theta}_{0}$ are defined in \eqref{solution-u-0} and \eqref{solution-theta-0} respectively, and $\widetilde{Z}^{\b,\mathrm{u}}_{1}$ is the solution of
\begin{equation}\nonumber
\begin{aligned}
   (\nu\del^2_{\!\zeta\!\zeta}-i\lambda^{k}_{0}) \mathrm{u} &= i \lambda^k_{1} [\mathrm{u}^{k,\b}_{0}\!\cdot\!\grad\pi]- \mathcal{F}^{\mathrm{u}}_1(U^{k,\b}_{0})\cdot \grad \pi\,,\\
   [\mathrm{u}-\tfrac{\nu}{\chi}\partial_{\!\zeta}\mathrm{u}](\zeta=0) & = -Z^\Int_{1}(U^{k,\Int}_{0})_{\mathrm{u}}\!\cdot\!\grad\pi +  V^{\mathrm{u}}_{1}(U^{k,\Int}_{0})\!\cdot\!\grad\pi\,,\\
   \lim_{\zeta\rightarrow \infty}\mathrm{u} & = 0\,,
\end{aligned}
\end{equation}
and $\widetilde{Z}^{\b,\theta}_{1}$ is the solution of
\begin{equation}\nonumber
\begin{aligned}
   (\nu\del^2_{\!\zeta\!\zeta}-i\lambda^{k}_{0}) \theta &= i \lambda^k_{1} \theta^{k,\b}_{0}- \mathcal{F}^\theta_1(U^{k,\b}_{0})\,,\\
   [\theta-\tfrac{\D+2}{\D+1}\tfrac{\kappa}{\chi}\partial_{\!\zeta}](\zeta=0) & = -Z^\Int_{1}(U^{k,\Int}_{0})_{\theta} +  V^{\theta}_{1}(U^{k,\Int}_{0})\,,\\
   \lim_{\zeta\rightarrow \infty}\theta & = 0\,,
\end{aligned}
\end{equation}
where $Z^\Int_{1}(U^{k,\Int}_{0})_{\mathrm{u}}$ and $Z^\Int_{1}(U^{k,\Int}_{0})_\theta$ denote the $\mathrm{u}$ and $\theta$ components of $Z^\Int_{1}(U^{k,\Int}_{0})$ respectively.

From the $\rho\mbox{-}$component of the projection of \eqref{U-b-1} on Null$(\AA^\mathrm{d})^\perp$ we can also derive the equation for $\mathrm{u}^{k,\b}_{2}\!\cdot\!\grad\mathrm{d}$:
\begin{equation}\nonumber
-\p_\zeta[\mathrm{u}^{k,\b}_{2}\!\cdot\!\grad\mathrm{d}]=\mathrm{div}_{\!\pi}[\mathrm{u}^{k,\b}_{1}\!\cdot\!\grad\pi]+ i\lambda^k_{0}\theta^{k,\b}_{1}+ i\lambda^k_{1,0}\theta^{k,\b}_{0}\,.
\end{equation}
Integrating from $\zeta$ to $\infty$, we can solve $\mathrm{u}^{k,\b}_{2}\!\cdot\!\grad\mathrm{d}=\widetilde{Z}^\b_{1}(\zeta,P_0U^{k,\Int}_{1}) + \widetilde{Z}^\b_{2}(\zeta,U^{k,\Int}_{0})$, in particular, by taking $\zeta=0$
\begin{equation}\label{normal-b-20-half}
-\mathrm{u}^{k,\b}_{2}\!\cdot\!\mathrm{n} =  Z^\b_{1}(P_0U^{k,\Int}_{1}) + Z^\b_{2}(U^{k,\Int}_{0})= \mathrm{u}^{k,\Int}_{2}\!\cdot\!\mathrm{n} \,,
\end{equation}
where the second equality followed from \eqref{normal-20-half}.

Finally, $g^{k,\b}_{3}$ can be represented as
\begin{equation}\nonumber
\begin{aligned}
    g^{k,\b} _{3}= &(1,v,\cvd)U^{k,\b}_{3} + \partial_\zeta \mathrm{u}^{k,\b}_{
2}\otimes\grad\mathrm{d}\!:\!\AHat+\partial_\zeta\theta^{k,\b}_{
2}\grad\mathrm{d}\!\cdot\!\BHat\\
&+\p_{\pi^\alpha}
\mathrm{u}^{k,\b}_{1}\otimes \grad\pi^\alpha:\AHat+\p_{\pi^\alpha}\theta^{k,\b}_{
1}\grad \pi^\alpha\cdot\BHat\\
&+\LL^{-1}\PP^\perp\left(\partial_i\mathrm{d}\partial_j\mathrm{d}
\partial^2_{\zeta\zeta}(\mathrm{u}^{k,\b}_{1})_k v_i\widehat{\mathrm{A}}_{jk}+\partial_i\mathrm{d}\partial_j\mathrm{d}
\partial^2_{\zeta\zeta}\theta^{k,b}_{1}v_i\widehat{\mathrm{B}}_j\right)\\
&+ \LL^{-1}\PP^\perp\left( (\partial_i\mathrm{d}\partial_j\mathrm{\pi^\alpha}+ \partial_j\mathrm{d}\partial_i\mathrm{\pi^\alpha})(\partial^2_{\zeta\pi^\alpha}(\mathrm{u}^{k,\b}_{0})_k v_i\widehat{\mathrm{A}}_{jk}+\partial^2_{\zeta\pi^\alpha}\theta^{k,\b}_{0}v_i\widehat{B}_j)\right)\\
&+ \LL^{-1}\PP^\perp\left( \partial_i\mathrm{d}\partial_j\mathrm{d}\partial_k\mathrm{d}
[\partial^3_{\!\zeta\zeta\zeta}(\mathrm{u}^{k,\b}_{0})_lv_i\LL^{-1}\PP^\perp(v_j\widehat{\mathrm{A}}_{kl})+
\partial^3_{\!\zeta\zeta\zeta}\theta^{k,\b}_{0,0}v_i\LL^{-1}\PP^\perp(v_j\widehat{\mathrm{B}}_{k})] \right)\\
& - i\lambda^k_{0}\left( \p_{\!\zeta}\mathrm{u}^{k,\b}_{0}\otimes \grad\mathrm{d}\!:\!\LL^{-1}\widehat{\mathrm{A}}+ \p_{\!\zeta}\theta^{k,\b}_{0}\grad\mathrm{d}\!\cdot\!\LL^{-1}\widehat{\mathrm{B}}\right)\\
= & B_0(U^\b_3) + B_1(U^\b_2) + B_2(U^\b_1) + B_3(U^\b_0) \,,
\end{aligned}
\end{equation}
where $B_3(v,U^\b_0)$ can be represented as
\begin{equation}\nonumber
\begin{aligned}
&B_3(v,U^\b_0)\\
= &\LL^{-1}\mathcal{P}^\perp\left((v\cdot\grad\mathrm{d})\partial_{\!\zeta}B_2(v,U^\b_0)+ [(v\cdot\grad\pi^\alpha)\partial_{\!\pi^\alpha}- i\lambda^k_0]B_1(v,U^\b_0) \right)\,.
\end{aligned}
\end{equation}

After $g^{k,\b}_{1}$ is solved (modulo $P_0U^{k,\Int}_{1}$),  go back to the linear kinetic boundary layer equations \eqref{BL-Equation-20}-\eqref{BC-bb-20} of $g^{k,\bb}_{2}$. Straightforward calculations by regrouping terms show that $$H^{k,\bb}_2= h^\bb_1(P_0 U^k_1) + h^\bb_2(U^{k,\Int}_{0})\,,$$ where $h^\bb_2(\cdot)$ is linear and whose detailed expression we omit here. Using the linearity of the kinetic boundary layer equation and the boundary conditions, it is easy to solve that
\begin{equation}\nonumber
g^{k,\bb}_{2}= K_{1}(\xi,v,P_0U^{k,\Int}_{1}) + K_{2}(\xi,v,U^{k,\Int}_{0})\,,
\end{equation}
where $K_{2}(\xi,v,U^{k,\Int}_{0})$ is the solution of the kinetic boundary layer equations \eqref{BL-Equation-20}-\eqref{BC-bb-20} with the boundary condition
\begin{equation}\nonumber
(\gamma_+- L \gamma_-) K_2 = h^\bb_2(U^{k,\Int}_{0})\,,\quad \mbox{on}\quad\xi
= 0\,,\quad v\cdot \mathrm{n}>0\,.
\end{equation}
It is obvious that $K_{2}(\xi,v,\cdot)$ is linear.

{\bf \underline{Step 3: Order $O(\sqrt{\eps}^{2})$ in the interior.}}

The order $O(\eps)$ in the interior part of \eqref{1-ansatz-half} yields
\begin{equation}\label{int-20-half}
\LL g^{k,\Int}_{4}=v\!\cdot\!\grad g^{k,\Int}_{2}
-i \lambda^{k}_{0}g^{k,\Int}_{2}-i
\lambda^{k}_{1}g^{k,\Int}_{1}-i \lambda^{k}_{2}g^{k,\Int}_{0}\,,
\end{equation}
the solvability condition of which is
\begin{equation}\label{sol-int-20-half}
\begin{aligned}
(\AA - i \lambda^{k}_{0}) U^{k,\Int}_{2} &= i
\lambda^{k}_{1} U^{k,\Int}_{1}+ (i\lambda^k_{2}- \mathcal{D})U^{k,\Int}_{0}\quad\mbox{in}\quad\! \Omega\,,\\
\mathrm{u}^{k,\Int}_2\cdot\mathrm{n}&=Z^\b_{1}(P_0U^{k,\Int}_{1}) + Z^\b_{2}(U^{k,\Int}_{0})\quad\mbox{on}\quad\! \partial\Omega\,,
\end{aligned}
\end{equation}
where $\mathcal{D}$ is defined as
\begin{equation}\nonumber
\mathcal{D}U=
\begin{pmatrix}
0\\
\nu\mathrm{div}_{\!x}(\grad\mathrm{u}+ \grad\mathrm{u}^\top-\tfrac{2}{\D}\mathrm{div}_{\!x}\mathrm{u})\\
\tfrac{\D+2}{\D}\kappa\LAP\theta
\end{pmatrix}\,.
\end{equation}
\noindent{\bf Remark:} To apply Lemma \ref{Solve-A}, we require the following orthogonality condition for $U^{k,\Int}_m$:
\begin{equation}\label{orthgonality}
\langle U^{k,\Int}_m\,|\,U^{l,\Int}_0\rangle =
0\,,\quad \mbox{for all}\quad \! m \neq 0\quad\!
\mbox{or}\quad\! k\neq l\,.
\end{equation}

To solve \eqref{sol-int-20-half}, first we apply the formula \eqref{imu} of Lemma \ref{Solve-A} to deduce
\begin{equation}\label{lambda-20}
\begin{aligned}
i\lambda^{k}_{2}&=\int_{\partial\Omega}[\mathrm{u}^{k,\Int}_{2}\cdot
\mathrm{n}]\Psi^k \,\mathrm{d}\sigma_x \\
&= \int_{\partial\Omega} Z^\b_{2}(U^{k,\Int}_{0})\Psi^k \,\mathrm{d}\sigma_x+\int_{\partial\Omega} Z^\b_{1}(P U^{k,\Int}_{0})\Psi^k \,\mathrm{d}\sigma_x +\langle \mathcal{D} U^{k,\Int}_{0}|U^{k,\Int}_{0}\rangle\,,\\
&= \int_{\partial\Omega} Z^\b_{2}(U^{k,\Int}_{0})\Psi^k \,\mathrm{d}\sigma_x+\langle \mathcal{D} U^{k,\Int}_{0}|U^{k,\Int}_{0}\rangle\,.
\end{aligned}
\end{equation}

\underline{Case 1}: If $\lambda^k_0$ is a {\em simple} eigenvalue, then $P U^{k,\Int}_{0}=0$, thus $i\lambda^k_1$ is given by \eqref{lambda-20}.

Otherwise, $\lambda^k_0$ is not a {\em simple} eigenvalue, note that
\begin{equation}\label{lambda-20-vanish}
\begin{aligned}
 \int_{\partial\Omega} Z^\b_{1}(P U^{k,\Int}_{0})\Psi^k \,\mathrm{d}\sigma_x &= \sum\limits_{l\neq k\,, \lambda^l_{0} = \lambda^k_{0}} a^{kl}_{1}\int_{\partial\Omega} Z^\b_{1}(U^{l,\Int}_{0})\Psi^k \,\mathrm{d}\sigma_x\\
 & = 0\,,
\end{aligned}
\end{equation}
because of the orthogonality condition \eqref{Q1} and \eqref{orth-cond-1-Q}\,. The above two identities illustrate that no matter $\lambda^k_0$ is simple or not, $i\lambda^k_2$ is completely determined, which is given by \eqref{lambda-20}.

When $\lambda^k_0$ is not simple, the compatibility condition \eqref{compatability-KL} is needed, which gives
\begin{equation}\label{orth-cond-2-half}
i\lambda^{l}_{1}a^{kl}_{1}+\int_{\partial\Omega}Z^\b_{2}(U^{k,\Int}_{0})\Psi^l \,\mathrm{d}\sigma_x = i\lambda^{k}_{1}a^{kl}_{1}\quad \mbox{if}
\quad \lambda^k_{0}=\lambda^l_{0}\quad
\mbox{and}\quad k \neq l\,.
\end{equation}

\underline{Case 2}: If $\lambda^k_0$ is not a {\em simple} eigenvalue, but $i\lambda^k_{1}$ is a {\em simple} eigenvalue of $L_1$ which is defined in \eqref{L-1}, then for all $l\neq k$ with $i\lambda^l_{0}=i\lambda^k_{0}$, we have $i\lambda^l_{1}\neq i\lambda^k_{1}$. For this case $a^{kl}_{1}$ can be solved from \eqref{orth-cond-2-half} as
\begin{equation}\label{a-kl-10-half}
a^{kl}_{1}= \frac{1}{i\lambda^k_{1}-i\lambda^l_{1}}\int_{\partial\Omega} Z^\b_{2}(U^{k,\Int}_{0})\Psi^l \,\mathrm{d}\sigma_x \,.
\end{equation}
Thus $P_0(U^{k,\Int}_{1})$ is completely determined, and no additional conditions on $\mathrm{H}_0(\lambda)$ rather than \eqref{orth-cond-1}, or equivalently \eqref{orth-cond-1-Q} is needed. However,

\underline{Case 3}: If $\lambda^k_0$ is not a {\em simple} eigenvalue, and $i\lambda^k_{1}$ is also not a {\em simple} eigenvalue of $L_1$, we need more orthogonality condition on
\begin{equation}\nonumber
\mathrm{H}_1= \mathrm{H}(\lambda_1)=\{\Psi\in \mathrm{H}_0: L_1 \Psi= i\lambda^k_{1}\Psi\}\,.
\end{equation}
This orthogonality condition comes from \eqref{orth-cond-2-half}:
\begin{equation}\label{condition-20-half}
\int_{\partial\Omega}Z^\b_{2}(U^{k,\Int}_{0})\Psi^l \,\mathrm{d}\sigma_x =0\,,\quad\mbox{if}\quad\!l\neq k\quad\!\lambda^l_{0}=\lambda^k_{0}\quad\!\lambda^l_{1}=\lambda^k_{1}\,.
\end{equation}
We can define a quadratic form $Q_2$ and the symmetric operator $L_2$ on $\mathrm{H}_1(\lambda_1)$ as
\begin{equation}\label{Q2}
Q_2(\Psi^k\,,\Psi^l)=\int_{\partial\Omega}Z^\b_{2}(U^{k,\Int}_{0})\Psi^l \,\mathrm{d}\sigma_x + \langle \mathcal{D} U^{k,\Int}_{0}|U^{l,\Int}_{0}\rangle\,,
\end{equation}
and $L_2 \Psi^k= i\lambda^k_{2} \Psi^k$, which satisfies that
\begin{equation}\nonumber
Q_2(\Psi^k,\Psi^l)= \int_\Omega L_2(\Psi^k)\Psi^l\,\mathrm{d}x\,.
\end{equation}
Be these definitions, we have $i\lambda^k_2 = Q_2(\Psi^k, \Psi^k)$, and the condition \eqref{condition-20-half} is
\begin{equation}\nonumber
Q_2(\Psi^k\,,\Psi^l)=0\,,\quad \mbox{if}\quad\! \Psi^k,\Psi^l\in \mathrm{H}_1(\lambda_1)\quad\!\mbox{and}\quad\! l\neq k\,.
\end{equation}

Under these conditions, the equation \eqref{sol-int-20-half} can be solved in the following way: Let $U^{k,\Int}_{2}= U^1 + U^2$, where $U^1$ satisfies the equation
\begin{equation}\nonumber
\begin{aligned}
 (\AA - i\lambda^k_{0})U^1 & = i\lambda^k_{1}P_0U^{k,\Int}_{1}\,,\\
    \mathrm{u}^1\!\cdot\!\mathrm{n}&= Z^\b_{1}(P_0U^{k,\Int}_{1})\,,
\end{aligned}
\end{equation}
whose solution in Ker$(\AA-i\lambda^k_{0})^\perp$ is $Z^\Int_{1}(P_0U^{k,\Int}_{1})$, and $U^2$ satisfies the equation
\begin{equation}\nonumber
\begin{aligned}
 (\AA - i\lambda^k_{0})U^2 & = i\lambda^k_{1}Z^\Int_{1}(U^{k,\Int}_{0})+ (i\lambda^k_{2}- \mathcal{D})U^{k,\Int}_{0}\,,\\
    \mathrm{u}^2\!\cdot\!\mathrm{n}&= Z^\b_{2}(U^{k,\Int}_{0})\,,
\end{aligned}
\end{equation}
whose solution in Ker$(\AA-i\lambda^k_{0})^\perp$ is {\em completely} determined, and is denoted by $Z^\Int_{2}(U^{k,\Int}_{0})$. In summary, the equation \eqref{sol-int-20-half} is
\begin{equation}\nonumber
U^{k,\Int}_{2}=P_0U^{k,\Int}_{2}+ Z^\Int_{1}(P_0U^{k}_{1})+ Z^\Int_{2}(U^{k,\Int}_{0})\,,
\end{equation}
where $P_0(U^{k,\Int}_{1})= (P_1+ P_1^\perp)(U^{k,\Int}_{1})$ in which $P_1^\perp U^{k,\Int}_{1}$ is already completely determined
in \eqref{a-kl-10-half} and $P_1U^{k,\Int}_{1}$ will be determined later, and $P_0 U^{k,\Int}_{2}$ is defined the same as in \eqref{P0U1}, i.e.
\begin{equation}\nonumber
P_0 U^{k,\Int}_{2} = \sum\limits_{l\neq k\,, \lambda^l_{0} = \lambda^k_{0}} a^{kl}_{2} U^{l,\Int}_{0}\,,
\end{equation}
where $a^{kl}_{2}= \< U^{k,\Int}_{2} \,|\, U^{l,\Int}_{0} \>$ will be determined later. Finally we can represent $g^{k,\Int}_{4}$ as
\begin{equation}\label{g-int-40}
\begin{aligned}
g^{k,\Int}_{4}
=&I_0(U^k_4)+ I_2(U^k_2) + I_4(U^{k,\Int}_{0})\\
:=& (1,v,\cvd)U^{k,\Int}_{4}+\grad \mathrm{u}^{k,\Int}_{2}\!:\!\widehat{\mathrm{A}}(v)+\grad \theta^{k,\Int}_{2}\!\cdot\!\widehat{\mathrm{B}}(v)\\
&+ \LL^{-1}\PP^\perp\left(
\partial^2_{x_ix_j}(\mathrm{u}^{k,\Int}_{0})_k v_i\widehat{\mathrm{A}}_{jk}+
\partial^2_{x_ix_j}\theta^{k,\Int}_{0}v_i\widehat{\mathrm{B}}_j\right)\\
& - i\lambda^k_{0}\left( \grad \mathrm{u}^{k,\Int}_{0}\!:\!\LL^{-1}\widehat{\mathrm{A}}+ \grad\theta^{k,\Int}_{0}\!\cdot\!\LL^{-1}\widehat{\mathrm{B}}\right)\,,\\
\end{aligned}
\end{equation}
where $I_4(U^{k,\Int}_{0})= \LL^{-1}\mathcal{P}^\perp\left\{(v\!\cdot\!\grad  - i\lambda^k_0)I_2(U^{k,\Int}_{0}) \right\} \,.$ Thus we conclude the {\bf Round 2} in the induction.

\subsection{General case: Induction hypothesis}

For $m \geq 3$, we assume that we have finished the $(m-1)\mbox{-}th$ round, i.e. used the information from the kinetic boundary layer, the viscous boundary and the interior till the order $O(\sqrt{\eps}^{m-3})$, $O(\sqrt{\eps}^{m-2})$ and $O(\sqrt{\eps}^{m-1})$ respectively. Before we solve the next round, we write down the hypothesis that summarizes what we were able to construct till now. We write down this in following 10 statements that we need to check for the $m\mbox{-}th$ round.

$\bf (P^1_{m-1}):$ For $2 \leq j \leq m-1$, $\rho^{k,\b}_j + \theta^{k,\b}_j = \sum^j_{h=2}Y^\b_h(\zeta, P_0 U^{k,\Int}_{j-h})\,;$ For $j=0, 1$, $\rho^{k,\b}_j + \theta^{k,\b}_j=0 \,.$

$\bf (P^2_{m-1}):$ For $0 \leq j \leq m-2$, $\mathrm{u}^{k,\b}_j\cdot\grad\pi= \sum^j_{h=0}\widetilde{Z}^{\b,\mathrm{u}}_h(\zeta, P_0 U^{k,\Int}_{j-h})\,;$

$\bf (P^3_{m-1}):$ For $0 \leq j \leq m-2$, $\theta^{k,\b}_j= \sum^j_{h=0}\widetilde{Z}^{\b,\theta}_h(\zeta, P_0 U^{k,\Int}_{j-h})\,;$

$\bf (P^4_{m-1}):$ For $1 \leq j \leq m-1$, $\mathrm{u}^{k,\b}_j\cdot\grad\mathrm{d} = \sum^j_{h=1}\widetilde{Z}^\b_{h}(\zeta, P_0 U^{k,\Int}_{j-h})\,.$ Taking $\zeta=0$, we deduce that on the boundary we have $-\mathrm{u}^{k,\b}_j\cdot\mathrm{n} = \sum^j_{h=1} Z^\b_{h}(P_0 U^{k,\Int}_{j-h})\,.$

$\bf (P^5_{m-1}):$ For $0 \leq j \leq m$, $g^{k,\b}_j = \sum^j_{h=0}B_h(U^\Int_{j-h})\,,$ where $B_h$ for $h\geq 0$ is defined iteratively starting from $B_0(U^\b)= (1,v,\frac{|v|^2}{2}-\frac{\D}{2})U^\b$:
\begin{equation}\label{B-h}
B_h(U^\b)= \LL^{-1}\mathcal{P}^\perp\{v\cdot\grad\mathrm{d}\partial_{\!\zeta}B_{h-1}(U^\b)+ v\cdot\grad\pi\partial_{\!\pi}B_{h-1}(U^\b)- \sum^{h-3}_{l=0}i\lambda^k_lB_{h-2-l}(U^\b)\}\,.
\end{equation}

$\bf (P^6_{m-1}):$ For $1 \leq j \leq m-1$,
\begin{equation}\label{K-bb}
g^{k,\bb}_j = \sum^{j}_{h=1}K_h(v,\xi, P_0(U^{k,\Int}_{j-h}))\,,
\end{equation}
where the linear operator $K_h(v,\xi, U^{k,\Int}_0)$ is the solution to the linear kinetic boundary layer equation \eqref{B-L-Equation}-\eqref{BC-beta>0} with the source term $s^\bb_h(U^{k,\Int}_0)$ and the boundary source term $h^\bb_h(U^{k,\Int}_0)$.

$\bf (P^7_{m-1}):$ For $1 \leq j \leq m-1$, $i\lambda^k_j = Q_j(\Psi^k,\Psi^k)$, where the quadratic form $Q_1$ and $Q_2$ are defined in \eqref{Q1} and \eqref{Q2} respectively, and $Q_j$ for $3 \leq j \leq m-1$ is defined as
\begin{equation}\label{Q-j}
   Q_j(\Psi^k,\Psi^l)= \sqrt{\tfrac{\D+2}{2\D}}\int_\pO \{Z^\b_j + V^\mathrm{n}_j\}(U^{k,\Int}_0)\Psi^l\,\mathrm{d}\sigma_x + \sum^j_{h=2}\langle \mathcal{G}_h(Z^\Int_{j-h}(U^{k,\Int}_0))| U^{l,\Int}_0 \rangle\,.
\end{equation}
Note $\mathcal{G}_h$ is defined in \eqref{G-definition}.

$\bf (P^8_{m-1}):$ For $0 \leq j \leq m-1$, $U^{k,\Int}_j= P_0 U^{k,\Int}_j + \sum^j_{h=1}Z^\Int_h(P_0 U^{k,\Int}_{j-h})\,;$

$\bf (P^9_{m-1}):$ For $0 \leq j \leq m+1$, $g^{k,\Int}_j = I_0(U^\Int_j) + I_2(U^\Int_{j-2}) + \sum^j_{h=4}I_j(U^\Int_{j-h}),$ where where $I_h$ for $h\geq 0$ is defined iteratively starting from $I_0(U^\Int)= (1,v,\frac{|v|^2}{2}-\frac{\D}{2})U^\Int$, $I_1=0$:
\begin{equation}\label{I-h}
I_h(U^\Int)=\LL^{-1}\mathcal{P}^\perp\{ v\cdot\grad I_{h-2}(U^\Int) - \sum^{h-4}_{l=0}i\lambda^k_l I_{h-2-m}(U^\Int)\}\,.
\end{equation}

$\bf (P^{10}_{m-1}):$ The last assumption to check deals with the number of orthogonality conditions needed and specifies what is already determined and what is still not determined in the construction. We distinguish between $m$ cases:

{\bf Case 1:} $i \lambda^k_{h}$ is a simple eigenvalue of $L_h$ for
 $ 0  \leq h \leq m-2$. No orthogonality condition is needed, and every term  in the expansion is fully determined;

{\bf Case j ($2 \leq j \leq m$):} $i \lambda^k_{h}$ is a multiple eigenvalue of $L_h$ for $0 \leq h \leq j-2$, but $i\lambda_{j-1}$ a simple eigenvalue of $L_{j-1}\,.$
(Note: the {\bf case m} means that
 all the eigenvalues  $i {\lambda^k_{h}}$ for $ 0  \leq h \leq m-2$  are multiple  eigenvalues.)

\begin{itemize}
\item We need the orthogonality conditions: For each $0 \leq h \leq j-2$,
                            \begin{equation}\label{orth-cond-h}
                                  Q_{h+1}(\Psi^k\,,\Psi^l)=0\,,\quad\mbox{for}\quad\! \Psi^k,\Psi^l\in \mathrm{H}_0\cap\cdots\cap \mathrm{H}_h\,,
                            \end{equation}
 where for $h \geq 1$, the space $\mathrm{H}_h= \mathrm{H}_h(\lambda_h)= \{\Psi\in \mathrm{H}_1(\lambda_1)\cap\cdots\cap \mathrm{H}_{h-1}(\lambda_{h-1}): L_h\Psi= i\lambda_h \Psi\}$.

\item  For $1 \leq h \leq m-j$, $U^{k,\Int}_{h}$ are completely determined. (for the {\bf case m}, no term is completely determined.)

\item For $m-j +1 \leq h \leq m-1$, $(P_0^\perp+ \cdots + P_{m-1-h}^\perp) U^{k,\Int}_{h}$ are determined.

\item For $m-j +1 \leq h \leq m-1$, $P_{m-1-h}U^{k,\Int}_{h}$ are not determined,

\end{itemize}
where $P_{h-1}$ is the orthogonal projection on $\mathrm{H}_1(\lambda_1)\cap\cdots\cap\mathrm{H}_{h-1}(\lambda_{h-1})$, and $P_{h-1}= P_h + P^\perp_h$, where $P^\perp_h$ is the orthogonal projection on $\mathrm{H}_1(\lambda_1)\cap\cdots\cap\mathrm{H}_{h-1}(\lambda_{h-1})\cap\mathrm{H}^\perp_h(\lambda_h)\,.$

{\bf Remark:} Regarding  the condition \eqref{orth-cond-h},
 actually we have  a stronger orthogonality
 property which is actually equivalent to \eqref{orth-cond-h}, namely :
for each $0 \leq h \leq j-2$,
 \begin{equation}\label{orth-cond-h-strong}
    \begin{aligned}
     Q_{h+1}(\Psi_k\,,\Psi_l)&=0\,,\quad\mbox{for}\quad\!  l\neq k , \quad
 \Psi_k,\Psi_l\in \mathrm{H}_0\,.
    \end{aligned}
    \end{equation}
Indeed,  we just need to use that  the $L_h$ leave stable the spaces
$\mathrm{H}_h$. Of course, we have to define $L_h$
over the whole space $\mathrm{H}_0$  even if the eigenvalue is simple, but
in this case we just take it to be the identity.

In the next subsection, we are going to prove the 10 hypotheses $\bf (P^1_{m})- (P^{10}_m)$  assuming  $\bf P_{i-1}$  for  $i \leq  m$.

\subsection{Induction: Round m.}
For $m \geq 3$, we assume that we have finished round $m-1$ in the induction process. For the round $m$, as before it includes three steps by considering terms in the kinetic, viscous boundary layers and interior alternatively.

{\bf \underline{Step 1: Order $O(\sqrt{\eps}^{m-2})$ in the kinetic boundary layer.}}

The order $O(\sqrt{\eps}^{m-2})$ of the kinetic boundary layer in the ansatz gives that $g^{k,\bb}_{m}$ satisfies the linear boundary layer equation
\begin{equation}\label{BL-Equation-m}
\begin{aligned}
\LL^{BL} g^{k,\bb}_{m} & = S^{k,\bb}_m\,, \quad \mbox{in}\quad\xi >0\,,\\
g^{k,\bb}_{m} &\longrightarrow 0\,,\quad\mbox{as}\quad \xi\rightarrow
\infty\,,
\end{aligned}
\end{equation}
with boundary condition at $\xi=0$
\begin{equation}\label{BC-bb-m}
L^\mathcal{R} g^{k,\bb}_{m} = H^{k,\bb}_{m}\,,\quad \mbox{on}\quad\xi
= 0\,,\quad v\cdot \mathrm{n}>0\,,
\end{equation}
The source term
\begin{equation}\nonumber
\begin{aligned}
S^{k,\bb}_m & = \left\{v\!\cdot\!\grad \pi^\alpha\partial_{\pi^\alpha} - i\lambda^k_0\right\} g^{k,\bb}_{m-2}-\sum^{m-3}_{j=1}i\lambda^k_j g^{k,\bb}_{m-2-j}\\
            & = \sum^m_{j=3}s^\bb_j(P_0 U^{k,\Int}_{m-j})\,,
\end{aligned}
\end{equation}
where for $3 \leq j \leq m-3$,
\begin{equation}\label{sk-m}
 s^{\bb}_j(U^\Int) = \left\{v\cdot\grad\pi\partial_{\!\pi}- i\lambda^k_0\right\} K_{j-2}(U^\Int)- \sum^{j-3}_{h=1}i\lambda^k_h K_{j-2-h}(U^\Int)\,,
\end{equation}
recalling the functions $K_j(U^\Int)$ are defined through \eqref{K-bb}. The boundary source term is
\begin{equation}\label{h-m-half}
\begin{aligned}
H^{k,\bb}_{m}&=- L^\mathcal{R}\tilde{g}^{k}_{m}+ L^\mathcal{D} (\tilde{g}^{k}_{m-1} + g^{k,\bb}_{m-1}) \\
& = \sum^m_{j=0} \tilde{h}^\bb_j(\widetilde{U}^k_{m-j}) = \sum^m_{j=1}h^\bb_j (P_0 U^{k,\Int}_{m-j})\,.
\end{aligned}
\end{equation}
Here
\begin{equation}\nonumber
\begin{aligned}
 \tilde{h}^\bb_j(\widetilde{U}^k_0)=& - L^{\mathcal{R}}\left\{I_j(U^{k,\Int}_0)+ B_j(U^{k,\b}_0)\right\}\\
                            & + L^\mathcal{D}\left\{I_{j-1}(U^{k,\Int}_0)+ B_{j-1}(U^{k,\b}_0)+ K_{j-1}(U^{k,\Int}_0)\right\}\,.
\end{aligned}
\end{equation}
The definition of $h^\bb_j(P_0 U^{k,\Int}_{m-j})$ is the following: represent all $I_0\,, \cdots\,, I_m$ and $B_0\,,\cdots\,, B_m$ in terms of $U^{k,\Int}_0\,, \cdots\,, P_0 U^{k,\Int}_{m-1}$, and collect the corresponding terms in $\sum^m_{j=0} \tilde{h}^\bb_j(\widetilde{U}^k_{m-j})$, which defines $h^\bb_j(P_0 U^{k,\Int}_{m-j})$ for $j=1\,,\cdots\,, m-1$. Note that there is no $U^{k,\Int}_m$ term, since it {\em formally} only appears in the term $- L^{\mathcal{R}}(I_0(U^\Int_m)+ B_0(U^\b_m))=-2(v\cdot\mathrm{n})(\mathrm{u}^{k,\Int}_m+\mathrm{u}^{k,\b}_m )\cdot\mathrm{n}$, which only depends on $U^{k,\Int}_0\,,P_0 U^{k,\Int}_1\,,\cdots\,, P_0 U^{k,\Int}_{m-1}$, see $\bf (P^4_{m-1})$.

The formulas \eqref{BC-D} and \eqref{BC-theta} give the boundary conditions
\begin{equation}\label{bc-m-u}
[\mathrm{u}^{k,\b}_{m-1}-\tfrac{\nu}{\chi}\partial_{\!\zeta}\mathrm{u}^{k,\b}_{m-1}]^{\mathrm{tan}} +[\mathrm{u}^{k,\Int}_{m-1}]^{\mathrm{tan}}
=\sum^{m-1}_{j=1}V^\mathrm{u}_j(P_0 U^\Int_{m-1-j})\,,
\end{equation}
and
\begin{equation}\label{bc-m-theta}
\theta^{k,\b}_{m-1}-\tfrac{\D+2}{\D+1}\tfrac{\kappa}{\chi}\partial_{\!\zeta}\theta^{k,\b}_{m-1}+ \theta^{k,\Int}_{m-1}
=\sum^{m-1}_{j=1}V^\theta_j(P_0 U^\Int_{m-1-j})\,,
\end{equation}
where
\begin{equation}\label{V-u-sum}
\begin{aligned}
&\sum^{m-1}_{j=1}V^\mathrm{u}_j(P_0 U^{k,\Int}_{m-1-j})
= -\tfrac{\nu}{\chi}[2d(\mathrm{u}^{k,\Int}_{m-2})\!\cdot\!\mathrm{n}]^\mathrm{tan} -\tfrac{\nu}{\chi}\nabla_{\!\pi}[\mathrm{u}^{k,\b}_{m-2}\!\cdot\!\mathrm{n}]\\
+ & \sum^{m-1}_{j=1}\int_{v\cdot\mathrm{n}>0}\left[L^\mathcal{D}\left\{B_j(U^{k,\b}_{m-1-j})+ K_j(P_0 U^{k,\Int}_{m-1-j})\right\}(v\!\cdot\!\mathrm{n})v\right]^{\mathrm{tan}}M\,\mathrm{d}v\\
- & \tfrac{1}{\chi}\left\langle (v\cdot\mathrm{n})v \{\sum^m_{j=4}I_j(U^{k,\Int}_{m-j})+ \sum^m_{j=3}B_j(U^{k,\b}_{m-j}) \}\right\rangle^\mathrm{tan} + \tfrac{1}{\chi}\int^\infty_0 \langle v S^{k,\bb}_{m}\rangle^\mathrm{tan}\mathrm{d}\xi\,,
\end{aligned}
\end{equation}
and
\begin{equation}\label{V-theta-sum}
\begin{aligned}
&\sum^{m-1}_{j=1}V^\theta_j(P_0 U^{k,\Int}_{m-1-j})
= -\tfrac{\D+2}{\D+1}\tfrac{\kappa}{\chi}\partial_{\mathrm{n}}\theta^{k,\Int}_{m-2} + \tfrac{\sqrt{2\pi}}{2(\D+1)}\tilde{\mathrm{u}}^{k}_{m-1}\cdot\mathrm{n}\\
+ & \tfrac{\sqrt{2\pi}}{\D+1}\sum^{m-1}_{j=1}\int_{v\cdot\mathrm{n}>0}\left[L^\mathcal{D}\left\{B_j(U^{k,\b}_{m-1-j})+ K_j(P_0 U^{k,\Int}_{m-1-j})\right\}(v\!\cdot\!\mathrm{n})|v|^2\right]^{\mathrm{tan}}M\,\mathrm{d}v\\
- & \tfrac{1}{(\D+1)\chi}\left\langle (v\cdot\mathrm{n})|v|^2[\sum^m_{j=4}I_j(U^{k,\Int}_{m-j})+ \sum^m_{j=3}B_j(U^{k,\b}_{m-j})]\right\rangle^\mathrm{tan} + \tfrac{\D+2}{\D+1}\tfrac{1}{\chi}\int^\infty_0 \langle (\tfrac{|v|^2}{\D+2}-1) S^{k,\bb}_{m}\rangle^\mathrm{tan}\mathrm{d}\xi\,.
\end{aligned}
\end{equation}
If we express in \eqref{V-u-sum} and \eqref{V-theta-sum} all $U^\Int$ and $U^\b$ in terms of $U^\Int_0\,,P_0 U^\Int_1\,, \cdots \,, P_0 U^\Int_{m-2}$, and collect the corresponding terms, then we can define $V^\mathrm{u}_{m-1}(U^{k,\Int}_0)$ and $V^\theta_{m-1}(U^{k,\Int}_0)$. Note that $V^\mathrm{u}_1\,, \cdots\,, V^\mathrm{u}_{m-2}$ and $V^\theta_1\,, \cdots\,, V^\theta_{m-2}$ have been defined in the previous rounds of the induction.

Furthermore, the formula \eqref{BC-N} gives the boundary condition on the normal direction:
\begin{equation}\label{BC normal-u-m-sum}
[\mathrm{u}^{k,\Int}_{m}+ \mathrm{u}^{k,\b}_{m}]\cdot\mathrm{n}=\sum^m_{j=3}V^\mathrm{N}_j(P_0 U^{k,\Int}_{m-j})\,, \quad \mbox{on}\quad\! \pO\,,
\end{equation}
where
\begin{equation}\nonumber
   V^\mathrm{N}_m(U^{k,\Int}_{0})= \int^\infty_0\left\langle \{v\!\cdot\!\grad\pi^\alpha \partial_{\!\pi^\alpha}- i\lambda^k_0\} K_{m-2}(U^{k,\Int}_{0})\right\rangle\,\mathrm{d}\xi
   -\sum^{m-3}_{j-1}\int^\infty_0\langle i\lambda^k_j K_{m-2-j}(U^{k,\Int}_{0})\rangle\,\mathrm{d}\xi\,.
\end{equation}

{\bf \underline{Step 2: Order $O(\sqrt{\eps}^{m-1})$ in the viscous boundary layer.}}

The equations of $[\mathrm{u}^{k,\b}_{m-1}]^{\mathrm{tan}}$ and
$\theta^{k,\b}_{m-1}$ can be derived by considering the order
$O(\eps^{m-1})$ of the viscous boundary layer:
\begin{equation}\label{m-b}
\LL g^{k,\b}_{m+1}=\VGRAD
\mathrm{d} \partial_{\!\zeta}g^{k,\b}_{m}+ v\!\cdot\!\grad
\pi^\alpha \p_{\pi^\alpha} g^{k,\b}_{m-1}
-\sum^{m-1}_{j=0}i\lambda^{k}_{j}g^{k,\b}_{m-1-j}\,,
\end{equation}
the solvability of which is the following system of ODEs:
\begin{equation}\label{U-b-m-1}
 - \AA^\mathrm{d} U^{k,\b}_m =  \sum^{m-1}_{j=0}(\mathcal{F}_j - i \lambda^k_j)U^{k,\b}_{m-1-j}\,,
\end{equation}
where $\mathcal{F}_{m-1}(U^{k,\b}_0)$ is defined by
\begin{equation}\label{F-m-1}
  \mathcal{P}\left\{v\!\cdot\!\grad\mathrm{d}\partial_{\!\zeta} B_m(U^{k,\b}_0)+ v\!\cdot\!\grad \pi^\alpha\partial_{\!\pi^\alpha}B_{m-1}(U^{k,\b}_0)\right\}= (1,v,\cvd)\mathcal{F}_{m-1}(U^{k,\b}_0)\,.
\end{equation}
Note that the linear operator $\mathcal{F}_1\,, \cdots \,, \mathcal{F}_{m-2}$ have been defined in the previous rounds of the induction. In particular, $\mathcal{F}_0=\AA^\pi + \mathcal{D}^\mathrm{d}$.

Projecting the system \eqref{U-b-m-1} on Null$(\AA^\mathrm{d})^\perp$, the $\mathrm{u}\mbox{-}$component and $\rho\mbox{-}$component of which give the equations of $\rho^{k,\b}_m + \theta^{k,\b}_m$ and $\mathrm{u}^{k,\b}_m\!\cdot\!\grad\mathrm{d}$ respectively:
\begin{equation}\label{rho-add-theta-m}
 -\partial_{\!\zeta}(\rho^{k,\b}_m + \theta^{k,\b}_m)= \left\{\nu(2-\tfrac{2}{\D})\partial^2_{\zeta\zeta}- i\lambda^k_0\right\}(\mathrm{u}^{k,\b}_{m-1}\!\cdot\!\grad\mathrm{d}) + \sum^{m-1}_{j=1}\left\{(\mathrm{I}-\Pi^\mathrm{d})(\mathcal{F}_j - i \lambda^k_j)U^{k,\b}_{m-1-j}\right\}^{\mathrm{u}}\,,
\end{equation}
and
\begin{equation}\label{u-normal-m}
\begin{aligned}
 &-\partial_{\!\zeta}(\mathrm{u}^{k,\b}_m \cdot\grad\mathrm{d})\\
 =& \mathrm{div}(\mathrm{u}^{k,\b}_{m-1}\cdot\grad \pi)+ \kappa\partial^2_{\zeta\zeta}\theta^{k,\b}_{m-1} - i\lambda^k_0 \tfrac{\D}{\D+2}(\rho^{k,\b}_{m-1} + \theta^{k,\b}_{m-1})+ \sum^{m-1}_{j=1}\left\{(\mathrm{I}-\Pi^\mathrm{d})(\mathcal{F}_j - i \lambda^k_j)U^{k,\b}_{m-1-j}\right\}^{\rho}\,.
 \end{aligned}
\end{equation}
Here we use the notation for a  vector $V=(V^\rho\,,V^\mathrm{u}\,,V^\theta)^\top\,.$ Integrating \eqref{rho-add-theta-m} from $\zeta$ to $\infty$ gives
\begin{equation}\label{sol-rho-add-theta-m}
  \rho^{k,\b}_m + \theta^{k,\b}_m = \sum^{m}_{j=2}Y^\b_j(\zeta\,,P_0 U^\Int_{m-j})\,,
\end{equation}
in which the linear operator $Y^\b_2\,, \cdots \,, Y^\b_{m-1}$ have been defined in the previous rounds of the induction. This corresponds to $\bf (P^1_m)$.

Next projecting the system \eqref{U-b-m-1} on Null$(\AA^\mathrm{d})$, the $\mathrm{u}\mbox{-}$component of which gives that $\mathrm{u}^{k,\b}_{m-1}\cdot \grad\pi$ satisfies the ODE
\begin{equation}\label{ODE-u-m-1-tangential}
\begin{aligned}
\left\{\nu \partial^2_{\zeta\zeta} - i\lambda^k_0\right\}\mathrm{u} &= -\partial_\pi(\rho^{k,\b}_{m-1} + \theta^{k,\b}_{m-1})- \sum^{m-1}_{j=1}\left\{\Pi^\mathrm{d}(\mathcal{F}_j - i \lambda^k_j)U^{k,\b}_{m-1-j}\right\}^{\mathrm{u}}\,,\\
[\mathrm{u}-\tfrac{\nu}{\chi}\partial_\zeta\mathrm{u}](\zeta=0)  &= - \mathrm{u}^{k,\Int}_{m-1}\!\cdot\!\grad\pi + \sum^{m-1}_{j=1}V^\mathrm{u}_j(P_0 U^\Int_{m-1-j})\,, \\
\lim_{\zeta\rightarrow \infty} \mathrm{u} &= 0\,,
\end{aligned}
\end{equation}
the solution of which is
\begin{equation}\label{sol-u-m-1-tangential}
 \mathrm{u}^{k,\b}_{m-1}\!\cdot\!\grad\pi = \sum^{m-1}_{j=0}\widetilde{Z}^{\b,\mathrm{u}}_j(\zeta\,, P_0 U^\Int_{m-1-j})\,,
\end{equation}
where the linear operator $\widetilde{Z}^{\b,\mathrm{u}}_0\,, \cdots \,, \widetilde{Z}^{\b,\mathrm{u}}_{m-2}$ have been defined in the previous rounds of the induction. This corresponds to $\bf (P^2_m)$.

Projecting the system \eqref{U-b-m-1} on Null$(\AA)$ gives that $\theta^{k,\b}_{m-1}$ satisfies the ODE
\begin{equation}\label{ODE-theta-m-1}
\begin{aligned}
\left\{\kappa \partial^2_{\zeta\zeta} - i\lambda^k_0\right\}\theta &= - i\lambda^k_0 \tfrac{2}{\D+2}(\rho^{k,\b}_{m-1} + \theta^{k,\b}_{m-1})- \sum^{m-1}_{j=1}\left\{\Pi^\mathrm{d}(\mathcal{F}_j - i \lambda^k_j)U^{k,\b}_{m-1-j}\right\}^\theta\,,\\
[\theta-\tfrac{\D+2}{\D+1}\tfrac{\kappa}{\chi}\partial_\zeta\theta](\zeta=0)  &= - \theta^{k,\Int}_{m-1} + \sum^{m-1}_{j=1}V^\theta_j(P_0 U^\Int_{m-1-j})\,, \\
\lim_{\zeta\rightarrow \infty} \theta &= 0\,,
\end{aligned}
\end{equation}
the solution of which is
\begin{equation}\label{sol-theta-m-1}
 \theta^{k,\b}_{m-1}= \sum^{m-1}_{j=0}\widetilde{Z}^{\b,\theta}_j(\zeta\,, P_0 U^\Int_{m-1-j})\,,
\end{equation}
where the linear operator $\widetilde{Z}^{\b,\theta}_0\,, \cdots \,, \widetilde{Z}^{\b,\theta}_{m-2}$ have been defined in the previous rounds of the induction. This corresponds to $\bf (P^3_m)$.

Having \eqref{sol-u-m-1-tangential} and \eqref{sol-theta-m-1}, go back to \eqref{u-normal-m} and integrate from $\zeta$ to $\infty$, we obtain
\begin{equation}\label{sol-u-m-normal}
\mathrm{u}^{k,\b}_m \cdot\grad\mathrm{d} = \sum^m_{j=1}\widetilde{Z}^\b_j(\zeta\,, P_0 U^\Int_{m-j})\,.
\end{equation}
In particular, by taking $\zeta=0$ in \eqref{sol-u-m-normal}, we obtain
\begin{equation}\label{BC u-m-normal}
-\mathrm{u}^{k,\b}_m \cdot \mathrm{n} = \sum^m_{j=1}Z^\b_j(P_0 U^\Int_{m-j})\,,
\end{equation}
Thus from \eqref{BC u-m-normal} and \eqref{BC normal-u-m-sum} we derive the boundary condition of $\mathrm{u}^{k,\Int}_m\cdot\mathrm{n}$ which will be used in the next step to solve $U^{k,\Int}_m$. This corresponds to $\bf (P^4_m)$.

Under these conditions, the equation \eqref{m-b} can be solved as $g^{k,\b}_{m+1} = \sum^{m+1}_{h=0}B_h(U^\Int_{j-h})\,,$ where $B_h(U^\Int)$ is defined in \eqref{B-h}. Note that $B_0\,,B_1\,,\cdots\,, B_m$ have been determined in the previous rounds of the induction. This corresponds to $\bf (P^5_m)$.

Finally we go back to the kinetic boundary equation \eqref{BL-Equation-m} to solve $g^{k,\bb}_m$ as
\begin{equation}
 g^{k,\bb}_m = \sum^m_{j=1} K_j(v,\xi, P_0 U^\Int_{m-j})\,,
\end{equation}
where the linear operator $K_m(v,\xi, U^{k,\Int}_0)$ is the solution to the linear kinetic boundary layer equation \eqref{B-L-Equation}-\eqref{BC-beta>0} with the source term $s^\bb_m(U^{k,\Int}_0)$ and the boundary source term $h^\bb_m(U^{k,\Int}_0)$. Note that $K_1\,, \cdots \,, K_{m-1}$ have been defined in the previous rounds of the induction. This corresponds to $\bf (P^6_m)$. Thus we finish {\bf \underline{Step 2}}.

{\bf \underline{Step 3: Order $O(\sqrt{\eps}^{m})$ in the interior.}}

The order $O(\eps^\frac{m}{2})$ in the interior part of the ansatz yields
\begin{equation}\label{int-m}
\LL g^{k,\Int}_{m+2}=v\!\cdot\!\grad g^{k,\Int}_{m}
- \sum^m_{j=0}i \lambda^{k}_{j,0}g^{k,\Int}_{m-j}\,,
\end{equation}
and the solvability condition of which is
\begin{equation}\label{sol-int-m}
\begin{aligned}
(\AA - i \lambda^{k}_{0}) U^{k,\Int}_{m} &= \sum^m_{j=1} (i\lambda^k_{j}- \mathcal{G}_j)U^k_{m-j}\,,\quad\mbox{in}\quad\! \Omega\,,\\
\mathrm{u}^{k,\Int}_m\cdot\mathrm{n} &=\sum^m_{j=1}(Z^\b_j+V^\mathrm{N}_j)(P_0 U^{k,\Int}_{m-j})\,,\quad\mbox{on}\quad\! \pO\,,
\end{aligned}
\end{equation}
where the vector-valued linear operator $\mathcal{G}_j$ for $j\geq 4$ is defined as
\begin{equation}\label{G-definition}
(1,v,\cvd)\mathcal{G}_j(U^{k,\Int}_0) = \mathcal{P}\left\{v\cdot\grad I_j(U^{k,\Int}_0)\right\}\,.
\end{equation}
Note that $\mathcal{G}_1=0$, $\mathcal{G}_2=\mathcal{D}$, $\mathcal{G}_3=0$ and $V^\mathrm{N}_j=0$ for $j=1, 2$.

Applying Lemma \ref{Solve-A} to \eqref{sol-int-m}, and recalling in \eqref{Q-j} the definition of $Q_j$ for $j=1\,,2\,,\cdots\,, m-1$, the formula \eqref{imu} gives
\begin{equation}\label{lambda-m}
\begin{aligned}
i\lambda^k_{m}= &\sqrt{\tfrac{\D+2}{2\D}}\int_{\pO}(Z^\b_m+V^\mathrm{n}_m)(U^{k,\Int}_0)\Psi^k\,\mathrm{d}\sigma_{\!x}+ \sum^m_{j=2}\left\langle\mathcal{G}_j(Z^\Int_{m-j}(U^{k,\Int}_0))|U^{k,\Int}_0 \right\rangle\\
& + \sum^{m-1}_h Q_h(P_0 U^{k,\Int}_{m-h}\,, \Psi^k)\,.
\end{aligned}
\end{equation}
The orthogonality condition \eqref{orth-cond-h-strong} implies that the second line of \eqref{lambda-m} vanishes. Thus we can define the righthand side of the first line of \eqref{lambda-m} as $Q_m(\Psi^k, \Psi^k)$. Thus we verify that $i\lambda_m = Q_m(\Psi^k, \Psi^k)$ which is completely determined. This corresponds to $\bf (P^7_m)$.

To solve the equation \eqref{sol-int-m}, we need to consider $m+1$ cases:

{\bf Case 1:} $i \lambda^k_{h}$ is a simple eigenvalue of $L_h$ for $0 \leq h \leq m-1$. No orthogonality condition is needed, and every term is fully determined;

{\bf Case j ($2 \leq j \leq m+1$):} $i \lambda^k_{h}$ is a multiple eigenvalue of $L_h$ for $0 \leq h \leq j-2$, and a simple eigenvalue of $L_h$ for $j-1 \leq h \leq m-1$.

We only consider the {\bf case m+1} here, i.e. all the eigenvalues   $i \lambda^k_{h}$ are multiple. The other cases are simpler. Taking the inner product with $U^{l,\Int}_0$, for $l \neq k$, $\lambda^l_{0}=\lambda^k_{0}$, which is
\begin{equation}\label{a-kl-m}
   \sum^{m-1}_{h=1}i \lambda^k_{h} a^{kl}_{m-h} = \sum^{m-1}_{h=1}Q_h(P_0 U^{k,\Int}_{m-h}, \Psi^l)+ Q_m(\Psi^k,\Psi^l)\,.
\end{equation}

If $\Psi^k , \Psi^l  \in \mathrm{H}_1(\lambda_1)\cap \mathrm{H}_2(\lambda_2)\cap\cdots \cap \mathrm{H}_{m-1}(\lambda_{m-1})$, then because of the orthogonality condition \eqref{orth-cond-h} for $1 \leq h \leq m-2$,
\begin{equation}\nonumber
\begin{aligned}
Q_h(P_0 U^{k,\Int}_{m-h}, \Psi^l  )&= Q_h(P_{h-1}U^{k,\Int}_{m-h}, \Psi^l  ) + Q_h(\sum^{h-1}_{\delta=1}P^\perp_{\delta}U^{k,\Int}_{m-h}, \Psi^l  )\\
&= i \lambda^l_{h} a^{kl}_{m-h}\,.
\end{aligned}
\end{equation}
For $h=m-1$, $Q_{m-1}(P_0 U^{k,\Int}_{1},\Psi^l  )= i \lambda^l_{m-2}a^{kl}_{1}+ Q_{m-1}(P^\perp_{m-2}U^{k,\Int}_{1},\Psi^l  )$. Thus, the identity \eqref{a-kl-m} implies that we need the orthogonality condition that for $k \neq l$,
\begin{equation}\label{orth-cond-m-1}
   Q_m(\Psi^k \,, \Psi^l  ) = \int_\Omega L_m(\Psi^k )\Psi^l  \,\mathrm{d}x = 0\,,\quad\mbox{for}\quad\! \Psi^k ,\Psi^l  \in \mathrm{H}_1\cap\cdots\cap \mathrm{H}_{m-1}\,,
\end{equation}
where the symmetric operator $L_m$ is defined by $L_m \Psi^l   = i\lambda^l_{m}\Psi^l  $, for $\Psi^l   \in \mathrm{H}_1\cap\cdots\cap \mathrm{H}_{m-1}\,.$

If $\Psi^k , \Psi^l  \in \mathrm{H}_1(\lambda_1)\cap \mathrm{H}_2(\lambda_2)\cap\cdots\cap \mathrm{H}_{m-2}(\lambda_{m-2})\cap \mathrm{H}^\perp_{m-1}(\lambda_{m-1})$, i.e $\lambda^k_{h}=\lambda^l_{h}$ for $0\leq h \leq m-2$, but $\lambda^k_{m-1}\neq\lambda^l_{m-1}$, from the identity \eqref{a-kl-m}, for these $k,l$, $a^{kl}_{1}$ can be determined by
\begin{equation}\nonumber
   a^{kl}_{1}= \tfrac{1}{i \lambda^k_{m-1} - i \lambda^l_{m-1}} Q_m(\Psi^k ,\Psi^l  )\,.
\end{equation}
This means that $(P^\perp_0+ P^\perp_1+ \cdots + P^\perp_{m-1})U^{k,\Int}_{1}$ is completely determined, but $P_{m-1}U^{k,\Int}_{1}$ is still left as undetermined.

If $\Psi^k ,\Psi^l  \in \mathrm{H}_1\cap\cdots\cap\mathrm{H}_{m-3}\cap\mathrm{H}^\perp_{m-2}$,
\begin{equation}\label{m-m-2perp}
Q_{m-1}(P^\perp_{m-1} U^{k,\Int}_{1}\,, \Psi^l  ) + Q_m(\Psi^k \,, \Psi^l  )\\
= (i\lambda^k_{m-2}- i\lambda^l_{m-2}) a^{kl}_{2} + i\lambda^k_{m-1} a^{kl}_{1}\,,
\end{equation}
from which $a^{kl}_{2}$ thus $P^\perp_{m-2}U^{k,\Int}_{2}$ is completely determined.

Under these solvability conditions, the equation \eqref{sol-int-m} can be solved as
\begin{equation}\nonumber
    U^{k,\Int}_m= P_0 U^{k,\Int}_m+ \sum^m_{h=1}Z^\Int_h(P_0 U^{k,\Int}_{m-h})\,,
\end{equation}
where $Z^\Int_m(U^{k,\Int}_0)$ is the solution to the following equation:
\begin{equation}\label{Z-int-m}
\begin{aligned}
(\AA - i \lambda^{k}_{0}) U &= \sum^m_{h=1} (i\lambda^k_{h}- \mathcal{G}_h)Z^\Int_{m-h}(U^{k,\Int}_0)\,,\quad\mbox{in}\quad\! \Omega\,,\\
\mathrm{u}\cdot\mathrm{n} &=( Z^\b_m +V^\mathrm{N}_m)U^{k,\Int}_0\,,\quad\mbox{on}\quad\! \pO\,.
\end{aligned}
\end{equation}
Thus, $U^{k,\Int}_m$ is determined modulo $P_0 U^{k,\Int}_m\,, P_1 U^{k,\Int}_{m-1}\,, \cdots\,, P_{m-1}U^{k,\Int}_1$
which are undetermined at this stage. Under these conditions, the equation \eqref{int-m} is solved as $g^{k,\Int}_{m+2} = I_0(U^\Int_{m+2}) + I_2(U^\Int_{m}) + \sum^{m+2}_{h=4}I_{m+2}(U^\Int_{m+2-h})$ This corresponds to $\bf (P^8_m)$, $\bf (P^9_m)$  and $\bf (P^{10}_m)$.

We can now inductively continue the process, namely
 go to the order $O(\sqrt{\eps}^{m-1})$ of the kinetic boundary layer, the order $O(\sqrt{\eps}^{m})$ of the viscous boundary layer, then the order $O(\sqrt{\eps}^{m+1})$ of the interior, and so on.  We should do this at least till the order $N+2$ where $N$ is the precision of the error in \eqref{error1}. Note however, that for a given  $\lambda= \lambda^k_0$, we may only need to construct a small
number of the $L_j$ if after few steps all the eigenvalues become simple, namely if for some $j$ all the eigenvalues of $L_j$ are simple on the
space   $\mathrm{H}_1(\lambda_1)\cap\cdots\cap \mathrm{H}_{j-1}(\lambda_{j-1})$.
It is clear that if  the eigenvalues become simple for some  $j \leq N+2$, then
the orthogonality condition \eqref{orth-cond-h} allows to determine
 the eigenfunctions $ \Psi^k $ uniquely.
If the process does not end, then we just need to satisfy the condition
till the order $N+2$ which yield a non-unique choice of eigenfunctions.
Also, in this case, we set all the undetermined pieces  of the eigenfunction,
namely those left undetermined to be zero.


\section{Proof of Proposition \ref{main-prop}: Truncation Error Estimates}

In the previous sections, we construct the kinetic-fluid boundary
layers up to any order for $\alpha_\eps = \sqrt{2\pi}\chi\eps^\beta$. Now we define the approximated eigenfunction and
eigenvalues $g^{k}_{\eps,N}$ and
$\lambda^{k}_{\eps,N}$ by truncation in the corresponding
ansatzs. More specifically,
\begin{equation}\nonumber
\begin{aligned}
g^{k}_{\eps,N} &= \sum^N_{j=0}\left\{ g^{k,\Int}_j +g^{k,\b}_j \right\}\eps^{\frac{j}{2}} + \sum^N_{j=1}g^{k,\bb}_j \eps^{\frac{j}{2}}\,,\\
\lambda^{k}_{\eps,N} &= \sum^N_{j=0}
\lambda^{k}_j\sqrt{\eps}^j\,.
\end{aligned}
\end{equation}

\subsection{Estimates of $R^{k}_{\eps,N}$}

Using the eigen-equation \eqref{eigen-Lep}, we can easily find that
the error term $R^{k}_{\eps,N}$ has the form of
\begin{equation}\nonumber
\begin{aligned}
R^{k}_{\eps,N} = &\{(i\lambda^k_0 - v\!\cdot\!\grad)g^{k,\Int}_{N-1}+ (i\lambda^k_0- v\!\cdot\!\grad\pi)\partial_\pi [g^{k,\b}_{N-1}+ g^{k,\bb}_{N-1}]- v\!\cdot\!\grad\mathrm{d} \partial_{\!\zeta}g^{k,\b}_N\\
& + \sum^{N-1}_{j=1} i \lambda^k_j \hat{g}^k_{N-1-j}\}\eps^{\frac{N-1}{2}} + ``\mbox{higher order terms}"\,,
\end{aligned}
\end{equation}
where $\hat{g}^k= g^{k,\Int}+ g^{k,\b} + g^{k,\bb}$. From the constructions of $g^{k,\Int}$, $g^{k,\b}$ and $g^{k,\bb}$,  it is easy to know that
\begin{equation}\nonumber
\|g^{k}_{j}\|_{L^r(\mathrm{d}x\,;L^p(aM\,\mathrm{d}v))}
\leq C\,,
\end{equation}
for all $j$, and $1 < r,p < \infty$, where $g^k$ stands for $g^{k,\Int}$, $g^{k,\b}$ or $g^{k,\bb}$.

Indeed, both the hydrodynamic and the kinetic parts of $g^{k,\Int}$ and $g^{k,\b}$ have coefficients in terms of the components of $U^{k,\Int}$ and $U^{k,\b}$. From Lemma \ref{Solve-A}, the solutions $U^{k,\Int}_j$ of the equation \eqref{A-Lambda} can be represented linearly in terms of components of $U^{\Int}_i$ for $0\leq i < j$ and the boundary terms of $U^{k,\b}_i$ and $g^{k,\b}_i$ for $0\leq i<j$. Note that the pseudo inverse operator $(\AA- i\lambda^{\tau,k})^{-1}$ is bounded, and furthermore, the boundary values of $U^{k,\b}_i$ and $g^{k,\b}_i$ are linearly in terms of $U^{k,\Int}_l$ for $0\leq l\leq i$. For $U^{k,\b}_j$, their components are solutions of second order ordinary differential equations with boundary conditions in terms of $U^{k,\Int}_i$ and $g^{k,\b}_i$ for $0 \leq i \leq j$. Moreover, the solutions of the linear kinetic boundary layer equation \eqref{B-L-Equation} for $g^{k,\bb}_{i}$ are bounded in $L^r(\mathrm{d}x,L^p(aM\mathrm{d}v))$ in terms of $U^{k,\Int}_l$ and $U^{k,\b}_l$ for $0\leq l < i$. So all $g^{k,\Int}$, $g^{k,\b}$ or $g^{k,\bb}$ are linearly depends on components of $U^{k,\Int}_0$, i.e. $\Psi^k$ and $\grad \Psi^k$ which is the eigenfunctions of $-\Delta_{\!x}$ with Neumann boundary conditions. From the basic regularity theory of elliptic operator, they are bounded in $L^r(\mathrm{d}x; \Omega)$ for any $1 < r \leq \infty$.

For the term $(v\!\cdot\!\grad\mathrm{d})\del_\zeta
g^{k,\bb}_{N}$,
we integrate over $\Omega\times\RD$ and use simple change of
variable $(y_1\,,y_2\,,\cdots\,, y_{\D-1})=
\pi(x)\,,y_{\D}=\frac{\mathrm{d}(x)}{\sqrt{\eps}}$, we can have extra
$\sqrt{\eps}$, so all will be in the higher order terms.
Thus, we have the error estimate \eqref{error1}.

\subsection{Estimates of $g^{k}_{\eps,N}-g^{k,\Int}_{0}$}

The leading order term of $g^{k}_{\eps,N}-g^{k,\Int}_{0}$ is
$g^{k,\b}_0$, so using the expressions above for $\mathrm{u}^{k,\b}_0\,,
\theta^{k,\b}_0$ and a simple change of variable
which will give an extra $\sqrt{\eps}$, we have
\begin{equation}\nonumber
\begin{aligned}
\|g^{k}_{\eps,N}-g^{\tau,k,\Int}_{0}\|_{L^r(\mathrm{d}x,L^p(aM\mathrm{d}v))}&
\leq C\eps^{\frac{1}{2r}}\,.
\end{aligned}
\end{equation}
Thus we get \eqref{error2}.

\subsection{Boundary error estimate}
Finally, the boundary error term $r^{k}_{\eps,N}$ is
\begin{equation}\nonumber
r^{k}_{\eps,N}= -\sqrt{\eps}^{N+1}L^\mathcal{D}(g^{k,\Int}_N + g^{k,\b}_N + g^{k,\bb}_N)
\end{equation}
from which we can get the estimate \eqref{r-error}. Thus we
finish the proof of the  Proposition \ref{main-prop}.

\section{Proof of the Weak Convergence in Theorem \ref{Diri-limit} and \ref{Navier-limit}}
In order to derive the fluid equation with the boundary conditions, we
need to pass to the limit in approximate local conservation laws
built from the renormalized Boltzmann equation $\eqref{BE-F}$. We
choose the renormalization used in \cite{LM}:
\begin{equation}\label{renorm-Gamma}
\Gamma(Z)=\frac{Z-1}{1+(Z-1)^2}\,.
\end{equation}
After multiplying $\Gamma'(\Ge)$ and dividing by $\eps$, equation $\eqref{BE-G}$ becomes
\begin{equation}\label{scale-BE}
\p_t\gps+\frac{1}{\eps}\Divv \gps=\frac{1}{\eps}\Gamma'(\Ge)\iint\limits_{\SD\times\RD}q_\eps
b(\omega\,,v_1-v)\,\mathrm{d}\omega M_1\,\mathrm{d}v_1\,,
\end{equation}
where $\gps=\tfrac{1}{\eps}\Gamma(\Ge)$ can be considered as the $L^2$ part of the fluctuations $g_\eps$ and $q_\eps$ is the scaled collision integrand defined as
\begin{equation}\label{renorm-g}
q_\eps= \frac{G'_{\eps 1}G'_\eps-G_{\eps 1}G_\eps}{\eps^2}\,.
\end{equation}
By introducing $N_\eps=1+\eps^2g^2_\eps$, we can write
\begin{equation}\nonumber
\gps=\frac{g_\eps}{N_\eps}\,,\quad \Gamma'(\Ge)=\frac{2}{N^2_\eps}-\frac{1}{N_\eps}\,.
\end{equation}

When moments of the renormalized Boltzmann equation
$\eqref{scale-BE}$ are formally taken with respect to any
$\zeta\in\mbox{span}\{1\,,v_1\,,\cdots\,,v_\D\,,|v|^2\}$, one obtains the local conservation laws with defects
\begin{equation}\label{formal-local}
\begin{aligned}
&\p_t\tilde{\rho}_\eps+\frac{1}{\eps}\di\tilde{\mathrm{u}}_\eps=\frac{1}{\eps}\left\langle\!\left\langle \Gamma'(\Ge)q_\eps\right\rangle\!\right\rangle\,,\\
&\p_t\tilde{\mathrm{u}}_\eps+\frac{1}{\eps}\grad(\tilde{\rho}_\eps+\tilde{\theta}_\eps)+\frac{1}{\eps}\di
\langle \mathrm{A}(v)\gps\rangle=\frac{1}{\eps}\left\langle\!\left\langle v\Gamma'(\Ge)q_\eps\right\rangle\!\right\rangle\,,\\
&\p_t\tilde{\theta}_\eps+\frac{1}{\eps}\frac{2}{\D}\di\tilde{\mathrm{u}}_\eps+\frac{2}{\D}\frac{1}{\eps}
\grad\!\cdot\!\langle \mathrm{B}(v)\gps\rangle=\frac{1}{\eps}\left\langle\!\!\left\langle \left(\tfrac{|v|^2}{\D}-1\right)
\Gamma'(\Ge)q_\eps\right\rangle\!\!\right\rangle\,,
\end{aligned}
\end{equation}
which can be written as
\begin{equation}\label{local}
\p_t\widetilde{U}_\eps+\frac{1}{\eps}\mathcal{A} \widetilde{U}_\eps+\widetilde{Q}_\eps=\widetilde{R}_\eps\,,
\end{equation}
where
\begin{equation}\nonumber
\widetilde{U}_\eps=(\tilde{\rho}_\eps\,,\tilde{\mathrm{u}}_\eps\,,\tilde{\theta}_\eps )= (\langle
\gps\rangle\,,\langle
v\gps\rangle\,,\langle(\tfrac{|v|^2}{\D}-1)\gps\rangle )\,,
\end{equation}
\begin{equation}\nonumber
\widetilde{Q}_\eps=\left(0\,,\tfrac{1}{\eps}\di\langle
\mathrm{A}(v)\gps\rangle\,,\tfrac{1}{\eps}\di\langle
\mathrm{B}(v)\gps\rangle\right)\,,
\end{equation}
and the local conservation defect
\begin{equation}\nonumber
\widetilde{R}_\eps=\frac{1}{\eps}\left\langle\!\!\!\left\langle (1\,,v\,,\tfrac{|v|^2}{\D}-1 )
\Gamma'(\Ge)q_\eps\right\rangle\!\!\!\right\rangle\,.
\end{equation}

Notice that we do not know if $\tilde{\mathrm{u}}_\eps\cdot\mathrm{n}=0$, so $\widetilde{U}_\eps$ is not necessary in the domain of $\AA$ for every $\eps >0$, thus the notation $\mathcal{A} \widetilde{U}_\eps$ in \eqref{local} is not quite rigorous. However we can show that the weak limit of $\tilde{\mathrm{u}}_\eps$, say, $\mathrm{u}$, satisfies $\mathrm{u}\cdot\mathrm{n}=0$ on the boundary, also see \cite{JLM}.  From the local conservation laws with defect \eqref{local}, formally the limit of $\widetilde{U}_\eps$ will be in the null space of the acoustic operator $\AA$. In other words, any weak limits of $(\tilde{\rho}_\eps\,,\tilde{\mathrm{u}}_\eps\,,\tilde{\theta}_\eps)$ will satisfy the incompressibility $\di\tilde{\mathrm{u}}=0$ and Boussinesq relation $\tilde{\rho}+ \tilde{\theta}=0$.

The term $\frac{1}{\eps}\mathcal{A} \widetilde{U}_\eps$ in
\eqref{local} describes the acoustic waves with propagation speed
$\frac{1}{\eps}$. As $\eps$ goes to zero, the sound waves
propagate faster and faster to make the fluid limit singular. To
derive the incompressible fluid equations, a natural way is to
project the local conservation laws \eqref{local} onto
Null$(\mathcal{A} )$ and Null$(\mathcal{A} )^\perp$ respectively. First $\widetilde{U}_\eps$ can be orthogonally decomposed as
\begin{equation}\label{Pi-Pi}
\begin{aligned}
\widetilde{U}_\eps&=\Pi\widetilde{U}_\eps+\Pi^\perp\widetilde{U}_\eps\\
&=\left(\langle(1-\tfrac{|v|^2}{\D+2})\gps\rangle\,,\mathbb{P}\langle v\gps\rangle\,,\langle(\tfrac{|v|^2}{\D+2}-1)
\gps\rangle\right)
\\
&+\left(\langle\tfrac{|v|^2}{\D+2}\gps\rangle\,,\mathbb{Q}\langle
v\gps\rangle\,,\langle\tfrac{2|v|^2}{\D(\D+2)}\gps\rangle\right)\,,
\end{aligned}
\end{equation}
in which we call $\Pi\widetilde{U}_\eps$ and $\Pi^\perp\widetilde{U}_\eps$ the incompressible and acoustic parts of $\widetilde{U}_\eps$ respectively.

By definition of Leray projection in a bounded domain \eqref{Leray-Proj}, the boundary conditions of $\mathbb{P}\langle v\gps\rangle$
and $\mathbb{Q}\langle
v\gps\rangle$ are
\begin{equation}\nonumber
\mathbb{P}\langle v\gps\rangle \cdot \mathrm{n} = 0\quad \mbox{and}
\quad \mathbb{Q}\langle v\gps\rangle \cdot \mathrm{n} =
\tilde{\mathrm{u}}_\eps \cdot \mathrm{n}\quad\mbox{on}\quad \pO\,.
\end{equation}

To derive the weak form of the evolution equations of
$\Pi\widetilde{U}_\eps$, we take the test function $Y$ in
\eqref{weak-BE-G} as special infinitesimal Maxwellian in the
incompressible mode:
\begin{equation}\nonumber
Y^{incom}(x\,,v)=-\chi+w\!\cdot\! v+\chi\left(\tfrac{|v|^2}{2}-\tfrac{\D}{2}\right)\,,
\end{equation}
where $(\chi\,,w)\in
C^\infty(\overline{\Omega}\,,\mathbb{R}^\D\times\mathbb{R})$ with
$\di w=0$ in $\Omega$ and $w\!\cdot\! \mathrm{n}=0$ on $\pO$.
Because $\chi$ and $w$ are independent, the weak form of
\eqref{local} can be written separately as:
\begin{equation}\label{weak-incompressible-u}
\begin{aligned}
&\int\limits_\Omega\mathbb{P}\langle v\gps(t_2)\rangle\cdot w\,\mathrm{d}x-\int\limits_\Omega\mathbb{P}\langle v\gps(t_1)\rangle\cdot w\,\mathrm{d}x\\
&-\frac{1}{\eps}\int^{t_2}_{t_1}\int\limits_\Omega
\langle \mathrm{A}\gps\rangle:\grad w\,\mathrm{d}x\,\mathrm{d}t
+\frac{1}{\sqrt{2\pi}\eps}\int^{t_2}_{t_1}\int\limits_\pO\langle\gamma\gps(w\!\cdot\! v)\rangle_\pO\,\mathrm{d}\sigma_x \,\mathrm{d}t\\
&=\frac{1}{\eps}\int^{t_2}_{t_1}\int\limits_\Omega
w\!\cdot\!\left\langle\!\left\langle
v\Gamma'(\Ge)q_\eps\right\rangle\!\right\rangle\,\mathrm{d}x\,\mathrm{d}t\,,
\end{aligned}
\end{equation}
and
\begin{equation}\label{weak-incompressible-theta}
\begin{aligned}
&\tfrac{\D+2}{2}\int\limits_\Omega\left\langle \left(\tfrac{|v|^2}{\D+2}-1\right)\gps(t_2)\right\rangle
\chi\,\mathrm{d}x-\tfrac{\D+2}{2}\int\limits_\Omega\left\langle \left(\tfrac{|v|^2}{\D+2}-1\right)\gps(t_1)\right\rangle \chi\,\mathrm{d}x\\
&-\frac{1}{\eps}\int^{t_2}_{t_1}\int\limits_\Omega
\langle \mathrm{B}\gps\rangle\cdot\grad \chi\,\mathrm{d}x\,\mathrm{d}t
+\frac{1}{\sqrt{2\pi}\eps}\int^{t_2}_{t_1}\int\limits_\pO\left\langle\chi\left(\tfrac{|v|^2}{\D+2}-1\right)
\gamma\gps\right\rangle_\pO\,\mathrm{d}\sigma_x \,\mathrm{d}t\\
&=\frac{1}{\eps}\int^{t_2}_{t_1}\int\limits_\Omega
\chi\left\langle\!\!\left\langle \left(\tfrac{|v|^2}{\D+2}-1\right)
\Gamma'(\Ge)q_\eps\right\rangle\!\!\right\rangle\,\mathrm{d}x\,\mathrm{d}t\,.
\end{aligned}
\end{equation}


Identities \eqref{weak-incompressible-u} and
\eqref{weak-incompressible-theta} are the local conservation laws in
the incompressible modes. It is the starting point of the proof of
the weak convergence to the incompressible Navier-Stokes equations
with boundary conditions in the Main Theorems \ref{Diri-limit} and
\ref{Navier-limit}. It has been proved in \cite{LM} the convergence
of the interior terms of \eqref{weak-incompressible-u} and
\eqref{weak-incompressible-theta} as $\eps\rightarrow 0$ to recover
the weak form of incompressible Navier-Stokes equations. It is only
left to derive the boundary conditions of the limiting equations.

The strategy to recover the boundary conditions in the limit is
basically the same as \cite{M-S} except some necessary modifications. For the convenience of the readers, and also because we work in more general collision kernels, we briefly go through the proof here. In \cite{LM}, the author proved
that inside the domain $\mathbb{R}^+\times \Omega \times \RD$, the
family of fluctuations $g_\eps$ is relatively compact in
$w\mbox{-}L^1_{loc}(\dd t;w\mbox{-}L^1(\sigma M\dd v\dd x))$, and that every
limit point $g$ has the form \eqref{limit-g}. Lemma 5.1 of
\cite{M-S} showed that the trace of the limit point $\gamma g$
belongs to $L^1_{loc}(\dd
t;L^1(M|v\!\cdot\!\mathrm{n}(x)|\dd\sigma_x))$ and satisfies
\begin{equation}\label{trace-g}
\gamma g=v\!\cdot\!\gamma \mathrm{u}+\left(\tfrac{|v|^2}{2}-\tfrac{\D+2}{2}\right)\gamma\theta\,,
\end{equation}
where $\gamma \mathrm{u}$ and $\gamma\theta$ denote the fluid traces of $\mathrm{u}$
and $\theta$.

We list some key {\em a priori} estimates from  \cite{M-S} on
$\gamma g_\eps$. The first one is from the inside, we generalize
it to the more general collision kernel case considered in this
paper.

\begin{Lemma}\label{Inside}
For all $p>0$, as $\eps\rightarrow 0$,
\begin{equation}\label{Inside-1}
\gamma\gps\rightarrow \gamma g\quad \!\mbox{in}\quad\!
w\mbox{-}L^1_{loc}
(\mathrm{d}t;w\mbox{-}L^1(M(1+|v|^p)|v\!\cdot\!\mathrm{n}(x)|\mathrm{d}v\mathrm{d}\sigma_x
))\,.
\end{equation}
\end{Lemma}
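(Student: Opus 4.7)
The plan is to argue in two steps: first establish uniform equi-integrability of $\gamma \gps$ in the weighted space $L^1_{loc}(\dd t; L^1(M(1+|v|^p)|v\!\cdot\!\mathrm{n}|\dd v\, \dd\sigma_x))$, and then identify the weak limit with $\gamma g$. For the first step, the two key inputs are the global entropy bound $H(G_\eps(t))\leq C\eps^2$ and the boundary entropy production estimate from Proposition 2.1,
\begin{equation*}
\frac{1}{\eps}\int_0^T \widetilde{\mathcal{E}}_\eps(\gamma_+G_\eps(s))\,\dd s \leq H(G^{\mathrm{in}}_\eps) \leq C\eps^2.
\end{equation*}
Since $\widetilde{\mathcal{E}}_\eps \geq \tfrac{\alpha_\eps}{\sqrt{2\pi}}\mathcal{E}$ and $\alpha_\eps = \sqrt{2\pi}\chi\sqrt{\eps}$, the Darroz\`es--Guiraud information is $O(\eps^{5/2})$, which after the convexity properties of $h$ controls $\gamma_+ g_\eps - L\gamma_- g_\eps$ and the diffuse mass flux on the boundary. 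To recover full control of $\gamma \gps$ itself (rather than just the jump across the reflection), I would follow the strategy of \cite{M-S} and apply a Green formula to the renormalized quantity $\Gamma(G_\eps)(1+|v|^p)$ integrated against $|v\!\cdot\!\mathrm{n}|$, which reduces the trace bound to a bulk integral of the convective derivative plus $\Gamma'(G_\eps)\mathcal{Q}(G_\eps,G_\eps)$; both are uniformly bounded under the LM assumptions \eqref{assum1}--\eqref{assum5}. Equi-integrability in $v$ is then automatic from the Gaussian weight $M(v)$ and a De la Vall\'ee--Poussin argument.

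For the second step, the interior convergence $g_\eps \to g$ in $w$-$L^1_{loc}$ and the infinitesimal Maxwellian form \eqref{limit-g} of $g$ are already established in \cite{LM}. Along a subsequence the traces converge to some $\tilde g$ in the weak $L^1$ sense above. To identify $\tilde g = \gamma g$, I would insert into the weak formulation \eqref{weak-BE-G} test functions of the form $Y(t,x,v)=\psi(t,\pi(x),v)\chi_\delta(\mathrm{d}(x))$, where $\chi_\delta$ is a smooth cutoff supported in $\Omega^\delta$, and apply the renormalized Green formula. Passing first to the limit $\eps\to 0$ (using $\gps\to g$ in the interior together with the LM estimates on the collision term) yields an identity involving only $\tilde g$ and interior quantities; sending $\delta\to 0$ afterwards isolates the boundary term $\int\!\int\langle \psi(v\!\cdot\!\mathrm{n})\tilde g\rangle\dd\sigma_x\dd t$, which by Lemma 5.1 of \cite{M-S} (in its renormalization-independent form) must agree with $\int\!\int\langle \psi(v\!\cdot\!\mathrm{n})\gamma g\rangle\dd\sigma_x\dd t$.

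The main obstacle is propagating the polynomial velocity weight $(1+|v|^p)$ uniformly in $\eps$ to the boundary trace. Neither the entropy bound nor the Darroz\`es--Guiraud information directly provides moments in $v$ at the boundary; in \cite{M-S} this was handled by exploiting the coercivity of the attenuation $a(v)\gtrsim (1+|v|)^\alpha$ with $\alpha>0$ in the hard potential range, together with the hypoelliptic regularity inherited from the collision operator. In the present setting we must allow soft potential kernels where $\alpha<0$ is admissible in \eqref{assum2}, so the attenuation no longer controls high velocities on its own. The way out is to combine the Fredholm structure of $\tfrac{1}{a}\LL$ from \eqref{assum3}--\eqref{assum4} with the loss-term estimate \eqref{assum5}, precisely as in the LM framework: one writes $g_\eps = \PP g_\eps + \PP^\perp g_\eps$, bounds $\PP^\perp g_\eps$ in $L^2(aM\,\dd v)$ through the entropy dissipation, and then uses $\LL^{-1}: L^p(a^{1-p}M\dd v)\cap\mathrm{Null}^\perp(\LL) \to L^p(aM\dd v)$ together with the gain-term control to regain the needed polynomial moments on the trace through Green's formula. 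Once this extension of the M-S trace lemma is in place, the identification step is essentially the same as in \cite{M-S}.
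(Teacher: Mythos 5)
Your two-step outline (equi-integrability of the trace, then identification of the weak limit with $\gamma g$) is the right high-level plan, and you correctly point to the Green-formula reduction of the trace bound to a bulk estimate on the free transport of a renormalized quantity, following \cite{M-S}. However, the central technical step is not addressed by the tools you propose, and one of your inputs belongs to a different lemma.

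The paper's proof of Lemma~\ref{Inside} does not use the Darroz\`es--Guiraud information at all; that is the mechanism behind Lemma~\ref{Bound}. The estimate "from the inside" rests entirely on controlling $(\eps\p_t + v\!\cdot\!\grad)\gps^{5/3}$ uniformly in $L^1_{loc}(\dd t; L^1(M\dd v\dd x))$, and for this the paper makes a very specific choice of renormalization, $\Gamma(Z)=\bigl(\tfrac{Z-1}{1+(Z-1)^2}\bigr)^{5/3}$, which produces a right-hand side of the form $\gps^{2/3} q_\eps\bigl(\tfrac{2}{N_\eps^2}-\tfrac{1}{N_\eps}\bigr)$. That term is then split by Young's inequality with the pair $r(z)=z\log(1+z)$ and its Legendre dual $r^*$: the $r$-piece is bounded by the entropy dissipation rate, while the $r^*$-piece is absorbed using the superquadratic homogeneity of $r^*$, the pointwise bound $|\eps\gps|\leq \tfrac12$, and assumptions \eqref{assum3} and \eqref{assum5} on the collision kernel. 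The exponent $5/3$ is not incidental; it is exactly what makes the $r^*$-term land at power $|\gps|^{4/3}$, which can be closed against the $L^\infty_t L^2_{x,v}(aM\dd v\dd x)$ bound on $\gps$. None of this is recovered by your proposed route through the decomposition $g_\eps = \PP g_\eps + \PP^\perp g_\eps$ and the Fredholm inverse $\LL^{-1}$: those give static-in-$v$ bulk $L^2$ control, and since $\gamma$ does not commute with $\PP$, $\PP^\perp$, or $\LL^{-1}$, they do not propagate to a trace bound without first having the $L^1$ control on the convective derivative of the renormalized fluctuation. Moreover, you need only assumptions \eqref{assum3} and \eqref{assum5} (plus the pointwise bound and entropy dissipation), not the full Fredholm machinery, to close the soft-potential case.

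In short: your plan is structurally sound but is missing the key lemma. You should replace the final paragraph of your proposal (the $\PP/\PP^\perp$ and Fredholm argument, and the Darroz\`es--Guiraud input) with the $5/3$-power renormalization and the $r(z)=z\log(1+z)$ Young's inequality splitting of the collision term; the rest is indeed as in Lemma 5.2 of \cite{M-S}.
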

\begin{proof}

The proof is essentially the same as Lemma 5.2 of
\cite{M-S}, except for some new argument  to treat the soft
potential collision kernel case. First, using the function
\begin{equation}\nonumber
\Gamma(Z)=\left(\frac{Z-1}{1+(Z-1)^2}\right)^{5/3}
\end{equation}
in the renormalized formulation \eqref{BE-Gamma} gives
\begin{equation}\label{renorm-2}
(\eps\partial_t+\Divv)\gps^{5/3}=\frac{5}{3}\iint\limits_{\SD\times\RD}\gps^{2/3}q_\eps\left(\frac{2}{N^2_\eps}
-\frac{1}{N_\eps}\right)b(\omega,v-v_1)\mathrm{d}\omega
M_1\,\mathrm{d}v_1\,.
\end{equation}

To estimate the right-hand side in \eqref{renorm-2}, we apply the
classical Young's inequality, namely,
\begin{equation}\nonumber
pz \leq r^*(p) + r(z)\,,
\end{equation}
for every $p$ and $z$ in the domains of $r^*$ and $r$. Here the
function $r$ is defined over $z > -1$ by $r(z) = z \log (1 + z)$ which
is strictly convex, and $r^*$ is the Legendre dual of $r$.
\begin{equation}\label{bound-gamma-g}
\begin{aligned}
\left| \frac{q_\eps}{N_\eps^2} |\gps|^{2/3} \right| &\leq \frac{1}{\eps^4}\Ge
G_{\eps 1}r\left(\frac{\eps^2 q_\eps}{\Ge G_{\eps 1}}\right)+\frac{1}{\eps^4}\Ge G_{\eps 1} r^*\left(\frac{\eps^2\gps^{2/3}}{N^2_\eps}\right)\\
&\leq \frac{1}{\eps^4}\Ge G_{\eps 1} r \left( \frac{\eps^2
q_\eps}{\Ge G_{\eps 1}} \right)+ \Ge G_{\eps 1}
\frac{|\gps|^{4/3}}{N^4_\eps} r^*(1)\,,
\end{aligned}
\end{equation}
The second inequality above used the superquadratic homogeneity of
$r^*$. By the entropy dissipation rate bound, the first term on the
right-hand side of \eqref{bound-gamma-g} is bounded in
$L^1_{loc}(\mathrm{d}t\,, L^1(d\nu \,\mathrm{d}x))$. Since
$N_\eps\geq 1$ and $\Ge\leq \sqrt{2 N_\eps}$, the integral of the
second term can be bounded as follows:
\begin{equation}\label{bound-gamma-2}
\begin{aligned}
&\sqrt{2}\iint\limits_{  \RD \times \RD}|\gps|^{4/3} G_{\eps 1}\frac{\overline{b}(v_1-v)}{a(v_1)a(v)} a_1 M_1\,\mathrm{d}v_1 a M\mathrm{d}v\\
\leq &\sqrt{2}\iint\limits_{  \RD \times
\RD}\frac{|\gps|^{4/3}}{N^3_\eps\sqrt{N_\eps}}\left(
1+\eps|\tilde{g}_{\eps 1}|
\right)\frac{\overline{b}(v_1-v)}{a(v_1)a(v)} a_1 M_1\,\mathrm{d}v_1 a M\mathrm{d}v\\
+ &\sqrt{2}\iint\limits_{ \RD \times \RD}|\gps|^{4/3}\eps |g_{\eps
1}-\tilde{g}_{\eps 1}| \frac{\overline{b}(v_1-v)}{a(v_1)a(v)} a_1
M_1\,\mathrm{d}v_1 a M\mathrm{d}v\,.
\end{aligned}
\end{equation}
Using the {\em assumption 3}, namely \eqref{assum3} and $|\eps \gps|\leq
\frac{1}{2}$, the first term in \eqref{bound-gamma-2} is bounded.
Indeed, it is bounded by
\begin{equation}\nonumber
C\int_{\RD} \frac{|\gps|^{4/3}}{N^3_\eps\sqrt{N_\eps}} aM
\,\mathrm{d} v \leq C \int_{\RD} \frac{g^2_\eps}{\sqrt{N_\eps}}a
M\,\mathrm{d}v \leq C\,.
\end{equation}
The second term in \eqref{bound-gamma-2} is bounded as
\begin{equation}\label{bound-gamma-3}
\begin{aligned}
&\sqrt{2}\iint\limits_{\RD \times \RD}|\eps\gps|^{4/3} \eps^{2/3} \frac{|\eps g_{\eps 1}|}{\sqrt{N_{\eps 1}}}\frac{g^2_{\eps 1}}{\sqrt{N_{\eps 1}}}
\frac{\overline{b}(v_1-v)}{a(v_1)a(v)} a M\mathrm{d}v\, a_1 M_1 \mathrm{d}v_1\\
\leq & \eps^{2/3} 2\left(\frac{1}{2}\right)^{4/3}\int\limits_{\RD}\frac{g^2_{\eps 1}}{\sqrt{N_{\eps 1}}}a_1 M_1 \mathrm{d}v_1\,.
\end{aligned}
\end{equation}

Since the righthand side of \eqref{bound-gamma-3} vanishes as $\eps$ goes to zero, we
deduce that $(\eps\partial_t+\Divv)\tilde{g}^{5/3}_\eps$ is
uniformly bounded in
$L^1_{loc}(\mathrm{d}t\,,L^1(M\mathrm{d}v\mathrm{d}x))$. The rest of
the proof of \eqref{Inside-1} is the same as that of Lemma 5.2 in
\cite{M-S}.

\end{proof}

The next lemma is the {\em a priori} estimate of $\gamma g_\eps$
from the boundary term in the entropy inequality
\eqref{entropy-inequality}. The proof is the same as Lemma 6.1
in \cite{M-S} with some trivial modification. So we just state the
lemma without giving the proof.
\begin{Lemma}\label{Bound}
Define $\gamma_\eps=\gamp g_\eps-\langle\gamp g_\eps\rangle_\pO$ and
\begin{equation}
\gamma_\eps^{(1)}=\gamma_\eps\mathbf{1}_{\gamp G_\eps\leq 2\langle
G_\eps\rangle_{\pO}\leq 4\gamp G_\eps}\,,\quad
\gamma_\eps^{(2)}=\gamma_\eps-\gamma_\eps^{(1)}\,.
\end{equation}
Then each of these is bounded as follows:
\begin{equation}\label{bound-1}
\sqrt{\frac{\ale}{\eps}}\frac{\gamma_\eps^{(1)}}{[1+\eps^2(\gamp
g_\eps)^2]^{\frac{1}{4}}}\quad\mbox{in}\quad
L^2_{loc}(\mathrm{d}t;L^2(M|v\!\cdot\!\mathrm{n}(x)|\mathrm{d}v\mathrm{d}\sigma_x
))\,,
\end{equation}
\begin{equation}\label{bound-2}
\sqrt{\frac{\ale}{\eps}}\frac{\gamma_\eps^{(1)}}{[1+\eps^2\<\gamp
g_\eps\>_\pO^2]^{\frac{1}{4}}}\quad\mbox{in}\quad
L^2_{loc}(\mathrm{d}t;L^2(M|v\!\cdot\!\mathrm{n}(x)|\mathrm{d}v\mathrm{d}\sigma_x
))\,,
\end{equation}
\begin{equation}\label{bound-3}
\frac{\ale}{\eps^2}\gamma_\eps^{(2)}\quad\mbox{in}\quad L^1_{loc}(\mathrm{d}t;L^1(M|v\!\cdot\!\mathrm{n}(x)|\mathrm{d}v\mathrm{d}\sigma_x ))\,.
\end{equation}
\end{Lemma}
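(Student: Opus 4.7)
The natural starting point is the entropy inequality \eqref{entropy-inequality} together with the bound $\widetilde{\mathcal{E}}_\eps(\gamp G_\eps)\geq \frac{\alpha_\eps}{\sqrt{2\pi}}\mathcal{E}(\gamp G_\eps)$ established in the {\em a priori} section. Combined with the assumption $H(G^{\mathrm{in}}_\eps)\leq C^{\mathrm{in}}\eps^2$, this yields the uniform bound
\begin{equation*}
\tfrac{\alpha_\eps}{\eps}\int_0^T\mathcal{E}(\gamp G_\eps(s))\,\dd s\leq C\eps^2\,,
\end{equation*}
so that $\frac{\alpha_\eps}{\eps^3}\mathcal{E}(\gamp G_\eps)$ is uniformly bounded in $L^1_{loc}(\dd t)$. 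The plan is to read off the three announced estimates from this one information bound by exploiting the convex function $h(z)=(1+z)\log(1+z)-z$ in the integrand of $\mathcal{E}$. Using that $\<\gamma_\eps\>_\pO=0$, the pointwise integrand can be rewritten as
\begin{equation*}
\bigl\<h(\eps\gamp g_\eps)-h(\eps\bar g_\eps)-h'(\eps\bar g_\eps)\eps\gamma_\eps\bigr\>_\pO\,,\qquad \bar g_\eps:=\<\gamp g_\eps\>_\pO\,,
\end{equation*}
and then splitting the $v$-integral according to whether the pointwise and averaged values of $\gamp G_\eps$ are comparable or not.

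On the \emph{comparable} set $\{\gamp G_\eps\leq 2\<G_\eps\>_\pO\leq 4\gamp G_\eps\}$ that defines $\gamma_\eps^{(1)}$, Taylor expansion with $h''(s)=1/(1+s)$ gives
\begin{equation*}
h(\eps\gamp g_\eps)-h(\eps\bar g_\eps)-h'(\eps\bar g_\eps)\eps\gamma_\eps=\int_{\eps\bar g_\eps}^{\eps\gamp g_\eps}\!\!\frac{\eps\gamp g_\eps-s}{1+s}\,\dd s\gtrsim \frac{\eps^2(\gamma_\eps^{(1)})^2}{1+\eps|\gamp g_\eps|}\,,
\end{equation*}
using comparability of the two arguments. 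Since $1+\eps|\gamp g_\eps|$ is equivalent to $\sqrt{1+\eps^2(\gamp g_\eps)^2}$, inserting this into the Darroz\`es-Guiraud information and using the entropy bound yields
\begin{equation*}
\tfrac{\alpha_\eps}{\eps}\Bigl\<\tfrac{(\gamma_\eps^{(1)})^2}{\sqrt{1+\eps^2(\gamp g_\eps)^2}}\Bigr\>_{\!\!\pO}\in L^1_{loc}(\dd t;L^1(\dd\sigma_x))\,,
\end{equation*}
which is precisely \eqref{bound-1}. Since on the support of $\gamma_\eps^{(1)}$ the denominators $\sqrt{1+\eps^2(\gamp g_\eps)^2}$ and $\sqrt{1+\eps^2\bar g_\eps^2}$ differ by a bounded factor, \eqref{bound-2} follows immediately.

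The main technical point is \eqref{bound-3}. On the \emph{non-comparable} set supporting $\gamma_\eps^{(2)}$, the quadratic Taylor expansion above is no longer optimal; instead we only have a \emph{linear} lower bound. More precisely, when $\eps\gamp g_\eps\geq 2(1+\eps\bar g_\eps)-1$ (and symmetrically in the opposite direction), strict convexity of $h$ together with the large-argument asymptotics $h(z)\sim z\log z$ produces
\begin{equation*}
h(\eps\gamp g_\eps)-h(\eps\bar g_\eps)-h'(\eps\bar g_\eps)\eps\gamma_\eps^{(2)}\gtrsim \eps|\gamma_\eps^{(2)}|\,.
\end{equation*}
This kind of estimate is exactly Lemma~6.1 of \cite{M-S}; it relies on a careful comparison of $h$ with its tangent at $\eps\bar g_\eps$ in the regime where the increment $\eps\gamma_\eps^{(2)}$ is larger than $1+\eps\bar g_\eps$. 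Dividing by $\eps$, the entropy bound then gives
\begin{equation*}
\tfrac{\alpha_\eps}{\eps^2}\<|\gamma_\eps^{(2)}|\>_\pO\in L^1_{loc}(\dd t;L^1(\dd\sigma_x))\,,
\end{equation*}
which is \eqref{bound-3}. The delicate step is the passage from the convex quantity $h(a)-h(b)-h'(b)(a-b)$ to a genuinely linear lower bound on the large-deviation set: one must check that the prefactor is independent of $\eps$ and survives the splitting by the indicator $\mathbf{1}_{\{\gamp G_\eps\leq 2\<G_\eps\>_\pO\leq 4\gamp G_\eps\}^c}$, which requires monotonicity arguments on $h$ exactly as in \cite{M-S}. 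Once this linear bound is in hand, the three claimed estimates \eqref{bound-1}-\eqref{bound-3} are assembled routinely from the single bound $\frac{\alpha_\eps}{\eps}\mathcal{E}(\gamp G_\eps)\leq C\eps^2$.
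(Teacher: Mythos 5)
Your proposal is correct and follows essentially the same route the paper (implicitly) takes: deriving all three bounds from the single information bound $\frac{\alpha_\eps}{\eps^3}\mathcal{E}(\gamp G_\eps) \in L^1_{loc}(\dd t)$ by Taylor-expanding the Darroz\`es--Guiraud integrand about $\eps\langle\gamp g_\eps\rangle_\pO$ and exploiting quadratic convexity on the comparable set and a linear convexity lower bound on its complement, exactly as in Lemma~6.1 of \cite{M-S} which the paper cites as the source of the proof. The one point worth double-checking in your write-up is the linear lower bound on the non-comparable set: writing $A=1+\eps\gamp g_\eps$, $B=1+\eps\langle\gamp g_\eps\rangle_\pO$, $t=A/B$, the Taylor remainder equals $B(t\log t - t + 1)$ while $\eps|\gamma_\eps^{(2)}|=B|t-1|$, and the ratio $(t\log t - t+1)/|t-1|$ is indeed bounded below by a positive constant on $\{t>2\}\cup\{t<1/2\}$ (infimum attained at the endpoints $t=2$ and $t=1/2$), so the step is sound.
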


Using Lemma \ref{Inside} and Lemma \ref{Bound}, we can prove the
following lemma which describes how to define the renormalized
outgoing mass flux $\mathbf{1}_{\Sigma_+} \rho$.

\begin{Lemma}\label{bc-renorm-g}
Assume that $\frac{\ale}{\sqrt{2\pi}\eps}\rightarrow \chi\in
(0\,,+\infty]$. Then up to the extraction of a subsequence,
\begin{equation}\nonumber
\frac{\gamma_\eps}{1+\eps^2\gamp g_\eps^2}\quad \mbox{and}\quad \frac{\gamma_\eps}{1+\eps^2\< \gamp g_\eps\>^2_{\partial\Omega}}
\end{equation}
converge in $w\mbox{-}L^1_{loc}(\dd t;w\mbox{-}L^1(
M|v\!\cdot\!\mathrm{n}(x)|\dd v\dd x))$ and have the same weak
limit. Moreover, there exists $\rho\in L^1_{loc}(\dd t; L^1(\dd
\sigma_x))$ such that, up to the extraction of a subsequence,
\begin{equation}\nonumber
\frac{\mathbf{1}_{\Sigma_+}\<\gamp
g_\eps\>_{\partial\Omega}}{1+\eps^2\gamp g_\eps^2}\rightarrow
\mathbf{1}_{\Sigma_+} \rho \quad\mbox{in}\!\!\quad
w\mbox{-}L^1_{loc}(\dd t;w\mbox{-}L^1( M|v\!\cdot\!\mathrm{n}(x)|\dd
v\dd x))\,.
\end{equation}
Furthermore,
\begin{equation}\nonumber
\rho=\< \gamp g\>_{\partial\Omega}\,.
\end{equation}
\end{Lemma}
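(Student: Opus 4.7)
The plan is to exploit the splitting $\gamma_\eps=\gamma_\eps^{(1)}+\gamma_\eps^{(2)}$ from Lemma~\ref{Bound} together with the identity $\gamp g_\eps/(1+\eps^2(\gamp g_\eps)^2)=\gamp\gps$ in order to reduce the claim to the already-proved weak trace convergence of Lemma~\ref{Inside}. The strategy is to show that one denominator can be freely replaced by the other because their ratio tends to $1$ in a strong enough sense, while $\gamma_\eps^{(2)}$ is itself negligible on account of the enhanced pointwise bound \eqref{bound-3}.

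First I would dispose of $\gamma_\eps^{(2)}$: under the hypothesis $\alpha_\eps/(\sqrt{2\pi}\,\eps)\to\chi\in(0,+\infty]$ one has $\alpha_\eps\geq c\eps$ for $\eps$ small, so \eqref{bound-3} yields $\|\gamma_\eps^{(2)}\|_{L^1_{loc}(\dd t;L^1(M|v\cdot\mathrm{n}|\dd v\dd\sigma_x))}=O(\eps^2/\alpha_\eps)=O(\eps)$. Since the two denominators are $\geq 1$, the contribution of $\gamma_\eps^{(2)}$ to either quotient vanishes strongly in $L^1_{loc}$. For $\gamma_\eps^{(1)}$, the bounds \eqref{bound-1}--\eqref{bound-2} provide, modulo the bounded factor $\sqrt{\eps/\alpha_\eps}$, an $L^2_{loc}$-control of $\gamma_\eps^{(1)}/[1+\eps^2(\gamp g_\eps)^2]^{1/4}$ and $\gamma_\eps^{(1)}/[1+\eps^2\<\gamp g_\eps\>_\pO^2]^{1/4}$. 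Writing
\[
\frac{\gamma_\eps^{(1)}}{1+\eps^2(\gamp g_\eps)^2}=\frac{\gamma_\eps^{(1)}}{[1+\eps^2(\gamp g_\eps)^2]^{1/4}}\cdot\frac{1}{[1+\eps^2(\gamp g_\eps)^2]^{3/4}}
\]
exhibits this quotient as an $L^2_{loc}$-bounded quantity times an $L^\infty$ one, hence equi-integrable and weakly relatively compact in $L^1_{loc}$; the same holds for the second quotient.

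To show both weak limits coincide, I would use the algebraic identity
\[
\frac{\gamma_\eps}{1+\eps^2(\gamp g_\eps)^2}-\frac{\gamma_\eps}{1+\eps^2\<\gamp g_\eps\>_\pO^2}
=-\frac{\gamma_\eps}{1+\eps^2\<\gamp g_\eps\>_\pO^2}\cdot\eps\gamma_\eps\cdot\frac{\eps(\gamp g_\eps+\<\gamp g_\eps\>_\pO)}{1+\eps^2(\gamp g_\eps)^2}
\]
and observe that the last factor is bounded in $L^\infty$ (since $\eps\gamp g_\eps/(1+\eps^2(\gamp g_\eps)^2)=\eps\gamp\gps$ is bounded by $1$, and similarly for the averaged term), while the product of the first two factors is $O(\eps)$ in $L^2_{loc}$ thanks to \eqref{bound-2} and the $\gamma_\eps^{(2)}$-bound. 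Hence the difference tends to zero in $L^1_{loc}$ and the extracted weak limits agree.

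For the last part, I would decompose
\[
\frac{\mathbf{1}_{\Sigma_+}\<\gamp g_\eps\>_\pO}{1+\eps^2(\gamp g_\eps)^2}=\mathbf{1}_{\Sigma_+}\gamp\gps-\frac{\mathbf{1}_{\Sigma_+}\gamma_\eps}{1+\eps^2(\gamp g_\eps)^2}.
\]
By Lemma~\ref{Inside} the first term converges weakly to $\mathbf{1}_{\Sigma_+}\gamp g$, while by the previous step the second term is weakly relatively compact; call any limit point $\mathbf{1}_{\Sigma_+}\tilde\gamma$ and set $\rho:=\gamp g-\tilde\gamma$. The key remaining point is that $\rho$ is $v$-independent and equals $\<\gamp g\>_\pO$: the left-hand side is already $v$-independent modulo the factor $1/(1+\eps^2(\gamp g_\eps)^2)$, and taking the boundary average $\<\cdot\>_\pO$ one uses $\<\gamma_\eps\>_\pO\equiv 0$ to get $\<\tilde\gamma\>_\pO=0$ and hence $\<\rho\>_\pO=\<\gamp g\>_\pO$; the $v$-independence itself follows from the fact that $1-1/(1+\eps^2(\gamp g_\eps)^2)=\eps\gamp g_\eps\cdot\eps\gamp\gps$ vanishes in the appropriate averaged sense when paired with the bounded quantity $\<\gamp g_\eps\>_\pO$. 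The main technical obstacle will be to justify this last step, i.e.\ to interchange the weak limit with the boundary averaging operator $\<\cdot\>_\pO$; this is where the equi-integrability produced by the $L^2$-refinements of Lemma~\ref{Bound} is essential, together with the fact that $\eps\gamp\gps$ is uniformly bounded and tends to $0$ in measure.
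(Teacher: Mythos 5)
The paper offers no proof of this lemma; it only says it ``is nothing but Lemma 6.2 and Lemma 6.3 in \cite{M-S}.'' So your attempt can only be judged on its own merits. Your choice of ingredients is right: Lemma~\ref{Inside} for the weak trace convergence $\gamma\gps\to\gamma g$, Lemma~\ref{Bound} for the $\gamma_\eps^{(1)}$, $\gamma_\eps^{(2)}$ splitting, the $3/4$-power trick for $w$-$L^1$ compactness, and the algebraic identity for the difference of the two quotients. The disposal of $\gamma_\eps^{(2)}$ and the compactness argument are sound.

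In the ``same weak limit'' step, however, two claims are imprecise enough to conceal a needed argument. First, the factor $\eps(\gamp g_\eps+\<\gamp g_\eps\>_\pO)/(1+\eps^2(\gamp g_\eps)^2)$ is \emph{not} bounded in $L^\infty$ on all of $\Sigma_+$: the term $\eps\<\gamp g_\eps\>_\pO/(1+\eps^2(\gamp g_\eps)^2)$ has a $v$-independent numerator and a $v$-dependent denominator, so it is unbounded in general. It \emph{is} bounded on the support of $\gamma_\eps^{(1)}$, because the defining inequality $\gamp G_\eps\leq 2\<G_\eps\>_\pO\leq 4\gamp G_\eps$ makes $1+\eps\gamp g_\eps$ and $1+\eps\<\gamp g_\eps\>_\pO$ comparable there; since $\gamma_\eps^{(1)}\gamma_\eps=(\gamma_\eps^{(1)})^2$ (disjoint supports), the restriction is automatic once you write the identity for $\gamma_\eps^{(1)}$ rather than $\gamma_\eps$, but this should be said. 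Second, ``$O(\eps)$ in $L^2_{loc}$'' is wrong; \eqref{bound-2} gives $(\gamma_\eps^{(1)})^2/[1+\eps^2\<\gamp g_\eps\>_\pO^2]^{1/2}$ bounded in $L^1_{loc}$, so the product of the first two factors is $O(\eps)$ in $L^1_{loc}$, which is what you actually need.

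The genuine gap is in identifying $\rho$. Two issues. (i) You deduce $\<\tilde\gamma\>_\pO=0$ from $\<\gamma_\eps\>_\pO\equiv0$, but $\tilde\gamma$ is the weak limit of $\gamma_\eps/(1+\eps^2(\gamp g_\eps)^2)$, whose boundary average is \emph{not} zero because the denominator depends on $v$. To make this step work you must route through the $v$-independent denominator: $\<\gamma_\eps/(1+\eps^2\<\gamp g_\eps\>_\pO^2)\>_\pO=\<\gamma_\eps\>_\pO/(1+\eps^2\<\gamp g_\eps\>_\pO^2)=0$, and then invoke the ``same weak limit'' part of the lemma you proved first; this should be made explicit. (ii) More seriously, even with $\<\tilde\gamma\>_\pO=0$ in hand, you still need that $\rho=\gamp g-\tilde\gamma$ is $v$-independent before taking $\<\cdot\>_\pO$ gives you anything; this is the heart of the last assertion. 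Your sketch (``$\eps\gamp g_\eps\cdot\eps\gamp\gps$ vanishes in the appropriate averaged sense when paired with the bounded quantity $\<\gamp g_\eps\>_\pO$'') does not work as written, since $\<\gamp g_\eps\>_\pO$ is not bounded in $L^\infty$. What is actually needed is a second comparison argument replacing $1+\eps^2(\gamp g_\eps)^2$ by $1+\eps^2\<\gamp g_\eps\>_\pO^2$ in the numerator $\<\gamp g_\eps\>_\pO$ (not in $\gamma_\eps$), which produces $D_\eps=-\<\gamp g_\eps\>_\pO\,\eps^2\gamma_\eps(\gamp g_\eps+\<\gamp g_\eps\>_\pO)/[(1+\eps^2(\gamp g_\eps)^2)(1+\eps^2\<\gamp g_\eps\>_\pO^2)]$; one must then show $D_\eps\rightharpoonup 0$. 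The $\gamma_\eps^{(2)}$ piece is easy, but the $\gamma_\eps^{(1)}$ piece requires factoring out an $L^2$-bounded $\gamma_\eps^{(1)}/[1+\eps^2(\gamp g_\eps)^2]^{1/4}$ and pairing it, via the Product Limit Theorem of \cite{BGL2}, with a remaining factor that is $L^\infty$-bounded on $\mathrm{supp}\,\gamma_\eps^{(1)}$ and tends to $0$ a.e.\ — and that last pointwise convergence of $\eps\gamp g_\eps$ at the trace level itself needs justification from the boundary entropy flux bound. None of this is carried out; you flag it as ``the main technical obstacle'' but then stop. As stated your argument does not establish the $v$-independence of $\rho$, and this is precisely the content of the last claim of the lemma.
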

Lemma \ref{bc-renorm-g} is nothing but Lemma 6.2 and Lemma 6.3 in \cite{M-S}.
The proof is basically the same except some trivial modifications
because we use some different renormalizations. Thus we skip the proof here.

Now it is ready to recover the Dirichlet boundary condition. For the
case $\frac{\ale}{\eps}\rightarrow \infty$, from \eqref{bound-1} and
\eqref{bound-3}, we deduce that
\begin{align*}
\frac{\gamma_\eps^{(1)}}{1+\eps^2\< \gamp g_\eps\>^2_{\partial\Omega}} \rightarrow 0&\quad\mbox{strongly in}\!\!\quad
L^2_{loc}(\dd t;L^2( M|v\!\cdot\!\mathrm{n}(x)|\dd v\dd x)),\\
\frac{\gamma_\eps^{(2)}}{1+\eps^2\< \gamp g_\eps\>^2_{\partial\Omega}} \rightarrow 0&\quad\mbox{strongly in}\!\!\quad
L^1_{loc}(\dd t;L^1( M|v\!\cdot\!\mathrm{n}(x)|\dd v\dd x));\\
\end{align*}
hence, we get
\begin{equation}\label{gamm-positive1}
\frac{\gamma_\eps}{1+\eps^2\< \gamp g_\eps\>^2_{\partial\Omega}}
\rightarrow 0\quad\mbox{strongly in}\!\!\quad L^1_{loc}(\dd t;L^1(
M|v\!\cdot\!\mathrm{n}(x)|\dd v\dd x))\,.
\end{equation}
On the other hand,
\begin{equation}\label{gamm-positive2}
\frac{\gamma_\eps}{1+\eps^2\gamp g_\eps^2} = \gamp \gps -
\frac{\mathbf{1}_{\Sigma_+}\<\gamp
g_\eps\>_{\partial\Omega}}{1+\eps^2\gamp g_\eps^2} \rightarrow \gamp
g - \mathbf{1}_{\Sigma_+} \rho\,,
\end{equation}
in $w\mbox{-}L^1_{loc}(\dd t;w\mbox{-}L^1(
M|v\!\cdot\!\mathrm{n}(x)|\dd v\dd x))$. Then \eqref{gamm-positive1}
and \eqref{gamm-positive2} imply that
\begin{equation}\nonumber
\gamp g=\mathbf{1}_{\Sigma_+}\rho
\end{equation}
where $\rho$ depends only on $(t,x)$. Thus, by \eqref{trace-g} we get the Dirichlet boundary condition
\begin{equation}\nonumber
\gamma \mathrm{u}=0\quad \mbox{and}\quad \gamma\theta=0\,.
\end{equation}

Now, we concentrate on the Navier boundary condition case.
Using the previous convergence results, we can take limits in the
conservation laws \eqref{weak-incompressible-u} and
\eqref{weak-incompressible-theta} to get the weak form of the
boundary conditions. In the weak forms \eqref{weak-incompressible-u}
and \eqref{weak-incompressible-theta}, the limits of the interior
terms have been carried in \cite {LM}, which is
stated in the following lemma:
\begin{Lemma}
Assume that $\frac{\ale}{\sqrt{2\pi}\eps}\rightarrow
\chi\in[0,\infty)$, then up to the extraction of a sequence,
$\mathbb{P}\< v\gps\>$ and $\< (\frac{|v|^2}{\D+2}-1)\gps\>$
converge to $\mathrm{u}$ and $\theta$ in $C([0,\infty); w\mbox{-}L^1(\dd x))$
such that, for all $w\in C^\infty(\overline{\Omega}; \RD)$ with
$\DIV w=0$ in $\Omega$ and $w\!\cdot\! \mathrm{n}=0$ on $\pO$, for
all $\chi\in C^\infty(\overline{\Omega}; \mathbb{R})$, and for all
$t_1\,, t_2 >0$,
\begin{equation}\label{Navier-u}
\begin{aligned}
&\int_\Omega \mathrm{u}(t_2)\!\cdot\! w\,\dd x-\int_\Omega \mathrm{u}(t_1)\!\cdot\! w\,\dd x-\int^{t_2}_{t_1}\int_\Omega
\sum\limits_{i,j} \mathrm{u}_i \mathrm{u}_j\partial_i w_j\,\dd x\dd t\\
&+ \nu  \int^{t_2}_{t_1}\int_\Omega \sum\limits_{i,j} (\partial_i \mathrm{u}_j+\partial_j \mathrm{u}_i)\partial_i w_j\,\dd x\dd t\\
&=-\lim\limits_{\eps\rightarrow
0}\frac{\ale}{\sqrt{2\pi}\eps}\int^{t_2}_{t_1}\int_{\partial\Omega}
\left\< \frac{\gamma^{(1)}_\eps(w\!\cdot\! v)\mathbf{1}_{|v|^2\leq
20|\log \eps|}}{(1+\eps^2\gamp g^2_\eps)(1+\eps^2 \gamp
\hat{g}_\eps^2)} \right\>_{\partial\Omega}\,\dd \sigma_x\dd t\,,
\end{aligned}
\end{equation}
\begin{equation}\label{Navier-theta}
\begin{aligned}
&\int_\Omega \theta(t_2)\!\cdot\! \chi\,\dd x-\int_\Omega \theta(t_1)\!\cdot\! \chi\,\dd x-\int^{t_2}_{t_1}
\int_\Omega \theta \mathrm{u}\!\cdot\!\grad \chi\,\dd x\dd t
+ \tfrac{2}{\D+2}\kappa \int^{t_2}_{t_1}\int_\Omega \grad\theta\!\cdot\!\grad \chi\,\dd x\dd t\\
&=-\lim\limits_{\eps\rightarrow
0}\frac{\ale}{\sqrt{2\pi}\eps}\int^{t_2}_{t_1}\int_{\partial\Omega}\left\<
\frac{\gamma^{(1)}_\eps }{(1+\eps^2\gamp g^2_\eps)(1+\eps^2 \gamp
\hat{g}_\eps^2)}
\chi\left(\tfrac{|v|^2}{\D+2}-1\right)\mathbf{1}_{|v|^2\leq 20|\log
\eps|}\right\>_{\partial\Omega}\,\dd \sigma_x\dd t\,.
\end{aligned}
\end{equation}
where $\gamp \hat{g}_\eps=(1-\ale)\gamp g_\eps+\ale \< \gamp g_\eps\>_{\pO}$.
\end{Lemma}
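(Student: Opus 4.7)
The plan is to pass to the limit in the weak conservation laws \eqref{weak-incompressible-u} and \eqref{weak-incompressible-theta} term by term, keeping the boundary integrals un-evaluated on the right-hand side. First I would establish the stated $C([0,\infty); w\mbox{-}L^1(\dd x))$ convergence of the incompressible moments $\mathbb{P}\<v\gps\>$ and $\<(\tfrac{|v|^2}{\D+2}-1)\gps\>$. Uniform boundedness in $L^\infty(\dd t; L^1(\dd x))$ is a standard consequence of the entropy bound \eqref{entropy-inequality} and the $\gps$-estimates of \cite{LM}. The key point is time-equicontinuity in a weak topology: projecting the local conservation laws \eqref{local} via $\mathbb{P}$ in the momentum equation and via the combination $\tfrac{1}{\D+2}(\D\cdot\tilde\theta_\eps - 2\tilde\rho_\eps)$ in the energy equation kills the singular $\frac{1}{\eps}\grad(\tilde\rho_\eps+\tilde\theta_\eps)$ and $\frac{1}{\eps}\DIV\tilde{\mathrm{u}}_\eps$ terms, leaving right-hand sides bounded in $L^1_{\text{loc}}(\dd t; W^{-s,1})$ for some $s$. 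Arzel\`a--Ascoli together with Dunford--Pettis equi-integrability (from the $H(\Ge)/\eps^2 = O(1)$ bound) then upgrades weak compactness to $C$-in-time convergence in the weak-$L^1$ topology.

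Next I would pass to the limit in the interior terms. The left-hand sides of the target identities \eqref{Navier-u} and \eqref{Navier-theta} are recovered by expressing the flux $\frac{1}{\eps}\<\mathrm{A}\gps\>$ and $\frac{1}{\eps}\<\mathrm{B}\gps\>$ through the Chapman--Enskog decomposition: writing $\gps = \PP\gps + \PP^\perp\gps$, the kinetic part $\PP^\perp\gps/\eps$ identifies with $-\widehat{\mathrm{A}}\!:\!\grad\mathrm{u} - \widehat{\mathrm{B}}\!\cdot\!\grad\theta$ plus a quadratic contribution from $\mathcal{P}\gps\otimes\mathcal{P}\gps$. For the Navier case, the acoustic waves are \emph{not} instantaneously damped, so one must invoke the compensated-compactness argument of Lions--Masmoudi \cite{LM3, LM2}: decomposing along the basis $\{U^{\tau,k}\}$ of $\mathrm{Null}(\AA)^\perp$, the quadratic self- and cross-interactions of acoustic modes either average to zero by nonresonance or reduce to a gradient which is absorbed by $\grad p$ when tested against a divergence-free $w$. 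Lemma \ref{mu-kappa-identity} yields the viscosity and heat-conductivity coefficients $\nu$ and $\kappa$ from the linear part.

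The main obstacle is the boundary flux $\frac{1}{\sqrt{2\pi}\eps}\int_\pO\<\gamma\gps(w\cdot v)\>_\pO\,\dd\sigma_x$ (and its temperature analogue), which is what ultimately produces the Navier-slip term. Using $w\cdot \mathrm{n}=0$, the mean $\<\gamp g_\eps\>_\pO$ contributes zero and the boundary integrand reduces to $\gamma_\eps$ times a tangential moment. Splitting $\gamma_\eps = \gamma_\eps^{(1)}+\gamma_\eps^{(2)}$ as in Lemma \ref{Bound}: the bound \eqref{bound-3} gives $\tfrac{\ale}{\eps^2}\gamma_\eps^{(2)}$ uniformly in $L^1$, so after multiplication by $\tfrac{1}{\eps}$ the residual prefactor $\tfrac{\eps}{\ale} = O(1)$ under the assumption $\ale/\eps\to \sqrt{2\pi}\chi$, and a careful renormalization in the denominators $(1+\eps^2\gamp\hat g_\eps^2)$ forces this piece to vanish on the complement of the comparability cone. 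For $\gamma_\eps^{(1)}$, the crucial identity
\begin{equation*}
\tfrac{1}{\eps}\gamma_\eps^{(1)}
 = \tfrac{1}{\sqrt{\ale\eps}}\cdot\sqrt{\tfrac{\ale}{\eps}}\,\gamma_\eps^{(1)}
\end{equation*}
combined with \eqref{bound-1} shows that $\tfrac{1}{\eps}\gamma_\eps^{(1)}/[1+\eps^2\gamp g_\eps^2]^{1/4}$ stays bounded in $L^2_{\text{loc}}(\dd t; L^2(M|v\!\cdot\!\mathrm{n}|\dd v\dd\sigma_x))$ precisely because $\ale\eps = O(\eps^{3/2})$ is balanced by the hypothesis on $\ale/\eps$, so weak compactness gives a limit. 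The velocity cutoff $\mathbf{1}_{|v|^2\le 20|\log\eps|}$ then serves to replace the renormalization denominators by $1$ on the bulk of $M\dd v$ in the limit, while the excluded high-velocity tail is absorbed using the exponential decay of $M$ together with the $L\log L$ bound from the entropy. This yields \eqref{Navier-u} and \eqref{Navier-theta} as stated, leaving the subsequent identification of the boundary-limit with the Navier expression $\chi[\mathrm{u}]^{\mathrm{tan}}$ and $\chi\tfrac{\D+1}{\D+2}\theta$ for the next step of the argument.
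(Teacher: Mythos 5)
Your overall strategy (pass to the limit term by term, split $\gamma_\eps=\gamma_\eps^{(1)}+\gamma_\eps^{(2)}$, and use Lemma \ref{Bound} together with the velocity cutoff) matches the paper's, but there is a genuine gap in how you handle the boundary integrand. The boundary flux in the weak formulation involves $\gamma\gps=\gamma\,\tfrac{g_\eps}{1+\eps^2 g_\eps^2}$, which is a nonlinear, $v$-dependent renormalization of $g_\eps$. Your claim that ``the mean $\<\gamp g_\eps\>_\pO$ contributes zero because $w\cdot\mathrm{n}=0$'' only works if you could factor the mean out of the renormalization denominator, which you cannot. The crucial tool the paper uses is the \emph{renormalized boundary condition} \eqref{renormBC} applied to $\Gamma(Z)=\tfrac{Z-1}{1+(Z-1)^2}$ together with the identity $\gamma_+\hat g_\eps=\gamp g_\eps-\ale\gamma_\eps$; the algebraic manipulation in \eqref{boundary-term} then produces exactly the structure displayed in the lemma's right-hand side, i.e.\ $\gamma_\eps$ appearing in the numerator and the product $(1+\eps^2\gamp g_\eps^2)(1+\eps^2\gamp\hat g_\eps^2)$ in the denominator. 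You do not explain how to arrive at that specific form, yet it is what the lemma asserts.

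Two further issues. First, after the splitting you omit the Product Limit Theorem of \cite{BGL2}, which is needed to dispose of the error term involving $\eps^2\gamp g_\eps\gamp\hat g_\eps$: one shows that $\sqrt{\tfrac{\ale}{\eps}}\,\gamma^{(1)}_\eps/\sqrt{1+\eps^2\gamp g_\eps^2}$ is relatively compact in $w\mbox{-}L^1$ while the companion factor is bounded in $L^\infty$ and tends to $0$ a.e., so the product tends to $0$ in $L^1_{loc}(\mathrm{d}t)$. Second, the factorization $\tfrac{1}{\eps}\gamma_\eps^{(1)}=\tfrac{1}{\sqrt{\ale\eps}}\sqrt{\tfrac{\ale}{\eps}}\,\gamma_\eps^{(1)}$ and the claim $\ale\eps=O(\eps^{3/2})$ are not right for this lemma: in the Navier regime $\ale/\eps\to\sqrt{2\pi}\chi$, so $\ale\eps=O(\eps^2)$ (you may be importing exponents from the Dirichlet case $\ale\sim\sqrt{\eps}$). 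The prefactor in the boundary term is already $\tfrac{\ale}{\sqrt{2\pi}\eps}=O(1)$, so what is actually needed is the $L^2$ bound of \eqref{bound-1} together with $\sqrt{\ale/\eps}=O(1)$, not the factorization you propose. Finally, the role of the cutoff $\mathbf{1}_{|v|^2\le 20|\log\eps|}$ in \emph{this} lemma is to give $\|Y_\eps\|_\infty=O(|\log\eps|)$, which combined with \eqref{bound-3} yields the $O(\eps|\log\eps|)$ estimate for the $\gamma_\eps^{(2)}$ contribution; removing the denominators is done in the subsequent identification step, not here.
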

\begin{proof}
It is the analogue of Lemma 7.1 and Lemma 7.2 of \cite{M-S} and the
idea of the proof is the same. Denote by $Y_\eps$ the test function
$(w\!\cdot\! v)\mathbf{1}_{|v|^2\leq 20|\log \eps|}$ or
$\chi(\tfrac{|v|^2}{\D+2}-1)\mathbf{1}_{|v|^2\leq 20|\log \eps|}$.
Then $Y_\eps$ has the property: $Y_\eps= L Y_\eps$, recalling $L$ is the local reflection operator defined in \eqref{reflect-L}. From
\eqref{weak-b-BE},  the renormalized form of the Maxwell boundary
condition reads
\begin{equation}\label{renormBC}
\gamma_-\gps=(1-\ale)\frac{L\gpg}{1+\eps^2(L\gamma_+\hat{g}_\eps)^2}+\ale\frac{\gpgb}{1+\eps^2(L\gamma_+\hat{g}_\eps)^2}\,,
\end{equation}
where
\begin{equation}\nonumber
\begin{aligned}
\gamma_+\hat{g}_\eps&=(1-\ale)\gamma_+g_\eps+\ale\langle\gamma_+g_\eps\rangle_\pO\,,\\
&=\gpg-\ale\gamma_\eps\,.
\end{aligned}
\end{equation}
Then
\begin{equation}\label{boundary-term}
\begin{aligned}
\frac{1}{\eps}\int_{\pO}\< \gamma \gps Y_\eps\>_{\pO}\dd \sigma_x &=\frac{1}{\eps}\int_{\pO}
\left\< \frac{\eps^2\gamp g_\eps(\gamp \hat{g}^2_\eps-\gamp g^2_\eps)}{(\gammag)(\gammagh)} Y_\eps \mathbf{1}_{\Sigma_+}\right\>_{\!\!\pO}\dd \sigma_x\\
&+ \frac{\ale}{\eps}\int_{\pO} \left\< \frac{\gamma_\eps}{\gammagh} Y_\eps \mathbf{1}_{\Sigma_+}\right\>_{\!\!\pO}\dd \sigma_x\\
&=\frac{\ale}{\eps}\int_{\pO}\left\<  \frac{\gamma^{(1)}_\eps +\gamma^{(2)}_\eps}{(\gammag)(\gammagh)}
Y_\eps \mathbf{1}_{\Sigma_+}\right\>_{\!\!\pO}\dd \sigma_x\\
&-\frac{\ale}{\eps}\int_{\pO}\left\< \frac{(\gamma^{(1)}_\eps+
\gamma^{(2)}_\eps) \eps^2 (\gamp g_\eps \gamp
\hat{g}_\eps)}{(\gammag)(\gammagh)}   Y_\eps
\mathbf{1}_{\Sigma_+}\right\>_{\!\!\pO}\dd \sigma_x\,.
\end{aligned}
\end{equation}

By \eqref{bound-3},
\begin{equation}\label{gamma-2}
\begin{aligned}
&\int^{t_2}_{t_1} \left\vert \frac{\ale}{\eps}\int_{\pO}\left\<  \frac{\gamma^{(2)}_\eps}{(\gammag)(\gammagh)}
Y_\eps \mathbf{1}_{\Sigma_+}\right\>_{\!\!\pO}\dd \sigma_x \right\vert\dd t\\
&\leq C \eps\left\| \frac{Y_\eps}{(\gammag)(\gammagh)}\right\|_\infty \leq C \eps|\log \eps|\,.
\end{aligned}
\end{equation}
The $\gamma^{(2)}_\eps$ part in the last term of \eqref{boundary-term} can be estimated as
\eqref{gamma-2}. For the $\gamma^{(1)}_\eps$ part, from \eqref{bound-1},
\begin{equation}\label{product-1}
\sqrt{\frac{\ale}{\eps}}\frac{\gamma^{(1)}_\eps}{\sqrt{\gammag}}\quad\mbox{is relatively compact in}
\!\!\quad w\mbox{-}L^1_{loc}(\dd t; w\mbox{-} L^1(M|v\!\cdot\!\mathrm{n}|\dd v\dd \sigma_x))
\end{equation}
Use the fact that
\begin{equation}\label{product-2}
\sqrt{\frac{\ale}{\eps}}\frac{\eps \gamp g_\eps \eps \gamp \hat{g}_\eps}{\sqrt{\gammag}(\gammagh)}
\end{equation}
is bounded in $L^\infty$ and goes to 0 a.e. Then by the Product
Limit Theorem of \cite{BGL2}, the product of \eqref{product-1} and
\eqref{product-2} goes to 0 in $L^1_{loc}(\dd t)$ as
$\eps\rightarrow 0$. Thus we finish the proof of the lemma.
\end{proof}

Now, it is ready to recover the Navier boundary condition by taking
limit in the last terms in \eqref{Navier-u} and
\eqref{Navier-theta}. As in \cite{M-S}, we can deduce that
\begin{equation}\nonumber
\frac{\ale}{\sqrt{2\pi}\eps}\left\<
\frac{\gamma^{(1)}_\eps(w\!\cdot\! v)\mathbf{1}_{|v|^2\leq 20|\log
\eps|}}{(1+\eps^2\gamp g^2_\eps)(1+\eps^2 \gamp \hat{g}_\eps^2)}
\right\>_{\!\!\pO}\rightarrow \lambda\<(\gamp
g-\mathbf{1}_{\Sigma_+}\<\gamp g\>_{\pO})(w\!\cdot\! v)\>_{\pO}\,,
\end{equation}
\begin{equation}\nonumber
\begin{aligned}
\frac{\ale}{\sqrt{2\pi}\eps}& \left\< \frac{\gamma^{(1)}_\eps }{(1+\eps^2\gamp g^2_\eps)(1+\eps^2 \gamp \hat{g}_\eps^2)}
\chi\left(\tfrac{|v|^2}{\D+2}-1\right)\mathbf{1}_{|v|^2\leq 20|\log \eps|}\right\>_{\partial\Omega}\\
&\rightarrow \lambda \<(\gamp g-\mathbf{1}_{\Sigma_+}\<\gamp g\>_{\pO}) \chi(\tfrac{|v|^2}{\D+2}-1) \>_\pO
\end{aligned}
\end{equation}
in $w\mbox{-} L^1_{loc}(\dd t; w\mbox{-} L^1(\dd \sigma_x))$. Use
\eqref{trace-g}, we finally prove the weak form of the
incompressible Navier-Stokes equations with Navier boundary
conditions:
\begin{equation}\nonumber
\begin{aligned}
&\int_\Omega \mathrm{u}(t_2)\!\cdot\! w\,\dd x-\int_\Omega \mathrm{u}(t_1)\!\cdot\! w\,\dd x-\int^{t_2}_{t_1}\int_\Omega
\sum\limits_{i,j} \mathrm{u}_i \mathrm{u}_j\partial_i w_j\,\dd x\dd t\\
&+ \nu  \int^{t_2}_{t_1}\int_\Omega \sum\limits_{i,j} (\partial_i \mathrm{u}_j+\partial_j \mathrm{u}_i)\partial_i w_j\,\dd x\dd t\\
&=\lambda \int^{t_2}_{t_1}\int_\pO \gamma \mathrm{u}\!\cdot\! w \dd \sigma_x \dd t\,,
\end{aligned}
\end{equation}
\begin{equation}\nonumber
\begin{aligned}
&\int_\Omega \theta(t_2)\!\cdot\! \chi\,\dd x-\int_\Omega \theta(t_1)\!\cdot\! \chi\,\dd x-\int^{t_2}_{t_1}
\int_\Omega \theta \mathrm{u}\!\cdot\!\grad \chi\,\dd x\dd t\\
&+ \tfrac{2}{\D+2}\kappa \int^{t_2}_{t_1}\int_\Omega \grad\theta\!\cdot\!\grad \chi\,\dd x\dd t
=\tfrac{\D+1}{\D+2}\alpha\int^{t_2}_{t_1}\int_\pO \gamma \theta \chi\sigma_x\dd t\,.
\end{aligned}
\end{equation}
Thus we finish the proof of the weak convergence results in the Theorem \ref{Diri-limit} and Theorem \ref{Navier-limit}.

\section{Proof of the Strong Convergence in Theorem \ref{Diri-limit}}

In the previous section, we proved that the incompressible part of the fluid moments $\widetilde{U}_\eps$, i.e.
$\Pi\widetilde{U}_\eps$ converges only {\em weakly} to solutions of
the incompressible NSF equations. This weak convergence is caused by the persistence of fast acoustic part
$\Pi^\perp\widetilde{U}_\eps$, as in the periodic domain
\cite{LM}. If $\Pi^\perp\widetilde{U}_\eps$ vanishes in some strong sense
as $\eps$ goes to zero, we can improve the convergence of $\Pi\widetilde{U}_\eps$
from weak to strong. The main novelty of this paper is to prove that in the bounded domain $\Omega$, when $\alpha_\eps = O(\sqrt{\eps})$, the acoustic part will
be damped {\em instantaneously}. This damping effect comes from the kinetic-fluid coupled boundary layers. More precisely, we have the following proposition:
\begin{Prop}\label{acoustic-vanish}
Let $\Pi^\perp\widetilde{U}_\eps$ be  defined as \eqref{Pi-Pi}. If
$\alpha_\eps = O(\sqrt{\eps})$,  then
\begin{equation}\nonumber
\Pi^\perp\widetilde{U}_\eps\rightarrow 0\quad\mbox{in}\!\!\quad L^2_{loc}(\dd t; L^2(\dd x))\,,
\end{equation}
as $\eps\rightarrow 0$.
\end{Prop}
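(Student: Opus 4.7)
The strategy is to use the approximate eigenfunctions $g^{\tau,k}_{\eps,N}$ constructed in Proposition \ref{main-prop} as test functions in the weak formulation of the renormalized Boltzmann equation \eqref{weak-BE-G}, so as to turn the evolution of each acoustic modal coefficient of $g_\eps$ into an almost-closed ODE whose damping coefficient blows up like $1/\sqrt{\eps}$. More precisely, for each acoustic mode $k\ge 1$ and $\tau\in\{+,-\}$, define
\begin{equation}\nonumber
b^{\tau,k}_\eps(t) := \int_{\Omega}\<\gps(t,x,v)\,,\,\overline{g^{\tau,k}_{\eps,N}(x,v)}\>\,\mathrm{d}x,
\end{equation}
where $\gps=\Gamma(\Ge)/\eps$ is the renormalized fluctuation used in \eqref{scale-BE}. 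The first step is to plug $Y=\overline{g^{\tau,k}_{\eps,N}}$ into \eqref{weak-BE-G} (with the admissible renormalization \eqref{renorm-Gamma}) and use the approximate eigenrelation \eqref{eigen-Lep-N} and the approximate dual Maxwell boundary condition \eqref{bc-ki} together with the identity \eqref{formal-dual} to rewrite the transport and collision terms. This yields an ODE of the form
\begin{equation}\nonumber
\tfrac{\dd}{\dd t}b^{\tau,k}_\eps + \tfrac{i\lambda^{\tau,k}_{\eps,N}}{\eps}\,b^{\tau,k}_\eps
= c^{\tau,k}_\eps(t),
\end{equation}
in which $c^{\tau,k}_\eps$ collects: the nonlinear quadratic term $\mathcal{Q}(\gps,\gps)$, the conservation defect $\widetilde R_\eps$ appearing in \eqref{local}, the interior error $R^{\tau,k}_{\eps,N}$, and the boundary error $r^{\tau,k}_{\eps,N}$ coupled with $\gamma \gps$ through the Maxwell reflection structure.

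Once this ODE is in hand, the key observation from the expansion \eqref{lambda-1} is that
\begin{equation}\nonumber
\mathrm{Re}\!\left(\tfrac{i\lambda^{\tau,k}_{\eps,N}}{\eps}\right) = \tfrac{\mathrm{Re}(i\lambda^{\tau,k}_1)}{\sqrt{\eps}} + O(1),
\qquad \mathrm{Re}(i\lambda^{\tau,k}_1)<0,
\end{equation}
so Duhamel's formula provides exponential decay with rate $\sim 1/\sqrt{\eps}$. Combined with the entropy bound $H(\Gein)\leq C^{\mathrm{in}}\eps^2$ which gives $\|b^{\tau,k}_\eps(0)\|=O(1)$, this shows that $b^{\tau,k}_\eps(t)$ is concentrated in an initial temporal boundary layer of size $O(\sqrt{\eps})$ and is $o(1)$ in $L^2_{loc}(\dd t)$ away from $t=0$, provided the source $c^{\tau,k}_\eps$ is uniformly bounded in a suitable weak norm. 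For the source control I will use: the standard {\em a priori} bounds on $\gps$ from \eqref{entropy-inequality} (giving $L^\infty_t L^2(aM\dd v\dd x)$ control modulo the renormalization), the error bounds \eqref{error1} and \eqref{r-error} together with the scaling $\alpha_\eps=\sqrt{2\pi}\chi\sqrt{\eps}$ (which precisely matches the boundary contribution $\tfrac{1}{\eps}\int_{\pO}\<\gamma\gps\,r^{\tau,k}_{\eps,N}\>_{\pO}$ so that it is $o(1)$), and the nonlinear estimates from \cite{LM} to handle $\mathcal{Q}(\gps,\gps)$.

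The final step is to reconstruct $\Pi^\perp\widetilde U_\eps$ from the modal coefficients. Since $\{U^{\tau,k}\}_{\tau,k}$ form an orthonormal basis of $\mathrm{Null}(\mathcal{A})^\perp$ (see \eqref{eigen-A} and the following display) and since the error estimate \eqref{error2} shows that $g^{\tau,k}_{\eps,N}$ differs from the infinitesimal Maxwellian $g^{\tau,k,\Int}_0$ only by $O(\eps^{1/(2r)})$ localized near the boundary, we have, in $L^2_{loc}(\dd t;L^2(\dd x))$,
\begin{equation}\nonumber
\Pi^\perp\widetilde U_\eps(t,x) = \sum_{\tau,k} b^{\tau,k}_\eps(t)\,U^{\tau,k}(x) + o(1),
\end{equation}
and Parseval combined with the uniform-in-$k$ decay of $b^{\tau,k}_\eps$ yields the desired strong convergence to $0$. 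For the summability in $k$ I will truncate at some $K$, control the tail $k>K$ by the acoustic-regime part of the energy norm of $\widetilde U_\eps$ (which is $O(1)$ by entropy bounds), and finally let $\eps\to 0$ and then $K\to\infty$.

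The main obstacle is the careful balancing of the three competing small parameters in the source: the boundary error $\|r^{\tau,k}_{\eps,N}\|=O(\sqrt{\eps}^{\,N+1})$ has to be weighted by $1/\eps$ from the prefactor in \eqref{weak-BE-G}, and must remain summable against $\gamma\gps$ for which only the trace bounds of Lemma \ref{Bound} are available (and these have a non-trivial splitting $\gamma^{(1)}_\eps+\gamma^{(2)}_\eps$ because of the quadratic renormalization $N_\eps$). Choosing $N$ large enough so that $\sqrt{\eps}^{\,N}\cdot\|\gamma\gps\|$ is negligible in $L^1_{loc}(\dd t)$, and exploiting the precise interplay between the accommodation scaling $\alpha_\eps=\sqrt{2\pi}\chi\sqrt{\eps}$ and the dual boundary condition \eqref{bc-ki} (which is what matches the $\sqrt{\eps}$ in \eqref{lambda-1}), is the technically delicate heart of the argument, and will be carried out in detail in the next two sections.
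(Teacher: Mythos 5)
Your strategy for the core of the argument---testing the renormalized weak formulation \eqref{weak-BE-G} against the approximate eigenfunctions $g^{\tau,k}_{\eps,N}$ from Proposition \ref{main-prop}, reading off an ODE for the modal coefficient $b^{\tau,k}_\eps$, and using $\mathrm{Re}(i\lambda^{\tau,k}_1)<0$ together with Duhamel and the scaling $\alpha_\eps=\sqrt{2\pi}\chi\sqrt{\eps}$ to get decay---is exactly the route the paper takes in Section 11 (Proposition 11.2 and Lemma 11.1). Your identification of the delicate boundary estimate, the interplay between the $\sqrt{\eps}^{N+1}$ error $r^{\tau,k}_{\eps,N}$ and the $1/\eps$ prefactor, and the splitting of the source $c^{\tau,k}_\eps$ into a bounded-$L^p$ part and a vanishing-$L^1$ part, all match the paper.

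However, there is a genuine gap in your last step, the reconstruction of $\Pi^\perp\widetilde U_\eps$ from the modal coefficients. You propose to truncate the modal expansion at $k\le K$, estimate the tail by the acoustic-regime energy norm of $\widetilde U_\eps$ ``which is $O(1)$ by entropy bounds,'' and then let $\eps\to0$ followed by $K\to\infty$. This does not close: if the tail for $k>K$ is only bounded by an $\eps$-independent $O(1)$ quantity, then after $\eps\to0$ you are left with an $O(1)$ remainder that does not shrink as $K\to\infty$. To run the truncation argument you need the tail to be small \emph{uniformly in $\eps$} as $K\to\infty$, i.e.\ some compactness or equicontinuity of the moments in the $x$-variable. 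The paper supplies exactly this via the velocity averaging lemma \cite{GLPS} (and its $L^1$ refinement \cite{GS-1}): applying it to \eqref{scale-BE} shows the moments $\<v\gps\>$ and $\<|v|^2\gps\>$ are relatively compact in $L^2_{loc}(\dd t;L^2(\dd x))$, giving the equicontinuity estimate \eqref{L2-compact}, which in turn makes the tail bound \eqref{finite-k} go to zero as $K\to\infty$ uniformly in $\eps$. Your alternative phrase ``uniform-in-$k$ decay of $b^{\tau,k}_\eps$'' would also do the job, but it is not available: the damping coefficient $i\lambda^{\tau,k}_1$ in \eqref{negative10-half} involves boundary integrals of $|\nabla_\pi\Psi^k|^2$ and $|\Psi^k|^2$ weighted by $(\lambda^k_0)^{-3/2}$, and the constants in your source estimates also depend on $k$, so the decay is not uniform over the modes. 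You need the velocity averaging input (or another source of $x$-compactness for the moments) to reduce to finitely many modes \emph{before} invoking the modal decay.
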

This proposition is also true for $\alpha_\eps = O(\eps^\beta)$, $0\leq \beta < frac{1}{2}$ and $\frac{1}{2} < \beta < 1$. These cases will be treated in a separate paper.

Now we apply Proposition \ref{acoustic-vanish} to prove the Main Theorem \ref{Diri-limit}, and leave its proof to the next subsection.

\subsection{Strong Convergence in $L^1$: Proof of Theorem \ref{Diri-limit}} We first show that we can improve the relative compactness of the family of
fluctuations $g_\eps$ from weak to strong in $L^1_{loc}(\dd
t;L^1(\sigma M\dd v\dd x))$. Indeed, $g_\eps$ can be decomposed as
\begin{equation}\nonumber
\begin{aligned}
g_\eps=&\mathcal{P}\gps+{\mathcal{P}}^\perp \gps+\frac{\eps^2 g^3_\eps}{N_\eps}\\
=&v\!\cdot\!\mathbb{P}\tilde{\mathrm{u}}_\eps+\left(\tfrac{\D}{\D+2}\tilde{\theta}_\eps-\tfrac{2}{\D+2}\tilde{\rho}_\eps\right)\left(
\tfrac{|v|^2}{2}-\tfrac{\D+2}{2}\right)+v\!\cdot\!{\mathbb{Q}}  \tilde{\mathrm{u}}_\eps+\tfrac{|v|^2}{\D+2}\left(\tilde{\rho}_\eps+\tilde{\theta}_\eps\right)\\
&+{\mathcal{P}}^\perp \gps+\frac{\eps^2
g_\eps}{\sqrt{N_\eps}}\frac{g^2_\eps}{\sqrt{N_\eps}}\,,
\end{aligned}
\end{equation}
where $\PP$ is the projection to Null$(\LL)$ defined in \eqref{projection-p}, $\mathbb{P}$ is the Leray projection, and $\mathbb{Q}=\mathbb{I}-\mathbb{P}$.

It has been proved in \cite{LM} that ${\mathcal{P}}^\perp \gps\rightarrow 0$ in $L^2_{loc}(\dd t; L^2(aM\dd v\dd x))$, (see (6.41) in \cite{LM}). We can also show that
\begin{equation}\label{L2-strong}
\mathbb{P}\tilde{\mathrm{u}}_\eps\rightarrow \mathrm{u}\,,\quad
\tfrac{\D}{\D+2}\tilde{\theta}_\eps-\tfrac{2}{\D+2}\tilde{\rho}_\eps\rightarrow
\theta\,,\quad \mbox{in}\!\!\quad L^2_{loc}(\dd t; L^2(\dd x))\,.
\end{equation}
Indeed, this convergence is justified in Lemma 5.6 in \cite{Go-Sai04}. Although the renormalization and decomposition of $g_\eps$ are different in \cite{Go-Sai04} and the current paper, the proof of the convergence \eqref{L2-strong} can follow the argument in the proof of Lemma 5.6 in \cite{Go-Sai04}. Furthermore, the Proposition \ref{acoustic-vanish} yields that
\begin{equation}\nonumber
v\!\cdot\!{\mathbb{P}}^\perp
\tilde{\mathrm{u}}_\eps+\tfrac{|v|^2}{\D+2}\left(\tilde{\rho}_\eps+\tilde{\theta}_\eps\right)
\rightarrow 0 \quad\mbox{in}\!\!\quad L^2_{loc}(\dd t;L^2(M\dd v\dd
x))\,.
\end{equation}
Thus $\mathcal{P}\gps\rightarrow g=v\!\cdot\!
\mathrm{u}+\left(\tfrac{1}{2}|v|^2-\tfrac{\D+2}{2}\right)\theta$ in
$L^2_{loc}(\dd t; L^2(M \dd v\dd x))$, as $\eps\rightarrow 0$. The
key nonlinear estimate in \cite{BGL2} claims that
\begin{equation}\nonumber
\sigma \frac{g^2_\eps}{\sqrt{N_\eps}}=O(|\log \eps|)\quad\mbox{in}\!\!\quad L^\infty(\dd t;  L^1(aM\dd v\dd x))\,.
\end{equation}
It is easy to see that $\frac{\eps\gps}{\sqrt{N_\eps}}$ is bounded, hence
\begin{equation}\label{g-sharp}
\frac{\eps^2 g^3_\eps}{N_\eps} \rightarrow 0\quad\mbox{in}\!\!\quad L^1_{loc}(\dd t; L^1(\sigma M\dd v\dd x))\,.
\end{equation}
We deduce  that $g_\eps$ is
relatively compact in $L^1_{loc}(\dd t;L^1(\sigma M\dd v\dd x))$
and that  every limit $g$ has the form \eqref{limit-g}, combining  the above  estimates.

Next, we can also improve the convergence of the moments of
$g_\eps$. In \cite{LM}, it was proved that the incompressible
part $(\mathbb{P}\< v g_\eps\>\,,\< (\frac{1}{\D+2}|v|^2-1)
g_\eps\>)$ converge to $(\mathrm{u}\,,\theta)$ in
$C([0,\infty);w\mbox{-}L^1(\dd x))$. We also have
$(\mathbb{P}\< v g_\eps\>\,,\< (\frac{1}{\D+2}|v|^2-1)
g_\eps\>)$ converge to $(\mathrm{u}\,,\theta)$ in $L^2_{loc}(\dd t; L^2(\dd x))$.
Now, from Proposition \ref{acoustic-vanish}, we know that  the
acoustic part ${\mathbb{Q}}  \< v \gps\>$ and $\<
(\frac{1}{\D+2}|v|^2 \gps\>$ converge strongly to $0$ in
$L^2_{loc}(\dd t; L^2(\dd x))$. So combining  this with
\eqref{g-sharp}, we get
\begin{equation}\nonumber
\begin{aligned}
\< vg_{\eps}\>\rightarrow \mathrm{u}\quad&\mbox{in}\!\!\quad L^1_{loc}(\dd t;L^1(\dd x;\RD))\cap C([0,\infty);w\mbox{-}L^1(\dd x;\RD)) \,,\\
\<(\tfrac{1}{\D}|v|^2-1)g_{\eps}\>\rightarrow
\theta\quad&\hbox{in}\!\!\quad L^1_{loc}(\dd t;L^1(\dd
x;\mathbb{R}))\cap C([0,\infty);w\mbox{-}L^1(\dd x;\mathbb{R}))\,.
\end{aligned}
\end{equation}

Furthermore, since now we have $\tilde{\mathrm{u}}_\eps\rightarrow \mathrm{u}$ and
$\tilde{\theta}\rightarrow \theta$ in $L^2_{loc}(\dd t; L^2(\dd
x))$, we can improve the Quadratic Limit Theorem 13.1 in \cite{LM}
to
\begin{equation}\label{quadratic}
\begin{aligned}
\tilde{\mathrm{u}}_\eps\otimes \tilde{\mathrm{u}}_\eps\rightarrow \mathrm{u}\otimes \mathrm{u}\,,\quad
\tilde{\theta}_\eps \tilde{\mathrm{u}}_\eps\rightarrow \mathrm{u}\theta\,, \quad
\tilde{\theta}^2_\eps\rightarrow \theta^2\quad\mbox{in}\!\!\quad
L^1_{loc}(\dd t; L^1(\dd x))\,,
\end{aligned}
\end{equation}
as $\eps\rightarrow 0$.

Let $s\in (0,\infty]$ be from the assumed bound \eqref{assum3} on
$b$. Let $p=2+\frac{1}{s-1}$, so that $p=2$ when $s=\infty$. Let
$\hat{\xi}\in L^p(aM\dd v)$ be such that $\mathcal{P}\hat{\xi}=0$ and
set $\xi=\mathcal{L}\hat{\xi}$, hence,
\begin{equation}\nonumber
\frac{1}{\eps}\< \xi \gps\>=\frac{1}{\eps}\< \xi \mathcal{P}^\perp \gps\>
=\< \hat{\xi}\mathcal{Q}(\gps\,,\gps)\> -\<\!\< \hat{\xi}\tilde{q}_\eps\>\!\>+ \<\!\< \hat{\xi}T_\eps\>\!\>\,.
\end{equation}
We know from in \cite{LM} that
\begin{equation}\label{TT}
\<\!\< \hat{\xi}T_\eps\>\!\>\rightarrow 0\quad\mbox{in}\!\!\quad L^1_{loc}(\dd t; L^1(\dd x))\,,
\end{equation}
and
\begin{equation}\label{qq}
\<\!\< \hat{\xi}\tilde{q}_\eps\>\!\>\rightarrow \<
\xi\widehat{\mathrm{A}}\>:\grad u +\<
\xi\widehat{\mathrm{B}}\>\!\cdot\! \grad
\theta\quad\mbox{in}\!\!\quad w\mbox{-}L^2_{loc}(\dd t;
w\mbox{-}L^2(\dd x))\,.
\end{equation}
Note that
\begin{equation}\nonumber
\begin{aligned}
\< \hat{\xi}\mathcal{Q}(\gps\,,\gps)\>=\< \hat{\xi}\mathcal{Q}(\mathcal{P}\gps\,,\mathcal{P}\gps)\>&
+2\< \hat{\xi}\mathcal{Q}(\mathcal{P}\gps\,,\mathcal{P}^\perp\gps)\>\\
&+\< \hat{\xi}\mathcal{Q}(\mathcal{P}^\perp\gps\,,\mathcal{P}\gps)\>\,.
\end{aligned}
\end{equation}
It is easy to show that the last two terms above vanish as $\eps\rightarrow 0$. For the first term,
\begin{equation}\label{QQ}
\begin{aligned}
&\< \hat{\xi}\mathcal{Q}(\mathcal{P}\gps\,,\mathcal{P}\gps)\> =\tfrac{1}{2}\< \xi \mathcal{P}^\perp(\mathcal{P}\gps)^2\>\\
=& \tfrac{1}{2}\< \xi \mathrm{A}\>: (\tilde{\mathrm{u}}_\eps\otimes
\tilde{\mathrm{u}}_\eps)+ \< \xi \mathrm{B}\>\!\cdot\! \tilde{\mathrm{u}}_\eps
\tilde{\theta}_\eps + \tfrac{1}{2}\< \xi
\mathrm{C}\>\tilde{\theta}^2_\eps \,.
\end{aligned}
\end{equation}
Applying the quadratic limit \eqref{quadratic}, \eqref{QQ} can be
taken limit in $L^1_{loc}(\dd t; L^1(\dd x))$ strongly. Combining
with convergence \eqref{TT} and \eqref{qq}, we get
\begin{equation}\nonumber
\frac{1}{\eps}\< \xi \mathcal{P}^\perp \gps\> \rightarrow \left\<
\xi \left(\tfrac{1}{2}\mathrm{A}: \mathrm{u}\otimes \mathrm{u}+\mathrm{B}\!\cdot\!
\mathrm{u}\theta +\tfrac{1}{2}\mathrm{C}\theta^2-\widehat{\mathrm{A}}:\grad
\mathrm{u}-\widehat{\mathrm{B}}\!\cdot\!\grad\theta\right)\right\>
\end{equation}
in $w\mbox{-}L^1_{loc}(\dd t;w\mbox{-} L^1(\dd x))$. Since
$g_\eps-\gps\rightarrow 0$ in $L^\infty(\dd t; L^1(\sigma M\dd v\dd
x))$, the convergence above implies \eqref{P-Perp}. Thus we finish
the proof of the Main Theorem \ref{Diri-limit}.


\subsection{Proof of Proposition \ref{acoustic-vanish}}
We will reduce the proof of the Proposition \ref{acoustic-vanish} to
show that the projection of $\widetilde{U}_\eps $ on each {\em
fixed} acoustic mode goes to zero in $L^2_{loc}(\dd t; L^2(\dd x))$.
We  know that $\Pi^\perp\widetilde{U}_\eps$ is uniformly bounded in
$L^\infty(\dd t; L^2(\dd x))$, so it can be represented as
\begin{equation}\label{orth-coe}
\begin{aligned}
\Pi^\perp\widetilde{U}_\eps&=\sum\limits_{k\in\mathbb{N}}
\<\widetilde{U}_\eps,U^{+,k}\>_{\mathbb{H}} U^{+,k}+\< \widetilde{U}_\eps,U^{-,k}\>_{\mathbb{H}} U^{-,k}\\
&=\tfrac{\D+2}{2\D}\sum\limits_{k\in\mathbb{N}}\begin{pmatrix}
   \tfrac{2\D}{(\D+2)^2}\int_\Omega\< |v|^2\gps\>\overline{\Psi^k}\,\mathrm{d}x\!\!\!\quad \Psi^k \\
   2 \int_\Omega \<
v\gps\>\!\cdot\!\frac{\grad\overline{\Psi^k }}{i\lambda^k}\,\mathrm{d}x\!\!\!\quad\frac{\grad\Psi^k }{i\lambda^k}\\
   \tfrac{4}{(\D+2)^2}\int_\Omega\< |v|^2\gps\>\overline{\Psi^k}\,\mathrm{d}x \!\!\!\quad\Psi^k
  \end{pmatrix}\,,
\end{aligned}
\end{equation}
recalling that the inner product $\<\cdot\,, \cdot\>_{\mathbb{H}}$ is defined in \eqref{innerproduct}, $U^{+,k}$ and $U^{-,k}$ are defined in \eqref{eigen-A}.

The above summation includes infinitely many terms. To reduce the
problem to a finite number of modes, we need some regularity in $x$
of $\< v\gps\>$ and $\<|v|^2\gps\>$. The tool adapted to
investigating this property is the velocity averaging theorem given
in \cite{GLPS} and the improvement to $L^1$ averaging in \cite{GS-1}.

Following the similar argument in the proof of Proposition 11.2 in \cite{LM}, and apply to \eqref{scale-BE}, we can show that
for each $\zeta\in \mbox{Span}\{1\,,v\,, |v|^2\}$ and $T > 0$, there exists a function $\eta: \mathbb{R}_+\rightarrow \mathbb{R}_+$ such that $\lim\limits_{z\rightarrow 0^+}\eta(z)=0$,
\begin{equation}\label{L2-compact}
\|\< \zeta\gps(t,x+y,v)- \zeta\gps(t,x,v)\>\|_{L^2([0,T]\times\Omega)} \leq \eta(|y|)\,,
\end{equation}
for every $y\in \Omega$ such that $|y|\leq 1$, uniformly in $\eps\in [0,1]$. From the classical criterion of compactness in $L^2$, $\< v\gps\>$ and $|v|^2\gps$ are relatively compact in $L^2_{loc}(\dd t\,, L^2(\dd x))$ which implies that
\begin{equation}\label{finite-k}
\sum\limits_{k>N}\int^T_0\left |
\<\widetilde{U}_\eps,U^{\tau,k}\>_{\mathbb{H}}\right
|^2\,\mathrm{d}t \leq
C_N\|\Pi^\perp\widetilde{U}_\eps\|_{L^2([t_1,t_2];L^2(\mathrm{d}x))}\rightarrow
0\quad\mbox{as}\quad N\rightarrow \infty\,,
\end{equation}
recalling from \eqref{L2-compact} that $C_N \rightarrow 0$ as
$N\rightarrow \infty$. \eqref{finite-k} implies that, to show
$\Pi^\perp\widetilde{U}_\eps\rightarrow 0$ strongly in
$L^2_{loc}(\mathrm{d}t\,,L^2(\mathrm{d}x))$, we need only to prove
that $\<\widetilde{U}_\eps,U^{\tau,k}\>_{\mathbb{H}}$
converges strongly to $0$ in $L^2(0,T)$ for any fixed acoustic mode
$k$. Furthermore, the relation
\begin{equation}\nonumber
\<\widetilde{U}_\eps,U^{\tau,k}\>_{\mathbb{H}}=\int\limits_\Omega\left\langle\gps\,,g^{\tau,k,\Int}_0\right\rangle\,\mathrm{d}x
\end{equation}
implies that the proof of Proposition \ref{acoustic-vanish} is
reduced to showing that :
\begin{Prop}\label{perp-U}
Assume that $\alpha_\eps = O(\sqrt{\eps})$  and
let $\gps$ be the renormalized fluctuation defined in
\eqref{renorm-g}, satisfying the scaled Boltzmann equation
\eqref{scale-BE}, and $g^{\tau,k,int}_0$ ($\tau$ is + or -) be the infinitesimal
Maxwellian of acoustic mode $k\geq 1$:
\begin{equation}\nonumber
g^{\tau,k,int}_0=\tfrac{\D}{\D+2}\Psi^k +\tfrac{\grad
\Psi^k }{\tau i\lambda^{k}}\cdot v
+\tfrac{2}{\D+2}\Psi^k (\tfrac{|v|^2}{2}-\tfrac{\D}{2})\,.
\end{equation}
Then, for any fixed mode $k$,
\begin{equation}\nonumber
\int_\Omega\left\langle\gps\,,
g^{\tau,k,\Int}_0\right\rangle\,\mathrm{d}x\rightarrow 0\,\quad
\mbox{in}\quad L^2(0,T)\,,\quad\mbox{as}\quad \eps\rightarrow 0\,.
\end{equation}
\end{Prop}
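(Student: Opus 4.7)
\medskip
\noindent\textbf{Proposal.} My plan is to use the approximate eigenfunction $g^{\tau,k}_{\eps,N}$ of Proposition \ref{main-prop} as a test function in the renormalized weak formulation \eqref{weak-BE-G} and thereby derive a near-ODE for
\begin{equation*}
  b^N_\eps(t) := \int_{\Omega}\langle \gps(t,\cdot),\,g^{\tau,k}_{\eps,N}\rangle\,\dd x.
\end{equation*}
The dual Maxwell boundary condition \eqref{bc-ki} imposed on $g^{\tau,k}_{\eps,N}$ with accommodation $\alpha_\eps=\sqrt{2\pi}\chi\sqrt{\eps}$ was built precisely so that the boundary contribution $\tfrac{1}{\eps}\int_{\pO}\langle\gamma\gps\cdot g^{\tau,k}_{\eps,N}\,(\mathrm{n}\cdot v)\rangle_{\pO}$ produced when integrating the streaming term by parts matches the Maxwell condition satisfied by $\gps$ up to the small boundary remainder $r^{\tau,k}_{\eps,N}$ and the defect splitting $\gamma_\eps=\gamma^{(1)}_\eps+\gamma^{(2)}_\eps$ of Lemma \ref{Bound}.

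The second step is to replace $v\cdot\nabla_{\!x} g^{\tau,k}_{\eps,N}$ using the approximate eigen-equation \eqref{eigen-Lep-N}, namely $v\cdot\nabla_{\!x} g^{\tau,k}_{\eps,N}=\tfrac{1}{\eps}\LL g^{\tau,k}_{\eps,N}+i\lambda^{\tau,k}_{\eps,N}g^{\tau,k}_{\eps,N}-R^{\tau,k}_{\eps,N}$, and to exploit the self-adjointness of $\LL$ together with the fact that $g^{\tau,k,\Int}_0\in\mathrm{Null}(\LL)$ in order to absorb the nonlinear collision contribution. After this manipulation the weak formulation reduces to
\begin{equation*}
  \frac{\dd}{\dd t}b^N_\eps(t)+\frac{i\lambda^{\tau,k}_{\eps,N}}{\eps}\,b^N_\eps(t)=c^N_\eps(t),
\end{equation*}
where $c^N_\eps$ collects three families of remainders: the interior residual $\tfrac{1}{\eps}\langle\gps,R^{\tau,k}_{\eps,N}\rangle$ controlled by \eqref{error1}; the boundary residual built from $r^{\tau,k}_{\eps,N}$ (controlled by \eqref{r-error}) and the pieces $\gamma^{(1)}_\eps,\gamma^{(2)}_\eps$ of Lemma \ref{Bound}; and a nonlinear/renormalization defect from the collision term treated by the standard symmetrization of $\mathcal{Q}$, the uniform entropy dissipation bound, and the inequality $\eps^2 g_\eps^2\leq N_\eps$. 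For $N$ taken sufficiently large, each of these is $o(1)$ in $L^1(0,T)$ uniformly in $\eps$.

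The damping mechanism is then immediate: by \eqref{lambda-1} and the strict dissipativity \eqref{dissip-half}, there is $c>0$ with $\mathrm{Re}(i\lambda^{\tau,k}_{\eps,N}/\eps)\leq -c/\sqrt{\eps}+O(1)$. Duhamel's formula, together with the bound $|b^N_\eps(0)|\leq C$ coming from the scaled entropy bound \eqref{boundedness-entropy}, yields
\begin{equation*}
  |b^N_\eps(t)|\,\leq\,e^{-ct/\sqrt{\eps}}|b^N_\eps(0)|+C\sqrt{\eps}\,\|c^N_\eps\|_{L^\infty_t}+o(1),
\end{equation*}
so that $\|b^N_\eps\|_{L^2(0,T)}=O(\eps^{1/4})\to 0$. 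The error estimate \eqref{error2} shows that $b^N_\eps(t)-\int_{\Omega}\langle\gps,g^{\tau,k,\Int}_0\rangle\,\dd x$ is itself $o(1)$ in $L^\infty_t$, which finishes the proof.

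The main obstacle I anticipate is the boundary analysis: the renormalized Maxwell condition satisfied by $\gps$ (with the splitting of Lemma \ref{Bound}, where only $\gamma^{(1)}_\eps$ has good $L^2$ control and the large part $\gamma^{(2)}_\eps$ is only controlled in $L^1$ with weight $\alpha_\eps/\eps^2$) must be paired with the \emph{approximate} dual condition $L^{\mathcal{R}}g^{\tau,k}_{\eps,N}=\sqrt{\eps}L^{\mathcal{D}}g^{\tau,k}_{\eps,N}+r^{\tau,k}_{\eps,N}$ in such a way that the $\tfrac{1}{\eps}$ prefactor in the boundary integral of \eqref{weak-BE-G} does not ruin the estimate. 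Concretely, the surplus $\sqrt{\eps}L^{\mathcal{D}}g^{\tau,k}_{\eps,N}$ combined with $\gamma^{(2)}_\eps$ must produce $o(1)$ rather than $O(1/\sqrt{\eps})$, and this is exactly what the scaling $\alpha_\eps=\sqrt{2\pi}\chi\sqrt{\eps}$ built into the construction of Section 8 is designed to deliver. A secondary difficulty, easier but not trivial, is to extract the pure $O(1)$ (non-singular) part of the nonlinear collision term; here one uses the orthogonality of $g^{\tau,k,\Int}_0\in\mathrm{Null}(\LL)$ against $\mathcal{P}^\perp\gps$ and the symmetrization identity to gain a factor of $\eps$, after which the entropy dissipation bound closes the estimate.
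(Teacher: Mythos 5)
Your overall strategy is the paper's: take $g^{\tau,k}_{\eps,N}$ as test function, reduce to a near-ODE for $b^N_\eps$, and use $\mathrm{Re}(i\lambda^{\tau,k}_1)<0$ in a Duhamel argument. The paper does exactly this with $N=4$. But there is a genuine gap in your treatment of the source terms $c^N_\eps$.

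You assert that ``for $N$ taken sufficiently large, each of these is $o(1)$ in $L^1(0,T)$ uniformly in $\eps$.'' That is false for two of the three families you list, and taking $N$ large does not fix it. The interior residual $\tfrac{1}{\eps}\langle\gps,R^{\tau,k}_{\eps,N}\rangle$ does improve with $N$ via \eqref{error1}, but the boundary contribution and the nonlinear collision contribution do not, because they come from the renormalized Boltzmann equation and its Maxwell boundary condition, not from the truncation error of the test function. Concretely, Lemma \ref{Boundary-Vanish} only gives a decomposition $\Gamma^{\tau,k}_1+\Gamma^{\tau,k}_2$ where $\Gamma^{\tau,k}_1$ is merely \emph{bounded} in $L^p_{\mathrm{loc}}(\dd t)$ for some $p>1$, and the bilinear part of the collision defect,
$\tfrac{1}{\eps}\int_\Omega\langle\!\langle(\tilde{g}'_{\eps1}\tilde{g}'_\eps-\tilde{g}_{\eps1}\tilde{g}_\eps)\,\mathcal{P}^\perp g^{\tau,k}_{\eps,4}\rangle\!\rangle\,\dd x$,
is only bounded by $C\int_\Omega\langle a\,\gps^2\rangle\,\dd x\le C$; neither is $o(1)$. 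Your final Duhamel bound $C\sqrt{\eps}\,\|c^N_\eps\|_{L^\infty_t}$ is therefore also unavailable, since you have no $L^\infty_t$ control of $c^N_\eps$.

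The missing idea is that the bounded-but-nonvanishing part of $c^{\tau,k}_\eps$ must be absorbed by the smallness of the convolution kernel rather than by smallness of the source. One must split $c^{\tau,k}_\eps=c^{\tau,k}_{1,\eps}+c^{\tau,k}_{2,\eps}$ with $c^{\tau,k}_{2,\eps}\to0$ in $L^1_{\mathrm{loc}}$ and $c^{\tau,k}_{1,\eps}$ only bounded in $L^p_{\mathrm{loc}}$, $p>1$, and then observe that
$\|e^{-\frac{1}{\sqrt{\eps}}\mathrm{Re}(i\lambda^{\tau,k}_1)(t-s)}\|_{L^r(0,t)}=O(\eps^{\frac{1}{2r}})$
with $\tfrac1p+\tfrac1r=1$, so H\"older applied to the Duhamel integral kills $c^{\tau,k}_{1,\eps}$ with a positive power of $\eps$ coming entirely from the kernel. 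This is where the strict negativity \eqref{dissip-half} pays its dividend a second time: not just to damp the initial layer, but to render the memory kernel small in $L^r$ against the merely bounded sources. Without this step your argument does not close.
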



\begin{proof}
We start from the weak formulation of the rescaled Boltzmann
equation \eqref{weak-BE-G} with the renormalization $\Gamma$ defined in \eqref{renorm-Gamma} and the test function $Y$ taken to be
the approximate eigenfunctions of $\mathcal{L}_\eps$
constructed in Proposition \ref{main-prop} to the  order $N=4$,
namely $Y =g^{\tau,k}_{\eps,4}$ :
\begin{equation}\label{weak-BE-com}
\begin{aligned}
&\int_\Omega\langle\gps(t_2)g^{\tau,k}_{\eps,4}\rangle\,\mathrm{d}x-\int_\Omega\langle\gps(t_1)g^{\tau,k}_{\eps,4}\rangle\,\mathrm{d}x\\
&+\frac{1}{\eps}\int^{t_2}_{t_1}\int_\Omega \langle\gps
\mathcal{L}_\eps
g^{\tau,k}_{\eps,4}\rangle\,\mathrm{d}x\,\mathrm{d}t
+\frac{1}{\eps}\int^{t_2}_{t_1}\int_\pO\langle\gamma\gps \gamma
g^{\tau,k}_{\eps,4} (v\!\cdot\!
\mathrm{n})\rangle\,\mathrm{d}\sigma_x \,\mathrm{d}t\\
&=\frac{1}{\eps}\int^{t_2}_{t_1}\int_\Omega\langle\!\langle R_\eps
g^{\tau,k}_{\eps,4}\rangle\!\rangle\,\mathrm{d}x\,\mathrm{d}t\,,
\end{aligned}
\end{equation}
where
\begin{equation}\nonumber
R_\eps=
\Gamma'(G_\eps) q_\eps+\frac{1}{\eps} \left(
\frac{g_{\eps 1}}{N_{\eps 1}}+\frac{g_\eps}{N_\eps}-\frac{g'_{\eps 1}}{N'_{\eps 1}}-\frac{g'_\eps}{N'_\eps}\right)\,.
\end{equation}

Define
\begin{equation}\nonumber
\tilde{b}^{\tau,k}_{\eps}(t)=\int_\Omega\langle\gps(t)
g^{\tau,k}_{\eps,4} \rangle\,\mathrm{d}x\,.
\end{equation}
Then from \eqref{weak-BE-com} $ \tilde{b}^{\tau,k}_{\eps}(t)$ satisfies
\begin{equation}\label{ode-b}
\tilde{b}^{\tau,k}_{\eps}(t_2)-\tilde{b}^{\tau,k}_{\eps}(t_1)-\frac{1}{\eps}\overline{i\lambda^{\tau,k}_{\eps,4}}
\int^{t_2}_{t_1}\tilde{b}^{\tau,k}_{\eps}(t)\,\mathrm{d}t=
\int^{t_2}_{t_1}c^{\tau,k}_{\eps}(t)\,\mathrm{d}t\,,
\end{equation}
where $c^{\tau,k}_{\eps}(t)$ is:
\begin{equation}\label{remainder-c}
\begin{aligned}
c^{\tau,k}_{\eps}(t)=&-\frac{1}{\eps}\int_\Omega\langle \gps(t)
R^{\tau,k}_{\eps,4}\rangle\,\mathrm{d}x-\frac{1}{\eps}
\int_{\partial\Omega}\<\gamma \gps \gamma g^{\tau,k}_{\eps,4}(v\!\cdot\!\mathrm{n})\>\,\mathrm{d}\sigma_x \\
&+\frac{1}{\eps}\int_\Omega\<\!\< R_\eps g^{\tau,k}_{\eps,4}
\>\!\>\,\mathrm{d}x\,.
\end{aligned}
\end{equation}

We claim that the boundary contribution in \eqref{remainder-c} is zero as $\eps\rightarrow 0$, i.e.
\begin{Lemma}\label{Boundary-Vanish}
Let $g^{\tau,k}_{\eps,4}$ be the  approximate eigenfunction of
$\LL_\eps$ constructed in Proposition \ref{main-prop}. Then,
\begin{equation}\label{Boundary-V-estimate}
\frac{1}{\eps}\int\limits_\pO\langle\gamma\gps\gamma
g^{\tau,k}_{\eps,4}(v\!\cdot\!\mathrm{n})\rangle\,\mathrm{d}\sigma_x = \Gamma^{\tau,k}_1 + \Gamma^{\tau,k}_2\,,
\end{equation}
where $\Gamma^{\tau,k}_1$ is bounded in
$L^p_{loc}(\mathrm{d}t)$ for $p >1$, and $\Gamma^{\tau,k}_2$ vanishes in
$L^1_{loc}(\mathrm{d}t)$ as $\eps \rightarrow 0$.
\end{Lemma}
We leave the proof of Lemma \ref{Boundary-Vanish} to the section
\ref{last-section}.

\subsection{Estimates of $c^{\tau,k}_{\eps}$}
We will decompose $c^{\tau,k}_{\eps}(t)$ into two parts: one is
vanishing in $L^1_{loc}(\mathrm{d}t)$, the other is bounded in
$L^p_{loc}(\mathrm{d}t)$ for some $p>1$. First, taking $N=4$,
$r=p=2$ in \eqref{error1} and noticing the
$L^\infty_{loc}(\mathrm{d}t\,,L^2(aM\mathrm{d}v\mathrm{d}x))$
boundedness of $\gps$, we have the estimate of the first term in
\eqref{remainder-c}:
\begin{equation}\nonumber
\begin{aligned}
\Big\vert \frac{1}{\eps}\int_\Omega \langle \gps(t)
R^{\tau,k}_{\eps,4}\rangle \,\mathrm{d}x\Big\vert
&\leq \frac{1}{\eps} \| R^{\tau,k}_{\eps,4} \|_{L^2(\frac{1}{a}M\mathrm{d}v\mathrm{d}x)} \| \gps\|_{L^2(aM\mathrm{d}v\mathrm{d}x)}\\
& \leq C \sqrt{\eps}\,.
\end{aligned}
\end{equation}
Second, Lemma \ref{Boundary-Vanish} implies that one part of the boundary term
in \eqref{remainder-c}, namely $\Gamma^{\tau,k}_2$ in \eqref{Boundary-V-estimate} will be vanishing in $L^1_{loc}(\mathrm{d}t)$
as $\eps$ goes to zero. The third term in \eqref{remainder-c} is
estimated as follows:
\begin{equation}\label{3-remainder-c}
\begin{aligned}
&\frac{1}{\eps}\int_\Omega\<\!\< R_\eps g^{\tau,k}_{\eps,4} \>\!\>\,\mathrm{d}x\\
=&\frac{1}{\eps}\int_\Omega\<\!\< R_\eps \PP g^{\tau,k}_{\eps,4}
\>\!\>\,\mathrm{d}x + \frac{1}{\eps}\int_\Omega\<\!\< R_\eps
\PP^\perp g^{\tau,k}_{\eps,4} \>\!\>\,\mathrm{d}x\,.
\end{aligned}
\end{equation}

For the first term in the right-hand side of \eqref{3-remainder-c}, because $\PP g^{\tau,k}_{\eps,4}$ is in Null$(\LL)$,, it has the form of
\begin{equation}\label{R-null}
\frac{1}{\eps}\int_\Omega\<\!\< \Gamma'(G_\eps)q_\eps \zeta \>\!\>\,\mathrm{d}x+\frac{1}{\eps^2}\int_\Omega\<\LL\gps\zeta\>\dd x\,,\quad\mbox{for some}\quad\zeta(t,x)\in\mbox{Null}(\LL)\,.
\end{equation}
The second term above is zero, the first term converges to zero
strongly in $L^1_{loc}(\mathrm{d}t)$ as $\eps\rightarrow 0$ by the
Conservation Defect Theorem (Proposition 8.1) in
\cite{LM}.

For the second term in the right-hand side of
\eqref{3-remainder-c}, from the calculations in Proposition
\ref{main-prop} again, we have
\begin{equation}\label{3rd-term}
\begin{aligned}
\frac{1}{\eps}{\mathcal{P}}^\perp g^{\tau,k}_{\eps,4}
=&\sqrt{\tfrac{\D+2}{2\D}}\left(\tfrac{\nabla_{\!x}^2\Psi^k }{\tau
i\lambda^k}\!:\!\widehat{\mathrm{A}} +\tfrac{2\grad
\Psi^k }{\D+2}\!\cdot\!\widehat{\mathrm{B}}\right)
+ \tfrac{1}{\sqrt{\eps}}\left(\grad
\mathrm{d}\!\otimes\!\partial_{\!\zeta}\mathrm{u}^{\tau,k,\b}_0\!:\!\AHat+ \partial_{\!\zeta}\theta^{\tau,k,\b}_0\grad
\mathrm{d}\!\cdot\!\BHat\right) \\
+& \tfrac{1}{\sqrt{\eps}} g^{\tau,k,\bb}_1+ \mbox{higher order terms}\,.
\end{aligned}
\end{equation}
We decompose $R_\eps$ into
\begin{equation}\label{decompose-R}
R_\eps=T_\eps+(\tilde{g}'_{\eps 1}\tilde{g}'_{\eps }-\tilde{g}_{\eps
1}\tilde{g}_{\eps})+q_\eps\left(\frac{2}{N^2_\eps}
-\frac{1}{N_\eps}-\frac{1}{N'_{\eps 1}N'_\eps N_{\eps
1}N_\eps}\right)\,,
\end{equation}
where $T_\eps$ is
\begin{equation}\nonumber
T_\eps=\frac{q_\eps}{N'_{\eps 1}N'_\eps N_{\eps 1}N_\eps}-\frac{1}{\eps}(\tilde{g}'_{\eps 1}+\tilde{g}'_{\eps }
-\tilde{g}_{\eps 1}-\tilde{g}_{\eps})-(\tilde{g}'_{\eps 1}\tilde{g}'_{\eps }-\tilde{g}_{\eps 1}\tilde{g}_{\eps 1})\,.
\end{equation}
When we integrate \eqref{3rd-term} over $\Omega$, for the second
term of \eqref{3rd-term} which is a function of
$(\pi(x)\,,\frac{\mathrm{d}(x)}{\sqrt{\eps}})$, we make the change of
variables:
\begin{equation}\label{change-varible}
y_1=\pi^1(x)\,,\cdots \,,y_{\D-1}=\pi^{\D-1}(x)\,,
y_\D=\tfrac{\mathrm{d}(x)}{\sqrt{\eps}}\,.
\end{equation}
Then $\mathrm{d}x=\sqrt{\eps}\mathrm{T}^*\mathrm{d}y$, where $$\mathrm{T}^* =\mathrm{det}^{-1}\begin{pmatrix}\grad \pi\\ \grad \mathrm{d} \end{pmatrix} > 0\,.$$

This extra
$\sqrt{\eps}$ cancels with the $\sqrt{\eps}^{-1}$ in the second term
of \eqref{3rd-term}. Similarly, for the third term of
\eqref{3rd-term}, we make the change of variables
$y_1=\pi(x)^1\,,\cdots\,,y_{\D-1}=\pi(x)^{\D-1}\,,y_\D=\frac{\mathrm{d}(x)}{\eps}$,
and consequently $\mathrm{d}x=\eps \mathrm{T}^*\mathrm{d}y$\,. Thus, for
the integral of \eqref{3rd-term}, the first two terms are the same
order, namely $O(1)$, while the third term is of order $O(\sqrt{\eps})$, and the rest term is even higher order in $\eps$. By the Flux Remainder Theorem (Proposition 10.1) of
\cite{LM}, we have
\begin{equation}\nonumber
\frac{1}{\eps}\int_\Omega\<\!\< T_\eps {\mathcal{P}}^\perp
g^{\tau,k}_{\eps,4} \>\!\> \mathrm{d}x\rightarrow
0\,,\quad\mbox{in}\quad L^1_{loc}(\mathrm{d}t)\,,
\end{equation}
as $\eps\rightarrow 0$. Furthermore, from the Bilinear estimates (Lemma 9.1) of \cite{LM}, we have
\begin{equation}\nonumber
\frac{1}{\eps}\int_\Omega\<\!\< (\tilde{g}'_{\eps 1}\tilde{g}'_{\eps
}-\tilde{g}_{\eps 1}\tilde{g}_{\eps 1}) {\mathcal{P}}^\perp
g^{\tau,k}_{\eps,4} \>\!\> \mathrm{d}x \leq  C\int_\Omega\<
a\gps^2\>\,\mathrm{d}x\leq C\,.
\end{equation}

The third term in \eqref{decompose-R} can be written as
$q_\eps/\sqrt{N_\eps}$ times a bounded sequence that vanishes almost
everywhere as $\eps\rightarrow 0$. Then the $L^2(\mathrm{d}\nu
\mathrm{d}x\mathrm{d}t)$ boundedness of $q_\eps/\sqrt{N_\eps}$
implies that it times \eqref{3rd-term} is relatively compact in
$w\mbox{-}L^1_{loc}(\mathrm{d}t;w\mbox{-}L^1(\mathrm{d}x))$, then
following from the Product Limit Theorem of \cite{BGL2},
\begin{equation}\nonumber
\begin{aligned}
&\frac{1}{\eps}\int_\Omega\<\!\<
q_\eps\left(\tfrac{2}{N^2_\eps}-\tfrac{1}{N_\eps}-\tfrac{1}{N'_{\eps
1}N'_\eps N_{\eps 1}N_\eps}\right) {\mathcal{P}}^\perp
g^{\tau,k}_{\eps,4} \>\!\> \mathrm{d}x\\
&\rightarrow 0\,,\quad\mbox{in}\quad L^1_{loc}(\mathrm{d}t)\,,\quad\mbox{as}\quad \eps \rightarrow 0\,.
\end{aligned}
\end{equation}

Now we can decompose $c^{\tau,k}_{\eps}(t)$ into
\begin{equation}\nonumber
c^{\tau,k}_{1,\eps}(t)= - \frac{1}{\eps}\int_\Omega\<\!\<
(\tilde{g}'_{\eps 1}\tilde{g}'_{\eps }-\tilde{g}_{\eps
1}\tilde{g}_{\eps 1}) {\mathcal{P}}^\perp g^{\tau,k}_{\eps,4}
\>\!\> \mathrm{d}x - \Gamma^{\tau,k}_1\,,
\end{equation}
and $c^{\tau,k}_{2,\eps}(t)=c^{\tau,k}_{\eps}(t)-c^{\tau,k}_{1,\eps}(t)$,
where $\Gamma^{\tau,k}_1$ appears in \eqref{Boundary-V-estimate}.

The above arguments show that $c^{\tau,k}_{2,\eps}(t)\rightarrow 0$, in
$L^1_{loc}(\mathrm{d}t)$, and Lemma \ref{Boundary-Vanish} gives that $c^{\tau,k}_{1,\eps}(t)$ is bounded in
$L^p_{loc}(\mathrm{d}t)$ for some $p>1$.

\subsection{Estimates of $\tilde{b}^{\tau,k}_{\eps}$}

From \eqref{ode-b}, $\tilde{b}^{\tau,k}_{\eps}$ satisfies the ordinary
differential equation
\begin{equation}\label{ode-b-1}
\frac{\mathrm{d}}{\mathrm{d}t}\tilde{b}^{\tau,k}_{\eps}-\frac{1}{\eps}\overline{i\lambda^{\tau,k}_{\eps,4}}\tilde{b}^{\tau,k}_{\eps}
=c^{\tau,k}_{1,\eps}(t)+c^{\tau,k}_{2,\eps}(t)\,.
\end{equation}

The solution to $\eqref{ode-b-1}$ is given by
\begin{equation}\label{b-eps}
\tilde{b}^{\tau,k}_{\eps}(t)=
\tilde{b}^{\tau,k}_{\eps}(0)e^{\frac{1}{\eps}\overline{i\lambda^{\tau,k}_{\eps,4}}t}
+\int^t_0
[c^{\tau,k}_{1,\eps}(s)+c^{\tau,k}_{2,\eps}(s)]e^{-\frac{1}{\eps}
\overline{i\lambda^{\tau,k}_{\eps,4}}(s-t)}\,\mathrm{d}s\,.
\end{equation}
From the  Proposition \ref{main-prop}, $i\lambda^{\tau,k}_{\eps,4}= \tau i \lambda^k + i\lambda^{\tau,k}_1\sqrt{\eps} + i\tilde{\lambda}^{\tau,k}_1 \eps$, where $\tilde{\lambda}^{\tau,k}_1 = O(1)$.
\begin{equation}\label{12-beta}
\begin{aligned}
\frac{1}{\eps}\overline{i\lambda^{\tau,k}_{\eps,4}}t&=\frac{1}{\sqrt{\eps}}\left[\mathrm{Re}(i\lambda^{\tau,k}_{1})
+\sqrt{\eps}\mathrm{Re}(i\tilde{\lambda}^{\tau,k}_{1})\right]t\\
&-i\left[\tau \frac{1}{\eps} \lambda^k +
\frac{1}{\sqrt{\eps}}\mathrm{Im} (i\lambda^{\tau,k}_{1})+
\mathrm{Im} (i\tilde{\lambda}^{\tau, k}_{1})\right]t\,.
\end{aligned}
\end{equation}
Using \eqref{12-beta}, the first term in \eqref{b-eps} is estimated as follows:
\begin{equation}\nonumber
\begin{aligned}
&\| \tilde{b}^{\tau,k}_{\eps}(0)e^{\frac{1}{\eps}\overline{i\lambda^{\tau,k}_{\eps,4}}t}\|_{L^2(0,T)}\\
=&| \tilde{b}^{\tau,k}_{\eps}(0)|
\left[-2\left(\mathrm{Re}(i\lambda^{\tau,k}_{1})+\sqrt{\eps}\mathrm{Re}
(i\tilde{\lambda}^{\tau,k}_{1})\right)\right]^{-\frac{1}{2}}\left(1-e^{\frac{1}{\sqrt{\eps}}
[\mathrm{Re}(i\lambda^{\tau,k}_1)+\sqrt{\eps}\mathrm{Re}(i\tilde{\lambda}^{\tau,k}_{1})]T}\right)^{\frac{1}{2}}
\eps^{\frac{1}{4}}\,.
\end{aligned}
\end{equation}
To estimate $| \tilde{b}^{\tau,k}_{\eps}(0)|$, from
\begin{equation}\nonumber
 \tilde{b}^{\tau,k}_{\eps}(0)=\int\limits_\Omega\langle\gps^{\mathrm{in}}\,,g^{k,\Int}_{0}\rangle\,\mathrm{d}x+
\int\limits_\Omega\langle\gps^{\mathrm{in}}\,,g^{\tau,k}_{\eps,4}-g^{k,\Int}_{0}\rangle\,\mathrm{d}x\,,
\end{equation}
noticing that $g^{\tau,k,\Int}_{0}\in$Null$(\mathcal{L})$ and
$\|\langle \zeta(v)\gps^{\mathrm{in}}\rangle\|_{L^2(\mathrm{d}x)}$
is bounded for every $\zeta(v)\in$Null$(\mathcal{L})$, and the error
estimate for $g^{\tau,k}_{\eps,4}-g^{\tau,k,\Int}_{0}$ in
\eqref{error2}, we deduce that $| \tilde{b}^{\tau,k}_{\eps}(0)|$ is
bounded. Using the key fact that
$\mathrm{Re}(i\lambda^{\tau,k}_{1})<0$, we deduce that for any $0<T<\infty$, sufficiently small
$\eps$:
\begin{equation}\nonumber
\|
\tilde{b}^{\tau,k}_{\eps}(0)e^{-\frac{1}{\eps^2}\overline{i\lambda^{\tau,k}_{\eps,4}}t}\|_{L^2(0,T)}
\leq C\eps^{\frac{1}{4}}\,.
\end{equation}

In order to estimate the remaining term in $\eqref{b-eps}$, we
observe that for any $a\in L^p(0,t)$ and $1\leq p, r\leq\infty$,
such that $p^{-1}+r^{-1}=1$, we have
\begin{equation}\nonumber
\Big\vert\int^t_0
a(s)e^{-\frac{1}{\eps}\overline{i\lambda^{\tau,k}_{\eps,4}}(s-t)}\,\mathrm{d}s\Big\vert
\leq
 C \int^t_0 e^{-\frac{1}{\sqrt{\eps}}\mathrm{Re}(i\lambda^{\tau,k}_{1})(s-t)}\vert a(s)\vert\,\mathrm{d}s\,.
\end{equation}
Direct calculations show that
\begin{equation}\nonumber
\left\|e^{-\frac{1}{\sps}\mathrm{Re}(i\lambda^{\tau,k}_{1})(t-s)}\right\|_{L^r(0,t)}=\eps^{\frac{1}{2r}}\left[\frac{1}
{-r\mathrm{Re}(i\lambda^{\tau,k}_{1})}\left(e^{-\frac{r}{\sps}\mathrm{Re}(i\lambda^{\tau,k}_{1})t}-1\right)
\right]^{\frac{1}{r}}
e^{-\frac{1}{\sqrt{\eps}}\mathrm{Re}(i\lambda^{\tau,k}_{1})t}
\,.
\end{equation}
Using the fact $\mathrm{Re}(i\lambda^{\tau,k}_{1})<0$
again, we have
\begin{equation}\label{a-0}
\Big\vert\int^t_0
a(s)e^{-\frac{1}{\eps}\overline{i\lambda^{\tau,k}_{\eps,4}}(s-t)}\,\mathrm{d}s\Big\vert\leq
C\|a\|_{L^p(0,t)} \eps^{\frac{1}{2r}}\,.
\end{equation}

Now applying $a(t)$ in \eqref{a-0} to
$c^{\tau,k}_{1,\eps}$ and $c^{\tau,k}_{2,\eps}$, finally we get:
\begin{equation}\nonumber
\tilde{b}^{\tau,k}_{\eps}\rightarrow 0\,,\quad\mbox{strongly in}\quad
L^2_{loc}(\mathrm{d}t)\,.
\end{equation}
To finish the proof of the Proposition, we notice that
\begin{equation}\nonumber
\int\limits_\Omega\left\langle\gps\,,
g^{\tau,k,\Int}_{0}\right\rangle\,\mathrm{d}x
=\tilde{b}^{\tau,k}_{\eps} +\int\limits_\Omega\langle\gps\,,
g^{\tau,k,\Int}_{0}-
g^{\tau,k}_{\eps,4} \rangle\,\mathrm{d}x\,.
\end{equation}
Applying the error estimate \eqref{error2} in  Proposition \ref{main-prop}, we finish the proof of the Proposition \ref{perp-U}.
\end{proof}
Consequently, we prove the Proposition \ref{acoustic-vanish}.

\subsection{Proof of Lemma \ref{Boundary-Vanish} }\label{last-section}

\begin{proof}
Using the boundary condition of $g^{\tau,k}_{\eps,4}$, namely
$\eqref{dual-BC}$, simple calculations yields that
\begin{equation}\label{boundary-cal-1}
\begin{aligned}
\frac{1}{\eps}&\int\limits_\pO\langle\gamma\gps\gamma
g^{\tau,k}_{\eps,4}(v\!\cdot\!\mathrm{n})\rangle\,\mathrm{d}\sigma_x
=\frac{1}{\eps}\Signe\gamn\gps\gamn g^{\tau,k}_{\eps,4}(v\!\cdot\!\mathrm{n})M\mathrm{d}v\,\mathrm{d}\sigma_x \\
&+\frac{1}{\eps}\Sigpo\gamp\gps\left[(1-\ale)L\gamn
g^{\tau,k}_{\eps,4}+\ale\langle\gamn
g^{\tau,k}_{\eps,4}\rangle_{\pO})
+ r^{k}_{\eps,4}\right](v\!\cdot\!\mathrm{n})M\mathrm{d}v\,\mathrm{d}\sigma_x \\
=&\frac{1}{\eps}\iint\limits_{\Sigma_-}\gamma_{-}g^{\tau,k}_{\eps,4}\left[\gamma_{-}\gps-(1-\alpha_\eps)L\gamma_{+}\gps
-\alpha_\eps\langle\gamma_{+}\gps\rangle_{\pO}\right](v\!\cdot\!\mathrm{n})M\mathrm{d}v\,\mathrm{d}\sigma_x \\
&+\frac{1}{\eps}\BP \gamp\gps
r^{k}_{\eps,4}(v\!\cdot\!\mathrm{n})M\mathrm{d}v\,\mathrm{d}\sigma_x
\,,\\
=&\frac{1}{\eps}\BP(\gamp\gps-L\gamn\gps)\dnu-\frac{\ale}{\eps}\BP(\gamp\gps-\langle\gamp\gps\rangle_\pO)\dnu\\
&+\frac{1}{\eps}\BP \gamp\gps
r^{k}_{\eps,4}(v\!\cdot\!\mathrm{n})M\mathrm{d}v\,\mathrm{d}\sigma_x
\,,
\end{aligned}
\end{equation}
where the measure
$\dnu=L\gamma_{-}g^{\tau,k}_{\eps,4}(v\cdot\mathrm{n})M\mathrm{d}v\,\mathrm{d}\sigma_x
$. From the boundary error estimate \eqref{r-error} in  Proposition
\ref{main-prop} (letting $r=\infty, p=2$), and the fact that $\gamp\gps$ is bounded in
$L^1(\dd \sigma_x,L^2(aM\dd v))$, it is easy to see that
\begin{equation}\nonumber
\frac{1}{\eps}\BP \gamp\gps
r^{k}_{\eps,4}(v\!\cdot\!\mathrm{n})M\mathrm{d}v\,\mathrm{d}\sigma_x
\rightarrow 0\,,\quad\mbox{in}\quad L^1_{loc}(\mathrm{d}t)\,,
\end{equation}
as $\eps\rightarrow 0$.

It remains to show that the first two terms in the righthand side of
\eqref{boundary-cal-1} go to zero as $\eps\rightarrow 0$. It again
follows from the {\em a priori} estimates Lemma \ref{Inside} and
\ref{Bound}. The main difficulty is $\ale/\eps\rightarrow \infty$ as $\eps \rightarrow 0$, since $\alpha_\eps = \sqrt{2\pi}\chi \sqrt{\eps}$.
\begin{equation}\label{equation-1}
\begin{aligned}
&\frac{1}{\eps}\BP(\gamp\gps-L\gamn\gps)\dnu-\frac{\ale}{\eps}\BP(\gamp\gps-\langle\gamp\gps\rangle_\pO)\dnu\\
=&\frac{1}{\eps}\BP(\gamp\gps-L\gamn\gps)\dnu-\frac{\ale}{\eps}\BP\frac{\gamma_\eps}{1+\eps^2\gamp g_\eps^2}\dnu\\
&+\frac{\ale}{\eps}\BP\left(\frac{\langle\gamp g_\eps\rangle_\pO}{1+\eps^2\langle\gamp g_\eps\rangle_\pO^2}
-\frac{\langle\gamp g_\eps\rangle_\pO}{1+\eps^2\gamp g_\eps^2}\right)\dnu\\
&+\frac{\ale}{\eps}\BP\left(\left\langle\frac{\gamp g_\eps}{1+\eps^2\gamp g_\eps^2}\right\rangle_\pO
-\frac{\langle\gamp g_\eps\rangle_\pO}{1+\eps^2\langle\gamp g_\eps\rangle_\pO^2}\right)\dnu\,.
\end{aligned}
\end{equation}

The renormalized boundary condition $\eqref{renormBC}$ yields that
\begin{equation}\label{equation-2}
\begin{aligned}
&\frac{1}{\eps}(\gamp\gps-L\gamn\gps)-\frac{\ale}{\eps}\frac{\gamma_\eps}{1+\eps^2\gamp g_\eps^2}\\
=&-\frac{\ale}{\eps}\frac{\gamma_\eps\eps^2\gamp g_\eps(\gamp g_\eps+\gamp\hat{g}_\eps)}{(1+\eps^2\gamp g_\eps^2)
(1+\eps^2\gamp \hat{g}_\eps^2)}+\frac{\ale}{\eps}\left(\frac{\gamma_\eps}{1+\eps^2\gamp \hat{g}_\eps^2}
-\frac{\gamma_\eps}{1+\eps^2\gamp g_\eps^2}\right)\,.
\end{aligned}
\end{equation}
Thus, after simple calculations, we have
\begin{equation}\label{equation-3}
\begin{aligned}
&\frac{1}{\eps}\int\limits_\pO\langle\gamma\gps\gamma g^{\tau,k}_{\eps,4}(v\!\cdot\!\mathrm{n})\rangle\,\mathrm{d}\sigma_x \\
=&-\frac{\ale}{\eps}\BP\frac{\gamma_\eps\eps^2\gamp \hat{g}_\eps(\gamp g_\eps+\gamp\hat{g}_\eps)}{(1+\eps^2\gamp g_\eps^2)(1+\eps^2\gamp
\hat{g}_\eps^2)}L\gamma_{-}g^\pm_{k,\epsilon,2}(v\!\cdot\!\mathrm{n})M\mathrm{d}v\,\mathrm{d}\sigma_x \\
&+\frac{\ale}{\eps}\BP\frac{\gamma_\eps\eps^2\langle\gamp g_\eps\rangle_\pO(\gamp g_\eps
+\langle\gamp g_\eps\rangle_\pO)}{(1+\eps^2\gamp g_\eps^2)(1+\eps^2 \langle\gamp
g_\eps\rangle_\pO^2)}L\gamma_{-}g^\pm_{k,\epsilon,2}(v\!\cdot\!\mathrm{n})M\mathrm{d}v\,\mathrm{d}\sigma_x \\
&-\frac{\ale}{\eps}\BP\left\langle \frac{\gamma_\eps\eps^2\gamp g_\eps(\gamp g_\eps
+\langle\gamp g_\eps\rangle_\pO)}{(1+\eps^2\gamp g_\eps^2)(1+\eps^2 \langle\gamp g_\eps\rangle_\pO^2)}
\right\rangle_\pO L\gamma_{-}g^\pm_{k,\epsilon,2}(v\!\cdot\!\mathrm{n})M\mathrm{d}v\,\mathrm{d}\sigma_x \,.
\end{aligned}
\end{equation}

The {\em a priori} estimates from boundary yields that all the three
terms on the right-hand side of \eqref{equation-3} are bounded in in
$L^p_{loc}(\mathrm{d}t)$ for $p>1$. In deed, the integral of the
first term over $[t_1\,,t_2]$ is bounded by
\begin{equation}\label{bound-first}
\begin{aligned}
&\int^{t_2}_{t_1}\BP\sqrt{\frac{\ale}{\eps}}\frac{\gamma^{(1)}_\eps}{(1+\eps^2\gamp\gep^2)^{1/4}}
\frac{\sqrt{\ale}(\eps\gamp\hat{g}_\eps)}{1+\eps^2\gamp\hat{g}^2_\eps}\frac{(\sqrt{\eps}\gamp
\gep
+\sqrt{\eps}\gamp \hat{g}_\eps)}{(1+\eps^2\gamp\gep^2)^{3/4}}L\gamma_{-}g^\pm_{k,\epsilon,2}(v\!\cdot\!\mathrm{n})M\mathrm{d}v\,\mathrm{d}\sigma_x \\
-&\eps\int^{t_2}_{t_1}\BP \frac{\ale}{\eps^2}\gamma^{(2)}_\eps
\frac{\eps\gamp \hat{g}_\eps(\eps\gamp
g_\eps+\eps\gamp\hat{g}_\eps)}{(1+\eps^2\gamp
g_\eps^2)(1+\eps^2\gamp
\hat{g}_\eps^2)}L\gamma_{-}g^\pm_{k,\epsilon,2}(v\!\cdot\!\mathrm{n})M\mathrm{d}v\,\mathrm{d}\sigma_x
\end{aligned}
\end{equation}

Note that
\begin{equation}\nonumber
\frac{\sqrt{\ale}(\eps\gamp\hat{g}_\eps)}{1+\eps^2\gamp\hat{g}^2_\eps}\quad
\mbox{and}\!\quad \frac{(\sqrt{\eps}\gamp \gep +\sqrt{\eps}\gamp
\hat{g}_\eps)}{(1+\eps^2\gamp\gep^2)^{3/4}}\mathbf{1}_{\gamp
G_\eps\leq 2\langle G_\eps\rangle_{\pO}\leq 4\gamp G_\eps}
\end{equation}
are bounded. Furthermore,$
L\gamma_{-}g^\pm_{k,\epsilon,2}(v\!\cdot\!\mathrm{n})$ is bounded in
$L^q((v\!\cdot\!\mathrm{n})M\mathrm{d}v\,\mathrm{d}\sigma_x)$ for
$q\geq 2$.  Now, the estimates \eqref{bound-1}, \eqref{bound-3} of
Lemma \ref{Bound}  imply that the first term in \eqref{bound-first}
is bounded in $L^p_{loc}(\mathrm{d}t)$, the second term vanishes as
$\eps\rightarrow 0$. Similarly, using Lemma \ref{Inside},  Lemma
\ref{Bound},  we can prove that the integrals over $[t_1\,,t_2]$ of
the second and third terms of \eqref{equation-3} can be decomposed
into two terms, one is bounded in $L^p_{loc}(\mathrm{d}t)$, the
other vanishes in $L^1_{loc}(\mathrm{d}t)$. Thus we proved the Lemma
\ref{Boundary-Vanish}.

\end{proof}

\end{document}